\documentclass[11pt]{amsart}

\usepackage{mathrsfs}
\usepackage{amsfonts,amssymb,amsmath,amsthm}
\usepackage{latexsym}
\usepackage{bm}
\usepackage{stmaryrd}

\usepackage{geometry}
\usepackage{indentfirst}
\usepackage{titletoc}
\usepackage{appendix}
\usepackage{enumitem}
\usepackage{needspace}
\usepackage{changepage}
\usepackage{etoolbox}

\usepackage{graphicx}
\usepackage{float}
\usepackage{booktabs}
\usepackage{longtable}
\usepackage{array}
\usepackage{arydshln}
\usepackage{tikz}
\usepackage{xcolor}
\usetikzlibrary{arrows.meta,decorations.markings}

\usepackage{cite}
\makeatletter
\patchcmd{\@sect}{\@tocwrite{#1}{#8}}{\@tocwrite{#1}{#7}}{}{\PackageError{aos}{Cannot patch displayed section contents}{}}
\patchcmd{\@sect}{\@tocwrite{#1}{#8}}{\@tocwrite{#1}{#7}}{}{\PackageError{aos}{Cannot patch run-in section contents}{}}
\makeatother
\usepackage[colorlinks=true,linkcolor=blue!45!black,citecolor=green!30!black,urlcolor=blue!50!black]{hyperref}

\allowdisplaybreaks[4]
\newcommand{\blackhyperref}[2]{%
  {\hypersetup{linkcolor=black}\hyperref[#1]{#2}}%
}

\newcommand{\R}{\mathbb R}

\newcommand{\Z}{\mathbb Z}
\newcommand{\Nat}{\mathbb N}

\newcommand{\calE}{\mathcal E}

\newcommand{\calH}{\mathcal H}

\newcommand{\calM}{\mathcal M}

\newcommand{\Id}{\mathrm{Id}}

\DeclareMathOperator{\diam}{diam}

\DeclareMathOperator{\dist}{dist}

\usepackage{microtype}
\newcommand{\D}{\mathbb D}

\newcommand{\1}{\mathbf1}
\newcommand{\dd}{\,\mathrm d}
\newcommand{\Real}{\operatorname{Re}}
\newcommand{\Ai}{\operatorname{Ai}}
\newcommand{\Var}{\operatorname{Var}}
\newcommand{\calT}{\mathcal T}

\DeclareMathOperator*{\essinf}{ess\,inf}

\numberwithin{equation}{section}
\newtheorem{theorem}{Theorem}[section]
\newtheorem{lemma}[theorem]{Lemma}

\newtheorem{prop}[theorem]{Proposition}

\newtheorem*{maintheorem}{Main Theorem}

\theoremstyle{definition}
\newtheorem{defi}[theorem]{Definition}
\newtheorem*{defi*}{Definition}
\newtheoremstyle{compactintro}{0pt}{}{\normalfont}{}{\bfseries}{.}{5pt plus 1pt minus 1pt}{}
\theoremstyle{compactintro}

\AddToHook{env/question/before}{\vspace{-.5\baselineskip}}
\AddToHook{env/observation/before}{\vspace{-.5\baselineskip}}
\theoremstyle{definition}
\newtheorem{remark}[theorem]{Remark}
\newtheorem*{remark*}{Remark}
\AddToHook{env/remark/begin}{\emergencystretch=2em}
\AddToHook{env/remark*/begin}{\emergencystretch=2em}

\theoremstyle{definition}

\begin{document}

\title[Asymptotic observable symmetry II]{Asymptotic observable symmetry for spacetime observability of waves II: Smooth bounded domains}

\author{Jingrui Niu}
\address[Jingrui Niu]{Institute for Advanced Study in Mathematics, Harbin Institute of Technology, Harbin 150001, China}
\email{jingrui.niu@hit.edu.cn}

\author{Shengquan Xiang}
\address[Shengquan Xiang]{School of Mathematical Sciences, Peking University, Beijing 100871, China}
\email{shengquan.xiang@math.pku.edu.cn}
\date{}

\begin{abstract}
Continuing \cite{NWX} and Part I \cite{AOSI}, we further investigate symmetry for spacetime observability of wave equations. Building on asymptotic observable symmetry (AOS) introduced in \cite{AOSI}, we study symmetry under glancing propagation and caustics. We prove that, on every smooth bounded domain in $\R^2$ and for every positive observation time, there is a fixed measurable set satisfying the essential spacetime measurable geometric control condition. However, the observability inequality fails on this set.
\end{abstract}

\subjclass[2020]{35L05, 93B07, 35B60, 35P20, 37C83}
\keywords{Wave,\; observability,\; asymptotic observable symmetry,\; geometric control condition}
\hypersetup{pdftitle={Asymptotic observable symmetry for spacetime observability of waves II: Smooth bounded domains},pdfauthor={Jingrui Niu and Shengquan Xiang}}

\maketitle

\setcounter{tocdepth}{1}
\tableofcontents

\section{Introduction}\label{sec:intro}

\subsection{Observability, geometry, and symmetry}\label{subsec:intro-obs}

Let $\Omega\subset\R^2$ be a smooth bounded domain. We consider the Dirichlet wave equation
\begin{equation}\label{eq:wave}
\begin{cases}
\partial_t^2u-\Delta u=0,& \forall (t,x)\in\R\times\Omega,\\
u=0,& \forall (t,x)\in\R\times\partial\Omega,\\
u[0]=(u_0,u_1)\in H_0^1(\Omega)\times L^2(\Omega).
\end{cases}
\end{equation}
Here $u[t]:=\left(u(t,\cdot),\partial_tu(t,\cdot)\right)$, and the conserved energy is
\begin{equation*}
E[u](t):=\frac12\int_\Omega(|\nabla u|^2+|\partial_tu|^2)\dd x.
\end{equation*}
For $T>0$ and a measurable set $G\subset(0,T)\times\Omega$, observability means that there is $C_G>0$ such that
\begin{equation}\label{eq:obs}
E[u](0)\le C_G\int_G|\partial_tu(t,x)|^2\dd x\dd t
\end{equation}
for every solution of \eqref{eq:wave}.

The literature on controllability and observability of wave equations is extensive. Here, we briefly review some related work and refer to Part~I \cite[Introduction]{AOSI} for further background and references.

For early results and methods in wave controllability, we refer to Russell \cite{Russell1971a} and Lions \cite{Lions1988}. In the classical smooth setting, the geometric control condition (GCC) describes observability through generalized rays reaching the observation region within the prescribed time; see Bardos--Lebeau--Rauch \cite{BLR1992} and Burq--G\'erard \cite{BG1997}. Further developments include observability with rough coefficients \cite{DE2017,BDLR2024}, and uniform observability estimates under geometric control \cite{LL2016}, the spacetime formulation \cite{LRLTT2017}, as well as regional and partial observability and control \cite{DEZ2025}.

For wave equations with lower-order terms and potentials, we refer to works \cite{Zhang2000,Zhang2000Potential,DZZ2008}. For unique continuation under partial analyticity assumptions, see \cite{Tataru1995,Hormander1997,RZ1998}; for quantitative results and applications to approximate control, see \cite{LL2019}. On a suitable bounded domain, Liao--L\"u--Zhang \cite{LLZ2026} construct a smooth real-valued space and time dependent potential and an open observation set satisfying GCC for which a nonzero wave vanishes on the observed cylinder.

For related work on wave decay and stabilization, see \cite{Lebeau1996,Burq1998,BG2020}. For semilinear wave equations, we refer to Zuazua \cite{Zuazua1990} for exact controllability and to Joly--Laurent \cite{JL2013} for stabilization under geometric control. For controllability of geometric wave equations, we also refer to \cite{KX2024,CKX2025,CKXgeneral2025}.

\vspace{2mm}

The observation sets considered here are only measurable. Recall the essential specetime measurable GCC condition used in \cite{AOSI}.

\begin{defi*}
A measurable set $G\subset(0,T)\times\Omega$ satisfies the essential spacetime measurable GCC if there exists $\eta>0$ such that
\begingroup
\renewcommand{\theHequation}{gcc}
\begin{equation}\tag{GCC}\label{eq:gcc}
\essinf_{\rho\in S^*\Omega}D_G(\rho)\ge\eta>0 \textrm{ \; \; where \; } D_G(\rho):=\int_0^T\1_G(t,\pi\Phi_t\rho)\dd t.
\end{equation}
\endgroup
Here $\Phi_t$ is the regular unit-speed billiard flow, $\pi:S^*\Omega\to\Omega$ is the spatial projection, and the essential infimum is taken with respect to Liouville measure.
\end{defi*}

This condition concerns regular interior rays and is unchanged by spacetime null modifications of $G$; see \cite[Lemma~B.2 and Proposition~B.3]{AOSI}. For measurable observation, the classical smooth-weight argument does not directly apply: weak convergence of high-frequency energy can  bypass to the indicator $\1_G$. We refer to \cite[Appendix~B.4]{AOSI} for this distinction. \vspace{2mm}

Observability and control on measurable sets have been studied for heat equations \cite{AEWZ2014,WWZZ2019,BM2023}, Schr\"odinger equations \cite{BZ2019,BurqZhu2025}, the periodic KdV equation \cite{NWXKdV2026}, and wave equations \cite{NWX,HX}. We refer to Wang \cite{WangICM2026} for a recent survey on the geometric characteristics of observable regions.

Ray coverage and wave observation are different assertions. A ray records one propagation direction, whereas the observed velocity is the sum of the components arriving at a spacetime point. In the torus setting, Niu--Wang--Xiang \cite{NWX} introduced the {\it observable symmetry condition} (OSC), excluding nontrivial symmetries whose transported profiles cancel on the observation region. There, OSC together with GCC fully characterizes observability. Huang--Xiang \cite{HX} extend the symmetry theory to the interval.

\subsection{The main result}\label{subsec:intro-main}

Part~I \cite{AOSI} introduces \emph{asymptotic observable symmetry} (AOS) and uses radial reflection to construct a fixed measurable observation set on the disc satisfying \eqref{eq:gcc} but failing observability, for a fixed positive observation time. In this paper, we use local glancing propagation to obtain this conclusion on {\it every smooth bounded domain} and for {\it every positive observation time}.

\begin{maintheorem}\phantomsection\label{thm:main}
Let $\Omega\subset\R^2$ be a smooth bounded domain (not necessarily simply connected). For every $T>0$, there is a Borel set $G\subset(0,T)\times\Omega$ which satisfies \eqref{eq:gcc}\footnote{In fact, in our construction, every unit-speed generalized ray spends a uniformly positive amount of time in $G\cup\left((0,T)\times\partial\Omega\right)$; see Theorem~\ref{thm:curved-gcc}.} but for which observability fails.

More precisely, there is $\eta>0$ such that
\begin{equation}\label{eq:main}
\essinf_{\rho\in S^*\Omega}D_G(\rho)\ge\eta.
\end{equation}
There are also solutions $\{u_k\}_{\mathbb{N}^*}$ of \eqref{eq:wave} such that $E[u_k](0)\equiv1$ and
\begin{equation}\label{eq:main-nonobs}
\int_G|\partial_tu_k|^2\dd x\dd t\longrightarrow0 \textrm{ \; as \;  } k\rightarrow \infty.
\end{equation}
\end{maintheorem}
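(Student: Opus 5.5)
The construction has three parts. We build a Borel set $G$ from small spacetime pieces attached to a fixed boundary arc, build whispering-gallery-type quasimodes concentrated near that arc, and check that every regular ray spends time in $G$ while these waves give asymptotically zero observation through cancellation.

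\textbf{Step 1 (localization near a convex boundary point).} Every smooth bounded domain in $\R^2$ has a point $x_0$ on its outer boundary component where the curvature is strictly positive, with $\Omega$ locally on the concave side. For instance, take a point of $\partial\Omega$ farthest from the origin. On a small arc $\Gamma\ni x_0$ of positive curvature, introduce boundary normal coordinates $(s,r)$, with $r$ the distance to $\partial\Omega$. Glancing rays near $\Gamma$ travel almost tangentially, and rays starting transversally cross the region quickly. Since $T>0$ is arbitrary, all constructions are confined to a time window $(T/4,3T/4)$ and a thin collar $\{r<\delta\}$.

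\textbf{Step 2 (AOS via caustic pairs).} We use Gaussian-beam or WKB quasimodes near glancing directions: $u_k=e^{-it\lambda_k}v_k+e^{it\lambda_k}\overline{v_k}$, with $v_k$ an Airy-type boundary-layer wave $\Ai(\lambda_k^{2/3}(r-\cdot))e^{i\lambda_k s}$ localized in $s$ and truncated. Its wavefront lies on the family of rays tangent to a caustic $r=r_k\to0$. Here I follow the AOS mechanism of Part~I: at each spacetime point the velocity $\partial_tu_k$ is a superposition of two branches, the incoming and outgoing sheets near the caustic. These branches interfere with a phase that is a fixed function of rescaled coordinates, since the Airy function profile is asymptotically universal after rescaling by $\lambda^{-2/3}$.

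We then choose $G$ as a union, over a sequence of scales $r_k=2^{-k}$ (a dyadic family, so the pieces are disjoint), of sets on which $|\partial_t u_k|^2$ is small. This means the nodal neighbourhoods of the Airy profile and of the interference pattern $\cos(\lambda_k t)$: thin sets of width $\epsilon_k\lambda_k^{-1}$ around the zeros, with $\epsilon_k\to0$. To keep $G$ fixed, each piece of $G$ is tied to one $k$. The other pieces must carry negligible energy of $u_k$, which I would get from exponential Airy decay for $r>r_j\gg r_k$ together with the smallness of the measure of the pieces for $r_j<r_k$.

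\textbf{Step 3 (GCC).} We must show that $\essinf D_G\ge\eta$. A ray nearly tangent to $\Gamma$ stays in the collar for a time of order $1$. To make it spend time in $G$, the sets must have positive density rather than being thin. I would resolve this by making $G$ contain, at each scale, the nodal neighbourhoods with thickness a fixed fraction of the wavelength in time, not shrinking. The observed energy $\int_G|\partial_tu_k|^2$ is then small only because the time set avoids antinodes, and for standing-type waves this forces a quadratic vanishing gain. Rays not nearly tangent to $\Gamma$ are handled by adding a fixed open set $G_0$ far from $\Gamma$ that is hit by all such rays. Such a $G_0$ exists by a compactness argument on non-glancing rays in the short window, using the footnote's generalized-ray statement. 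The $u_k$ must be nearly invisible on $G_0$, which holds since they concentrate exponentially near $\Gamma$.

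\textbf{Main obstacle.} The hard step is reconciling Steps 2 and 3. Every ray must accumulate a uniformly positive time in $G$, yet the fixed $G$ must simultaneously kill the whole sequence $u_k$. I expect to resolve this by exploiting the asymptotic symmetry. Rays in the collar of scale $r_k$ cross many wavelengths, so a periodic spacetime pattern of density $\eta$ is met for time about $\eta$ by each ray. Meanwhile that pattern is aligned with the nodal structure of $\partial_tu_k$, which is asymptotically translation-invariant along the glancing flow. I would verify this alignment through the explicit Airy asymptotics.
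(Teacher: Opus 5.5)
There is a genuine gap, and it sits exactly where you locate the main obstacle.

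\emph{Why single-mode waves cannot work.} With a single Airy mode (your $v_k$ carrying $e^{i\lambda_k s}$), the velocity $\partial_tu_k$ away from the turning layer has the form $A\,(f(\phi)-g(\psi))$, with $f,g$ sinusoids in the two branch phases. It vanishes only on a null set of nodal curves in the phase torus. GCC forces the part of $G$ inside the support of $u_k$ to have a fixed positive density. Indeed, a positive glancing ray at that scale drifts in $s-t$ only by $O(\beta^2)$, so it stays in that packet for the whole interval, while its two phases equidistribute on the torus.

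Your fixed-fraction nodal neighbourhoods therefore capture a fixed fraction $c(\eta)>0$ of the energy (of order $\eta^3$ for simple zeros). This is perfectly compatible with the observability inequality, so \eqref{eq:main-nonobs} never follows. Shrinking the neighbourhoods instead destroys the uniform ray bound.

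\emph{The missing idea: exact cancellation on a set of positive phase measure.} In the paper, the caustic and the Dirichlet boundary force the branch profiles to satisfy $g=\calH f$ and $f=g(\cdot+1/4)$. Step profiles are excluded, because the Hilbert transform creates logarithmic singularities at jumps. The diamond profile $f_*$, built from the conformal map of the disc onto a square, makes both $f_*$ and $g_*$ equal to $\pm1$ on quarter-intervals. Consequently $f_*(\phi_k)-g_*(\psi_k)=0$ holds exactly on the pullback $\Gamma_k$ of rectangles of measure $1/8$. Realizing this requires a superposition of infinitely many Airy modes $n\equiv1\pmod4$ with coordinated Airy-zero indices, a diagonal Fourier truncation, and a propagated-tail estimate uniform in the scale. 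A single-mode quasimode cannot produce it.

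\emph{Problems in the geometric part.} First, dyadic depths $r_k=2^{-k}$ give nested collars, not disjoint pieces. Each $u_k$ has a fixed fraction of its energy in $\{r<r_k/2\}$, where the positive-density pieces of all finer scales live, and those pieces are not aligned with its nodes. So the cross-scale contribution is not negligible.

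The paper instead places the packets in pairwise disjoint, indeed triple-separated, intervals of $s-t$. It defines $G$ as the complement of the hidden parts, so everything outside the packets is observed. This handles non-glancing rays by a thin-collar time estimate, with no separate $G_0$. It works on a short interval $\tau<T_{\rm loc}$ with no observation afterwards, so the waves never leave the convex arc. Your window $(T/4,3T/4)$ allows them to, for large $T$.

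\emph{Translation invariance is the difficulty, not the resolution.} A ray with action ratio near $1$ almost follows one branch, so its slow phase is nearly constant on a segment and can stay outside the observed cells. The paper needs several further ingredients here:
\begin{itemize}
\item the quarter-period return at each reflection, combined with $J_0\cup(J_0-1/4)=\R/\Z$ for the plateau set $J_0$;
\item corner strips $\calE_k$ of reference width comparable to $\lambda_k^{1/4}$, whose energy still tends to zero;
\item a continuous-phase count across reflections for shallow rays;
\item a threshold argument to combine scales: at most one packet with $\lambda_k\ge\delta\beta$ is met, and all smaller packets lie in one thin collar.
\end{itemize}
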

The construction uses one positively curved boundary arc and a short time interval. The passage to every prescribed $T>0$ is given in Section~\ref{subsec:gcc-energy}. See Theorem~\ref{thm:curved-gcc} for a stronger and more detailed version of the \hyperref[thm:main]{Main Theorem}.

\begin{remark}
The generated approximate velocities $S_k$ used in the construction below describe the leading parts of the time derivatives of the high-frequency waves $u_k$. The profile matching is exact, whereas its realization by solutions is asymptotic.

This suggests that the conclusion of the \hyperref[thm:main]{Main Theorem} could be extended to wave equations with fixed real first-order coefficients smooth on $[0,T]\times\overline\Omega$ and a bounded real-valued potential, with the same Dirichlet boundary condition. In the glancing construction, the two branches have the same limiting tangential propagation direction. The first-order terms could therefore be incorporated into a common amplitude through a transport equation, preserving the profile matching. The potential term could be absorbed into the realization error by an energy estimate. The same observation-set construction and GCC argument would then apply, with a further reduction of the chosen scales if needed.
\end{remark}

\subsection{A brief recall of AOS}\label{subsec:intro-aos}

We use the definition of AOS in \cite[Definition~2.5]{AOSI}; the full framework and the reflection example are given in \cite[Sections~2--4]{AOSI}. We recall only the objects needed below. For a two-branch construction, fixed compatible profiles $f,g$ and their matching region $\mathcal M$ satisfy
\begin{equation*}
f(\alpha)=g(\beta),\quad \forall(\alpha,\beta)\in\mathcal M.
\end{equation*}
At scale $\lambda_k\to0^+$, the phases and amplitude generate the approximate velocity
\begin{gather*}
S_k=A_{\lambda_k}\left(f(\phi_k)-g(\psi_k)\right), \\
 \Gamma_k:=\mathcal U_k\cap\{(\phi_k,\psi_k)\in\mathcal M\},
\end{gather*}
where $\mathcal U_k$ is its support region. Thus $S_k=0$ on $\Gamma_k$. The profiles and their symmetry remain fixed while the propagation scales vary.

The analytic requirement is approximation by exact wave velocities. For the present construction, this approximation is used on a fixed short interval $(0,\tau)$, with $0<\tau<T$ independent of $k$. Set $Q_\tau:=(0,\tau)\times\Omega$. We construct one fixed observation set $G_\tau\subset Q_\tau$ and exact waves satisfying
\begin{gather*}
\partial_tu_k=S_k+r_k \textrm{ \; with \; } E[u_k](0)=1,\\
\|\1_{G_\tau}S_k\|_{L^2(Q_\tau)}+\|r_k\|_{L^2(Q_\tau)}\rightarrow0  \textrm{ \; as \; }  k\rightarrow \infty.
\end{gather*}
This gives failure of \eqref{eq:obs} with $T=\tau$. For the prescribed time $T$, the exact waves continue to evolve and we take $G=G_\tau\subset(0,T)\times\Omega$. 

We retain the terminology and notation of \cite[Section~2]{AOSI}, with $\varepsilon_k=\lambda_k$. The supplementary discussion in \cite[Appendix~A]{AOSI} explains compatible profiles, velocity approximation, and strict localization; the estimates required for the present construction are proved below.

\subsection{The glancing construction in general domains}\label{subsec:intro-glancing}

The radial construction of \cite[Sections~3--4]{AOSI} uses the rotational invariance of the disc. A general domain need not have this symmetry. At a positively curved boundary, however, near-tangent waves still have two normal branches connected by a turning caustic. We use these branches to construct compatible profiles with matching regions, then place them in moving packets. We call this construction \emph{AOS via glancing propagation}. \vspace{1.5mm}

We outline the main ideas here. More detailed strategies for the construction on the disc and on general domains are presented in Sections~\ref{subsec:gl-intuition} and~\ref{subsec:curved-intuition}, respectively, while Section~\ref{subsec:gcc-sketch} outlines the proof of GCC.

\noindent{\it 1. Near-tangent propagation on a local arc.} Every smooth bounded planar domain has a positively curved boundary arc; see Lemma~\ref{lem:local-geometry}. We construct high-frequency waves that propagate nearly tangentially along this arc and concentrate in a thin boundary layer. The boundary reflection and the turning caustic connect the two normal branches. Fix a sufficiently small time $\tau\in(0,T)$, independently of the sequence of scales, so that all packets remain near this arc. \vspace{1.5mm}

\noindent{\it 2. Fixed profiles in moving packet regions.} The glancing caustic relates the two profiles through the Hilbert transform, while the boundary reflection imposes a quarter-period translation. We choose the diamond profiles $f_*,g_*$ (see Section~\ref{subsec:diamond}) to satisfy these compatibility relations and to have equal nonzero constant values on intervals. This gives matching rectangles in the phase variables, whose pullbacks define the matching regions inside the moving packet regions $\mathcal U_k$.
\begin{gather*}
S_k=A_{\lambda_k}\left(f_*(\phi_k)-g_*(\psi_k)\right) \textrm{ \; thus \; } S_k=0\quad\text{outside }\mathcal U_k,\\
\textrm{ then further define \; }\Gamma_k\subset\mathcal U_k\quad\text{ such that }\quad \1_{\Gamma_k}S_k=0.
\end{gather*}
The same profiles and symmetry are used at every scale; the phases $\phi_k,\psi_k$, amplitude $A_{\lambda_k}$ and moving packet region $\mathcal U_k$ change. With $s$ denoting local boundary arclength, we place the packets in disjoint intervals of $s-t$, so that different packets remain disjoint while moving along the chosen arc. \vspace{1.5mm}

\noindent{\it 3. A fixed observation region and a sequence of waves.} Inspired by  the radial example of \cite[Section~1.4]{AOSI}, we observe everything outside the moving packet region $\mathcal U_k$. Inside each $\mathcal U_k$, we retain $\Gamma_k$ together with a small additional observation region $\calE_k$. Thus the fixed set is
\begin{equation*}
G_\tau:=Q_\tau\setminus\bigcup_{k\ge1}\left(\mathcal U_k\setminus(\Gamma_k\cup\calE_k)\right).
\end{equation*}
Restricting the union to the first $N$ moving packet regions gives decreasing finite-stage sets $G^{(N)}$ with limit $G_\tau$; see Figure~\ref{fig:curved-intro-stages}. The additional regions $\calE_k$ allow for phase errors in the ray estimates. Their parameters are chosen so that their contribution to the observed profile energy tends to zero. The waves satisfy $E[u_k](0)=1$ and $\partial_tu_k=S_k+r_k$. Support and symmetry matching yield
\begin{equation*}
\|\1_{G_\tau}\partial_tu_k\|_{L^2(Q_\tau)} \le\|r_k\|_{L^2(Q_\tau)}+\|\1_{\calE_k}S_k\|_{L^2(Q_\tau)}\rightarrow0.
\end{equation*}

\noindent{\it 4. GCC realization.} It remains to choose the same construction parameters so that $G_\tau$ also satisfies \eqref{eq:gcc}. The ray estimate has two levels. At one scale, we separate the ray geometries and use profile matching to obtain a positive observed-time bound. To combine the scales for positive glancing rays,\footnote{Positive glancing rays move in the same tangential direction as the moving packet regions, while negative glancing rays move in the opposite direction.} we use a threshold depending on the ray: it can meet at most one moving packet region above this threshold, while all smaller moving packet regions lie in one thin boundary collar. The time spent in that collar is controlled separately. Negative glancing rays and non-glancing rays are handled by the small total packet width and the shallow packet support, respectively. Together, these estimates give
\begin{equation*}
\essinf_{\rho\in S^*\Omega}\int_0^\tau\1_{G_\tau}(t,\pi\Phi_t\rho)\dd t\ge\eta_\tau>0.
\end{equation*}

\smallskip\noindent For a larger prescribed time, we keep this observation set inside $(0,\tau)\times\Omega$ and add no observation outside that interval. Its ray lower bound and the same non-observability sequence remain valid.

\begin{figure}[htbp]
\centering
\includegraphics[width=\textwidth]{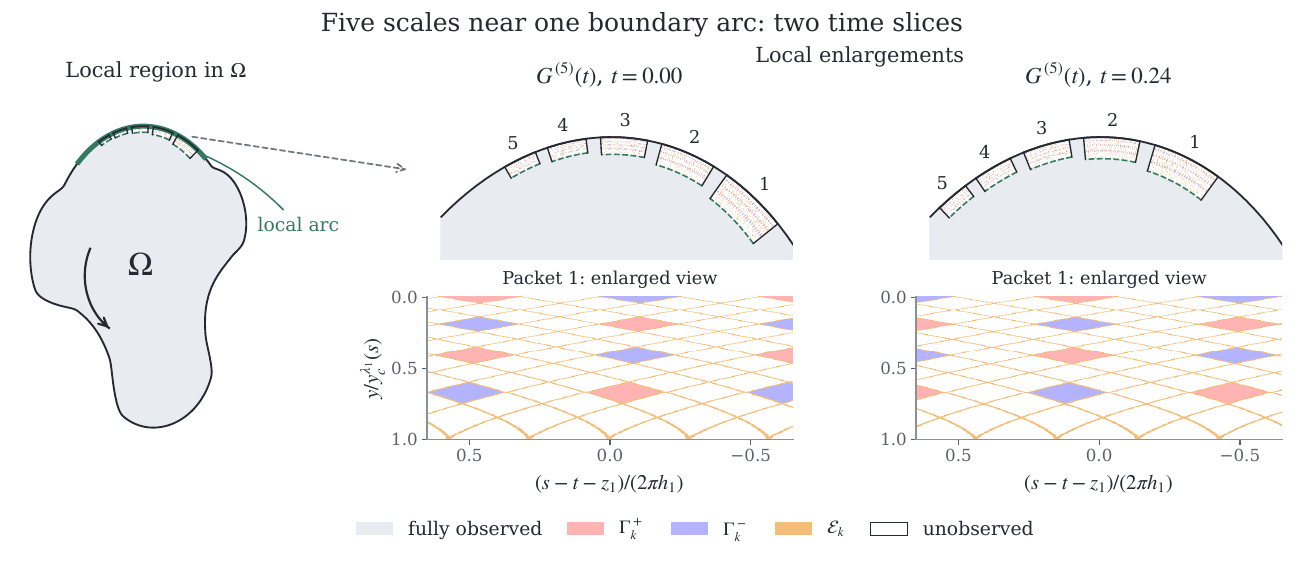}
\caption{The same $G^{(5)}$ at two short times near a positively curved boundary arc. Gray is fully observed; red and blue mark profile matching; orange is the additional observation $\calE_k$; white is unobserved. The lower row enlarges part of packet 1 in normalized depth.}\label{fig:curved-intro-stages}
\end{figure}

\Needspace{12\baselineskip}
\subsection{Caustics, Airy scaling, and wave estimates}\label{subsec:intro-caustics}

Caustics have been studied extensively in connection with concentration, dispersion and Strichartz estimates for wave equations. These studies show both losses caused by caustics and the role of their detailed spacetime structure in obtaining sharper estimates \cite{ILP2014,ILLP2023}. The present paper uses the same local propagation background to study symmetry and cancellation in observation on measurable spacetime sets. \vspace{1.5mm}

\noindent\textit{Classical glancing and Airy analysis.} The local analysis of glancing rays and diffracted waves was developed in works including Friedlander \cite{Friedlander1976}, Melrose \cite{Melrose1976} and Melrose--Taylor \cite{MelroseTaylor1986}. Near a simple turning point, the leading normal equation and its decaying solution take the form
\begin{equation*}
h^2V''(\zeta)+\zeta V(\zeta)=0, \quad V(\zeta)=\Ai(-\zeta/h^{2/3}).
\end{equation*}
We use the convention $\zeta>0$ on the oscillatory side and $\zeta<0$ on the decaying side. Thus $h^{2/3}$ is the transition scale in the Airy coordinate. Away from the turning point on the oscillatory side, the Airy asymptotics yield two geometric-optics branches with a fixed relative phase. We refer to \cite{Olver} for the asymptotics and to \cite[Appendix~A.4.2]{AOSI} for an explicit derivation in the disc. \vspace{1.5mm}

\noindent\textit{Boundary concentration and wave estimates.} Smith--Sogge \cite{SmithSogge2007} study spectral-cluster bounds on manifolds with boundary, where whispering-gallery concentration exhibits the normal scale $h^{2/3}$. Blair--Smith--Sogge \cite{BlairSmithSogge2009} establish Strichartz estimates for waves on manifolds with boundary. Dispersion inside strictly convex domains is studied by Lebeau \cite{Lebeau2006}, while Ivanovici \cite{Ivanovici2010,Ivanovici2012} constructs counterexamples to certain Strichartz estimates valid in the boundaryless setting. Ivanovici--Lebeau--Planchon \cite{ILP2014} prove sharp dispersive estimates in the Friedlander model and describe the role of swallowtail caustics; \cite{ILP2021New} gives further Strichartz counterexamples in a two-dimensional model. Ivanovici--Lascar--Lebeau--Planchon \cite{ILLP2023} treat general strictly convex domains and use the spacetime distribution of concentration to improve on Strichartz bounds obtained from the worst dispersive decay alone. The Airy normal form describes the local structure near a simple turning point. The swallowtail caustics studied in these works arise in the geometry of multiply reflected wave fronts. Recent work also treats dispersion with Neumann boundary conditions \cite{IvanoviciSanchez2026}; the present paper uses Dirichlet conditions. \vspace{1.5mm}

\noindent\textit{Airy scaling and turning depth.} We retain $h$ for the semiclassical parameter and $\lambda$ for the small glancing parameter. Our scale choice for exact-wave realization gives, by \eqref{eq:scales} and \eqref{eq:turning},
\begin{equation*}
h\sim\lambda^{9/2},\quad y_c^\lambda(s)\sim\lambda^2,  \textrm{ \; and \; } h^{2/3}\sim\lambda^3\ll\lambda^2.
\end{equation*}
On the chosen positively curved arc, $|\partial_y\zeta_\lambda|$ is bounded above and below by positive constants. Thus, at the base semiclassical scale $h$, the Airy transition occurs in a layer of thickness comparable to $h^{2/3}$ around the turning curve, whereas the distance from the boundary to that curve is of order $\lambda^2$. The dimensionless ratio $a^{3/2}/h$ used for a normal geometric scale $a$ in \cite{Ivanovici2010,ILP2014} corresponds, at a scale comparable to $\lambda^2$, to
\begin{equation*}
\frac{\lambda^3}{h}\sim\lambda^{-3/2}\rightarrow\infty.
\end{equation*}
We use the two-branch expansion away from the turning curve on the oscillatory side and integral Airy bounds across the turning layer; see Sections~\ref{subsubsec:curved-scales} and~\ref{subsec:curved-correction}. \vspace{1.5mm}

\noindent\textit{Relation with the present construction.} The Airy connection and the boundary condition impose compatibility on the profiles.  Our diamond profiles satisfy these compatibility relations and also provide matching plateaux. We then localize the generated velocities, approximate them by exact wave velocities, and coordinate all scales in one fixed measurable observation set. The remaining requirement is a uniform ray observation-time bound for that same set. Thus the Airy analysis supplies the propagation structure, while the profile matching and the fixed-set GCC argument address a different question from dispersive or Strichartz bounds.

\subsection{History of the project and use of artificial intelligence}

The history of the project is described in Part I~\cite[Section~1.6]{AOSI}. In early August 2026, an exploration with ChatGPT pointed to the glancing and caustic structure in the disc. Its possible connection with AOS had not previously been considered by the second author. After studying this construction, the second author realized that its essential ingredients were local and could be adapted to general smooth bounded domains. ChatGPT models contributed extensively to the technical development of the construction, including asymptotic calculations, parameter analysis, and intermediate derivations. They were also used for figures and during LaTeX typesetting.

\medskip\noindent\emph{Organization of the paper.} Section~\ref{sec:glancing-disc} develops AOS via glancing propagation on the disc. Section~\ref{sec:curved} constructs the phases and exact waves near a positively curved boundary arc. Section~\ref{sec:gcc} proves the ray estimates for GCC and completes the \hyperref[thm:main]{Main Theorem}. Appendix~\ref{subsec:meas-boundary} discusses the generalized rays used in the local construction. Appendix~\ref{sec:phase} supplies the phase, Airy, and return calculations. We refer to \cite[Section~1.5]{AOSI} for the broader literature review.

\begingroup
\renewcommand{\addcontentsline}[3]{}
\subsection*{Acknowledgments}
\noindent Shengquan Xiang is partially supported by NSFC 12301562 and by the Shanghai Qiguang Natural Science Development Foundation. Jingrui Niu is supported by the start-up funding of HIT(No. AUGA5610303426). Part of this work was done when Jingrui Niu was visiting Peking University. They would like to thank the hospitality and the financial support of these institutions.
\endgroup

\Needspace{20\baselineskip}
\section{AOS via glancing on the disc}\label{sec:glancing-disc}

In this section, we develop AOS via glancing on the unit disc $\D:=\{x\in\R^2:|x|<1\}$. Unlike the radial reflection construction in \cite[Section~3]{AOSI}, the two branches propagate near the boundary. The method extends to general smooth domains in Section~\ref{sec:curved}.

We shall construct a compatible profile, localize its two normal branches in one moving packet region, and combine the matching regions at different scales into one fixed observation region. We prove finite-profile velocity approximation in this section. To recover the full profiles and obtain non-observability, we use the energy and profile-tail estimates of Section~\ref{sec:curved}; the \hyperref[eq:gcc]{GCC} argument is left to Section~\ref{sec:gcc}.

The section is organized as follows.
\begin{itemize}[leftmargin=*,itemsep=3pt]
\item Section~\ref{subsec:gl-propagation} derives the near-tangent geometry; Section~\ref{subsec:gl-compat} derives the profile compatibility.
\item Section~\ref{subsec:diamond} constructs the diamond profile and its matching rectangles $\mathcal R$.
\item Section~\ref{subsec:gl-family} defines $S,\Gamma$, proves finite-profile velocity approximation and varies the scales to obtain $S_k,\Gamma_k$.
\item Proposition~\ref{prop:gl-observation} assembles $G^{(N)}\downarrow G$, including the additional observation $\calE_k$.
\item Section~\ref{subsec:gl-disc-real} recovers the full profiles and obtains non-observability, using the constant-curvature case of the estimates in Section~\ref{sec:curved}.
\end{itemize}

\setcounter{subsection}{-1}
\subsection{The intuition of glancing propagation and AOS}\label{subsec:gl-intuition}

The roles of profiles, symmetry matching, scaled approximate velocities, and asymptotics remain those of \cite[Section~1.4]{AOSI}. Here, however, propagation takes place between the boundary and a nearby caustic. We explain the construction in seven steps. Background for the propagation, compatibility, velocity approximation, and localization discussed in Steps~1--4 and~7 is given in \cite[Appendix~A]{AOSI}.

\vspace{2mm} \noindent{\it 1. From radial propagation to boundary--caustic geometry.} As in \cite[Section~3.1]{AOSI}, consider a separated mode $e^{im\theta-i\omega t}R(r)$ with angular order $m$ and temporal frequency $\omega$. Its radial equation is
\begin{equation*}
R''+\frac1rR'+\omega^2\left(1-\frac{b^2}{r^2}\right)R=0 \textrm{ \; \; with \; }
 b=\frac{|m|}{\omega}.
\end{equation*}
The oscillatory region is determined by
\begin{equation*}
p_r^2=1-\frac{b^2}{r^2}>0 \quad\Longleftrightarrow\quad b<r<1.
\end{equation*}
In \cite[Section~3]{AOSI}, $b$ tends to zero. Here we take $b$ close to $1$: the ray travels between the boundary and a nearby caustic, with two signs of the normal momentum and a common tangential direction. The radial equation describes this normal structure, while propagation is nearly tangential.

\vspace{2mm} \noindent{\it 2. Compatible profiles.} Passing through the caustic and imposing the Dirichlet condition relate the two branch profiles. After choosing the boundary phase shift to be a quarter-period, these requirements become
\begin{equation*}
g=\calH f \text{ \; and \;  }f=\tau_{1/4}g=g(\cdot+1/4).
\end{equation*}
The Hilbert transform $\calH$ records the caustic transfer, while the translation $\tau_{1/4}$ records the boundary relation. Together they require $f=\tau_{1/4}\calH f$, defining the compatible class in Section~\ref{subsec:gl-compat}. In particular, we choose the compatible pair $f_*,g_*$ from the diamond construction of Section~\ref{subsec:diamond}.

\vspace{2mm} \noindent{\it 3. Generated approximate velocities $S_k$.} Use the angular coordinate $\theta$ and set $y=1-r$. The generated velocity has the form
\begin{equation*}
S_k=A_{\lambda_k}\left(f_*(\phi_k)-g_*(\psi_k)\right),
\end{equation*}
with a common amplitude specified in the next step. The two phases are
\begin{align*}
\phi_k=\frac{b_{\lambda_k}\theta-t+R_{\lambda_k}(y)}{2\pi h_k}+\alpha_k \textrm{ \; and \; }
 \psi_k=\frac{b_{\lambda_k}\theta-t-R_{\lambda_k}(y)}{2\pi h_k}+\alpha_k.
\end{align*}
Their tangential and temporal parts agree, while their normal actions have opposite signs.

\vspace{2mm} \noindent{\it 4. Localization of $S_k$ in the moving packet region $\mathcal U_k$.} The Airy model $\Ai\left(-\frac{\zeta_{\lambda_k}(y)}{h_k^{2/3}}\right)$ explains the normal concentration near the turning depth $y_c^{\lambda_k}\sim\lambda_k^2/2$, where $\zeta_{\lambda_k}$ is given by
\begin{equation*}
\begin{cases}
\zeta_{\lambda_k}(y)>0,&0<y<y_c^{\lambda_k},\\
\zeta_{\lambda_k}(y)<0,&y>y_c^{\lambda_k}.
\end{cases}
\end{equation*}
These are the oscillatory and decaying sides, respectively. For $S_k$, we retain the oscillatory-side expression and extend it by zero beyond the turning curve. Tangential localization is supplied by the smooth envelope 
\begin{equation*}
\chi\left(\frac{\theta-t-z_k}{w_k}\right) \textrm{ \; with \; } \chi\in C_c^\infty((-1/2,1/2);\R).
\end{equation*}

It moves at speed one, the common limiting tangential speed of the two branches. Set
\begin{gather*}
I_k:=z_k+(-w_k/2,w_k/2),\\
\mathcal U_k:=\{\theta-t\pmod{2\pi}\in I_k,\quad 0<y<y_c^{\lambda_k}\}.
\end{gather*}
On the angular lift satisfying $|\theta-t-z_k|<w_k/2$, the common amplitude is
\begin{equation*}
A_{\lambda_k}(t,\theta,y):=
\begin{cases}
c_k(\lambda_kw_k)^{-1/2}\chi\!\left(\dfrac{\theta-t-z_k}{w_k}\right)(\zeta_{\lambda_k}(y))^{-1/4},&\forall (t,x)\in\mathcal U_k,\\[1mm]
0,&\forall (t,x)\notin\mathcal U_k,
\end{cases}
\end{equation*}
where $c_k$ is the energy-normalizing factor supplied in Section~\ref{subsec:gl-disc-real}. Hence the generated velocity vanishes outside the moving packet region:
\begin{equation*}
S_k=A_{\lambda_k}\left(f_*(\phi_k)-g_*(\psi_k)\right)=\1_{\mathcal U_k}S_k.
\end{equation*}

Here $w_k$ is the tangential width, $\theta=t+z_k\pmod{2\pi}$ is the moving center and $y_c^{\lambda_k}\sim\lambda_k^2/2$ is the normal depth. The two normal branches are localized in a single moving packet region, whereas the radial construction of \cite[Section~3]{AOSI} uses two opposite angular packets. Under $\lambda_k^2/w_k\to0$, Lemma~\ref{lem:gl-finite} proves velocity approximation for the finite-profile version of this localized expression, including the normal zero extension.

\vspace{2mm} \noindent{\it 5. Symmetry matching inside the moving packet region $\mathcal U_k$.} The preceding localization gives support in $\mathcal U_k$. We now retain observation where the two profile values agree, producing additional cancellation inside the moving packet region. The diamond supplies two fixed reference rectangles,
\begin{equation*}
\mathcal R=(F_+\times J_+)\cup(F_-\times J_-), \quad\text{with }f_*(\xi)=g_*(\eta)=\pm1\quad\text{on }F_\pm\times J_\pm.
\end{equation*}
Pulling them back by the phases gives
\begin{align*}
\Gamma_k&=\mathcal U_k\cap\{(\phi_k,\psi_k)\pmod1\in\mathcal R\},\\*
\1_{\Gamma_k}S_k &=\1_{\Gamma_k}A_{\lambda_k}\left(f_*(\phi_k)-g_*(\psi_k)\right)=0.
\end{align*}

\begin{figure}[htbp]
\centering
\includegraphics[width=\textwidth]{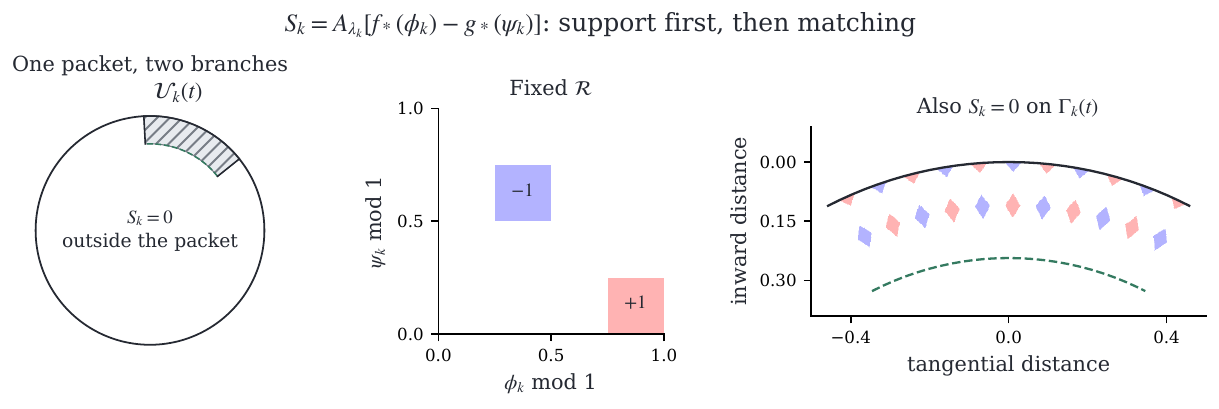}
\caption{Support and matching in the glancing disc construction. Hatching marks possible support in $\mathcal U_k(t)$; red and blue denote the reference values $f_*=g_*=+1$ and $f_*=g_*=-1$. The pullback $\Gamma_k(t)$ is enlarged in a tangential--inward orthonormal frame without normal stretching. White inside the moving packet region lies outside the chosen matching region; the dashed curve is the finite-order turning curve.}\label{fig:gl-intuition}
\end{figure}

\vspace{2mm} \noindent{\it 6. Scales and one fixed observation region.} Choose the intervals $I_k$ disjoint. Since all envelopes move with the coordinate $\theta-t$, the moving packet regions remain disjoint in spacetime. The profiles and $\mathcal R$ stay fixed while $\lambda_k,w_k\to0$ change the physical depth, width and matching cells.

For the later ray comparison we also retain a small additional observation region $\calE_k$ near the reference corner values; its precise phase pullback is \eqref{eq:gl-extra}. Set
\begin{gather*}
\mathcal H_k=\mathcal U_k\setminus(\Gamma_k\cup\calE_k),\\
G^{(N)}=\left((0,T)\times\D\right)\setminus\bigcup_{k\le N}\mathcal H_k ,\text{ \; and \; }G=\bigcap_{N\ge1}G^{(N)}.
\end{gather*}
We observe everything outside the moving packet regions and retain $\Gamma_k\cup\calE_k$ within each moving packet region already used. By disjointness,
\begin{equation*}
G^{(N)}\downarrow G,\text{ \; and \; } G^{(N)}\cap\mathcal U_k=G\cap\mathcal U_k=\Gamma_k\cup\calE_k \quad\forall k\in\{1,\ldots,N\}.
\end{equation*}
As Figure~\ref{fig:gl-intro-stages} illustrates, both the positions of the moving packet regions and their matching cells change with time.

\begin{figure}[htbp]
\centering
\includegraphics[width=\textwidth]{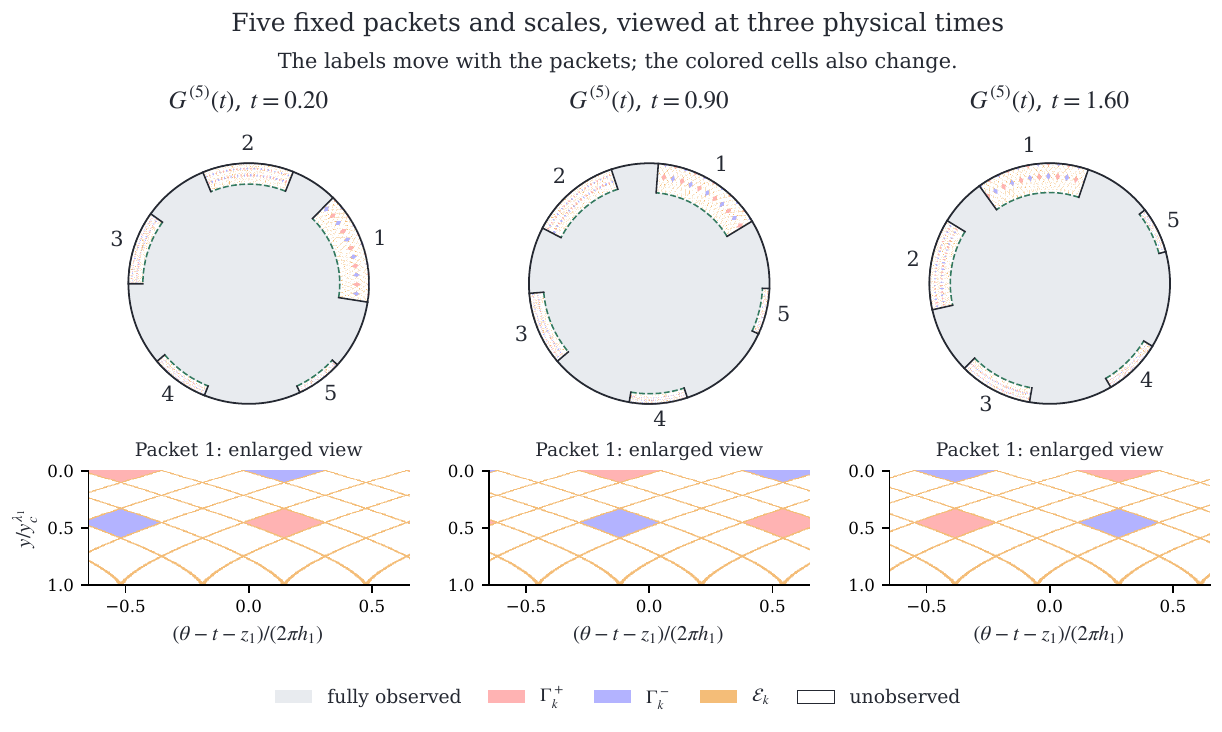}
\caption{The same $G^{(5)}$ at three physical times in the disc, with all parameters fixed. }\label{fig:gl-intro-stages}
\end{figure}

\vspace{2mm} \noindent{\it 7. Velocity approximation and non-observability.} Proposition~\ref{prop:gl-disc-real} realizes this diamond family by normalized solutions of \eqref{eq:wave}. With coordinated choices of the scales,
\begin{equation*}
E[u_k](0)=1,\quad \partial_tu_k=S_k+r_k ,\text{ \; and \; }\|r_k\|_{L^2((0,T)\times\D)}\rightarrow0.
\end{equation*}
Support and matching give, for $k\le N$,
\begin{equation*}
\1_{G^{(N)}}\partial_tu_k =\1_{G^{(N)}}r_k+\1_{\calE_k}S_k,
\end{equation*}
and the same identity holds on $G$. Proposition~\ref{prop:gl-disc-real} also controls the additional observed energy, so
\begin{equation*}
\|\1_G\partial_tu_k\|_{L^2((0,T)\times\D)} \le\|r_k\|_{L^2((0,T)\times\D)} +\|\1_{\calE_k}S_k\|_{L^2((0,T)\times\D)}\rightarrow0.
\end{equation*}

\subsection{Near-tangent geometry and the two branch phases}\label{subsec:gl-propagation}

Recall the separated modes from \cite[Section~3.1]{AOSI}:
\begin{equation*}
u(t,r,\theta)=\Real\left(e^{-i\omega t}e^{im\theta}R(r)\right).
\end{equation*}
Their radial equation is
\begin{equation*}
R''+\frac1rR'+\left(\omega^2-\frac{m^2}{r^2}\right)R=0.
\end{equation*}
With the angular orders fixed before the radial indices tend to infinity in \cite[Section~3.2]{AOSI}, we have $|m|/\omega\to0$, giving the phases $t-r$ and $t+r$ away from the origin.
\vspace{1.5mm}

To study another propagation regime, keep track of the ratio
\begin{equation}\label{eq:gl-ratio}
b:=\frac{|m|}{\omega} \text{ \;\; and  \; } \omega^{-2}\left(R''+\frac{1}{r}R'\right)+\left(1-\frac{b^2}{r^2}\right)R=0.
\end{equation}
The angular factor $e^{im\theta}$ retains the tangential oscillation; the radial equation describes the normal behavior of this nearly tangential wave. Let $W(r):=r^{\frac{1}{2}}R(r)$. Then, $W$ satisfies the following equation
\begin{equation*}
W''+\left(\omega^{2}\left(1-\frac{b^2}{r^2}\right)+\frac{1}{4r^2}  \right)W=0.
\end{equation*}
This becomes a standard form of the turning-point problem (see \cite[Chapter 11.3]{Olver}). The function $1-\frac{b^2}{r^2}$ has a zero at $r=b$, and its derivative does not vanish at $r=b$, i.e., a simple turning point. To construct the travelling wave solution, based on the WKB expansion, for $b\in(0,1)$, we introduce 
\begin{equation}\label{eq:circular-action}
R_b(r):=\int_b^r\sqrt{1-\frac{b^2}{\rho^2}}\dd\rho,\quad\forall r\in[b,1].
\end{equation}
A direct computation gives that $R_b(r)=\sqrt{r^2-b^2}-b\arccos(b/r)\text{, and }R_b(b)=0$. According to \cite{Olver}, for this turning-point problem, we also need to introduce the following Airy coordinate defined via the following equation
\begin{equation*}
\zeta(r)\left(\zeta'(r)\right)^2=1-\frac{b^2}{r^2}, \text{ \; with \; }\zeta'(r)>0.
\end{equation*}

Therefore, we obtain the following explicit formula
\begin{equation}
\zeta(r)=
\begin{cases}
\left(\frac{3}{2}R_b(r)\right)^{\frac{2}{3}},& \forall b\leq r<1;\\
-\left(\frac{3}{2}\int_r^b\sqrt{\frac{b^2}{\rho^2}-1}\dd\rho\right)^{\frac{2}{3}},&\forall  r<b.
\end{cases}
\end{equation}
Now we set $\widetilde{W}(r):=\left(r|\zeta'(r)|\right)^{-\frac{1}{2}}\Ai(-\omega^{\frac{2}{3}}\zeta(r))$, where $\Ai$ is the standard Airy function. The following lemma implies that $\widetilde{W}$ is the leading profile of \eqref{eq:gl-ratio} near the boundary.

\begin{lemma}\label{lem:W-form}
For $b$ sufficiently close to $1$, $|\zeta'|\sim1, |\zeta(r)|\sim|r-b|$. Moreover, there exists a constant $C>0$ such that $\widetilde{W}(r)=\left(r|\zeta'(r)|\right)^{-\frac{1}{2}}\Ai(-\omega^{\frac{2}{3}}\zeta(r))$ satisfies
\begin{equation*}
\left(\omega^{-2}(\partial_r^2+\frac{1}{r}\partial_r)+1-\frac{b^2}{r^2}\right)\widetilde{W} =\omega^{-2}c_b\widetilde{W},\quad \|c_b\|_{L^\infty}\le C.
\end{equation*}
\end{lemma}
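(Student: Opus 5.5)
The plan is to use the classical Liouville--Green (Langer) transformation for a simple turning point, keeping track of uniformity in $b$. Throughout, $r$ ranges over a fixed compact interval $[r_0,1]$ with $0<r_0<1$, and $b\in[b_0,1]$ with $b_0\in(r_0,1)$ close to $1$. All constants are to be uniform in $(r,b)$ in this set.

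\emph{Step 1: joint smoothness of $\zeta$ and the first two claims.} Write $q_b(r):=1-b^2/r^2=(r-b)\,p(r,b)$ with $p(r,b)=(r+b)/r^2$, which is smooth and bounded above and below by positive constants on the parameter set. For $r\ge b$, substitute $\rho=b+(r-b)\sigma$, $\sigma\in[0,1]$, in \eqref{eq:circular-action}:
\begin{equation*}
R_b(r)=(r-b)^{3/2}F(r,b),\qquad F(r,b):=\int_0^1\sigma^{1/2}\,p\left(b+(r-b)\sigma,b\right)^{1/2}\dd\sigma .
\end{equation*}
The same substitution in $\int_r^b\sqrt{b^2/\rho^2-1}\dd\rho$ gives, for $r<b$, the analogous formula with $(b-r)^{3/2}$ and the same integrand. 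In both cases $F$ is smooth and positive, and it is given by a single smooth expression across $r=b$. Hence
\begin{equation*}
\zeta(r)=(r-b)\left(\tfrac32F(r,b)\right)^{2/3}
\end{equation*}
on both sides of the turning point. So $\zeta$ is jointly $C^\infty$ in $(r,b)$, with all derivatives bounded uniformly. At $r=b$ we have $\zeta'(b)=(\tfrac32F(b,b))^{2/3}=p(b,b)^{1/3}=(2/b)^{1/3}$. Away from $r=b$, $\zeta'>0$ follows from $\zeta\zeta'^2=q_b$, since $\zeta$ and $q_b$ have the same sign. Compactness then gives $c\le\zeta'\le C$, and $|\zeta(r)|\sim|r-b|$ follows from $\zeta(b)=0$ and the mean value theorem.

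\emph{Step 2: the algebraic identity.} For any $Y$, a direct computation gives
\begin{equation*}
\left(\partial_r^2+\tfrac1r\partial_r\right)(r^{-1/2}Y)=r^{-1/2}\left(Y''+\tfrac{1}{4r^2}Y\right).
\end{equation*}
Take $Y=(\zeta')^{-1/2}V$ with $V=\Ai(-\omega^{2/3}\zeta)$. The Airy equation $\Ai''(z)=z\Ai(z)$ gives
\begin{equation*}
\frac{d^2}{dr^2}V=(\zeta')^2\omega^{4/3}(-\omega^{2/3}\zeta)V+\zeta''\frac{dV}{d\zeta}.
\end{equation*}
Expanding $Y''$, the first-derivative terms in $V$ cancel; this is exactly why the weight $(\zeta')^{-1/2}$ is chosen. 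One is left with
\begin{equation*}
Y''+\omega^2\zeta\zeta'^2\,Y=\psi\,Y,\qquad \psi:=(\zeta')^{1/2}\left((\zeta')^{-1/2}\right)''.
\end{equation*}
Since $\zeta\zeta'^2=q_b$, dividing by $\omega^2$ yields the claimed identity with
\begin{equation*}
c_b=\psi-\tfrac{1}{4r^2}.
\end{equation*}
The sign of the $\tfrac1{4r^2}$ term is immaterial for the bound.

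\emph{Step 3: the bound on $c_b$, and the main obstacle.} The function $\psi$ involves $\zeta''$, $\zeta'''$ and negative powers of $\zeta'$. By Step 1 these are uniformly bounded, and $r\ge r_0$ controls $\tfrac{1}{4r^2}$, so $\|c_b\|_{L^\infty}\le C$. The only genuinely delicate point is Step 1: one must check that $\zeta$ is smooth \emph{through} the turning point, with bounds uniform as $b\to1$. The factorization $q_b=(r-b)p$ with $p$ bounded away from zero is what makes this work. I expect the rest to be routine bookkeeping.
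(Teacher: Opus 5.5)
Your proposal is correct and is essentially the paper's direct computation. The amplitude $a_b=(r\zeta')^{-1/2}$, which you apply in two stages, removes the $\Ai'$ term; the identity $\zeta(\zeta')^2=1-b^2/r^2$ removes the leading term; and the remainder is $c_b=(a_b''+r^{-1}a_b')/a_b$. Your factorization $R_b=(r-b)^{3/2}F(r,b)$ also supplies something the paper leaves implicit: the smoothness of $\zeta$ through $r=b$, uniformly in $b$ and for $r$ bounded away from $0$.

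One slip, harmless for the bound: the computation gives $c_b=\psi+\frac{1}{4r^2}$, not $\psi-\frac{1}{4r^2}$.
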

\begin{proof}
The direct computation gives the proof. Notice the Airy equation $\Ai''(z)=z\Ai(z)$. Let $a_b(r):=\left(r|\zeta'(r)|\right)^{-\frac{1}{2}}$. We know that $a'_b(r)=-\frac{1}{2}a_b(r)(\frac{1}{r}+\frac{\zeta''(r)}{\zeta'(r)})$. Therefore,
\begin{align*}
\left(\omega^{-2}(\partial_r^2+\frac{1}{r}\partial_r)+1-\frac{b^2}{r^2}\right)\widetilde{W}=&a_b\left(-\zeta(\zeta')^2+1-b^2r^{-2}\right)\Ai(-\omega^{\frac{2}{3}}\zeta)\\
+&\omega^{-2}(a_b''+r^{-1}a_b')\Ai(-\omega^{\frac{2}{3}}\zeta)-\omega^{-\frac{4}{3}}(2a_b'\zeta'+a_b\zeta''+r^{-1}a_b\zeta')\Ai'(-\omega^{\frac{2}{3}}\zeta).
\end{align*}
Using the explicit formula of $a_b=\left(r|\zeta'(r)|\right)^{-\frac{1}{2}}$ and $c_b=\frac{a_b''+r^{-1}a_b'}{a_b}$, we obtain
\begin{equation*}
\left(\omega^{-2}(\partial_r^2+\frac{1}{r}\partial_r)+1-\frac{b^2}{r^2}\right)\widetilde{W} =\omega^{-2}c_b\widetilde{W},\quad \|c_b\|_{L^\infty}\le C.
\end{equation*}
\end{proof}
\begin{remark}
Heuristically, $b\theta-t\pm R_b(r)$ are exact ray phases (up to constants) on the annulus away from the turning point. We make the ansatz $R(r)=a(r,\omega)e^{i\omega \Phi(r)}$. Then, by the WKB method, we derive
\begin{equation*}
0= \left(1-\frac{b^2}{r^2}-(\Phi')^2\right)a+i\omega^{-1}\left(2\Phi'a'+\left(\Phi''+\frac{\Phi'}r\right)a\right) +\omega^{-2}\left(a''+\frac1ra'\right).
\end{equation*}
Hence, the eikonal equation gives $\Phi'=\pm\sqrt{1-\frac{b^2}{r^2}}$ (denoted by $p_r$ in Figure~\ref{fig:glancing}). Hence, the radial equation provides the phase $\pm R_b(r)$. Back to the wave mode (for example, $m>0$), $u(t,r,\theta)=\Real\left(e^{-i\omega t}e^{im\theta}R(r)\right)=\Real\left(a(r,\omega)e^{i\omega( b\theta-t\pm R_b(r))}\right)$. The associated oscillatory formulas for waves are asymptotic, and cease to be separate regular expressions at the caustic; see Figure~\ref{fig:glancing}. For a rigorous proof, see Lemma~\ref{lem:gl-airy}.
\end{remark}

\begin{figure}[tbp]
\centering
\includegraphics[width=.9\textwidth]{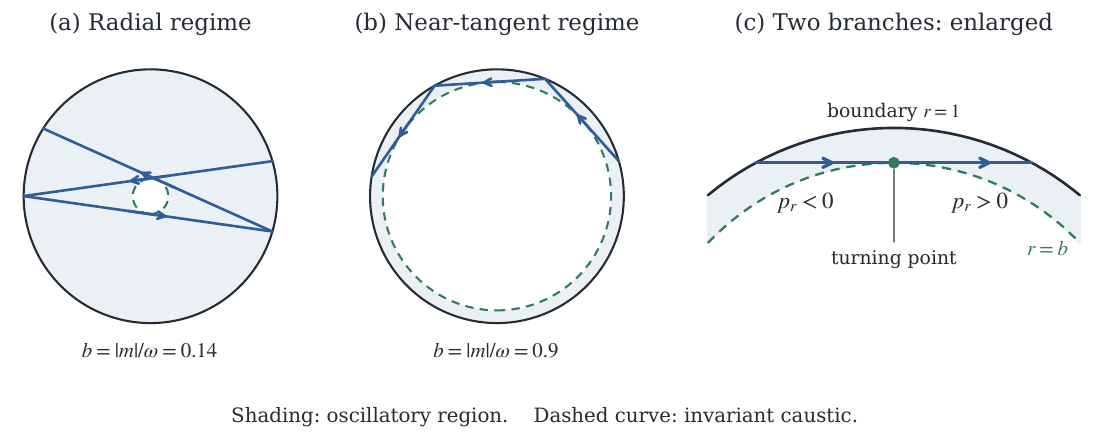}
\caption{Caustics and the two branches near the turning point.}\label{fig:glancing}
\end{figure}

\noindent The regular local model is the Airy function, satisfying
\begin{equation*}
\Ai''(z)=z\Ai(z),\text{ \; and  \;}\Ai(z)\rightarrow0\quad\text{as }z\to+\infty.
\end{equation*}
See also \cite[Appendix~A.4.2]{AOSI} for the connection from circular rays to the Airy equation. We keep the convention of the semiclassical parameter $h=\omega^{-1}$. One can find the Airy asymptotic formula in \cite{Olver}:
\begin{equation}\label{eq:airy-osc}
\Ai(-x)=\frac{1}{\sqrt{\pi}x^{\frac{1}{4}}}\left(\sin\left(\frac{2}{3}x^{\frac{3}{2}}+\frac\pi4\right)+O(x^{-\frac{3}{2}}) \right), \text{ for }x \text{ large and positive.}
\end{equation}
Recall that $\widetilde{W}(r)=\left(r|\zeta'(r)|\right)^{-\frac{1}{2}}\Ai\left(-\frac{\zeta(r)}{h^{\frac{2}{3}}}\right)$ and set $\widetilde{u}_h(t,r,\theta)=\Real\left(e^{-i\omega t}e^{im\theta}\widetilde{W}(r)\right) $. The next lemma shows the leading profile of the wave mode.
\begin{lemma}\label{lem:gl-airy}
Let $b\in(0,1)$ be sufficiently close to $1$, let $h>0$, and set $I_b:=\left[\frac{1+b}{2},1\right]$. There exists a constant $C=C(b,h)>0$ such that, for every $t,\theta\in\R$ and $r\in I_b$,
\begin{equation*}
\widetilde{u}_h(t,r,\theta) = \frac{h^{1/6}}{2\sqrt{\pi}(r^2-b^2)^{1/4}} \Real\left( -i\left( e^{\frac{i}{h}(b\theta-t+R_b(r))+\frac{i\pi}{4}} - e^{\frac{i}{h}(b\theta-t-R_b(r))-\frac{i\pi}{4}} \right) \right) +e_h(t,r,\theta),
\end{equation*}
where $\|e_h\|_{L^\infty(\R\times I_b\times\R)} \le C(b,h)$.
\end{lemma}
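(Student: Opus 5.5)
The plan is to substitute the oscillatory Airy asymptotics \eqref{eq:airy-osc} directly into the definition of $\widetilde W$ and simplify the prefactor using the defining relation for $\zeta$. Throughout, take $m>0$, so that $m=b\omega=b/h$ and $e^{-i\omega t}e^{im\theta}=e^{\frac ih(b\theta-t)}$. On $I_b=[\frac{1+b}{2},1]$ we have $r\ge\frac{1+b}2>b$. Hence, by the formula for $\zeta$ and Lemma~\ref{lem:W-form}, $\zeta(r)=(\tfrac32R_b(r))^{2/3}\ge\zeta(\tfrac{1+b}2)=:\zeta_0>0$. Thus $x:=\zeta(r)/h^{2/3}$ lies in $[\zeta_0h^{-2/3},\infty)$ and we are on the oscillatory side.

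\emph{Step 1 (amplitude identity).} From $\zeta(\zeta')^2=1-b^2/r^2$ and $\zeta'>0$ we get $r\zeta'=\sqrt{r^2-b^2}\,\zeta^{-1/2}$. Therefore
\begin{equation*}
\left(r|\zeta'(r)|\right)^{-1/2}=\frac{\zeta(r)^{1/4}}{(r^2-b^2)^{1/4}}.
\end{equation*}
Also $\tfrac23x^{3/2}=\tfrac23\zeta^{3/2}/h=R_b(r)/h$, and $x^{-1/4}=h^{1/6}\zeta^{-1/4}$.

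\emph{Step 2 (insert the asymptotics).} By \eqref{eq:airy-osc},
\begin{equation*}
\widetilde W(r)=\frac{h^{1/6}}{\sqrt\pi\,(r^2-b^2)^{1/4}}\left(\sin\left(\tfrac{R_b(r)}h+\tfrac\pi4\right)+O(x^{-3/2})\right).
\end{equation*}
The factor $\zeta^{1/4}$ cancels exactly. Writing $\sin z=-\tfrac i2(e^{iz}-e^{-iz})$ and multiplying by $e^{\frac ih(b\theta-t)}$ before taking the real part gives precisely the displayed main term of Lemma~\ref{lem:gl-airy}. The remainder is
\begin{equation*}
e_h=\Real\Big(e^{\frac ih(b\theta-t)}\,\frac{h^{1/6}}{\sqrt\pi(r^2-b^2)^{1/4}}\,O(x^{-3/2})\Big).
\end{equation*}

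\emph{Step 3 (error bound).} Since $(r^2-b^2)^{1/4}$ is bounded below on $I_b$ and $x^{-3/2}\le\zeta_0^{-3/2}h$, we get $|e_h|\le C(b)h^{7/6}$ when $\zeta_0h^{-2/3}$ exceeds the threshold at which \eqref{eq:airy-osc} holds with a uniform constant. For the remaining range of $h$, the bound follows with a constant $C(b,h)$ simply because both $\widetilde u_h$ and the main term are continuous in $r$ on the compact set $I_b$ and independent of $(t,\theta)$ in modulus. Indeed, $\Ai$ is bounded on $(-\infty,0]$ and $(r^2-b^2)^{-1/4}$ is bounded there. Note that the stated bound, with a constant allowed to depend on $h$, is thus essentially immediate; the real content is Steps 1--2.

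The main point requiring care is Step 1: one needs the monotone branch $\zeta'>0$ and the identity for $r\zeta'$ on the oscillatory side. This is what makes the amplitude collapse to $(r^2-b^2)^{-1/4}$ and the Airy phase collapse to $R_b/h$. Nothing else is delicate.
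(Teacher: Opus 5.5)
Your proof is correct and follows the paper's argument. Like the paper, you substitute the oscillatory Airy asymptotics \eqref{eq:airy-osc}, collapse the prefactor via $r^2(\zeta')^2\zeta=r^2-b^2$, and rewrite the sine as the two branch exponentials. The only difference is how you handle the remainder. The paper enlarges the constant so that the $O(x^{-3/2})$ bound holds for all $x>0$, which gives the explicit bound $C_0h^{7/6}(r_b^2-b^2)^{-1/4}R_b(r_b)^{-1}$ with $r_b=\frac{1+b}{2}$ for every $h$. You instead treat the non-asymptotic range of $h$ separately by trivial boundedness, which suffices for the stated constant $C(b,h)$.
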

\begin{proof}
Set $r_b:=(1+b)/2$, so that $I_b=[r_b,1]$. For $r\in I_b$, we have
\[
\zeta(r)=\left(\frac32R_b(r)\right)^{2/3}>0, \;\; \;R_b(r)\ge R_b(r_b)>0.
\]
The remainder estimate in \eqref{eq:airy-osc} extends to all $x>0$ after enlarging the constant, since $\Ai(-x)$ is bounded for $0<x\le1$ and $x^{-1/4}\le x^{-7/4}$ there. Thus, for every $h>0$,
\begin{equation*}
\Ai\left(-\frac{\left(\frac{3}{2}R_b(r)\right)^{\frac{2}{3}}}{h^{\frac{2}{3}}}\right)=\frac{h^{\frac{1}{6}}}{\sqrt{\pi}|\zeta|^{\frac{1}{4}}}\left(\sin\left(\frac{R_b(r)}{h}+\frac{\pi}{4}\right)+O\left(\frac{h}{R_b(r)}\right)\right).
\end{equation*}

By the definition of $\zeta$, $\left|r^2(\zeta'(r))^2\zeta(r)\right|=\left|r^2\left(\frac{b^2}{r^2}-1\right)\right| =r^2-b^2$. Hence
\begin{equation*}
\widetilde W(r)=\frac{h^{1/6}}{\sqrt{\pi}(r^2-b^2)^{1/4}} \left(\sin\left(\frac{R_b(r)}h+\frac\pi4\right) +O\left(\frac h{R_b(r)}\right) \right).
\end{equation*}
Replacing the $\sin$ function, we have
\begin{equation*}
e^{-i\omega t}e^{im\theta}\widetilde{W}(r)=\frac{h^{1/6}}{2i\sqrt{\pi}(r^2-b^2)^{1/4}} \left( e^{\frac{i}{h}(b\theta-t+R_b(r))+\frac{i\pi}{4}} - e^{\frac{i}{h}(b\theta-t-R_b(r))-\frac{i\pi}{4}} \right) +\mathcal E_h(t,r,\theta),
\end{equation*}
where, uniformly in $t,\theta\in\R$ and $r\in I_b$,
\[
|\mathcal E_h(t,r,\theta)| \le C_0\frac{h^{7/6}}{(r^2-b^2)^{1/4}R_b(r)}.
\]
Taking real parts yields the asserted formula with $e_h:=\Real\mathcal E_h$. Since $r\ge r_b>b$ and $R_b$ is increasing,
\[
\|e_h\|_{L^\infty(\R\times I_b\times\R)} \le C_0\frac{h^{7/6}}{(r_b^2-b^2)^{1/4}R_b(r_b)} =:C(b,h)<\infty.
\]
This proves the lemma. 
\vspace{1.5mm}

To verify the Dirichlet boundary condition, we need $\widetilde{W}(1)=0$, which requires that $\Ai\left(-\frac{\zeta(1)}{h^{2/3}}\right)=0$. Here we choose a positive zero $z_j$ of $\Ai(-z)$, with $j=j(b)\to\infty$ as $b\to1$, and choose $h$ accordingly. Later we will deal with their superposition in Section~\ref{subsec:gl-compat}. Then, we have $3R_b(1)=2hz_j^{3/2}>0$. On the other hand, we have the explicit formula for $R_b(1)$. We introduce a small parameter $\lambda>0$ by setting $b=\cos\lambda$, so that the limit $b\to1$ corresponds to $\lambda\to0$. Therefore,
\begin{align*}
R_b(1)=\sqrt{1-b^2}-b\arccos(b)=\sin\lambda-\lambda\cos\lambda=\frac13\lambda^3+O(\lambda^4).
\end{align*}
This implies that $R_b(1)\sim\frac13\lambda^3$ and
\[
\frac{h}{\lambda^3} = \frac{3R_b(1)}{2\lambda^3z_j^{3/2}} \sim\frac{1}{2z_j^{3/2}} \rightarrow0.
\]
Uniformly for $r\in I_b=[(1+b)/2,1]$, we have $r^2-b^2\sim\lambda^2,\;R_b(r)\sim\lambda^3$. Hence the asymptotic parameter satisfies $h/R_b(r)\sim h/\lambda^3\to0$ uniformly on $I_b$. Therefore,
\begin{equation*}
\frac{h^{7/6}}{(r^2-b^2)^{1/4}R_b(r)} \sim\left(\frac{h}{\lambda^3}\right)^{7/6} \rightarrow0.
\end{equation*}
We conclude that
\begin{equation*}
\widetilde{u}_h(t,r,\theta) = \frac{h^{1/6}}{2\sqrt{\pi}(r^2-b^2)^{1/4}} \Real\left(-i\left(e^{\frac{i}{h}(b\theta-t+R_b(r))+\frac{i\pi}{4}}-e^{\frac{i}{h}(b\theta-t-R_b(r))-\frac{i\pi}{4}} \right)\right)+O\left(\left(\frac{h}{\lambda^3}\right)^{7/6}\right),
\end{equation*}
uniformly for $r\in I_b$ and $t,\theta\in\R$. The leading profile is a combination of two travelling waves. We will use this fact to construct the approximate solutions later.
\end{proof}

\subsection{Deriving the profile compatibility}\label{subsec:gl-compat}

The periodic Hilbert transform is defined by the Fourier multiplier
\begin{equation*}
\widehat{\calH q}(n)=-i\,\mathrm{sgn}(n)\widehat q(n),\quad \forall n\in\Z,
\end{equation*}
with $\mathrm{sgn}(0)=0$. We follow the convention of \cite[Section~2]{AOSI}, and set the compatible profile space\footnote{$L_1=\Id\quad\text{and}\quad L_2=\calH$ are the branch maps.}
\begin{equation}\label{eq:gl-space}
X:=\{q\in L^2(\R/\Z;\R):\widehat q(0)=0,\ q=\tau_{1/4}\calH q\}.
\end{equation}
We now derive this choice from the caustic and boundary relations, then identify its Fourier frequencies. From Lemma~\ref{lem:gl-airy}, for a single mode, we construct the travelling wave profile. Now we want to deal with the superposition of different modes: 
\begin{align*}
\sum_{\ell} \frac{d_\ell B_\ell(r)}{2i}\left(e^{\frac{i}{h_\ell}\left(b_\ell\theta-t+R_{b_\ell}(r)\right)+\frac{i\pi}{4}}-e^{\frac{i}{h_\ell}\left(b_\ell\theta-t-R_{b_\ell}(r)\right)-\frac{i\pi}{4}}\right),
\end{align*}
where $d_\ell$ are the superposition coefficients and $B_\ell(r):=\frac{h_\ell^{1/6}}{\sqrt{\pi}(r^2-b_\ell^2)^{1/4}}$ is the leading radial amplitude of the $\ell$th mode for $r>b_\ell$. Then, we explain these constraints for a profile in $X$.
\begin{enumerate}
    \item \textit{The relation imposed by the caustic.} Let $H\in L^2(\R/\Z;\mathbb{C})$ have only positive Fourier frequencies and set
\begin{equation}\label{eq:fg}
f(\xi):=\Real\left(e^{i\pi/4}H(e^{2\pi i\xi})\right) \text{ \; and \; } g(\xi):=\Real\left(e^{-i\pi/4}H(e^{2\pi i\xi})\right).
\end{equation}
Then $f,g\in L^2_{\mathrm{loc}}(\R)$ are $1$-periodic real functions. Since $H$ has only positive frequencies, the Hilbert transform acts by
\begin{equation*}
\calH f=\Real(-ie^{i\pi/4}H)=\Real(e^{-i\pi/4}H)=g.
\end{equation*}
Thus the caustic fixes $g$ once $f$ is chosen. This relation allows the two profiles to arise from the same Airy sum, through its two branch factors.
\item \textit{The relation required at the boundary.} The boundary imposes a second constraint. If $\xi$ denotes the first normalized phase at $r=1$, the branch difference there is proportional to
\begin{equation*}
f(\xi)-g\left(\xi-\frac{R_b(1)}{\pi h}\right).
\end{equation*}
The caustic relation is $g=\calH f$. We choose the boundary phase shift to be a quarter-period:
\begin{equation}\label{eq:lock-intuition}
-\frac{R_b(1)}{\pi h}=\frac14\pmod1.
\end{equation}
The boundary and caustic requirements therefore combine as
\begin{equation*}
f=\tau_{1/4}g\quad\text{and}\quad g=\calH f \quad\rightarrow\quad f=\tau_{1/4}\calH f.
\end{equation*}
Here the quarter-period is chosen, and the Hilbert-transform relation is imposed by propagation. At this stage the boundary relation concerns the leading profiles. Lemma~\ref{lem:gl-finite} imposes the exact Dirichlet condition on the approximating waves by using Airy zeros. Moreover, Section~\ref{sec:curved} extends the construction to varying curvature.
\end{enumerate}

The following proposition characterizes the space $X$ defined in \eqref{eq:gl-space}.%
\begin{prop}[Profile characterization]\label{prop:loop}
Let $f\in L^2(\R/\Z;\R)$ with $\widehat f(0)=0$. Then $f\in X$ if and only if its positive Fourier coefficients vanish outside $n\equiv1\pmod4$.
\end{prop}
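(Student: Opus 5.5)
The plan is to diagonalize both operators in the definition of $X$ in \eqref{eq:gl-space} on the Fourier basis of $L^2(\R/\Z)$ and reduce the fixed-point condition $f=\tau_{1/4}\calH f$ to a scalar condition on each frequency.

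\textbf{Step 1: multipliers of $\tau_{1/4}$ and $\tau_{1/4}\calH$.} With $\widehat q(n)=\int_0^1 q(\xi)e^{-2\pi i n\xi}\dd\xi$, the translation $\tau_{1/4}q=q(\cdot+1/4)$ acts by
\[
\widehat{\tau_{1/4}q}(n)=e^{2\pi i n/4}\widehat q(n)=i^n\widehat q(n).
\]
Composing with the multiplier $-i\,\mathrm{sgn}(n)$ of $\calH$, the operator $\tau_{1/4}\calH$ is the Fourier multiplier
\[
m(n)=-i\,\mathrm{sgn}(n)\,i^n .
\]
This is a bounded (unitary on mean-zero functions) multiplier on $L^2$, so $f=\tau_{1/4}\calH f$ holds in $L^2$ if and only if $\widehat f(n)=m(n)\widehat f(n)$ for every $n\in\Z$. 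By Parseval, this means that $\widehat f(n)=0$ whenever $m(n)\neq1$.

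\textbf{Step 2: evaluate $m(n)$.} For $n=0$ we have $m(0)=0$, which is consistent with the hypothesis $\widehat f(0)=0$. For $n>0$ we get $m(n)=-i\cdot i^n=i^{n-1}$, and this equals $1$ exactly when $n\equiv1\pmod4$. For $n<0$ we get $m(n)=i\cdot i^n=i^{n+1}$, and this equals $1$ exactly when $n\equiv-1\pmod4$, that is, when $-n\equiv1\pmod4$.

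\textbf{Step 3: conclude using reality of $f$.} Since $f$ is real, $\widehat f(-n)=\overline{\widehat f(n)}$. Hence the conditions at negative frequencies are the complex conjugates of those at positive frequencies, and the index sets match under $n\mapsto-n$. So $f\in X$ if and only if $\widehat f(n)=0$ for all positive $n\not\equiv1\pmod4$, which is the claim. The converse direction is the same computation read backwards: if the positive coefficients vanish off $n\equiv1\pmod4$, then conjugate symmetry forces the negative ones to vanish off $n\equiv-1\pmod4$, and so $m(n)\widehat f(n)=\widehat f(n)$ for all $n$.

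I expect no real obstacle here. The only point needing care is the sign conventions: the direction of the translation (with $\tau_{1/4}q=q(\cdot+1/4)$, which gives the factor $i^{n}$ rather than $i^{-n}$) and the sign in the multiplier of $\calH$. These must be checked against the conventions fixed above so that the residue class comes out as $1$ rather than $3$ modulo $4$.
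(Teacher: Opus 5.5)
Your proposal is correct and follows essentially the same route as the paper. The paper likewise writes the condition on each positive frequency as $(1+ie^{\pi in/2})\widehat f(n)=0$, which is your $m(n)=i^{n-1}=1$ test. It then handles the negative frequencies by complex conjugation and the zero frequency by the mean-zero hypothesis, as you do.
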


\begin{proof}
For $n>0$, compatibility reads
\begin{equation*}
(1+ie^{\pi in/2})\widehat f(n)=0,\quad -ie^{\pi in/2}=1\ \Longleftrightarrow\ n\equiv1\pmod4.
\end{equation*}
The negative-frequency condition is the complex conjugate, and the constant term is zero by hypothesis. This proves both directions.
\end{proof}
We would like to retain the plateau geometry of the radial example in \cite[Section~1.4]{AOSI}: equal constant values on two intervals give a rectangle of matching phase pairs.

\begin{lemma}\label{lem:step-loop}
A real mean-zero finite-step function satisfying $f=\tau_{1/4}\calH f$ is zero.
\end{lemma}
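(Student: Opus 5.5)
The plan is to pass to the distributional derivative, which turns a step function into a finite sum of Dirac masses. Its Fourier coefficients then form an exponential sum in $n$, and Proposition~\ref{prop:loop} forces that sum to vanish on too many residue classes. Write $f=\sum_j c_j\1_{[a_j,a_{j+1})}$ on $\R/\Z$, with finitely many distinct jump points $a_1,\dots,a_J\in\R/\Z$. Then $f'=\sum_j d_j\delta_{a_j}$ with real jumps $d_j$, and
\begin{equation*}
2\pi i n\,\widehat f(n)=P(n):=\sum_{j=1}^J d_j e^{-2\pi i n a_j},\quad n\in\Z .
\end{equation*}
It suffices to show that all $d_j=0$. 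Then $f$ is constant, and since $\widehat f(0)=0$ we get $f=0$.

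\emph{Step 1: vanishing pattern.} By Proposition~\ref{prop:loop}, $\widehat f(n)=0$ for every $n>0$ with $n\not\equiv1\pmod 4$. Hence $P(n)=0$ for all $n>0$ in the classes $n\equiv0,2,3\pmod4$. Since the $d_j$ are real, $P(-n)=\overline{P(n)}$, so the negative frequencies give nothing new. It is enough to use the three progressions $n=4m+c$ with $c\in\{0,2,3\}$ and $m\ge1$.

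\emph{Step 2: Vandermonde on each progression.} For fixed $c$, define
\begin{equation*}
Q_c(m):=P(4m+c)=\sum_j \bigl(d_je^{-2\pi i c a_j}\bigr)\,z_j^m,\qquad z_j:=e^{-8\pi i a_j}.
\end{equation*}
Group the indices $j$ according to the value of $z_j$, that is, by the class of $a_j$ modulo $1/4$. Then $Q_c$ is a finite exponential sum in $m$ with distinct bases. It vanishes for all $m\ge1$, so a Vandermonde argument (its first $J$ consecutive values already suffice) shows that each grouped coefficient vanishes. Concretely, fix a class represented by $a$, and write its members as $a+k/4$ with $k\in\{0,1,2,3\}$. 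Let $D_k\in\R$ be the jump at $a+k/4$, with $D_k=0$ if that point is not a jump point. Dividing by the common factor $e^{-2\pi i c a}$, we obtain
\begin{equation*}
\sum_{k=0}^3 D_k e^{-\pi i c k/2}=0\qquad\text{for } c=0,2,3.
\end{equation*}

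\emph{Step 3: linear algebra using reality.}
\begin{itemize}
\item For $c=0$ the relation gives $D_0+D_1+D_2+D_3=0$.
\item For $c=2$ it gives $D_0-D_1+D_2-D_3=0$.
\item For $c=3$ we have $e^{-3\pi i k/2}=i^k$, so the relation reads $D_0+iD_1-D_2-iD_3=0$. Because the $D_k$ are real, its real and imaginary parts give $D_0=D_2$ and $D_1=D_3$.
\end{itemize}
Combining these, $D_0+D_2=0$ and $D_0=D_2$ give $D_0=D_2=0$, and likewise $D_1=D_3=0$. Hence every jump vanishes, and $f=0$.

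The main obstacle is Step 3. Over complex coefficients, a single missing residue class would leave room for a nonzero solution. It is the realness of $f$, used in the $c=3$ equation, that splits one complex relation into two real ones and closes the system. Some care is also needed in Step 2: the jump points must be grouped modulo $1/4$ rather than treated individually, since different $a_j$ can share the same base $z_j$.
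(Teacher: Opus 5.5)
Your proof is correct, and it takes a genuinely different route from the paper's.

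\emph{The paper's argument} is local and in physical space. From $f=\tau_{1/4}\calH f$ it gets $\calH f=\tau_{-1/4}f\in L^\infty$. It then uses the explicit kernel formula $\calH\1_{(a,b)}(\xi)=\frac1\pi\log|\sin\pi(\xi-a)/\sin\pi(\xi-b)|$, which shows that near each jump point $a_j$ the function $\calH f$ behaves like $\frac{d_j}{\pi}\log|\xi-a_j|$ plus a bounded term. Boundedness therefore forces every $d_j=0$.

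\emph{Your argument} works entirely on the Fourier side. It combines Proposition~\ref{prop:loop}, the Dirac-mass structure of $f'$, a Vandermonde argument along the progressions $n=4m+c$, and a $4\times4$ discrete Fourier system on each class of jump points modulo $1/4$.

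The paper's route is shorter and more general. It never uses the value of the shift, the residue structure, or the reality of $f$: it really shows that no nonconstant step function has a bounded conjugate function. Coincidences among jump points are irrelevant there, because the singularities are local.

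Your route gives an exact finite algebraic argument that avoids principal-value kernels and identifies precisely which vanishing frequencies are responsible. The price is that you must group jump points differing by multiples of $1/4$, which you handle correctly.

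One small correction to your closing commentary: reality is a convenience of your bookkeeping, not essential to the lemma. For complex-valued $f$, the relation also forces $\widehat f(n)=0$ for $n<0$ with $n\equiv1\pmod4$. Running your Vandermonde step along $m\le -1$ then supplies the missing $c=1$ equation, and the full $4\times4$ DFT system gives $D=0$. This is consistent with the paper's singularity argument, which applies verbatim to complex step functions.
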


\begin{proof}
Compatibility would make $\calH f=\tau_{-1/4}f$ bounded. However, for a circle interval $(a,b)$,
\begin{equation*}
\calH\1_{(a,b)}(\xi) =\frac1\pi\log\left|\frac{\sin\pi(\xi-a)}{\sin\pi(\xi-b)}\right|.
\end{equation*}
Every nonzero jump therefore gives a logarithmic singularity that jumps at other locations cannot cancel. Hence $f$ has no jumps; its zero mean then gives $f=0$.
\end{proof}

\subsection{Constructing the diamond profile}\label{subsec:diamond}

The preceding section determines which profiles are compatible with propagation. We now choose a profile in this class with a positive-measure matching region. Lemma~\ref{lem:step-loop} shows that the construction in \cite[Section~1.4]{AOSI} cannot directly apply to our current situation. Therefore, we introduce a new example profile.
\begin{defi}[Diamond profile]\label{def:diamond}
Set
\begin{equation}\label{eq:diamond}
H_*(z):=\frac{\sqrt2}{I_0}\int_0^z\frac{dw}{\sqrt{1-w^4}},\quad\forall z\in\D,
\end{equation}
where $I_0:=\int_0^1\frac{dr}{\sqrt{1-r^4}}$ is a normalizing constant. The corresponding boundary profiles are
\begin{equation*}
f_*(\xi):=\Real\left(e^{i\pi/4}H_*(e^{2\pi i\xi})\right) \text{ \; and \; } g_*(\xi):=\Real\left(e^{-i\pi/4}H_*(e^{2\pi i\xi})\right),
\end{equation*}
as in \eqref{eq:fg}. We call $q=f_*$ the diamond profile.
\end{defi}

\begin{prop}\label{prop:diamond}
We have the following properties
\begin{enumerate}
    \item The function $H_*$ defined in \eqref{eq:diamond} extends continuously to the closed disc and satisfies $H_*(iz)=iH_*(z)$. 
    \item $f_*,g_*$ satisfy $g_*=\calH f_*, f_*=\tau_{1/4}g_*\text{ \; and \; }|f_*|,|g_*|\leq1$.
    \item $f_*=\pm1$ on $F_\pm$ and $g_*=\pm1$ on $J_\pm$, where
\begin{equation}\label{eq:plateaus}
F_+=[3/4,1],\quad F_-=[1/4,1/2],\quad J_+=[0,1/4],\quad J_-=[1/2,3/4],
\end{equation}
are understood modulo one. Consequently the reference matching region
\begin{equation}\label{eq:rectangles}
\mathcal R:=(F_+\times J_+)\cup(F_-\times J_-),\quad\text{with }|\mathcal R|=1/8,
\end{equation}
has $f_*(\xi)=g_*(\eta)=\pm1$ on these two rectangles.
\end{enumerate}
\end{prop}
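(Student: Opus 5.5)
The plan is to recognize $H_*$ as the Schwarz--Christoffel map of the unit disc onto the square (``diamond'') $\{w=x+iy:|x|+|y|<\sqrt2\}$, with vertices $\pm\sqrt2,\pm i\sqrt2$. All three items then reduce to elementary computations on the boundary.

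\emph{Step 1: continuity and rotation symmetry.} The integrand $(1-w^4)^{-1/2}$ is holomorphic in $\D$. Its only boundary singularities are at the fourth roots of unity, where it behaves like $|w-w_0|^{-1/2}$, which is integrable along any path. Hence $H_*$ extends continuously, indeed $\tfrac12$-H\"older, to $\overline{\D}$. The identity $H_*(iz)=iH_*(z)$ follows from the substitution $w=iv$, since $dw=i\,dv$ and $1-w^4=1-v^4$. The normalization gives $H_*(1)=\sqrt2$, and therefore $H_*(i^k)=i^k\sqrt2$.

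\emph{Step 2: boundary arcs map to the sides.} On $z=e^{i\theta}$ with $\theta\in(0,\pi/2)$ we have
\begin{equation*}
\frac{d}{d\theta}H_*(e^{i\theta})=\frac{\sqrt2}{I_0}\,\frac{ie^{i\theta}}{\sqrt{1-e^{4i\theta}}},\qquad 1-e^{4i\theta}=2\sin(2\theta)\,e^{i(2\theta-\pi/2)}.
\end{equation*}
The branch of the square root is the principal one, fixed by continuity from $w=0$; on this arc its argument is $\theta-\pi/4$. The argument of the derivative is therefore the constant $3\pi/4$. So this arc is mapped monotonically onto the segment from $\sqrt2$ to $i\sqrt2$, that is, onto the side $x+y=\sqrt2$. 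By the rotation symmetry of Step 1, the four arcs $\xi\in[0,\tfrac14],[\tfrac14,\tfrac12],[\tfrac12,\tfrac34],[\tfrac34,1]$ map onto the sides $x+y=\sqrt2$, $y-x=\sqrt2$, $x+y=-\sqrt2$, and $x-y=\sqrt2$, respectively. I expect this branch check to be the main point requiring care. Everything else is algebra.

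\emph{Step 3: plateaux and bounds.} For $w=x+iy$ we have $\Real(e^{i\pi/4}w)=(x-y)/\sqrt2$ and $\Real(e^{-i\pi/4}w)=(x+y)/\sqrt2$. Since $|x\pm y|\le\sqrt2$ on the closed diamond, $|f_*|,|g_*|\le1$. Reading off the sides from Step 2 gives the values on the plateaux. First, $g_*=1$ on $J_+=[0,\tfrac14]$ and $g_*=-1$ on $J_-=[\tfrac12,\tfrac34]$. Second, $f_*=-1$ on $F_-=[\tfrac14,\tfrac12]$, where $y-x=\sqrt2$, and $f_*=1$ on $F_+=[\tfrac34,1]$, where $x-y=\sqrt2$. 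Then $|\mathcal R|=2\cdot\tfrac14\cdot\tfrac14=\tfrac18$.

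\emph{Step 4: compatibility.} $H_*$ is bounded and holomorphic, so its boundary values lie in $H^2$. Moreover $H_*(0)=0$, so only positive frequencies occur; in fact the power series $z+\tfrac{z^5}{10}+\cdots$ contains only the frequencies $n\equiv1\pmod4$. The computation following \eqref{eq:fg} then gives $g_*=\calH f_*$. Finally, the rotation symmetry gives
\begin{equation*}
g_*(\xi+\tfrac14)=\Real\left(e^{-i\pi/4}H_*(ie^{2\pi i\xi})\right)=\Real\left(e^{-i\pi/4}\,i\,H_*(e^{2\pi i\xi})\right)=f_*(\xi).
\end{equation*}
This is $f_*=\tau_{1/4}g_*$, consistent with Proposition~\ref{prop:loop}.
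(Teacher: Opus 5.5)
Your proposal is correct and follows essentially the same route as the paper: the boundary derivative on one quarter-arc has constant argument $3\pi/4$, which is equivalent to the paper's $g_*'=0$ on $J_+$ together with $g_*(0)=1$. The quarter-turn symmetry $H_*(iz)=iH_*(z)$ then handles the other three arcs and gives $f_*=\tau_{1/4}g_*$, and the absence of nonpositive frequencies gives $g_*=\calH f_*$.

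There are two minor differences. First, you obtain continuity on $\overline{\D}$ from the integrable $|w-w_0|^{-1/2}$ singularities, which should be integrated along straight segments to get the H\"older-$\tfrac12$ bound. The paper instead uses the summable $O((m+1)^{-3/2})$ Taylor coefficients. Second, you actually justify $|f_*|,|g_*|\le1$ by placing the boundary values on the sides of the diamond, whereas the paper only asserts this bound.
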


\noindent\emph{Idea of diamond profile.} The diamond meets two requirements. First, the compatibility class requires
\begin{equation*}
H(z)=\sum_{m\ge0}d_{4m+1}z^{4m+1} \quad\Longleftrightarrow\quad H(iz)=iH(z).
\end{equation*}
Second, we need intervals on which the two real projections have equal constant values. Writing $H=H_{\rm R}+iH_{\rm I}$, these projections are
\begin{equation*}
f=\frac{H_{\rm R}-H_{\rm I}}{\sqrt2},\text{ \; and \; }g=\frac{H_{\rm R}+H_{\rm I}}{\sqrt2}.
\end{equation*}
Thus a straight side on $H_{\rm R}-H_{\rm I}=\pm\sqrt2$ makes $f=\pm1$, and a side on $H_{\rm R}+H_{\rm I}=\pm\sqrt2$ makes $g=\pm1$. These four lines bound a diamond, preserved by a quarter-turn, which satisfies both geometric requirements.

Therefore, we construct $H_*$ by considering the Riemann mapping from the unit disc to a square. Place the preimages of the four corners at $z_0\in\{1,i,-1,-i\}$. Integration with the normalization $H_*(1)=\sqrt2$ gives \eqref{eq:diamond}.

\begin{proof}[Proof of Proposition~\ref{prop:diamond}]
By the definition in \eqref{eq:diamond}, for any $w\in\D\subset\mathbb{C}$, $\Real(1-w^4)\geq 1-|w|^4>0$. Thus, we may choose a holomorphic branch of the square root on the right half-plane such that $q(z):=(1-z^4)^{-\frac{1}{2}}$ is holomorphic in $\D$ and $q(0)=1$. By Cauchy's integral theorem, we know that $H_*$ in \eqref{eq:diamond} is well-defined. We expand $q$ into a power series $(1-w^4)^{-\frac{1}{2}}=\sum_{m=0}^{\infty}\frac{\binom{2m}{m}}{4^m}w^{4m}$. Then,
\begin{equation}\label{eq:diamond-coeff}
H_*(z)=\sum_{m=0}^{\infty}d_{*,4m+1}z^{4m+1}, \text{ \;\; where }d_{*,4m+1}:=\frac{\sqrt2}{I_0}\frac{\binom{2m}{m}}{4^m(4m+1)},\quad\forall m\geq0,
\end{equation}
and other coefficients $d_{*,n}\equiv0$, $\forall n\ge1\text{ \; with \; }n\not\equiv1\pmod4$. By Stirling's formula, we have $\frac{\binom{2m}{m}}{4^m}\sim\frac{1}{\sqrt{\pi m}}$ as $m\to\infty$. Thus, the coefficients are $O\left((m+1)^{-3/2}\right)$, the series converges uniformly on the closed disc, and $H_*$ extends continuously to the circle. We also have $H_*(iz)=iH_*(z)$ and $H_*(1)=\sqrt2$.
\vspace{1.5mm}

For $f_*$ and $g_*$ defined in \eqref{eq:fg}, the relation $H_*(iz)=iH_*(z)$ gives
\begin{align*}
f_*(\xi+1/4)&=\Real\left(ie^{i\pi/4}H_*(e^{2\pi i\xi})\right)=-g_*(\xi),\\
g_*(\xi+1/4)&=\Real\left(ie^{-i\pi/4}H_*(e^{2\pi i\xi})\right)=f_*(\xi).
\end{align*}
For $n>0$, direct computation gives
\begin{align*}
\widehat{\calH f_*}(n)=e^{-i\frac{\pi}{2}}\widehat{f_*}(n)=e^{-i\frac{\pi}{2}}\widehat{\Real\left(e^{i\frac{\pi}{4}}H_*(e^{2\pi i\xi})\right)}(n)=\widehat{g_*}(n).
\end{align*}
For negative frequencies the identity follows by complex conjugation, and both sides vanish at $n=0$. Thus
\begin{equation}\label{eq:diamond-rel}
g_*=\calH f_*,\quad f_*(\xi)=g_*(\xi+1/4),\text{ \; and \; }|f_*|,|g_*|\le1.
\end{equation}

To verify the value on edges, for $0<\beta<\frac{1}{4}$, we write $z=e^{2\pi i\beta}$, and direct differentiation gives
\begin{equation*}
1-e^{8\pi i\beta}=-2i e^{4\pi i\beta}\sin(4\pi\beta), \quad \frac{d}{d\beta}H_*(e^{2\pi i\beta})=\frac{2\pi}{I_0}\frac{e^{i\frac{3\pi}{4}}}{\sqrt{\sin(4\pi\beta)}}.
\end{equation*}
Therefore, $\frac{d}{d\beta}g_*(\beta)=\Real\left(e^{-i\pi/4}\frac{d}{d\beta}H_*(e^{2\pi i\beta})\right)=\Real\left(i\frac{2\pi}{I_0}\frac{1}{\sqrt{\sin(4\pi\beta)}}\right)=0$, for $0<\beta<\frac{1}{4}$. This implies that $g_*$ is a constant on $J_+$. Using that $g_*(0)=1$, we know that $g_*=1$ on $J_+$.  The other three sides follow from $H_*(iz)=iH_*(z)$ and the relations between $f_*$ and $g_*$. %
Then, we can easily deduce that
\begin{equation*}
f_*(\xi)-g_*(\eta)=
\begin{cases}
1-1=0,&(\xi,\eta)\in F_+\times J_+,\\
-1-(-1)=0,&(\xi,\eta)\in F_-\times J_-.
\end{cases}
\end{equation*}
The rectangles are disjoint and have total area $2(1/4)^2=1/8$, proving \eqref{eq:rectangles}; see Figure~\ref{fig:diamond}.
\end{proof}

\begin{figure}[H]
\centering
\includegraphics[width=.9\textwidth]{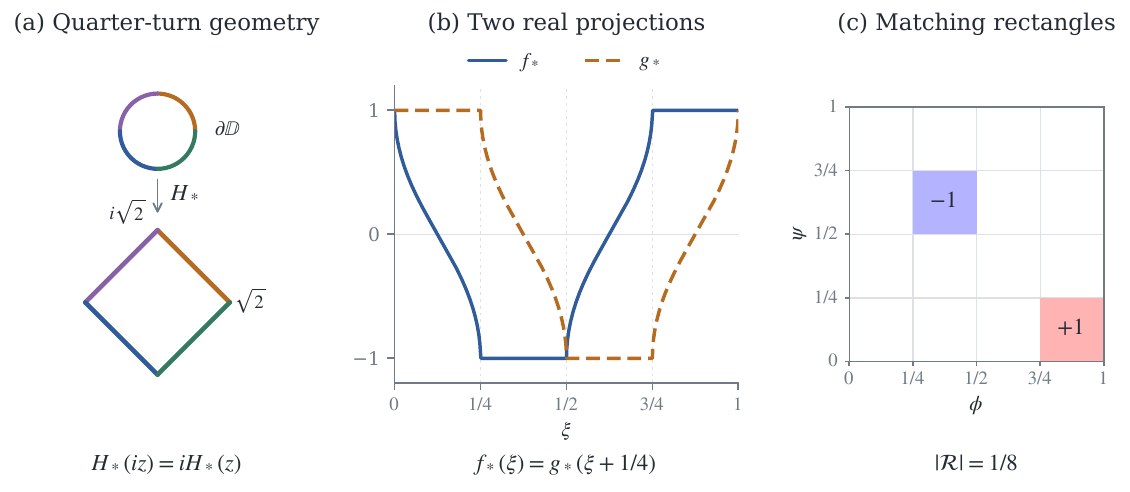}
\caption{The diamond determines both the symmetry and the matching region. A quarter-turn acts on source and image together, giving $f_*(\xi)=g_*(\xi+1/4)$. Red and blue mark the equal-sign plateau pairs in \eqref{eq:rectangles}, whose total area is $1/8$.}\label{fig:diamond}
\end{figure}

The reference geometry is now fixed: $q=f_*\in X$, the branch profiles are $f_*$ and $g_*=\calH f_*$, and we choose $\calM_q=\mathcal R$.

\subsection{Profile symmetry, scaled approximate velocities, and matching regions}\label{subsec:gl-family}

We now construct generated approximate velocities $S_k$ and their matching regions in physical space. The normal oscillatory region becomes thinner with the scale. We proceed in five steps.
\begin{itemize}[leftmargin=*,itemsep=3pt]
\item Choose phases $\phi,\psi$ retaining the quarter-period boundary shift.
\item Localize both branches in one moving packet region with a common amplitude $A_{\lambda,w}$, and define $S_{\lambda,w,z,\alpha}$.
\item Pull back the reference rectangles $\mathcal R$ to obtain the physical matching region $\Gamma_{\lambda,w,z,\alpha}$.
\item Prove velocity approximation for each fixed finite-profile expression.
\item Vary the scales while keeping $f_*,g_*$ and $\mathcal R$ fixed, obtaining the family $S_k,\Gamma_k$.
\end{itemize}

\subsubsection{Phases, scales, and the moving packet region}
By Lemma~\ref{lem:gl-airy}, the exact ray phases are $b\theta-t\pm R_b(r)$. In what follows, we would like to work in boundary normal coordinates $(\theta,y)$, where $y=1-r$ denotes the inward normal distance to the boundary. We also recall some notations before proceeding to further discussion. From this point onward, we redefine $\lambda$ by the boundary action, writing $\beta$ for the angle used in Lemma~\ref{lem:gl-airy}:
\begin{align*}
\lambda:=(3R_b(1))^{1/3},\quad R_b(1)=\frac{1}{3}\lambda^3.
\end{align*}

For $b=\cos{\beta}$ close to $1$, by expanding the series at $\beta=0$, we obtain the relation:
\begin{equation*}
b=1-\frac{1}{2}\lambda^2+\frac{1}{120}\lambda^4+O(\lambda^6).
\end{equation*}
Let $\zeta(y)$ denote the exact signed Airy coordinate in the normal variable $y=1-r$, which is smooth across its turning point $y=1-b\sim\frac{1}{2}\lambda^2$. Near the boundary, we write
\begin{equation*}
\zeta(y)=\zeta_{\lambda}(y)+O(\lambda^6)  \textrm{ \; and \; } \zeta_{\lambda}(y)=2^{1/3}\left(\frac{\lambda^2}{2}-y-\frac{3}{10}y^2+\frac{2}{15}\lambda^2y\right),\quad\text{for }y\in(0,y_c^\lambda).
\end{equation*}
Here $y_c^\lambda$ denotes the first positive zero of the polynomial $\zeta_\lambda$. Then, we use the approximate phases $\Psi^{\pm}_{\lambda}$ defined via
\begin{equation}\label{eq:gl-disc-phase}
b_{\lambda}:=1-\frac{1}{2}\lambda^2+\frac{1}{120}\lambda^4  \textrm{ \; and \; }  \Psi^{\pm}_{\lambda}(t,\theta,y):=b_\lambda\theta-t\pm R_{\lambda}(y),
\end{equation}
where $R_{\lambda}(y):=\frac{2}{3}(\zeta_\lambda(y))^{3/2}$. These formulas specialize \eqref{eq:W0} and \eqref{eq:airy-coeff} to $\kappa=1$; Section~\ref{sec:curved} treats varying curvature.
\vspace{1.5mm}

We next choose the semiclassical parameter $h$ for the period-one profiles. The proof of Lemma~\ref{lem:gl-finite} requires two errors to vanish: $\lambda^6/h$ from the disc phase defect, and $h/\lambda^4$ from comparing the modewise Airy phases with the common phases. We take $h\sim\lambda^{9/2}$ and coordinate it with the boundary action by setting
\begin{equation}\label{eq:gl-disc-scales}
j:=\lceil\lambda^{-3/2}\rceil   \textrm{ \; and \; } h:=\frac{\lambda^3}{3\pi(j-1/4)}.
\end{equation}
The boundary phase difference is therefore
\begin{align*}
-\frac{R_\lambda(0)}{\pi h}=-\frac{\lambda^3}{3\pi h}=-(j-1/4)=1/4\pmod1.
\end{align*}
Thus the scaled phases retain the quarter-period boundary relation exactly. Since $j\sim\lambda^{-3/2}$, the two errors satisfy
\begin{equation*}
\frac{\lambda^6}{h}\sim\lambda^{3/2}\rightarrow 0 \textrm{ \; and \; }
\frac h{\lambda^4}\sim\lambda^{1/2}\rightarrow 0.
\end{equation*}
The same choice also gives $\lambda^5/h\sim\lambda^{1/2}\to0$, the stronger residual requirement used for varying curvature in Section~\ref{subsubsec:curved-scales}. The integer $j$ becomes the base Airy-zero index in Lemma~\ref{lem:gl-finite}.

Finally, choose a tangential width $w$ and a center moving along $\theta=t+z$. Combining this interval with the normal layer $0<y<y_c^\lambda$ gives the moving packet region used below.
\begin{defi}[Moving packet region]\label{def:gl-packet}
Fix $T>0$, a sufficiently small $\lambda>0$, a width $0<w<1$ and a center $z\in\R/2\pi\Z$. Define
\begin{equation}\label{eq:gl-disc-packet}
\mathcal U_{\lambda,w,z}:=\left\{(t,x)\in(0,T)\times\D:
\begin{aligned}
&\theta(x)-t-z\pmod{2\pi}\in(-w/2,w/2),\\
&0<y(x)<y_c^\lambda
\end{aligned}
\right\}.
\end{equation}
\end{defi}
Fix a $z\in\R$. On $\mathcal U_{\lambda,w,z}$, we can choose the unique lift of $\theta$ satisfying $|\theta-t-z|<w/2$, due to the choice $w<1<2\pi$. At time $t$, the moving packet region has tangential coordinate width $w$, center $\theta=t+z\pmod{2\pi}$ and normal depth $y_c^\lambda\sim\frac{1}{2}\lambda^2$.

\subsubsection{Generated approximate velocities and matching regions}\label{subsubsec:gl-packet}

We now define the generated velocity on this moving packet region and identify its matching region. For a real cutoff function $\chi\in C_c^\infty((-1/2,1/2))$, we define the localized travelling wave in $\mathcal U_{\lambda,w,z}$. Its amplitude has three factors:
\begin{equation}\label{eq:gl-disc-amp}
A_{\lambda,w}=\underbrace{(\lambda w)^{-1/2}}_{\text{mass scaling}} \underbrace{\chi\left(\frac{\theta-t-z}{w}\right)}_{\text{tangential localization}} \underbrace{(\zeta_\lambda)^{-1/4}}_{\text{Airy branch amplitude}} \quad\text{ on \; }\mathcal U_{\lambda,w,z}.
\end{equation}
We extend it by zero outside the moving packet region. The first factor compensates for the mass scale; the exact energy normalization will use an additional constant $c_k$. We set
\begin{gather}
S_{\lambda,w,z,\alpha}(t,\theta,y):=A_{\lambda,w}(t,\theta,y)\left(f_*(\frac{\Psi^+_{\lambda}}{2\pi h}+\alpha)-g_*(\frac{\Psi^-_{\lambda}}{2\pi h}+\alpha)\right). \label{eq:gl-disc-S}
\end{gather}
Extend $S_{\lambda,w,z,\alpha}$ by zero outside the moving packet region and on its turning curve. Define the physical matching regions by $\Gamma_{\lambda,w,z,\alpha}:=\Gamma_{\lambda,w,z,\alpha}^+\cup\Gamma_{\lambda,w,z,\alpha}^-$, where
\begin{equation}\label{eq:gl-disc-Gamma}
\begin{split}
\Gamma_{\lambda,w,z,\alpha}^+&:=\mathcal U_{\lambda,w,z}\cap\{\frac{\Psi^+_{\lambda}}{2\pi h}+\alpha\pmod1\in F_+,\ \frac{\Psi^-_{\lambda}}{2\pi h}+\alpha\pmod1\in J_+\},\\
\Gamma_{\lambda,w,z,\alpha}^-&:=\mathcal U_{\lambda,w,z}\cap\{\frac{\Psi^+_{\lambda}}{2\pi h}+\alpha\pmod1\in F_-,\ \frac{\Psi^-_{\lambda}}{2\pi h}+\alpha\pmod1\in J_-\}.
\end{split}
\end{equation}
\begin{prop}\label{prop:gl-support}
Fix $T>0$ and let $\lambda>0$ be sufficiently small. Choose the diamond profiles $f_*,g_*$ and rectangles $\mathcal R$ from Proposition~\ref{prop:diamond}, and a real nonzero envelope $\chi\in C_c^\infty((-1/2,1/2))$,  and $\mathcal U_{\lambda,w,z}$ in Definition~\ref{def:gl-packet}. Then $S_{\lambda,w,z,\alpha}\in L^2((0,T)\times\D)$, and the following two properties hold.
\begin{enumerate}[label=(\roman*),leftmargin=*,itemsep=1mm]
\item \emph{(Support in the moving packet region)}. The function $S_{\lambda,w,z,\alpha}$ vanishes outside $\mathcal U_{\lambda,w,z}$.%
\item \emph{(Additional cancellation through profile matching)}. The two full profiles $f_*,g_*$ have equal values on the reference rectangles, and
\begin{equation}\label{eq:gl-disc-zero}
\1_{\Gamma_{\lambda,w,z,\alpha}}S_{\lambda,w,z,\alpha}=0.
\end{equation}
\end{enumerate}
\end{prop}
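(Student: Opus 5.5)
The plan is to verify the three assertions directly from the definitions \eqref{eq:gl-disc-amp}, \eqref{eq:gl-disc-S} and \eqref{eq:gl-disc-Gamma}, together with Proposition~\ref{prop:diamond}. Support and cancellation are formal consequences of the construction. The only analytic point is square integrability, because the Airy branch amplitude $(\zeta_\lambda)^{-1/4}$ blows up at the turning curve $y=y_c^\lambda$.

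\emph{Square integrability.} By Proposition~\ref{prop:diamond}, $|f_*|,|g_*|\le1$, so the profile factor is bounded by $2$. It therefore suffices to show that $A_{\lambda,w}\in L^2((0,T)\times\D)$. Measurability is clear, since $f_*,g_*$ are continuous and the phases are smooth on $\mathcal U_{\lambda,w,z}$. Work in the coordinates $(t,\theta,y)$ with area element $(1-y)\dd\theta\dd y$, where $1-y\le1$. Then
\begin{equation*}
\|A_{\lambda,w}\|_{L^2}^2\le(\lambda w)^{-1}\int_0^T\!\!\int\chi^2\left(\frac{\theta-t-z}{w}\right)\dd\theta\dd t\int_0^{y_c^\lambda}\zeta_\lambda(y)^{-1/2}\dd y .
\end{equation*}
The $\theta$-integral equals $w\|\chi\|_{L^2}^2$. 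For the $y$-integral, I use that $\zeta_\lambda$ is a quadratic polynomial with $\zeta_\lambda(0)=2^{1/3}\lambda^2/2>0$ and first positive zero $y_c^\lambda\sim\lambda^2/2$. Its derivative there is $\zeta_\lambda'(y_c^\lambda)=2^{1/3}(-1+O(\lambda^2))\neq0$ for small $\lambda$, so this zero is simple. Hence $\zeta_\lambda(y)\ge c\,(y_c^\lambda-y)$ on $(0,y_c^\lambda)$, and $\zeta_\lambda^{-1/2}\le C(y_c^\lambda-y)^{-1/2}$ is integrable, with integral $O(\lambda)$. This gives $\|S_{\lambda,w,z,\alpha}\|_{L^2}\lesssim T^{1/2}\|\chi\|_{L^2}$, which is finite.

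\emph{Item (i).} $A_{\lambda,w}$ is defined to be zero off $\mathcal U_{\lambda,w,z}$, and $S$ is extended by zero there and on the turning curve. The well-definedness of the lift of $\theta$ (using $w<1<2\pi$) guarantees that the formula on $\mathcal U_{\lambda,w,z}$ is unambiguous. Item (i) is therefore immediate.

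\emph{Item (ii).} Take a point of $\Gamma^+_{\lambda,w,z,\alpha}$. There $\Psi^+_\lambda/(2\pi h)+\alpha \bmod 1$ lies in $F_+$ and $\Psi^-_\lambda/(2\pi h)+\alpha \bmod 1$ lies in $J_+$. Since $f_*,g_*$ are $1$-periodic, Proposition~\ref{prop:diamond}(3) gives $f_*=1$ and $g_*=1$ at the respective arguments, so the bracket in \eqref{eq:gl-disc-S} vanishes. On $\Gamma^-$, the same argument with the values $-1$ gives zero. Since $\Gamma\subset\mathcal U_{\lambda,w,z}$, \eqref{eq:gl-disc-zero} holds pointwise.

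The only step requiring care is the integrability across the turning curve, which rests on the simplicity of the zero $y_c^\lambda$ for small $\lambda$.
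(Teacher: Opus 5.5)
Your proposal is correct and follows the paper's proof. The paper likewise bounds the fixed-time $L^2$ norm of $A_{\lambda,w}$ by $(\lambda w)^{-1}\cdot w\cdot\lambda$, using integrability of $(\zeta_\lambda)^{-1/2}$ up to the turning point together with $|f_*|,|g_*|\le1$, and reads off support and the plateau cancellation directly from the definitions. Your check that the turning zero is simple, so that $\zeta_\lambda(y)\ge c\,(y_c^\lambda-y)$, just makes the paper's ``$\sim$'' step explicit.
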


\begin{proof}
Support follows from the cutoff and zero extension in the definition. At each fixed time, 
\begin{align*}
\|A_{\lambda,w}(t,\cdot)\|_{L^2(\D)}^2=\frac1{\lambda w}\int\chi\left(\frac{\theta-t-z}{w}\right)^2\dd \theta\int_0^{y_c^\lambda}(\zeta_\lambda(y))^{-1/2}(1-y)\dd y\sim\frac{w\lambda}{\lambda w}=1.
\end{align*}
Since $f_*,g_*$ are bounded, $S_{\lambda,w,z,\alpha}$ belongs to spacetime $L^2$. Now we verify the additional cancellation through profile matching. By the definition of $\Gamma_{\lambda,w,z,\alpha}^{\pm}$ and Proposition~\ref{prop:diamond}, 
\begin{equation*}
S_{\lambda,w,z,\alpha}=
\begin{cases}
A_{\lambda,w}(1-1)=0,&\text{on }\Gamma_{\lambda,w,z,\alpha}^+,\\
A_{\lambda,w}\left(-1-(-1)\right)=0,&\text{on }\Gamma_{\lambda,w,z,\alpha}^-.
\end{cases}
\end{equation*}
\end{proof}

\begin{figure}[htbp]
\centering
\includegraphics[width=.8\textwidth,
  trim={0mm 3mm 0mm 2mm},
  clip]{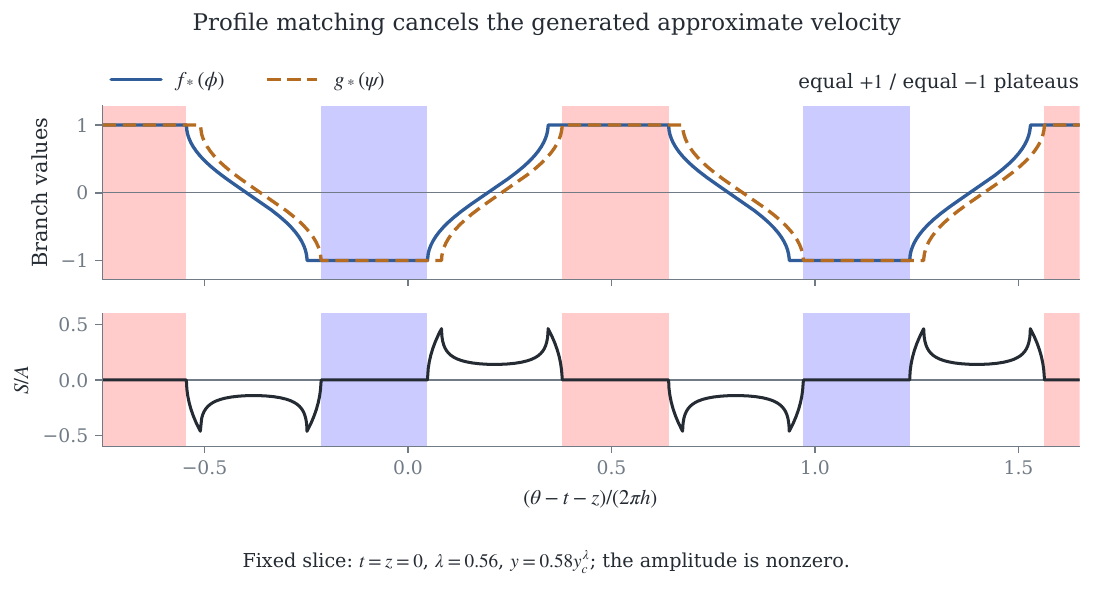}
\caption{Above: $f_*(\phi)$ and $g_*(\psi)$. Below: $S/A=f_*(\phi)-g_*(\psi)$}
\end{figure}

For our phases, we have $\frac{\phi+\psi}{2}=\frac{b_\lambda \theta-t}{2\pi h}+\alpha$, and $\frac{\phi-\psi}{2}=\frac{R_\lambda(y)}{2\pi h}$. Boundary position and time move the two arguments together; normal position separates them. In particular, at fixed time,
\begin{equation*}
\partial_\theta\phi=\partial_\theta\psi=\frac{b_\lambda}{2\pi h} \text{ \; and \; }\partial_y\phi=-\partial_y\psi=\frac{\partial_yR_\lambda}{2\pi h}.
\end{equation*}
Thus both physical variables enter the matching, and the reference rectangles become curved cells in the thin annular moving packet region; see Figure~\ref{fig:gl-slices}.

\begin{figure}[htbp]
\centering
\includegraphics[width=.8\textwidth,
  trim={0mm 3mm 0mm 1mm},
  clip]{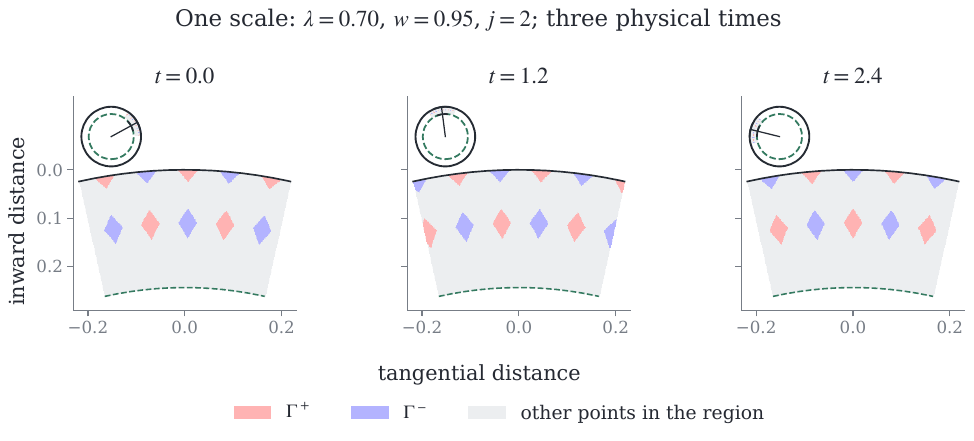}
\caption{Three time slices of one disc matching region.}\label{fig:gl-slices}
\end{figure}

\subsubsection{Finite-profile realization}\label{subsubsec:gl-finite}

The next lemma proves velocity approximation for the finite-profile version of this localized expression, including its zero extension beyond the turning curve. Fourier truncation preserves the linear compatibility relations, but need not preserve the plateau regions. The full profiles in Proposition~\ref{prop:gl-support} will therefore continue to define the matching region. See also \cite[Appendices~A.3 and~A.5.2]{AOSI} for the discussion of strict localization and Airy assembly.

\begin{lemma}\label{lem:gl-finite}
Fix $T,\chi,z,\alpha$ as in Proposition~\ref{prop:gl-support} and an integer $\ell\ge1$. Let
\begin{equation*}
H_*^{(\ell)}(e^{2\pi i\xi}):=\sum_{\substack{0<n\le\ell\\n\equiv1 \mod{4}}}d_{*,n}e^{2\pi in\xi},
\end{equation*}
where $d_{*,n}$ are the coefficients in \eqref{eq:diamond-coeff}. Define the travelling profiles by
\begin{equation*}
f_*^{(\ell)}(\xi):=\Real\left(e^{i\pi/4}H_*^{(\ell)}(e^{2\pi i\xi})\right), \text{ \; and \; }g_*^{(\ell)}(\xi):=\Real\left(e^{-i\pi/4}H_*^{(\ell)}(e^{2\pi i\xi})\right).
\end{equation*}
Use the scales \eqref{eq:gl-disc-scales} and let $0<w=w(\lambda)<1$ satisfy $\lambda^2/w\to0$. With the convention in Proposition~\ref{prop:gl-support}, define
\begin{equation}\label{eq:gl-finite-S}
S_{\lambda,w,\ell}:=A_{\lambda,w}\left(f_*^{(\ell)}(\frac{\Psi^+_{\lambda}}{2\pi h}+\alpha)-g_*^{(\ell)}(\frac{\Psi^-_{\lambda}}{2\pi h}+\alpha)\right).
\end{equation}
There are real solutions $u_{\lambda,\ell}$ of \eqref{eq:wave} on the disc such that
\begin{equation}\label{eq:gl-finite-real}
\|\partial_tu_{\lambda,\ell}-S_{\lambda,w,\ell}\|_{L^2((0,T)\times\D)}\rightarrow0 \quad\text{as }\lambda\to0^+.
\end{equation}
\end{lemma}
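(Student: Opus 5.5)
The plan is to build $u_{\lambda,\ell}$ as a finite superposition of exact Dirichlet eigenmodes $e^{im\theta}J_m(\omega r)$ of the disc, and then to compare it term by term with $S_{\lambda,w,\ell}$. Writing $f_*^{(\ell)}$ and $g_*^{(\ell)}$ in Fourier form, the Fourier mode $n$ (with $n\equiv1\pmod 4$, $n\le\ell$) contributes oscillations $e^{in\Psi^\pm_\lambda/h}$. So for each such $n$ I would use modes with semiclassical parameter $h/n$ and the same ratio $b\approx b_\lambda$.

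\textbf{Step 1: exact Dirichlet modes for each $n$.} For each $n$ and each integer angular order $m$ in a window around $nb_\lambda/h$, I choose the radial index $j_n$ with $j_n-\tfrac14\approx n(j-\tfrac14)$. Then $\omega_{m,j_n}$ is the corresponding Bessel zero, so the Dirichlet condition holds exactly.

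Olver's uniform Airy asymptotics (the turning-point form behind Lemma~\ref{lem:W-form}) give
\[
J_m(\omega r)\approx (r|\zeta'|)^{-1/2}\Ai\bigl(-\omega^{2/3}\zeta(r)\bigr),
\]
with an $O(\omega^{-1})$ relative error. On the oscillatory side, Lemma~\ref{lem:gl-airy} turns this into the two branches $e^{\pm i\omega R_b}$, with the fixed phases $\pm\pi/4$ and amplitude proportional to $\zeta^{-1/4}$.

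The choice $j_n-\tfrac14\approx n(j-\tfrac14)$ reproduces the quarter-period boundary lock \eqref{eq:lock-intuition} for every $n$. The factors $e^{\pm i\pi/4}$ together with the real part produce exactly the pair $f^{(\ell)}_*,\ \calH f^{(\ell)}_*=g^{(\ell)}_*$, as in \eqref{eq:fg}.

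\textbf{Step 2: tangential localization.} I form wave packets
\[
\sum_m \widehat{\chi_w}(m-m_n)\,e^{im\theta-i\omega_{m,j_n}t}\,J_m(\omega_{m,j_n} r),
\]
with $m_n\approx nb_\lambda/h$ and weights given by the Fourier coefficients of $\theta\mapsto(\lambda w)^{-1/2}\chi\bigl((\theta-z)/w\bigr)$. The coefficients $\widehat{\chi_w}$ are concentrated on $|m-m_n|\lesssim w^{-1}$.

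I then Taylor expand $\omega_{m,j_n}$ and $b_m=m/\omega_{m,j_n}$ in $m$ about $m_n$. Three errors appear.

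\begin{itemize}
\item \emph{Group velocity.} The group velocity is $\partial_m\omega=1+O(\lambda^2)$. Over times $t\le T$ and packet frequencies $|m-m_n|\lesssim 1/w$, this produces an envelope drift, which I will check is controlled by $\lambda^2/w\to0$ (possibly after a small correction of the packet speed absorbed into $z$).
\item \emph{Dispersion.} The second derivative $\partial_m^2\omega$ contributes $O(T h\, w^{-2}\cdot\text{small})$, which should be negligible.
\item \emph{Radial-phase variation.} Across the window, $b_m$ varies by $O(h/w)$. This changes $R_{b_m}(y)/h_m$ by $O\bigl((h/w)\,\partial_b R/h\bigr)=O(\lambda/w)$, and I would need to bound this error. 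If the crude estimate is not small enough, I would instead absorb the $m$-dependence into a modified envelope that is smooth in $y$ on the $\lambda^2$ layer. This bookkeeping is where the hypothesis $\lambda^2/w\to0$ should enter.
\end{itemize}

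\textbf{Step 3: phase replacement and time derivative.} Next I replace the exact phases $b\theta-t\pm R_b$ by the common polynomial phases $\Psi^\pm_\lambda$ of \eqref{eq:gl-disc-phase}. This costs $\lambda^6/h\to0$ from the defect $\zeta-\zeta_\lambda=O(\lambda^6)$, and $h/\lambda^4\to0$ from the mode-wise versus common phase mismatch, both guaranteed by \eqref{eq:gl-disc-scales}.

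Taking $\partial_t$ brings down a factor $-i\omega\approx -in/h$. I normalize each $n$-component by $h/n$ (up to the factor $h^{1/6}$ from Lemma~\ref{lem:gl-airy}), so that $\partial_tu_{\lambda,\ell}$ has leading term exactly $S_{\lambda,w,\ell}$ on $\{y<y_c^\lambda\}$, away from the turning layer.

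\textbf{Step 4 (main obstacle): the turning layer and the decaying side.} The remaining step is to show that the part of $\partial_t u$ living in the Airy transition layer $|y-y_c^\lambda|\lesssim h^{2/3}$ and on the decaying side, together with the truncated part of $S$ near $y_c^\lambda$ where the amplitude $\zeta_\lambda^{-1/4}$ blows up, carries vanishing $L^2$ mass.

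I would use the integral Airy bounds $\int_{-\infty}^{X}|\Ai(-x)|^2\,\dd x\lesssim 1+X^{1/2}$ and rescale. After the $(\lambda w)^{-1/2}$ normalization, the relative mass of the layer is about $(h^{2/3}/\lambda^2)^{1/2}\sim\lambda^{1/2}\to0$, and the exponentially decaying tail is similarly small. For fixed $\ell$ only finitely many $n$ occur, so summing these errors and the Olver remainders gives \eqref{eq:gl-finite-real}.
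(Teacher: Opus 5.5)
Your route is genuinely different from the paper's. The paper never synthesizes Bessel modes. It writes an approximate wave $v_\ell$ with the moving envelope $\chi((\theta-t-z)/w)$ built directly into the Airy ansatz $\sum_n d_{*,n}h_n^{5/6}\Ai(-\zeta_{\lambda_n}(y)/h_n^{2/3})e^{i(b_{\lambda_n}\theta-t)/h_n}$, using the polynomial Airy coordinate and periodizing in $\theta$. It makes the Dirichlet condition exact by choosing $\lambda_n^3=2h_nz_{j_n}^{3/2}$ with $j_n=nj-(n-1)/4$. It then pays for the inexactness through the residual bound $\lambda^6/h+\lambda+\lambda^2/w+h\lambda^2/w^2$ and a Duhamel/energy correction.

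Your exact modes $e^{im\theta}J_m(\omega_{m,j_n}r)$ remove the residual and correction steps, and they give $L^2(\D)$-orthogonality in $m$, which is convenient for tails. The price is that you need Olver's uniform expansions for $J_m$ and for the zeros $j_{m,k}$, uniformly over the window $|m-m_*|\lesssim w^{-1}$, and you depend on separation of variables. The paper's ansatz-plus-residual argument is chosen precisely because it transfers unchanged to variable curvature (Proposition~\ref{prop:finite-airy}). The scale conditions are the same in both: your drift and dispersion terms are the paper's $\lambda^2/w$ and $h\lambda^2/w^2$, and your frequency mismatch $\omega_{m_*,j_n}-n/h=O(1/(j\lambda))$ is its $h/\lambda^4$.

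The estimate that does not close as written is your third bullet, and your fallback would not rescue it. The target $S_{\lambda,w,\ell}$ has the same $y$-independent envelope on both branches, so a branch displacement of size $\lambda\gg w$ could not be absorbed into a modified envelope. Your bound $O(h/w)$ for the variation of $b_m$ misses the cancellation built into a fixed radial index. From $\omega R_{m/\omega}(1)=\mathrm{const}$ one gets $\partial_m\omega=\lambda/\sin\lambda$ (your group velocity), hence $\partial_m b_m=h(1-\lambda\cot\lambda)=O(\lambda^2h)$. By Hamilton--Jacobi,
\[
\partial_m\bigl(\omega R_b(r)\bigr)\big|_{j_n\ \mathrm{fixed}}=-\arccos(b/r)+\sqrt{r^2-b^2}\,\frac{\lambda}{\sin\lambda}=O(\lambda^3),\qquad b\le r\le1,
\]
which vanishes at $r=b$ and $r=1$. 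Each branch is therefore displaced in $\theta$ by only $O(\lambda^3)$, giving an error $O(\lambda^3/w)=O(\lambda\cdot\lambda^2/w)\to0$. The hypothesis $\lambda^2/w\to0$ is really used in your first bullet.

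Two smaller points need fixing.
\begin{enumerate}
\item The index relation $j_n=nj-(n-1)/4$ must hold exactly, not approximately. This is possible because $n\equiv1\pmod 4$; an integer error shifts $\omega$ by about $\pi/\lambda$.
\item The packet should be centred at the non-integer frequency $m_*=nb_\lambda/h$, i.e.\ you should use the Fourier coefficients of $\chi((\theta-z)/w)e^{im_*\theta}$. With an integer centre $m_n$ and weights $\widehat{\chi_w}(m-m_n)$, each $n$ picks up an uncontrolled constant phase $e^{i(m_n-m_*)z}$. This spoils the relative phases that produce $f_*^{(\ell)}$ and $g_*^{(\ell)}$. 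It also leaves a factor $1+O(w)$, and the lemma does not assume $w\to0$.
\end{enumerate}
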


\noindent\emph{Idea of approximation.} For fixed $\ell$, we construct a smooth Airy sum $v_\ell$ satisfying the Dirichlet condition. Its velocity gives the two profiles on the oscillatory side, while integral estimates control the turning region and the normal tail. We then correct its small wave-equation residual:
\begin{equation*}
S_{\lambda,w,\ell} \underset{\substack{\text{Airy}\\\text{comparison}}}{\approx} \partial_tv_\ell \underset{\substack{\text{energy}\\\text{correction}}}{\approx} \partial_tu_{\lambda,\ell}.
\end{equation*}
More precisely, the proof controls the two terms in
\begin{align*}
\|\partial_tu_{\lambda,\ell}-S_{\lambda,w,\ell}\|_{L^2((0,T)\times\D)} \le\|\partial_tv_\ell-S_{\lambda,w,\ell}\|_{L^2((0,T)\times\D)}+C_T\|(\partial_t^2-\Delta)v_\ell\|_{L^1(0,T;L^2(\D))}.
\end{align*}

\begin{proof}[Proof of Lemma~\ref{lem:gl-finite}]
The Fourier cutoff $\ell$ is fixed throughout the proof. 

\noindent\emph{Step 1. An Airy sum satisfying the boundary condition.} As in Lemma~\ref{lem:gl-airy}, let $z_j$ be the positive zeros of $\Ai(-z)$. Their asymptotics \cite{Olver} give
\begin{equation*}
z_j^{3/2}=\frac{3\pi}{2}(j-1/4)+O(j^{-1}).
\end{equation*}
For each retained frequency, choose
\begin{equation*}
h_n:=h/n,\quad j_n:=nj-(n-1)/4 ,\text{ \; and \; }\lambda_n^3:=2h_nz_{j_n}^{3/2}.
\end{equation*}
The condition $n\equiv1\pmod4$ makes $j_n\in\Nat$ an integer and
\begin{equation*}
\Ai\left(-\zeta_{\lambda_n}(0)/h_n^{2/3}\right)=\Ai(-z_{j_n})=0, \lambda_n^3=\lambda^3\left(1+O_\ell(j^{-2})\right).
\end{equation*}
Thus the boundary condition is exact, while the modewise normal scales approach the common scale.
\vspace{1.5mm}

Fix $1<Y_0<Y_1$ and a real cutoff $\rho\in C^{\infty}$ such that $\rho=1$ on $[0,Y_0]$ and $\rho=0$ on $[Y_1,\infty)$. For $\theta\in\R$, set
\begin{equation}\label{eq:gl-airy-wave}
v_\ell^{\rm lift}:=\Real\left( -2\sqrt{\frac{\pi}{\lambda w}} \chi\left(\frac{\theta-t-z}{w}\right)\rho\left(\frac{y}{\lambda^2}\right) \sum_{\substack{0<n\le\ell\\n\equiv1\mod{4}}}d_{*,n}h_n^{5/6} \Ai\left(-\frac{\zeta_{\lambda_n}(y)}{h_n^{2/3}}\right) e^{i\frac{b_{\lambda_n}\theta-t}{h_n}+2\pi in\alpha} \right).
\end{equation}
The factor $h_n^{5/6}$ gives a common leading velocity amplitude, since $h_n^{5/6}h_n^{1/6}h_n^{-1}=1$. On the support of the normal cutoff derivatives, $\zeta_{\lambda_n}\le -c\lambda^2$ for sufficiently small $\lambda$. The Airy function and its derivatives decay exponentially there. Thus $\rho$ removes a decaying tail of the smooth approximate wave. The zero extension of the target at the turning curve is justified separately in the next step.

Periodize both the phase and the envelope by setting
\begin{equation*}
v_\ell(t,\theta,y):=\sum_{m\in\Z}v_\ell^{\rm lift}(t,\theta-2\pi m,y).
\end{equation*}
Since $w<1$, at most one summand is active at each point. This defines a smooth real function on the disc, extended by zero beyond the normal collar, with $v_\ell|_{\partial\D}=0$. On the moving packet region it uses exactly Definition~\ref{def:gl-packet}. 

\vspace{2mm} \noindent\emph{Step 2. Recovery of the common profile.} We first compare the velocities away from the turning curve, then control the mass near it. The Airy zeros slightly change the phases. For $t\in[0,T]$, the explicit formulas \eqref{eq:gl-disc-phase} give
\begin{equation*}
\frac{|(b_{\lambda_n}-b_\lambda)\theta|}{h_n} \le C_{\ell,T}\frac{\lambda^2j^{-2}}h \le C_{\ell,T}\frac h{\lambda^4}\rightarrow0.
\end{equation*}
On a regular oscillatory band $y_c^\lambda-y\ge\delta\lambda^2$, with $\delta>0$ fixed, they also give
\begin{equation*}
\frac{|R_{\lambda_n}-R_\lambda|}{h_n}\le C_{\ell,\delta}j^{-1} ,\text{ \; and \; } \frac{\zeta_{\lambda_n}}{\zeta_\lambda}=1+O_{\ell,\delta}(j^{-2}).
\end{equation*}

Consequently each mode in \eqref{eq:gl-airy-wave} has a common amplitude and common phases. Summing its two branches gives
\begin{align*}
\Real\sum_{\substack{0<n\le\ell\\n\equiv1\mod{4}}}d_{*,n} \left(e^{i\pi/4}e^{2\pi in\phi}-e^{-i\pi/4}e^{2\pi in\psi}\right)=f_*^{(\ell)}(\phi)-g_*^{(\ell)}(\psi),
\end{align*}
where $\phi:=\frac{b_\lambda \theta-t+R_\lambda(y)}{2\pi h}+\alpha \text{ \; and \; } \psi:=\frac{b_\lambda \theta-t-R_\lambda(y)}{2\pi h}+\alpha$. The derivative of the envelope contributes $O_{\ell,T}(h/w)$ in spacetime $L^2$, which tends to zero. This proves the comparison with \eqref{eq:gl-finite-S} away from the turning curve on the oscillatory side. On the other side, Airy decay gives a vanishing contribution.
\vspace{1.5mm}

To include the turning curve, we use integral estimates. The change of variable $x=-\zeta_{\lambda_n}/h_n^{2/3}$ and the Airy bounds yield
\begin{align*}
\int_0^{Y_1\lambda^2}|\Ai(-\zeta_{\lambda_n}/h_n^{2/3})|^2\dd y \le C_\ell h_n^{1/3}\lambda,\;\int_0^{Y_1\lambda^2}|\Ai'(-\zeta_{\lambda_n}/h_n^{2/3})|^2\dd y \le C_\ell h_n^{-1/3}\lambda^3.
\end{align*}
More precisely, on $|y-y_c^\lambda|\le\delta\lambda^2$ the first integral is at most $C_\ell h_n^{1/3}\lambda \left(\sqrt\delta+j^{-1}+(h/\lambda^3)^{1/3}\right)$. Here the last two terms account for the displacement of the modewise turning curves and the Airy transition scale. The target has the same $\sqrt\delta$ bound because its squared normal amplitude is proportional to $(\zeta_\lambda)^{-1/2}$, whose integral on the strip is $O(\lambda\sqrt\delta)$.

Let $\mathcal V_{\lambda,\delta}:=\{(t,x):0<t<T,\ |y(x)-y_c^\lambda|\le\delta\lambda^2\}$. Including the factor $(\lambda w)^{-1}$ and integrating the squared envelope in $\theta$, we obtain
\begin{equation*}
\limsup_{\lambda\to0^+}\left( \|\1_{\mathcal V_{\lambda,\delta}}\partial_tv_\ell\|_{L^2}^2 +\|\1_{\mathcal V_{\lambda,\delta}}S_{\lambda,w,\ell}\|_{L^2}^2\right) \le C_{\ell,T}\sqrt\delta.
\end{equation*}
Combining this with the regular-band comparison, then letting $\delta\to0^+$, proves
\begin{equation}\label{eq:gl-finite-leading}
\|\partial_tv_\ell-S_{\lambda,w,\ell}\|_{L^2((0,T)\times\D)}\rightarrow0.
\end{equation}

\vspace{2mm} \noindent\emph{Step 3. The remainder and the exact wave.} In the disc coordinates, write $r=1-y$. Then
\begin{equation*}
\Delta=\partial_y^2-r^{-1}\partial_y+r^{-2}\partial_{\theta}^2.
\end{equation*}
For one mode, abbreviate $b_n=b_{\lambda_n}$, $\zeta_n=\zeta_{\lambda_n}$, $\eta=(\theta-t-z)/w$ and
\begin{equation*}
B_n(y):=\Ai\left(-\zeta_n(y)/h_n^{2/3}\right),\; \vartheta_n:=(b_n\theta-t)/h_n+2\pi n\alpha, \text{ \; and \; }F_n:=\chi(\eta)B_ne^{i\vartheta_n}.
\end{equation*}
Before the normal cutoff, direct differentiation gives
\begin{equation*}
(\partial_t^2-\Delta)F_n=e^{i\vartheta_n}\left( \frac{D_n}{h_n^2}\chi B_n +\frac{2i(1-r^{-2}b_n)}{h_nw}\chi'B_n +\frac{\zeta_n''-r^{-1}\zeta_n'}{h_n^{2/3}}\chi\Ai'(-\zeta_n/h_n^{2/3}) +\frac{1-r^{-2}}{w^2}\chi''B_n \right),
\end{equation*}
where $D_n:=b_n^2r^{-2}+\zeta_n(\zeta_n')^2-1$. On the normal collar, \eqref{eq:gl-disc-phase} gives
\begin{equation*}
D_n=O_\ell(\lambda^6),\;1-r^{-2}b_n=O_\ell(\lambda^2),\; \zeta_n''-r^{-1}\zeta_n'=O_\ell(1), \text{ \; and \; } 1-r^{-2}=O(\lambda^2).
\end{equation*}
Applying the two Airy integral bounds, and summing over the fixed modes, we deduce
\begin{equation*}
\|(\partial_t^2-\Delta)v_\ell\|_{L^1(0,T;L^2(\D))}\le C_{\ell,T}\left(\frac{\lambda^6}{h}+\lambda+\frac{\lambda^2}{w} +\frac{h\lambda^2}{w^2}\right)+o(1)\rightarrow0.
\end{equation*}
The normal cutoff contributes exponentially small terms. All the displayed terms vanish: $h\sim\lambda^{9/2}$, $\lambda^2/w\to0$, and
\begin{equation*}
\frac{h\lambda^2}{w^2}=\frac h{\lambda^2}\left(\frac{\lambda^2}{w}\right)^2\rightarrow0.
\end{equation*}
Periodization preserves these bounds on every fixed finite time interval.

Let $u_{\lambda,\ell}$ be the exact Dirichlet wave with initial state $v_\ell[0]$. The difference has zero initial and boundary data and forcing $-(\partial_t^2-\Delta)v_\ell$. The energy estimate and \eqref{eq:gl-finite-leading} therefore give
\begin{align*}
\|\partial_tu_{\lambda,\ell}-S_{\lambda,w,\ell}\|_{L^2((0,T)\times\D)} \le C_T\|(\partial_t^2-\Delta)v_\ell\|_{L^1(0,T;L^2(\D))}+\|\partial_tv_\ell-S_{\lambda,w,\ell}\|_{L^2((0,T)\times\D)} \rightarrow0.
\end{align*}
This finishes the proof of the lemma.
\end{proof}

\subsubsection{The same symmetry at different scales}

Choose $w_k\to0^+$ and then $\lambda_k\to0^+$ with $\lambda_k^2/w_k\to0$. Let $h_k$ be given by \eqref{eq:gl-disc-scales}, and allow center parameters $z_k$ and common phase shifts $\alpha_k$. After multiplication by the normalization constants $c_k$, for simplicity, we write
\begin{equation}\label{eq:gl-disc-family}
\begin{aligned}
A_{k}&:=c_kA_{\lambda_k,w_k}, S_k:=c_kS_{\lambda_k,w_k,z_k,\alpha_k}=A_{k}(f_*(\phi_k)-g_*(\psi_k)),\\
\Gamma_k&:=\Gamma_{\lambda_k,w_k,z_k,\alpha_k},\ \1_{\Gamma_k}S_k=0.
\end{aligned}
\end{equation}
Set $\theta=t+z_k+w_k\sigma$. Then
\begin{equation}\label{eq:gl-moving}
\begin{split}
\phi_k&=\frac{b_{\lambda_k}(z_k+w_k\sigma)+(b_{\lambda_k}-1)t+R_{\lambda_k}(y)}{2\pi h_k}+\alpha_k,\\
\psi_k&=\frac{b_{\lambda_k}(z_k+w_k\sigma)+(b_{\lambda_k}-1)t-R_{\lambda_k}(y)}{2\pi h_k}+\alpha_k.
\end{split}
\end{equation}
Thus the envelope moves at unit speed, but the internal matching pattern also changes through the common phase drift $\frac{(b_{\lambda_k}-1)t}{2\pi h_k}$. 

\begin{remark}\label{rem:gl-S-params}
We choose 
\begin{equation*}
S_k=c_k\,S\!\left(q,\chi;z_k,\alpha_k;\lambda_k,w_k\right),\text{ where }q=f_*, \text{ \; and \; } g_*=\calH q.
\end{equation*}
Here $S$ is given by \eqref{eq:gl-disc-S}. The profile $q$ and envelope $\chi$ are fixed; $z_k$ determines the position of the moving packet region, while $\alpha_k$ is the common phase shift of the two profiles.

The parameters $\lambda_k,w_k$ determine the normal depth and tangential width, while \eqref{eq:gl-disc-scales} fixes $h_k$ from $\lambda_k$. Thus the main propagation scales are $(h_k,\lambda_k^2,w_k)$, where $h_k\sim\lambda_k^{9/2}$, and $\lambda_k^2/w_k\to0$. Proposition~\ref{prop:gl-disc-real} provides bounded energy-normalizing factors $c_k$ using Fourier cutoffs.
\end{remark}

\begin{figure}[H]
\centering
\includegraphics[width=\textwidth]{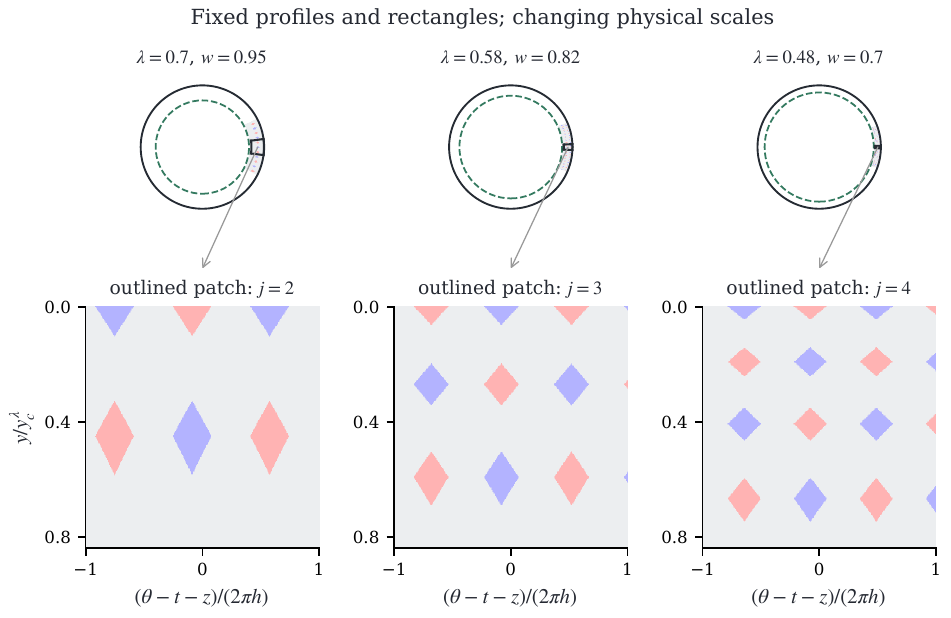}
\caption{One fixed diamond symmetry at three scales.}\label{fig:gl-scales}
\end{figure}

\subsection{A fixed observation region}\label{subsec:gl-observation}

We combine the moving packet regions at all scales into one observation region. The additional regions $\calE_k$ are pullbacks of strips near the reference corner values. They accommodate the phase errors in the ray comparison of Section~\ref{sec:gcc}. All sets below are subsets of $(0,T)\times\D$. Before we state our next proposition, we recall the notations. The diamond profiles $f_*,g_*$ and a real cutoff $\chi$, and $S_k,\Gamma_k$ are as in Proposition~\ref{prop:gl-support}. The moving packet regions of Definition~\ref{def:gl-packet} are
\begin{equation*}
\mathcal U_k:=\mathcal U_{\lambda_k,w_k,z_k} =\{(t,x):\theta(x)-t\pmod{2\pi}\in I_k,\quad 0<y(x)<y_c^{\lambda_k}\}.
\end{equation*}

Choosing reference widths $\gamma_k\to0^+$, set $\mathcal K:=\{0,1/4,1/2,3/4\}$, and define
\begin{equation}\label{eq:gl-extra}
\calE_k:=\{(t,x)\in\mathcal U_k:\dist(\phi_k,\mathcal K)\le4\gamma_k \text{ or } \dist(\psi_k,\mathcal K)\le4\gamma_k\}.
\end{equation}
The distances here are taken on $\R/\Z$. Define the hidden part of each moving packet region by
\begin{equation*}
\mathcal H_k:=\mathcal U_k\setminus(\Gamma_k\cup\calE_k).
\end{equation*}
The finite-stage and fixed observation regions are
\begin{equation}\label{eq:gl-disc-G}
G^{(N)}:=\left((0,T)\times\D\right)\setminus\bigcup_{k\le N}\mathcal H_k,\;G:=\bigcap_{N\ge1}G^{(N)} =\left((0,T)\times\D\right)\setminus\bigcup_{k\ge1}\mathcal H_k.
\end{equation}
\begin{prop}\label{prop:gl-observation}
Fix $T>0$ and adopt the conventions above. Choose centers $z_k$ and widths $0<w_k<1$, with $w_k\to0^+$, so that the intervals
\begin{equation}\label{eq:gl-intervals}
I_k:=z_k+(-w_k/2,w_k/2)\subset\R/2\pi\Z
\end{equation}
are pairwise disjoint. Fix common phase shifts $\alpha_k\in\R/\Z$, sufficiently small scales $\lambda_k\to0^+$ and factors $c_k>0$. Then, these Borel sets satisfy
\begin{equation}\label{eq:gl-disc-stages}
\begin{gathered}
G^{(N)}\downarrow G,\\
G^{(N)}\cap\mathcal U_k=G\cap\mathcal U_k=\Gamma_k\cup\calE_k, \quad \forall k\in\{1,\ldots,N\}.
\end{gathered}
\end{equation}
Moreover,
\begin{equation}\label{eq:gl-disc-observed}
\1_{G^{(N)}}S_k=\1_GS_k=\1_{\calE_k}S_k, \quad \forall k\in\{1,\ldots,N\}.
\end{equation}
\end{prop}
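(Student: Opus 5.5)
The proof is set-theoretic, resting on two facts: the moving packet regions $\mathcal U_k$ are pairwise disjoint, and each $S_k$ is supported in $\mathcal U_k$ and vanishes on $\Gamma_k$. We first check measurability. The functions $\theta(x)-t$, $y(x)$, $\phi_k$ and $\psi_k$ are continuous on $\mathcal U_k$, and $F_\pm,J_\pm,\mathcal K$ are closed or finite subsets of $\R/\Z$. Hence $\mathcal U_k$ is open, and $\Gamma_k$, $\calE_k$ are relatively closed in $\mathcal U_k$. So $\mathcal H_k$ is Borel, and $G^{(N)}$ and $G$ are Borel as well.

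\emph{Step 1: disjointness.} If $(t,x)\in\mathcal U_k\cap\mathcal U_m$ with $k\ne m$, then $\theta(x)-t \pmod{2\pi}$ lies in $I_k\cap I_m=\emptyset$, a contradiction. Since $\mathcal H_m\subset\mathcal U_m$, we get $\mathcal H_m\cap\mathcal U_k=\emptyset$ for every $m\ne k$.

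\emph{Step 2: the identities \eqref{eq:gl-disc-stages}.} The union $\bigcup_{k\le N}\mathcal H_k$ increases in $N$, so its complement $G^{(N)}$ decreases, and its intersection is $G$ by definition. Fix $k\le N$. By Step 1, intersecting with $\mathcal U_k$ removes every $\mathcal H_m$ with $m\ne k$. This gives
\[
G^{(N)}\cap\mathcal U_k=\mathcal U_k\setminus\mathcal H_k=\Gamma_k\cup\calE_k,
\]
since $\Gamma_k\cup\calE_k\subset\mathcal U_k$. The same computation with the full union gives $G\cap\mathcal U_k=\Gamma_k\cup\calE_k$.

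\emph{Step 3: the identity \eqref{eq:gl-disc-observed}.} By Proposition~\ref{prop:gl-support}(i), and since the factor $c_k$ does not change the support, $S_k=\1_{\mathcal U_k}S_k$. Therefore
\[
\1_{G^{(N)}}S_k=\1_{G^{(N)}\cap\mathcal U_k}S_k=\1_{\Gamma_k\cup\calE_k}S_k,
\]
and likewise for $G$. Pointwise,
\[
\1_{\Gamma_k\cup\calE_k}=\1_{\calE_k}+\1_{\Gamma_k\setminus\calE_k}.
\]
By \eqref{eq:gl-disc-zero}, $\1_{\Gamma_k}S_k=0$, so the second term contributes nothing. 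This leaves $\1_{\calE_k}S_k$.

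There is no serious obstacle. The only care needed is the angular lift in Step 1: membership in $\mathcal U_k$ is defined through $\theta-t$ modulo $2\pi$, so disjointness of the $I_k$ in $\R/2\pi\Z$ is exactly what is required. We also need the sets $\Gamma_k$ and $\calE_k$ to be defined as subsets of $\mathcal U_k$, as they are in \eqref{eq:gl-disc-Gamma} and \eqref{eq:gl-extra}.
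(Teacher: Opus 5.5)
Your proof is correct and follows essentially the same route as the paper: disjoint $I_k$ give disjoint $\mathcal U_k$, which localizes $G^{(N)}$ and $G$ on each packet to $\Gamma_k\cup\calE_k$, and then support in $\mathcal U_k$ together with $\1_{\Gamma_k}S_k=0$ yields $\1_{G^{(N)}}S_k=\1_GS_k=\1_{\calE_k}S_k$. The differences are cosmetic. You compute $G^{(N)}\cap\mathcal U_k$ directly rather than through the paper's one-step recursion $G^{(N+1)}=(G^{(N)}\setminus\mathcal U_{N+1})\cup\Gamma_{N+1}\cup\calE_{N+1}$, and you add an explicit Borel-measurability check that the paper leaves implicit.
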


\begin{remark}\label{rem:gl-params}
With the propagation model fixed, the set $G$ can be written as
\begin{equation}\label{eq:gl-G-param}
G=G\left(q,T;\{z_k,\alpha_k\}_{k\ge1};\{\lambda_k,w_k,\gamma_k\}_{k\ge1}\right), \text{ where }q=f_*\quad\text{and}\quad g_*=\calH q.
\end{equation}
The positions and propagation scales are as in Remark~\ref{rem:gl-S-params}. The additional reference width $\gamma_k$ changes $\calE_k$ and $G$ but not $S_k$. Proposition~\ref{prop:gl-disc-real} allows any $\gamma_k\to0$ and Section~\ref{sec:gcc} chooses it as a function of $\lambda_k$ for GCC.
\end{remark}

\begin{proof}[Proof of Proposition~\ref{prop:gl-observation}]
Note that $I_k$ are disjoint intervals, which implies disjoint moving packet regions in spacetime, regardless of their normal depths. Proposition~\ref{prop:gl-support}(ii) gives $\1_{\Gamma_k}S_k=0$. We also retain $\calE_k$, the physical pullback of strips of reference width $4\gamma_k$. 

Now we try to combine the scales. The set $G^{(N)}$ observes everything outside the first $N$ moving packet regions and retains $\Gamma_k\cup\calE_k$ inside each of them. Adding one scale gives
\begin{align*}
G^{(N+1)} &=G^{(N)}\setminus\mathcal H_{N+1}\\
&=\left(G^{(N)}\setminus\mathcal U_{N+1}\right) \cup\Gamma_{N+1}\cup\calE_{N+1}.
\end{align*}
The earlier moving packet regions are disjoint from $\mathcal H_{N+1}$, proving \eqref{eq:gl-disc-stages}.

By Proposition~\ref{prop:gl-support}(i), $S_k$ vanishes outside $\mathcal U_k$. Thus, for $k\le N$,
\begin{align*}
\1_{G^{(N)}}S_k=\1_{G^{(N)}\cap\mathcal{U}_k}S_k=\1_{\Gamma_k\cup\calE_k}S_k=\1_{\calE_k}S_k.
\end{align*}
The last equality uses $\1_{\Gamma_k}S_k=0$. The same calculation on $G$ proves \eqref{eq:gl-disc-observed}.

\end{proof}

The observation stages are illustrated in Figure~\ref{fig:gl-stages}. In contrast with \cite[equation~(3.45)]{AOSI}, $\1_GS_k=\1_{\calE_k}S_k$ need not vanish. Its smallness requires an energy estimate because the amplitudes concentrate. Proposition~\ref{prop:gl-disc-real} gives this estimate and non-observability; Section~\ref{sec:gcc} makes further parameter choices to obtain \hyperref[eq:gcc]{GCC} on the same $G$.

\begin{figure}[htbp]
\centering
\begin{tikzpicture}[x=1cm,y=1cm,>=stealth,font=\small]
  \foreach \stage/\shift in {1/0,2/4.9,3/9.8}{
    \begin{scope}[xshift=\shift cm]
      \node at (2.0,2.35) {$G^{(\stage)}$};
      \fill[gray!14] (0,0) rectangle (4.0,1.8);
      \foreach \k/\left/\width/\depth in {1/0.35/1.15/1.35,2/1.9/0.8/0.95,3/3.05/0.5/0.6}{
        \begin{scope}[shift={(\left,0)},xscale=\width,yscale=\depth]
          \ifnum\k>\stage
          \else
            \fill[white] (0,0) rectangle (1,1);
            \fill[red!30] (0.10,0.12) rectangle (0.45,0.44);
            \fill[blue!30] (0.55,0.56) rectangle (0.90,0.88);
            \fill[orange!45] (0.46,0) rectangle (0.54,1);
            \fill[orange!45] (0,0.46) rectangle (1,0.54);
          \fi
          \draw[gray!75,dashed] (0,0) rectangle (1,1);
        \end{scope}
        \node[anchor=north] at ({\left+\width/2},-0.06) {$I_{\k}$};
      }
      \draw[->] (0,0) -- (4.25,0) node[below] {$\theta-t$};
      \draw[->] (0,0) -- (0,2.05) node[left] {$y$};
    \end{scope}
  }
  \node at (6.9,-0.88) {$G^{(1)}\supset G^{(2)}\supset G^{(3)}\supset\cdots,\quad G=\displaystyle\bigcap_{N\ge1}G^{(N)}$};
  \fill[gray!14] (0.4,-1.6) rectangle (0.65,-1.38);
  \node[anchor=west] at (0.72,-1.49) {fully observed};
  \fill[red!30] (3.9,-1.6) rectangle (4.04,-1.38);
  \fill[blue!30] (4.06,-1.6) rectangle (4.2,-1.38);
  \node[anchor=west] at (4.28,-1.49) {matching $\Gamma_k$};
  \fill[orange!45] (7.85,-1.6) rectangle (8.1,-1.38);
  \node[anchor=west] at (8.18,-1.49) {additional $\calE_k$};
  \draw[gray!75] (11.55,-1.6) rectangle (11.8,-1.38);
  \node[anchor=west] at (11.88,-1.49) {unobserved};
\end{tikzpicture}

\caption{Finite-stage observation regions at a fixed time, in coordinates $(\theta-t,y)$.}\label{fig:gl-stages}
\end{figure}

\Needspace{8\baselineskip}
\subsection{Exact waves and the asymptotic realization formula}\label{subsec:gl-disc-real}

We now realize $S_k$ by time derivatives of normalized solutions of \eqref{eq:wave} and show that their observation on $G$ tends to zero.

\begin{prop}\label{prop:gl-disc-real}
Fix $T>0$, the diamond profiles $f_*,g_*$ from Proposition~\ref{prop:diamond}, and a real cutoff $\chi\in C_c^\infty((-1/2,1/2))$. Prescribe widths $0<w_k<1$ with $w_k\to0$, centers $z_k\in\R/2\pi\Z$, common phase shifts $\alpha_k\in\R/\Z$, and $h_k$ determined by \eqref{eq:gl-disc-scales}. For any positive upper bounds $\overline\lambda_k$, one can choose Fourier cutoffs $\ell_k\to\infty$, then sufficiently small scales satisfying
\begin{equation*}
0<\lambda_k\le\overline\lambda_k,\;\lambda_k\to0, \text{ \; and \; }\lambda_k^2/w_k\to0.
\end{equation*}
There are constants $0<c_*\le c_k\le C_*$ and real solutions $u_k$ of \eqref{eq:wave} with the following properties.

\begin{enumerate}[label=(\roman*),leftmargin=*,itemsep=1.5mm]
\item \emph{(Asymptotic realization).} With $S_k=c_kS_{\lambda_k,w_k,z_k,\alpha_k}$ and $\Gamma_k$ from Proposition~\ref{prop:gl-support},
\begin{equation}\label{eq:gl-disc-real}
E[u_k](0)=1,  \; \partial_tu_k=S_k+r_k, \text{ \; and \; } \|r_k\|_{L^2((0,T)\times\D)}\rightarrow0.
\end{equation}
The same profiles $f_*,g_*$ and reference matching region $\mathcal R$ are used for every $k$. In particular,
\begin{equation*}
\1_{\Gamma_k}S_k=0,  \; \1_{\Gamma_k}\partial_tu_k=\1_{\Gamma_k}r_k, \text{ \; and \; } \liminf_k\|S_k\|_{L^2((0,T)\times\D)}>0.
\end{equation*}
\item \emph{(Observation on the fixed set and non-observability).} If the intervals $I_k$ in \eqref{eq:gl-intervals} are pairwise disjoint, choose any $\gamma_k\to0^+$ and define $G^{(N)},G$ by Proposition~\ref{prop:gl-observation}, using these same scales. Then
\begin{equation}\label{eq:gl-disc-wave-obs}
\1_{G^{(N)}}\partial_tu_k =\1_{G^{(N)}}r_k+\1_{\calE_k}S_k, \; \forall k\in\{1,\ldots,N\},
\end{equation}
and the same identity holds with $G$ in place of $G^{(N)}$. Moreover,
\begin{equation*}
\|\1_{\calE_k}S_k\|_{L^2((0,T)\times\D)}^2 \le C_T(\gamma_k+h_k/w_k)\rightarrow0.
\end{equation*}
Consequently,
\begin{equation*}
\|\1_G\partial_tu_k\|_{L^2((0,T)\times\D)}^2 \le2\|r_k\|_{L^2((0,T)\times\D)}^2+C_T(\gamma_k+h_k/w_k) \rightarrow0,
\end{equation*}
so observability fails on $G$. Here $C_T$ is independent of $k$ and the reference widths $\gamma_k$.
\end{enumerate}
\end{prop}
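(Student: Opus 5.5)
The plan is a diagonal argument on top of Lemma~\ref{lem:gl-finite}, together with a uniform bound for the tail of the Fourier truncation.

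\emph{Step 1: tail bound.} By \eqref{eq:diamond-coeff}, $\sum_{n>\ell}|d_{*,n}|^2\lesssim\ell^{-2}\to0$. Hence $\|f_*-f_*^{(\ell)}\|_{L^2(\R/\Z)}+\|g_*-g_*^{(\ell)}\|_{L^2(\R/\Z)}\to0$. We transfer this to physical space with a change of variables. At fixed $(t,y)$, the map $\theta\mapsto\phi$ has Jacobian $b_\lambda/(2\pi h)$, and the envelope has width $w\gg h$. Averaging a periodic function against the envelope therefore gives
\[
\|A_{\lambda,w}(q(\phi)-q^{(\ell)}(\phi))\|_{L^2}^2\le C_T\|q-q^{(\ell)}\|_{L^2(\R/\Z)}^2+O(h/w)\,\|q\|_{L^\infty}^2 ,
\]
after the normal integral $\int_0^{y_c}\zeta_\lambda^{-1/2}\,dy\sim\lambda$ cancels the factor $(\lambda w)^{-1}$. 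The same bound holds for $\psi$. This bound is uniform in $\lambda$, which is what allows the diagonal choice below.

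\emph{Step 2: diagonal choice.} Choose $\ell_k\to\infty$ so that the tail in Step~1 is at most $1/k$. For each $k$, apply Lemma~\ref{lem:gl-finite} with $\ell=\ell_k$ and $(w_k,z_k,\alpha_k)$. Then pick $\lambda_k\le\overline\lambda_k$ small enough that $\lambda_k^2/w_k\le 1/k$, $h_k/w_k\le1/k$, and the realization error \eqref{eq:gl-finite-real} is at most $1/k$. This produces unnormalized solutions $\tilde u_k$ with
\[
\partial_t\tilde u_k=S_{\lambda_k,w_k,z_k,\alpha_k}+\tilde r_k,\qquad \|\tilde r_k\|_{L^2}\to0 .
\]

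\emph{Step 3: normalization, and the main obstacle.} The key point is a two-sided bound on $E[\tilde u_k](0)$. First, $\|S_{\lambda,w,z,\alpha}\|_{L^2((0,T)\times\D)}^2$ is bounded above by Proposition~\ref{prop:gl-support}. It is bounded below by the same averaging argument: $f_*(\phi)-g_*(\psi)$ has positive mean square over a phase period, because the pair $(\phi,\psi)$ equidistributes on $\mathbb T^2$ as $y$ and $\theta$ vary, and $\|f_*\|_{L^2}^2+\|g_*\|_{L^2}^2>0$. Second, for a Dirichlet wave, equipartition gives $\|\partial_t\tilde u\|_{L^2((0,T)\times\D)}^2\in[c_T,C_T]\cdot E$ once $T$ is fixed and the solution is high frequency. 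An upper bound $E\le C\|\partial_t\tilde u\|^2$ at high frequency is the delicate direction. I would get it instead from the explicit Airy sum $v_\ell$: compute $E[v_\ell](0)$ directly as $\sim\|d^{(\ell)}\|^2$, using the same Airy integral bounds as in Lemma~\ref{lem:gl-finite}. This avoids invoking equipartition.

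Set $c_k:=E[\tilde u_k](0)^{-1/2}\in[c_*,C_*]$, $u_k:=c_k\tilde u_k$ and $r_k:=c_k\tilde r_k$. This gives (i). The identity $\1_{\Gamma_k}S_k=0$ is Proposition~\ref{prop:gl-support}, and $\liminf_k\|S_k\|>0$ follows from the lower bound in Step~3.

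\emph{Step 4: part (ii).} Multiply $\partial_tu_k=S_k+r_k$ by $\1_{G^{(N)}}$ and use \eqref{eq:gl-disc-observed}. It then remains to estimate $\|\1_{\calE_k}S_k\|^2$. Since $|f_*|,|g_*|\le1$, we have $|S_k|^2\le 4|A_k|^2$. The set $\calE_k$ is the pullback of a set of measure at most $16\gamma_k$ on $\R/\Z$ in either $\phi$ or $\psi$. Applying the same change of variables and envelope averaging as in Step~1 gives the bound $C_T(\gamma_k+h_k/w_k)$. The final inequality then follows from $(a+b)^2\le2a^2+2b^2$.
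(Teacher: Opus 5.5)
Your proposal is correct and follows essentially the paper's route: a scale-uniform period-counting bound for the Fourier tail, a diagonal choice of $\ell_k$ and then $\lambda_k$ on top of Lemma~\ref{lem:gl-finite}, normalization by $c_k=E[v_{\ell_k}](0)^{-1/2}$ with $\ell$-independent two-sided bounds computed from the explicit Airy sum, and the same period count for $\|\1_{\calE_k}S_k\|^2$. Two points are harmless: for the energy computation you must also shrink $\lambda_k$ at each fixed $\ell_k$ so the cross terms vanish, and your variations (counting periods in $\theta$ at fixed $(t,y)$ rather than in $t$ at fixed $x$, and bounding $\|S_k\|$ below via equidistribution of $(\phi,\psi)$ for the full profiles rather than via the first-mode bound plus the tail estimate) work equally well.
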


\noindent\emph{Idea of realization.} Lemma~\ref{lem:gl-finite} realizes each fixed finite-profile expression by an exact wave. Its proof first reads $S_{\lambda,w,\ell}$ from the two Airy branches and then corrects the residual of $v_\ell$. To recover the full diamond, we need a profile-tail estimate uniform in the scale and energy bounds uniform along the chosen sequence of Fourier cutoffs. The same bounded factor $c_k$ then normalizes the wave and the full-profile expression $S_k$; the latter retains exact matching.

We give a sketch of these remaining steps. The uniform energy and propagated-tail estimates are proved in Sections~\ref{subsubsec:curved-energy}--\ref{subsubsec:curved-tail} for general domains and apply here with constant curvature.

\begin{proof}[Sketch of the proof]
For fixed $\ell$, take the approximate and exact waves constructed in Lemma~\ref{lem:gl-finite}. The periodization in its proof is valid for every fixed $T>0$. With $\kappa=1$, the energy and propagated-tail estimates in Sections~\ref{subsubsec:curved-energy}--\ref{subsubsec:curved-tail} give
\begin{gather*}
c\le E[v_\ell](0)\le C\quad\text{for sufficiently small $\lambda$ at each fixed $\ell$},\\
\|S_{\lambda,w,\ell}-S_{\lambda,w,z,\alpha}\|_{L^2((0,T)\times\D)} \le C_T\left(\|f_*^{(\ell)}-f_*\|_{L^2}+\|g_*^{(\ell)}-g_*\|_{L^2}\right).
\end{gather*}
The constants are independent of $\ell$ in this order of choices, and the profile norms are on $\R/\Z$. Then, choose $\ell_k\to\infty$, then $\lambda_k\le\overline\lambda_k$ sufficiently small for the finite-profile error in \eqref{eq:gl-finite-real} and the required energy bounds, with $\lambda_k\to0$ and $\lambda_k^2/w_k\to0$. Set $c_k=E[v_{\ell_k}](0)^{-1/2}$ and $u_k=c_ku_{\lambda_k,\ell_k}$. Therefore, we have
\begin{align*}
\|\partial_tu_k-S_k\|_{L^2((0,T)\times\D)} \le c_k\|\partial_tu_{\lambda_k,\ell_k}-S_{\lambda_k,w_k,\ell_k}\|_{L^2((0,T)\times\D)}+c_k\|S_{\lambda_k,w_k,\ell_k}-S_{\lambda_k,w_k,z_k,\alpha_k}\|_{L^2((0,T)\times\D)}.
\end{align*}
Lemma~\ref{lem:gl-finite} controls the first term, and the uniform profile-tail estimate controls the second. Since $c_k$ is bounded, we obtain
\begin{equation*}
\|\partial_tu_k-c_kS_{\lambda_k,w_k,z_k,\alpha_k}\|_{L^2((0,T)\times\D)}\rightarrow0.
\end{equation*}
The nonvanishing bound follows from the first-mode lower bound in Section~\ref{subsubsec:curved-energy}, together with the same profile-tail estimate. We omit the repeated energy and tail calculations.
\vspace{1.5mm}

Proposition~\ref{prop:gl-support}(ii) gives the matching assertion. For disjoint intervals, we apply Proposition~\ref{prop:gl-observation} after the scales have been chosen. Multiplying \eqref{eq:gl-disc-real} by $\1_{G^{(N)}}$ or $\1_G$ then gives the observation identities.

For the additional observed energy, apply the period-counting estimate in Section~\ref{subsubsec:curved-diagonal} on each interval in $\R$. At a fixed spatial point, only $O_T(1)$ such intervals meet $[0,T]$; on each of them the phases have time derivative $-(2\pi h_k)^{-1}$. Summing their contributions and integrating the normal amplitude as in \eqref{eq:extra-energy}, we obtain
\begin{equation*}
\|\1_{\calE_k}S_k\|_{L^2((0,T)\times\D)}^2 \le C_T(\gamma_k+h_k/w_k)\rightarrow0.
\end{equation*}
Here $h_k\sim\lambda_k^{9/2}$ and $\lambda_k^2/w_k\to0$. Combining this estimate with the realization error proves non-observability on $G$ for every fixed $T>0$.
\end{proof}

\section{AOS on smooth bounded domains}\label{sec:curved}

The glancing disc construction of Section~\ref{sec:glancing-disc} has given a concrete family of the form $A\left(f_*(\phi)-g_*(\psi)\right)$ and an observation rule on disjoint moving packet regions. Its two profiles satisfy the same boundary--caustic compatibility at every scale.

We now keep these profiles and ask how to construct the propagation phases when the \emph{curvature varies}. The new difficulty is geometric: the normal depth and tangential action can no longer be chosen independently of the boundary point. The diamond profile, the symmetry, and the rule defining $G^{(N)}$ and $G$ do not change.

Throughout this section, $\Omega$ is a smooth bounded planar domain. We work near one positively curved boundary arc for a fixed time $0<T<T_{\rm loc}$, with the geometry chosen in Section~\ref{subsec:curved-local}. Theorem~\ref{thm:realization} realizes the same diamond family by normalized solutions of \eqref{eq:wave} on the whole domain $\Omega$ and proves non-observability on the common set $G$.
\vspace{1.5mm}

The section is organized as follows.
\begin{itemize}[leftmargin=*,itemsep=3pt]
\item Section~\ref{subsec:curved-local} fixes the local geometry and a short observation time near a positively curved boundary arc.
\item Section~\ref{subsec:curved-phases} constructs the phases with a common boundary action and coordinates the scales.
\item Section~\ref{subsec:curved-boundary} imposes the Dirichlet condition and recovers a common amplitude. Propositions~\ref{prop:curved-support} and~\ref{prop:curved-observation} define $S_k$ and $G^{(N)},G$.
\item Proposition~\ref{prop:finite-airy} estimates the residual of $v_\ell$ and corrects it to an exact wave $u_\ell$.
\item Section~\ref{subsec:curved-full} controls the profile tails, normalizes the energy and estimates the additional observed energy, completing Theorem~\ref{thm:realization}.
\end{itemize}

\setcounter{subsection}{-1}
\subsection{From the disc to a general domain: the intuition}\label{subsec:curved-intuition}

We follow the construction of Section~\ref{subsec:gl-intuition} in seven steps. Stars mark the principal changes from the disc: the local geometry, the common boundary action, and the corrected phases and amplitudes. The remaining steps use these data in the same localization, matching, and approximation arguments.

\vspace{2mm} \noindent{\it $1^*$. A local positively curved boundary arc.} Notice that every smooth bounded planar domain has a boundary point of positive curvature with respect to the inward unit normal\footnote{Later we may just use positive curvature for short.}. We choose a small arc around it and a fixed short time so that all the moving packet regions remain near that arc. The rest of the boundary may be nonconvex. Outside the moving packet regions, the entire domain is observed. Section~\ref{subsec:curved-local} gives the local coordinates and the separation condition needed later for test rays.

Write $s$ for local boundary arclength, $y$ for inward distance and $\kappa(s)>0$ for curvature on the chosen arc.

\vspace{2mm} \noindent{\it $2^*$. A common boundary action and the same compatible profiles.} Keep the diamond profiles and their reference matching rectangles from Section~\ref{subsec:diamond}:
\begin{equation*}
g_*=\calH f_*,\quad f_*=\tau_{1/4}g_*,\text{ \; and \; } \mathcal R=(F_+\times J_+)\cup(F_-\times J_-).
\end{equation*}
To use the same boundary translation at every point, we require the phase difference at $y=0$ to remain a quarter-period. As in the disc, this is achieved by keeping the boundary action equal to $\lambda^3/3$ and coordinating the semiclassical parameter with it. Freezing the curvature at one point, a small normal momentum $\beta$ gives the leading action
\begin{equation*}
\int_0^{\beta^2/(2\kappa)}\sqrt{\beta^2-2\kappa y}\dd y =\frac{\beta^3}{3\kappa}.
\end{equation*}
To keep this equal to $\lambda^3/3$, choose $\beta=\lambda\kappa^{1/3}$. This predicts the depth $\lambda^2/(2\kappa^{1/3})$: larger curvature makes the moving packet region thinner.
\vspace{1.5mm}

The corrected normal action $R_\lambda$ will preserve the boundary value exactly, while its turning depth has this leading behavior:
\begin{equation*}
R_\lambda(s,0)=\frac{\lambda^3}{3},\text{ \; and \; } y_c^\lambda(s)=\frac{\lambda^2}{2\kappa(s)^{1/3}}+O(\lambda^4).
\end{equation*}
The common boundary action allows the same quarter-period relation along the arc. The physical matching cells can therefore deform without changing the reference symmetry; see Figure~\ref{fig:curved-intuition}. We construct finite-order phases, instead of considering an exact invariant billiard caustic.

\begin{figure}[htbp]
\centering
\includegraphics[width=\textwidth]{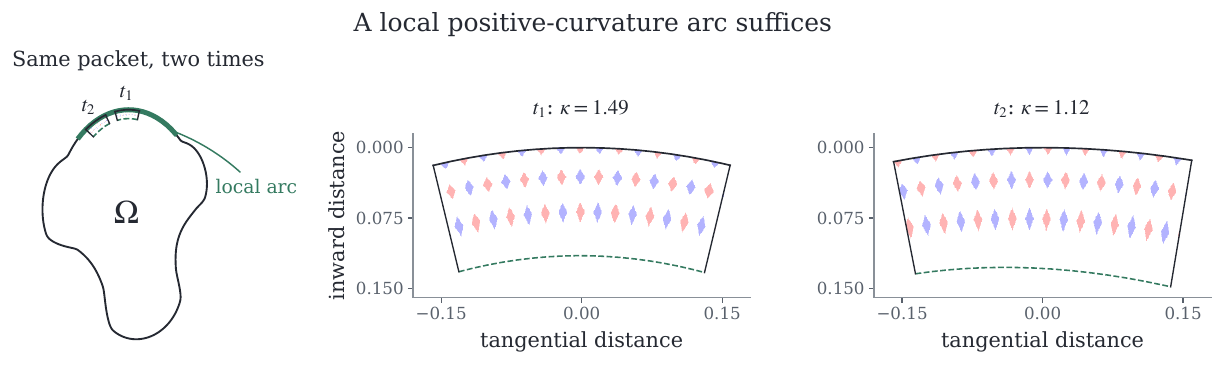}
\caption{Local construction on a nonconvex domain.}\label{fig:curved-intuition}
\end{figure}

\vspace{2mm} \noindent{\it $3^*$. Corrected phases, amplitudes, and generated approximate velocities.} Recover the two-branch travelling wave form as in the disc case. Varying curvature introduces a normal correction to the common tangential phase:
\begin{equation*}
\Phi_{0,\lambda}(s,y)=C_\lambda(s)+U(s,y), \quad U(s,y)=-\frac{\kappa'(s)}{6\kappa(s)}y^2.
\end{equation*}
This correction and the corrected normal action $R_\lambda$ control the phase errors in Section~\ref{subsec:curved-phases}. The common amplitude also acquires the factor $\kappa(s)^{1/6}$ to cancel a leading transport term, as explained in Section~\ref{subsec:curved-boundary}. The profile arguments remain
\begin{align*}
\phi_k=\frac{\Phi_{0,\lambda_k}-t+R_{\lambda_k}}{2\pi h_k}+\alpha_k,\;\psi_k=\frac{\Phi_{0,\lambda_k}-t-R_{\lambda_k}}{2\pi h_k}+\alpha_k.
\end{align*}
They generate the velocity $S_k=A_{\lambda_k}\left(f_*(\phi_k)-g_*(\psi_k)\right)$ with the common amplitude specified in the next step.

\Needspace{8\baselineskip} \vspace{2mm} \noindent{\it 4. Localization in a curved moving packet region.} As in Section~\ref{subsec:gl-intuition}, the Airy expression concentrates in the normal layer, and one moving envelope localizes both branches in $s-t$. For $I_k=z_k+(-w_k/2,w_k/2)$, take
\begin{gather*}
\mathcal U_k=\{s-t\in I_k,\quad 0<y<y_c^{\lambda_k}(s)\},\\
A_{\lambda_k}=c_k(\lambda_kw_k)^{-1/2} \chi\!\left(\frac{s-t-z_k}{w_k}\right)\kappa(s)^{1/6}(\zeta_{\lambda_k}(s,y))^{-1/4} \quad\text{on }\mathcal U_k.
\end{gather*}
Here $\zeta_\lambda$ is the smooth Airy coordinate: $R_\lambda=\frac23(\zeta_\lambda)^{3/2}$ on the oscillatory side, and $\zeta_\lambda=0$ defines the turning curve. The envelope $\chi\in C_c^\infty((-1/2,1/2))$ is real and nonzero, and $c_k$ is the energy-normalizing factor chosen in Section~\ref{subsec:curved-full}. Extending the amplitude by zero outside the moving packet region, including its turning curve, gives
\begin{equation*}
S_k=\1_{\mathcal U_k}S_k.
\end{equation*}
The tangential width is $w_k$ and the center is $s=t+z_k$. The normal depth changes with curvature as the moving packet region moves. The comparison with the smooth Airy velocity across the turning curve is proved in Proposition~\ref{prop:finite-airy}, following Lemma~\ref{lem:gl-finite}.

\vspace{2mm} \noindent{\it 5. Symmetry matching inside the moving packet region.} The full profiles and reference rectangles are unchanged. Pulling them back by the corrected phases gives
\begin{align*}
\Gamma_k&=\mathcal U_k\cap\{(\phi_k,\psi_k)\pmod1\in\mathcal R\},\\*
\1_{\Gamma_k}S_k&=\1_{\Gamma_k}A_{\lambda_k}(f_*(\phi_k)-g_*(\psi_k))=0.
\end{align*}
Thus the same reference symmetry gives matching regions whose physical shapes depend on curvature, as in Proposition~\ref{prop:curved-support}.

\vspace{2mm} \noindent{\it 6. Coordinated scales and one fixed observation region.} The semiclassical parameter $h_k$, depth of order $\lambda_k^2$, and tangential width $w_k$ retain their different roles from Section~\ref{subsec:gl-family}. The phase error, the discrepancy between Fourier modes, and the moving-envelope error require, respectively,
\begin{equation*}
\frac{\lambda_k^5}{h_k}\to0, \quad \frac{h_k}{\lambda_k^4}\to0 ,\text{ \; and \; }\frac{\lambda_k^2}{w_k}\to0.
\end{equation*}
The choice $h_k\sim\lambda_k^{9/2}$ already used in \eqref{eq:gl-disc-scales} leaves room for both of the first two requirements. An integer adjustment also enforces the boundary shift exactly. Section~\ref{subsubsec:curved-scales} explains this coordination; the Fourier cutoff is fixed before taking $\lambda_k$ sufficiently small.

Choose the intervals $I_k$ pairwise disjoint, so that the moving packet regions remain disjoint despite their varying depths. Using these phases, define $\calE_k$ by \eqref{eq:gl-extra} and put $\mathcal H_k=\mathcal U_k\setminus(\Gamma_k\cup\calE_k)$. As in Section~\ref{subsec:gl-observation}, set
\begin{gather*}
G^{(N)}=\left((0,T)\times\Omega\right)\setminus\bigcup_{k\le N}\mathcal H_k, \quad G^{(N)}\downarrow G,\\
G^{(N)}\cap\mathcal U_k=G\cap\mathcal U_k=\Gamma_k\cup\calE_k \quad\forall k\in\{1,\ldots,N\}.
\end{gather*}
In addition to the internal phase drift seen in the disc, each moving packet region changes depth as it travels through different curvatures; see Figure~\ref{fig:curved-intro-stages}.

\vspace{2mm} \noindent{\it 7. Velocity approximation and non-observability.} We now prove the general-domain counterpart of the disc realization stated in Proposition~\ref{prop:gl-disc-real}. We first construct an approximate wave that uses truncated profiles and satisfies the Dirichlet condition exactly. We then correct its residual and control the Fourier tails.

After energy normalization, Theorem~\ref{thm:realization} gives
\begin{equation*}
E[u_k](0)=1,\quad \partial_tu_k=S_k+r_k ,\text{ \; and \; }\|r_k\|_{L^2((0,T)\times\Omega)}\rightarrow0.
\end{equation*}
Support and matching then reduce the observation to two terms. In the following calculation all norms are on $(0,T)\times\Omega$:
\begin{align*}
\|\1_G\partial_tu_k\|_2\le\|\1_{G^{(k)}}\partial_tu_k\|_2\le\|\1_{G^{(k)}}r_k\|_2+\|\1_{\calE_k}S_k\|_2\le\|r_k\|_2+\|\1_{\calE_k}S_k\|_2\rightarrow0.
\end{align*}
Theorem~\ref{thm:realization} controls the additional observed energy despite concentration near the turning curve. This proves non-observability for any reference widths $\gamma_k\to0$. Section~\ref{sec:gcc} makes further parameter choices to obtain \hyperref[eq:gcc]{GCC} on the same fixed $G$.

\subsection{Geometric preliminaries}\label{subsec:curved-local}
We fix the local geometry for two purposes. The moving envelopes and normal cutoffs of the approximate waves must stay inside one boundary chart. For the ray comparison in Section~\ref{sec:gcc}, every test ray meeting the neighborhood containing the moving packet regions must stay near the same boundary arc throughout the observation interval. We therefore choose neighborhoods $V\Subset U$ and a fixed short time: $V$ contains the construction, and the distance from $V$ to $U^c$ keeps the test rays inside $U$. 

\begin{lemma}\label{lem:local-geometry}
Let $\Omega\subset\R^2$ be a smooth bounded domain. There are a boundary point $p$, bounded open neighborhoods $V\Subset U$ of $p$, a boundary arclength chart $s\in\mathcal I$, and constants $d_0,\kappa_*,T_{\rm loc}>0$ with the following properties.
\begin{enumerate}[label=(\roman*),leftmargin=*,itemsep=1mm]
\item  The boundary in $U$ belongs to this chart and has curvature $\kappa\ge\kappa_*$. The set $U\cap\overline\Omega$ lies in a regular collar chart $x(s,y)=x(s,0)+y\mathbf n(s)$, defined also on a neighborhood of the closed strip below.
\item  There is a bounded open interval $\mathcal I_{\rm in}$ such that
\begin{equation}\label{eq:local-buffer}
\mathcal I_{\rm in}+[0,T_{\rm loc}]\Subset\mathcal I, x\left(\left(\overline{\mathcal I_{\rm in}}+[0,T_{\rm loc}]\right)\times[0,d_0]\right)\subset V, 4T_{\rm loc}<\dist(\overline V,U^c).
\end{equation}
\item There is a smooth bounded strictly convex domain $\Omega_{\rm c}$ with strictly positive curvature, which can be chosen so that $\Omega_{\rm c}\cap U=\Omega\cap U$.
\end{enumerate}
\end{lemma}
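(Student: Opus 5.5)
The plan has three ingredients. First, locate a positively curved boundary point. Second, set up the collar chart and choose the neighborhoods and time in the right order. Third, build the convex comparison domain by a local gluing.

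\emph{Positive curvature point.} Pick a point $p\in\partial\Omega$ that maximizes $|x|$ over $\overline\Omega$; it exists by compactness and lies on the boundary. At $p$ the domain sits inside the disc of radius $R=|p|$, and the boundary is tangent to that circle. The standard comparison of second fundamental forms then gives $\kappa(p)\ge 1/R>0$ with respect to the inward normal. By continuity, $\kappa\ge\kappa_*:=1/(2R)$ on a boundary arc around $p$, and we take $\mathcal I$ to be an arclength interval parametrizing this arc.

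\emph{Collar chart and nested neighborhoods.} By the tubular neighborhood theorem, the map $x(s,y)=x(s,0)+y\mathbf n(s)$ is a diffeomorphism from $\mathcal I'\times(-\delta,\delta)$ onto an open set, for some slightly larger interval $\mathcal I'\Supset\mathcal I$ and some $\delta>0$. Choose a small ball $U$ around $p$ with $U\cap\overline\Omega\subset x(\mathcal I\times[0,\delta/2))$ and $U\cap\partial\Omega\subset x(\mathcal I\times\{0\})$; this is possible for a small enough radius, since $\Omega$ lies locally on one side of its boundary. This gives (i).

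Next set $r_U$ to be the radius of $U$ and choose the remaining constants in this order. Take $\mathcal I_{\rm in}$ to be a small interval around $s(p)$, take $d_0$ small, and take $T_{\rm loc}$ small. Then $x\bigl((\overline{\mathcal I_{\rm in}}+[0,T_{\rm loc}])\times[0,d_0]\bigr)$ lies in a ball $B(p,r)$ with $r\le C(|\mathcal I_{\rm in}|+T_{\rm loc}+d_0)$, using that $x$ is Lipschitz. Set $V:=B(p,r_U/4)$. Shrinking the parameters so that $r<r_U/4$ and $4T_{\rm loc}<r_U/2\le\dist(\overline V,U^c)$ gives all three conditions in (ii), together with $\mathcal I_{\rm in}+[0,T_{\rm loc}]\Subset\mathcal I$.

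\emph{Convex comparison domain.} This step, (iii), is the only non-routine point. Inside the chart, $\partial\Omega$ near $p$ is the graph of a smooth function $y=F(\xi)$ over the tangent line, with $F''\ge c>0$ on $|\xi|\le a$. Replace $F$ by a function $\widetilde F$ with the following properties:
\begin{itemize}
\item $\widetilde F=F$ on $|\xi|\le a/2$;
\item $\widetilde F''>0$ everywhere;
\item $\widetilde F$ grows fast enough that the closed curve can be completed to a smooth strictly convex closed curve.
\end{itemize}
For instance, on $|\xi|\ge a/2$ interpolate $\widetilde F''$ between $F''$ and a large positive constant using a partition of unity. Since positive combinations of positive functions stay positive, this keeps $\widetilde F''>0$. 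Then close the curve by a large circular cap, smoothed with the same convex interpolation of curvature. A cleaner alternative is to prescribe the curvature function on the whole circle (positive, equal to $\kappa$ on the retained arc) with zero total "closing vector", and integrate it to get a closed convex curve.

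Finally, shrink $U$ (and correspondingly $V$, $\mathcal I_{\rm in}$, $d_0$, $T_{\rm loc}$) so that $U$ only sees the graph part over $|\xi|<a/2$. Then $\Omega_{\rm c}\cap U=\Omega\cap U$.

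The main obstacle is ensuring that the closed convex curve in (iii) exactly reproduces the given arc while remaining smooth and strictly curved. I would handle it by the curvature-prescription method: choose a smooth positive $\tilde\kappa$ on an extended parameter circle that agrees with $\kappa$ on the arc. Its integral must equal $2\pi$, and the closure condition $\int e^{i\int\tilde\kappa}=0$ must hold; both are adjusted using two free bumps on the complementary arc.
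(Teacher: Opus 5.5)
Your parts (i) and (ii) are sound and essentially follow the paper. The enclosing-circle comparison gives $\kappa(p)\ge 1/R$. Fixing $V\Subset U$ before shrinking $\mathcal I_{\rm in}$, $d_0$, $T_{\rm loc}$ gives the three conditions in (ii). Shrinking $U$ after $\Omega_{\rm c}$ is built is legitimate, because the rest of $\partial\Omega_{\rm c}$ is a compact set avoiding $p$.

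The gap is in (iii), the step you yourself single out: neither of your constructions is actually carried out.
\begin{itemize}
\item Your first construction stays a graph over the tangent line, so modifying $\widetilde F''$ can never close the curve. Capping it with a circle creates two junctions where the curvature jumps, or corners if the circle crosses the graph. Rounding these while keeping the curvature strictly positive is exactly the difficulty, and ``the same convex interpolation'' does not handle it. A cutoff gluing of two curves with only first-order contact produces an error of order one in the second derivative, since $\eta''\sim\delta^{-2}$ multiplies a difference of size $\delta^2$.
\item Your curvature-prescription alternative is the right idea, but in arclength form you must satisfy $\int\tilde\kappa=2\pi$ together with the nonlinear, two-component closure condition $\int e^{i\int\tilde\kappa}=0$. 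That is three real equations, and two bump amplitudes do not solve them in general. You also give no implicit-function or continuity argument, around an explicit closed solution, that would produce a solution with $\tilde\kappa>0$ and matching jets at the ends.
\end{itemize}

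Either of the following closes the gap.

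\emph{Tangent-angle parametrization.} Parametrize by tangent angle $\theta\in\R/2\pi\Z$ with radius of curvature $\varrho=1/\tilde\kappa>0$. Turning number one is then automatic, and closure becomes the \emph{linear} condition $\int_0^{2\pi}\varrho(\theta)e^{i\theta}\,d\theta=0$. Keep only a short arc and extend $\varrho$ smoothly, with matching jets, by a function within $\epsilon$ of $1/\kappa(p)$. Since $\int_0^{2\pi}e^{i\theta}\,d\theta=0$, the closure defect is then $O(\epsilon)$. Cancel it with two bumps of size $O(\epsilon)$ whose first moments are linearly independent; positivity survives. A closed curve with $\tilde\kappa>0$ and total turning $2\pi$ is simple and bounds a strictly convex domain.

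\emph{The paper's route.} This avoids any closure condition. Write the arc and its osculating circle at $p$ as graphs $\rho,\rho_{\rm c}$ over the common tangent, and set $\rho_\delta=\rho_{\rm c}+\eta(\xi/\delta)(\rho-\rho_{\rm c})$. Because the contact is of second order, $\rho-\rho_{\rm c}=O(|\xi|^3)$, hence $\|\partial_\xi^j(\rho_\delta-\rho_{\rm c})\|_{L^\infty}\le C\delta^{3-j}$ for $j\le 2$. The new curve is therefore a $C^2$-small perturbation of a circle, so it is embedded with positive curvature. It coincides with $\partial\Omega$ wherever $\eta(\xi/\delta)=1$.
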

\begin{remark}
The inclusions keep moving packet regions with $I_k\subset\mathcal I_{\rm in}$ and sufficiently small normal depths in $V$ for every $0\le t\le T<T_{\rm loc}$. Moreover, the distance bound keeps test rays meeting $V$ inside $U$. For every $0<T<T_{\rm loc}$, any regular unit-speed ray that meets $V$ during $[0,T]$ remains in $U$ throughout that interval and agrees there with the ray in $\Omega_{\rm c}$ having the same initial state. 
\end{remark}
\begin{proof}[Proof of Lemma \ref{lem:local-geometry}] We prove this lemma as follows.\\
\noindent\emph{A positive-curvature arc.} Fix $c\in\Omega$ and let $R=\max_{\overline\Omega}|x-c|$. The enclosing circle touches $\partial\Omega$ at a maximizing point $p$, and the two inward normals agree there; see Figure~\ref{fig:local-geometry}(a). For a unit-speed boundary parametrization $\gamma$ with $\gamma(0)=p$, we have
\begin{equation*}
\overline\Omega\subset\overline{B(c,R)},\quad p-c=-R\mathbf n(p),\text{ \; and \; }\gamma''(0)=\kappa(p)\mathbf n(p).
\end{equation*}
Since $|\gamma(s)-c|^2$ has a maximum at $s=0$, it follows that
\begin{equation*}
0\ge\left.\frac{d^2}{ds^2}|\gamma(s)-c|^2\right|_{s=0} =2-2R\kappa(p).
\end{equation*}
Hence $\kappa(p)\ge R^{-1}>0$. Continuity gives a positive-curvature arc and regular collar coordinates around it.

\vspace{2mm} \noindent\emph{A convex completion.} To compare the complete short ray segments used in Section~\ref{sec:gcc}, we extend the chosen arc to a closed strictly convex boundary without changing it near $p$. For this step we use the osculating circle at $p$.

Write the boundary arc and its osculating circle as graphs $\rho(\xi)$ and $\rho_{\rm c}(\xi)$ over their common tangent. Their derivatives agree through order two at $\xi=0$, so
\begin{equation*}
\partial_\xi^j(\rho-\rho_{\rm c})(\xi)=O(|\xi|^{3-j}), \quad \forall j\in\{0,1,2\}.
\end{equation*}
Choose $\eta\in C_c^\infty((-1,1))$ equal to one near zero. Replace the circular graph by
\begin{equation*}
\rho_\delta(\xi):=\rho_{\rm c}(\xi)+\eta(\xi/\delta)\left(\rho(\xi)-\rho_{\rm c}(\xi)\right).
\end{equation*}
In particular, the second derivative of the perturbation satisfies
\begin{gather*}
\partial_\xi^2(\rho_\delta-\rho_{\rm c}) =\delta^{-2}\eta''(\xi/\delta)(\rho-\rho_{\rm c}) +2\delta^{-1}\eta'(\xi/\delta)(\rho'-\rho_{\rm c}')+\eta(\xi/\delta)(\rho''-\rho_{\rm c}''),\\
\|\partial_\xi^2(\rho_\delta-\rho_{\rm c})\|_{L^{\infty}} \le C(\delta^{-2}\delta^3+\delta^{-1}\delta^2+\delta)\le C\delta.
\end{gather*}
Together with the lower derivatives, this gives
\begin{equation*}
\|\partial_\xi^j(\rho_\delta-\rho_{\rm c})\|_{L^{\infty}}\le C\delta^{3-j}, \quad \forall j\in\{0,1,2\}.
\end{equation*}
Thus the change is $O(\delta)$ in $C^2$. For small $\delta$ the resulting closed curve remains smooth, embedded and positively curved. It bounds a strictly convex domain $\Omega_{\rm c}$ agreeing with $\Omega$ in a neighborhood $U$ of $p$.

\vspace{2mm} \noindent\emph{The distance bound and ray comparison.} We choose $V\Subset U$, then $\mathcal I_{\rm in}$, $d_0$ and $T_{\rm loc}$ small enough for \eqref{eq:local-buffer}. The first two inclusions control the moving packet regions. Indeed, their tangential coordinates satisfy
\begin{equation*}
s-t\in I_k\subset\mathcal I_{\rm in} \quad\rightarrow\quad s\in I_k+t\subset\mathcal I_{\rm in}+[0,T_{\rm loc}], \quad \forall t\in[0,T].
\end{equation*}
Taking their normal depths below $d_0$ therefore keeps them in $V$. This support condition comes from the chosen envelope.

The distance bound controls test rays, independently of that envelope. If a regular unit-speed ray satisfies $x(t_*)\in V$, then
\begin{equation*}
|x(t)-x(t_*)|\le |t-t_*|\le T<\dist(\overline V,U^c), \quad \forall t\in[0,T].
\end{equation*}
Hence the ray remains in $U$ throughout $[0,T]$, regardless of its number of reflections. The boundaries and reflection laws of $\Omega$ and $\Omega_{\rm c}$ agree there, so the two rays with the same initial state coincide; see Figure~\ref{fig:local-geometry}(b).

The factor $4$ in \eqref{eq:local-buffer} also leaves room for extensions of length $T_{\rm loc}$ before time zero and after time $T$. On $[-T_{\rm loc},T+T_{\rm loc}]$, the distance traveled from $x(t_*)$ is at most $T+T_{\rm loc}<2T_{\rm loc}$. For a sufficiently small glancing threshold, these extensions contain the complete ray segments containing time zero or time $T$ and the neighboring short ray segments used in Section~\ref{sec:gcc}.
\end{proof}

\begin{figure}[htbp]
\centering
\includegraphics[width=\textwidth]{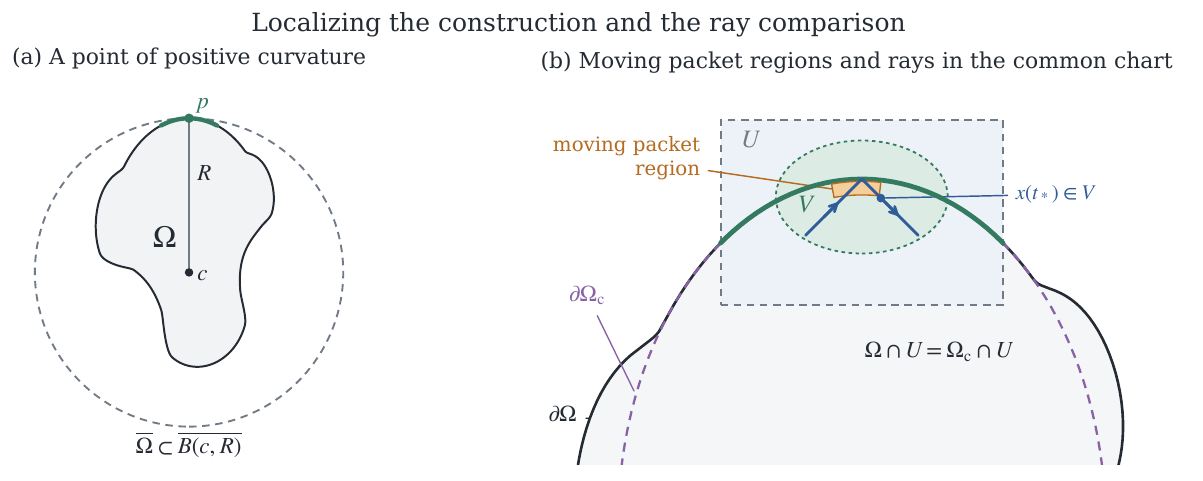}
\caption{Local geometry for the construction and ray comparison.}\label{fig:local-geometry}
\end{figure}

Fix this geometry and $0<T<T_{\rm loc}$ for the rest of Sections~\ref{sec:curved}--\ref{sec:gcc}. Choose $I_k\subset\mathcal I_{\rm in}$ before the normal scales. The normal cutoff of the approximate waves will also be supported in $0\le y<d_0$; taking the later $\lambda_k$ sufficiently small ensures this.

\Needspace{8\baselineskip}
\subsection{From the disc phases to variable curvature}\label{subsec:curved-phases}

The reference profiles $f_*,g_*$ are fixed. We construct phases with a common boundary action, controlled phase errors, and a turning curve that determines the normal depth of the moving packet region. Recall the coordinates in Lemma~\ref{lem:local-geometry}. Their metric is $\dd y^2+\mu(s,y)^2\dd s^2$, where $\mu(s,y):=1-\kappa(s)y$. We first state the resulting phase properties and define the moving packet region.

\begin{prop}\label{prop:curved-phases}
Fix the local geometry of Lemma~\ref{lem:local-geometry}. For all sufficiently small $\lambda>0$, the functions $\Phi_{0,\lambda}$ and $\zeta_\lambda$ constructed in Section~\ref{subsubsec:curved-phase-correction}, with the explicit formulas \eqref{eq:W0} and \eqref{eq:airy-coeff}, have the following properties.
\begin{enumerate}[label=(\roman*),leftmargin=*,itemsep=1mm]
\item \emph{Common boundary action.} On the oscillatory side, the normal action and its boundary value satisfy
\begin{equation}\label{eq:phase-data}
\zeta_\lambda(s,0)=2^{-\frac{2}{3}}\lambda^2, R_\lambda:=\frac{2}{3}(\zeta_\lambda)^{\frac{3}{2}},\text{ \; and \; }R_\lambda(s,0)=\frac{1}{3}\lambda^3.
\end{equation}
The two spatial phases are $\Phi_\lambda^\pm:=\Phi_{0,\lambda}\pm R_\lambda$.

\item \emph{Phase errors.} We have\footnote{Note that $|\dd f|_{g^{-1}}^2=|\nabla_g f|_g^2$. For simplicity, we abuse the notation $|\dd f|_{g}^2$.}
\begin{equation}\label{eq:eik}
\begin{aligned}
E_0&:=|\dd\Phi_{0,\lambda}|_g^2+\zeta_\lambda|\dd\zeta_\lambda|_g^2-1=O(\lambda^6),\\
E_1&:=\langle \dd\Phi_{0,\lambda},\dd\zeta_\lambda\rangle_g=O(\lambda^4).
\end{aligned}
\end{equation}
Norms and inner products are with respect to the metric $\dd y^2+\mu(s,y)^2\dd s^2$. On each fixed scaled collar $0\le y\le c\lambda^2$, over the compact part of the chart fixed in Lemma~\ref{lem:local-geometry}, the error estimates hold uniformly. %
In particular, on the oscillatory collar the two spatial phases satisfy
\begin{equation*}
|\dd\Phi_\lambda^\pm|^2=1+O(\lambda^5).
\end{equation*}

\item \emph{Turning curve.} The first turning point satisfies
\begin{equation}\label{eq:turning}
y_c^\lambda(s)=\frac{\lambda^2}{2\kappa(s)^{1/3}}+O(\lambda^4) ,\text{ \; and \; }\partial_y\zeta_\lambda=-(2\kappa)^{1/3}+O(\lambda^2).
\end{equation}
We use the real $R_\lambda$ only for $0<y<y_c^\lambda(s)$. The smooth Airy coordinate itself continues through the zero. For $\kappa=1$, these formulas are exactly \eqref{eq:gl-disc-phase}.
\end{enumerate}
\end{prop}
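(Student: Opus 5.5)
The plan is to construct $\Phi_{0,\lambda}$ and $\zeta_\lambda$ as explicit polynomials in $y$ with $s$-dependent coefficients. The coefficients are fixed by matching the eikonal equations order by order in the scaled variable $y\sim\lambda^2$. Throughout, in the metric $\dd y^2+\mu^2\dd s^2$ with $\mu=1-\kappa y$,
\begin{equation*}
|\dd F|_g^2=(\partial_yF)^2+\mu^{-2}(\partial_sF)^2,\qquad \mu^{-2}=1+2\kappa y+3\kappa^2y^2+O(y^3).
\end{equation*}
The ansatz is
\begin{equation*}
\Phi_{0,\lambda}=s+c_1(s)\lambda^2 s\text{-corrections}+U(s,y),\qquad \zeta_\lambda=a(s)\left(\beta_0\lambda^2-y\right)+\text{quadratic terms},
\end{equation*}
with $a=(2\kappa)^{1/3}$ and $\beta_0$ chosen so that $\zeta_\lambda(s,0)=2^{-2/3}\lambda^2$. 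Concretely, $\zeta_\lambda$ has a constant term fixed to $2^{-2/3}\lambda^2$ exactly, which gives (i) at once since $R_\lambda(s,0)=\frac23(2^{-2/3}\lambda^2)^{3/2}=\lambda^3/3$. Its linear coefficient is $-(2\kappa)^{1/3}+O(\lambda^2)$, and it has a $y^2$ coefficient plus a $\lambda^2y$ coefficient. The tangential phase is $\Phi_{0,\lambda}=C_\lambda(s)+U(s,y)$ with $C_\lambda'(s)=1-\tfrac12\kappa^{2/3}\lambda^2+O(\lambda^4)$, mirroring $b_\lambda$ in \eqref{eq:gl-disc-phase}. For $\kappa=1$ this must reduce to the disc formulas, which gives a consistency check on every coefficient.

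For (ii), I expand $E_0$ in monomials $\lambda^{2i}y^j$ with weight $2i+2j$, using $y\sim\lambda^2$. At weight $2$ the terms are $2C_\lambda'\cdot(-\tfrac12\kappa^{2/3}\lambda^2)+2\kappa y$ from the tangential part and $a^3(\beta_0\lambda^2-y)$ from $\zeta|\partial_y\zeta|^2$. Vanishing forces $a^3=2\kappa$ and $a^3\beta_0=\kappa^{2/3}$, which is consistent with $\beta_0=2^{-2/3}\lambda^2/a$. At weight $4$ there are monomials $\lambda^4,\lambda^2y,y^2$. These are killed by the $\lambda^4$ term of $C_\lambda'$, by the $\lambda^2y$ and $y^2$ coefficients of $\zeta_\lambda$, and by $3\kappa^2y^2$. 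The $s$-derivatives of $\zeta_\lambda$ enter $E_0$ only through $\zeta|\partial_s\zeta|^2\mu^{-2}$, which is $O(\lambda^6)$ because $\partial_s\zeta=O(\lambda^2)$, so they are harmless. For $E_1=\partial_y\Phi_0\partial_y\zeta+\mu^{-2}\partial_s\Phi_0\partial_s\zeta$, the leading contribution is $\partial_s\zeta\sim(\partial_sa)(\beta_0\lambda^2-y)+a\partial_s\beta_0\lambda^2$, which is $O(\lambda^2)$. It is cancelled by $\partial_yU\,\partial_y\zeta=-a\partial_yU$. This forces $\partial_yU$ to be linear in $y$ plus an $O(\lambda^2)$ constant. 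Since $a\beta_0$ is $s$-independent, only the $y$ term survives, namely $\partial_yU=-\frac{a'}{a}y=-\frac{\kappa'}{3\kappa}y$, i.e.\ $U=-\frac{\kappa'}{6\kappa}y^2$ as announced. The remainder is $O(\lambda^4)$. Then $U$ contributes to $E_0$ only at order $(\partial_yU)^2+\partial_sU\sim\lambda^4$. I must check that $(\partial_yU)^2$ and the cross term $2C'\partial_sU$ fit into the weight-$4$ balance above; they do, since they are additional $y^2$ monomials absorbed by the free $y^2$ coefficient of $\zeta_\lambda$. After these cancellations, the leftover terms are of weight $\ge6$, uniformly on $0\le y\le c\lambda^2$ by smoothness of $\kappa$ on the compact chart.

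The consequence $|\dd\Phi^\pm_\lambda|^2=1+O(\lambda^5)$ follows from $\dd R_\lambda=\zeta^{1/2}\dd\zeta$:
\begin{equation*}
|\dd\Phi^\pm|^2=|\dd\Phi_0|^2+\zeta|\dd\zeta|^2\pm2\zeta^{1/2}E_1=1+E_0\pm2\zeta^{1/2}E_1,
\end{equation*}
and $\zeta^{1/2}E_1=O(\lambda\cdot\lambda^4)$. For (iii), $\partial_y\zeta_\lambda=-a+O(\lambda^2)$ holds on the collar, and solving $\zeta_\lambda=0$ by the implicit function theorem gives $y_c=\zeta(s,0)/a+O(\lambda^4)=2^{-2/3}\lambda^2/(2\kappa)^{1/3}+O(\lambda^4)=\lambda^2/(2\kappa^{1/3})+O(\lambda^4)$.

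The main obstacle is the bookkeeping of the weight-$4$ system. I need to verify that the free coefficients ($\lambda^4$ in $C'$, and the $y^2$ and $\lambda^2y$ coefficients of $\zeta$) can absorb all weight-$4$ monomials in $E_0$ simultaneously, while $E_1$ is only required to be $O(\lambda^4)$. I would do this by first solving in the frozen-curvature case and then adding the $\kappa'$ corrections perturbatively. I would finally check against the disc values $-\tfrac{3}{10},\tfrac{2}{15}$.
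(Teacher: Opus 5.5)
Your proposal is correct and takes essentially the same route as the paper's proof via Lemma~\ref{lem:airy-coord}. You lock the constant term of $\zeta_\lambda$ at $2^{-2/3}\lambda^2$, kill the weight-two part of $E_1$ with $U=-\frac{\kappa'}{6\kappa}y^2$, and absorb the weight-four part of $E_0$ using $c_4$ together with the $\lambda^2y$ and $y^2$ coefficients $u,v$ of $\zeta_\lambda$. The weight-four system you flag as the main obstacle is triangular: the $y^2$ equation fixes $v$ (coefficient $5(2\kappa)^{2/3}$), the $\lambda^2y$ equation then fixes $u$ (coefficient $3(2\kappa)^{2/3}$), and the $\lambda^4$ equation fixes $c_4$ precisely because the boundary value of $\zeta_\lambda$ is frozen, which is exactly how the paper arrives at \eqref{eq:airy-coeff}.
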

We defer the proof to Section~\ref{subsubsec:curved-phase-correction}.
\begin{defi}[Moving packet region]\label{def:curved-packet}
Fix the collar coordinates $x(s,y)$ and data $\mathcal I_{\rm in},d_0,T_{\rm loc}$ of Lemma~\ref{lem:local-geometry}, and take $0<T<T_{\rm loc}$. Choose $0<w<1$, a center $z\in\R$ with $z+[-w/2,w/2]\subset\mathcal I_{\rm in}$, and $\lambda>0$ sufficiently small that the turning depth in \eqref{eq:turning} is below $d_0$. Define
\begin{equation}\label{eq:curved-packet}
\mathcal U_{\lambda,w,z}:=\left\{\left(t,x(s,y)\right):
\begin{aligned}
&t\in(0,T),\quad s\in\mathcal I,\\
&|s-t-z|<w/2,\quad 0<y<y_c^\lambda(s)
\end{aligned}
\right\}.
\end{equation}
At time $t$, the tangential interval has width $w$ and center $s=t+z$. Its normal depth is
\begin{equation*}
y_c^\lambda(s)=\frac{\lambda^2}{2\kappa(s)^{1/3}}+O(\lambda^4).
\end{equation*}
The inclusions in \eqref{eq:local-buffer} give $\mathcal U_{\lambda,w,z}\subset(0,T)\times V$.
\end{defi}

We now derive these phase properties from the boundary-coordinate equation and the leading normal geometry. Section~\ref{subsubsec:curved-scales} then chooses the semiclassical parameter to control both the phase residual and the discrepancies between Fourier modes, while imposing the quarter-period boundary relation. The Airy construction and velocity approximation follow in Sections~\ref{subsec:curved-boundary}--\ref{subsec:curved-full}.

\subsubsection{The wave equation in boundary normal coordinates}\label{subsubsec:boundary-geometry}

In the disc, the coordinates $(\theta,y)$ with $y=1-r$ made the boundary $y=0$. We keep this description on the chosen boundary arc and introduce boundary normal coordinates $(s,y)$, where $s\in\mathcal I$ is boundary arclength, and $y$ is the inward distance. Let $\boldsymbol\tau(s)$ be the unit tangent, and $\mathbf n(s)$ be the inward unit normal.  The associated coordinate map is
\begin{equation*}
x(s,y)=x(s,0)+y\mathbf n(s).
\end{equation*}
The Frenet relation $\mathbf n'=-\kappa\boldsymbol\tau$ gives
\begin{equation*}
\partial_sx(s,y)=\left(1-\kappa(s)y\right)\boldsymbol\tau(s) ,\text{ \; and \; }\partial_yx(s,y)=\mathbf n(s).
\end{equation*}
The metric and area element are given by
\begin{equation*}
g=\dd y^2+\mu(s,y)^2\dd s^2 ,\text{ \; and \; }\dd x=\mu(s,y)\dd s\dd y.
\end{equation*}
For a sufficiently small fixed collar neighborhood, $\mu>0$ and these coordinates are regular. The corresponding wave equation is
\begin{equation}\label{eq:collar-wave}
\partial_t^2u-\frac{1}\mu\partial_y(\mu\partial_yu) -\frac{1}\mu\partial_s(\mu^{-1}\partial_su)=0, \quad\text{with }u(t,s,0)=0.
\end{equation}

To construct the travelling wave solutions, away from the turning point, we make the ansatz $u_h(t,s,y)=A_h(s,y)e^{\frac{i}{h}(\Phi(s,y)-t)}$. Then, by WKB expansion, the leading eikonal equation is
\begin{equation*}
|d\Phi|_{g}^2=(\partial_y\Phi)^2+\mu^{-2}(\partial_s\Phi)^2=1.
\end{equation*}
This relation determines the normal momentum from the tangential momentum. Unlike in the disc, the coefficients depend on $s$. A single angular Fourier mode no longer separates the equation. We therefore construct local oscillations whose phases and amplitudes vary along the boundary.

To make the context more compact, we use the following convention. For any smooth function $f$ defined in the collar neighborhood,
\begin{gather*}
\nabla_gf:=\partial_yf\partial_y+\mu^{-2}\partial_sf\partial_s, \text{ \; and \; }|\nabla_gf|^2_g=\langle\nabla_gf,\nabla_gf\rangle_g:=|\partial_yf|^2+\mu^{-2}|\partial_sf|^2,\\
-\Delta_gf:=-\frac{1}{\mu}\partial_y(\mu\partial_yf) -\frac{1}{\mu}\partial_s(\mu^{-1}\partial_sf), \text{ where }\mu(s,y)=1-\kappa(s)y>0.
\end{gather*}
Using the Airy coordinate $\zeta$, we set the ansatz $W_h(s,y)=\left( \frac{|\nabla_g\zeta|_g^2} {1-\zeta|\nabla_g\zeta|_g^2} \right)^{1/4}e^{\frac{i}{h}\Phi(s,y)}\Ai\left(-\frac{\zeta(s,y)}{h^{\frac{2}{3}}}\right)$, where $\zeta$ and $\Phi$ are defined via
\begin{equation}\label{eq:phase-eq-s}
|d\Phi|_g^2+\zeta|d\zeta|_g^2=1,\;\langle d\Phi,d\zeta\rangle_g=0.
\end{equation}
Then, we have the following lemma.
\begin{lemma}\label{lem:exact-Airy-solution}
Let $U$ be a coordinate neighborhood equipped with the metric $g=\mu^2\,ds^2+dy^2$. Assume also that $\zeta(s,y_c(s))=0$ and that $0<c\leq -\partial_y\zeta(s,y)\leq C$. Then, locally near the turning curve,
\begin{equation*}
|\partial_y\zeta|\sim 1,\;|\zeta(s,y)|\sim |y-y_c(s)|.
\end{equation*}
For $0<h\leq 1$, $W_h$ satisfies the following equation
\[
(-h^2\Delta_g-1)W_h =e^{i\Phi/h}\left(h^2r_0 \Ai\left(-\frac{\zeta}{h^{2/3}}\right)+h^{4/3}r_1\Ai'\left(-\frac{\zeta}{h^{2/3}}\right) \right),
\]
where $r_0$ and $r_1$ are defined by
\begin{gather*}
a=\left( \frac{|\nabla_g\zeta|_g^2} {1-\zeta|\nabla_g\zeta|_g^2} \right)^{1/4},\;r_0=-\Delta_ga,\;\text{ \; and \; }r_1=2\langle d\zeta,da\rangle_g+(\Delta_g\zeta)a.
\end{gather*}
Moreover, for every compact set $K\Subset U$, there exists a constant $C_K>0$, independent of $h$, such that
\[
\|r_0\|_{L^\infty(K)} +\|r_1\|_{L^\infty(K)} \leq C_K.
\]
\end{lemma}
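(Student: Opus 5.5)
The plan is to follow the proof of Lemma~\ref{lem:W-form}, but now in the collar metric $g=\dd y^2+\mu^2\dd s^2$. The first two claims are elementary. Since $\zeta(s,y_c(s))=0$ and $0<c\le-\partial_y\zeta\le C$, the mean value theorem in $y$ at fixed $s$ gives
\[
\zeta(s,y)=\partial_y\zeta(s,\tilde y)\,(y-y_c(s))
\]
for some $\tilde y$ between $y$ and $y_c(s)$. Hence $c|y-y_c|\le|\zeta|\le C|y-y_c|$, and $|\partial_y\zeta|\sim1$ is just the hypothesis.

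For the equation, write $W_h=a\,e^{i\Phi/h}B$ with $B:=\Ai(-\zeta/h^{2/3})$. Expand $-h^2\Delta_g$ of this product with the Leibniz rule for $\Delta_g$ and $\nabla_g$:
\[
\Delta_g(FG)=F\Delta_gG+2\langle dF,dG\rangle_g+G\Delta_gF.
\]
Apply it to the three factors, and use the chain rule for $B$ together with Airy's equation $\Ai''(z)=z\Ai(z)$. This gives
\[
\Delta_g B=-h^{-2/3}(\Delta_g\zeta)\Ai'+h^{-4/3}|d\zeta|_g^2\,(-\zeta/h^{2/3})\Ai,
\]
so that
\[
-h^2|d\zeta|_g^2\,h^{-2}\zeta\, a B e^{i\Phi/h}=-\zeta|d\zeta|_g^2\,W_h .
\]
The phase contributes two pieces. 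The term $|d\Phi|_g^2W_h$ combines with the previous one and $-W_h$, and vanishes by the first relation in \eqref{eq:phase-eq-s}. The cross term $\langle d\Phi,d\zeta\rangle_g$, which multiplies $h^{1/3}\Ai'$, vanishes by the second relation.

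What remains are the $O(h)$ transport terms with $\Ai$ and $\Ai'$, namely
\[
-ih\bigl(2\langle d\Phi,da\rangle_g+a\Delta_g\Phi\bigr)B \quad\text{and}\quad h^{1/3}\cdot h\,(\cdots)\Ai'.
\]
The key step, and the one I expect to be the main obstacle, is to check that the specific amplitude $a=\bigl(|d\zeta|_g^2/(1-\zeta|d\zeta|_g^2)\bigr)^{1/4}=(|d\zeta|_g^2/|d\Phi|_g^2)^{1/4}$ kills these first-order transport terms. This is the same cancellation as in the classical Olver/Ludwig uniform Airy ansatz.

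I would try to verify it as follows. Use that $d\Phi\perp d\zeta$ in two dimensions, so that $\Phi$ and $\zeta$ form orthogonal coordinates locally. In these coordinates the metric is diagonal with coefficients $|d\Phi|_g^{-2}$ and $|d\zeta|_g^{-2}$. The transport equation $2\langle d\Phi,da\rangle+a\Delta_g\Phi=0$ then becomes conservation of $a^2|d\Phi|_g\cdot(\text{area density})$ along $\Phi$-flow, and it is satisfied exactly by $a^2=|d\zeta|_g/|d\Phi|_g$. This holds up to the terms that the statement absorbs.

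Concretely, I would regroup so that the leftover terms are exactly
\[
h^2(-\Delta_ga)B+h^{4/3}\bigl(2\langle d\zeta,da\rangle_g+a\Delta_g\zeta\bigr)\Ai',
\]
which is the claimed formula with $r_0$ and $r_1$. If the transport cancellation does not close exactly as stated, the fallback is to note that the statement is being read as defining $r_0,r_1$ after this cancellation. In that case it suffices to verify the identity in the orthogonal coordinates $(\Phi,\zeta)$.

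Finally, boundedness of $r_0,r_1$ on compact $K$ follows once $a$ is smooth with bounded derivatives. The quantity $1-\zeta|d\zeta|_g^2=|d\Phi|_g^2$ must stay positive near the turning curve. It equals $1$ at $\zeta=0$, so it is positive on a small neighborhood. Also $|d\zeta|_g^2\ge c^2>0$. Hence $a$ is a smooth positive function there, and $r_0=-\Delta_ga$ and $r_1$ are continuous on $K$. Their $L^\infty(K)$ bounds depend only on $K$ and not on $h$, since $h$ enters only through the explicit prefactors.
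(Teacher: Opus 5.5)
Your proposal is correct and is essentially the paper's argument: expand by Leibniz, let the two phase equations remove the $\Ai$ and $h^{1/3}\Ai'$ terms, and show the transport coefficient $a^{-1}\mathrm{div}_g(a^2\nabla_g\Phi)$ vanishes exactly because $d\Phi\perp d\zeta$ in dimension two. Your orthogonal coordinates $(\Phi,\zeta)$ give $\mathrm{div}_g(a^2\nabla_g\Phi)=|d\Phi|_g|d\zeta|_g\,\partial_\Phi\bigl(a^2|d\Phi|_g/|d\zeta|_g\bigr)=0$, which replaces the paper's observation that $a^2\nabla_g\Phi=\pm\mathcal R\nabla_g\zeta$ is a divergence-free rotated gradient.

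Since this cancellation is exact, drop the hedge and the fallback reading, because the statement leaves no room for an $O(h)$ term. Also fix the sign slip: the Airy-equation term contributes $+\zeta|d\zeta|_g^2W_h$, which is what combines with $(|d\Phi|_g^2-1)W_h$ into the first phase equation.
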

\begin{proof}
Direct computation will give the proof. In fact, 
\begin{align*}
e^{-i\Phi/h}(-h^2\Delta_g-1)W_h ={}&a\left( |d\Phi|_g^2+\zeta|d\zeta|_g^2-1 \right)\Ai\left(-\frac{\zeta}{h^{2/3}}\right)\\
&+2ih^{1/3}a\langle d\Phi,d\zeta\rangle_g\Ai'\left(-\frac{\zeta}{h^{2/3}}\right)\\
&+\left(-ih\widetilde{r}_1+h^2r_0\right)\Ai\left(-\frac{\zeta}{h^{2/3}}\right) +h^{4/3}r_1\Ai'\left(-\frac{\zeta}{h^{2/3}}\right).
\end{align*}
Here $\widetilde{r}_1= 2\langle d\Phi,da\rangle_g+(\Delta_g\Phi)a$. The first two terms vanish by the phase equations \eqref{eq:phase-eq-s}. The coefficient bounds follow from their formulas, and the comparison $|\zeta(s,y)|\sim |y-y_c(s)|$ follows from the bounds on $\partial_y\zeta$. It remains to check that $\widetilde{r}_1\equiv0$. Indeed, the equation $\widetilde{r}_1=0$ is equivalent to $-a\widetilde{r}_1=0$, that is
\begin{equation*}
0=-\langle d\Phi,da^2\rangle_g-(\Delta_g\Phi)a^2=-\mathrm{div}_g(a^2\nabla_g\Phi).
\end{equation*}
On the other hand, for any smooth function $f$ defined in the local chart, we have $\nabla_gf=\partial_yf\partial_y+\mu^{-2}\partial_sf\partial_s$. Define the linear rotation operator $\mathcal{R}$ by $\mathcal{R}\partial_s=\mu\partial_y$ and $\mathcal{R}\partial_y=-\mu^{-1}\partial_s$. Then
\begin{gather*}
\mathcal{R}\nabla_gf=\mu^{-1}(-\partial_yf\partial_s+\partial_sf\partial_y),\\
\mathrm{div}_g(\mathcal{R}\nabla_gf)=\mu^{-1}\left(\partial_y(\mu\mu^{-1}\partial_sf)+\partial_s(-\mu\mu^{-1}\partial_yf)\right)\equiv0.
\end{gather*}
 By the second equation in \eqref{eq:phase-eq-s}, we know that $\nabla_g\Phi$ is orthogonal to $\nabla_g\zeta$. Since we work in dimension $2$, the orthogonal direction is uniquely determined (up to orientation). Hence, we deduce that
\begin{equation*}
\nabla_g\Phi=c\mathcal{R}\nabla_g\zeta.
\end{equation*}
Comparing the norms, we know that $c=\pm\frac{|\nabla_g\Phi|_g}{|\nabla_g\zeta|_g}$. So we obtain
\begin{equation*}
a^2\nabla_g\Phi=\pm a^2\frac{|\nabla_g\Phi|_g}{|\nabla_g\zeta|_g}\mathcal{R}\nabla_g\zeta.
\end{equation*}
Therefore, a natural choice of $a$ is such that $a^2\frac{|\nabla_g\Phi|_g}{|\nabla_g\zeta|_g}=\mathrm{constant}$. Applying the first equation in \eqref{eq:phase-eq-s}, we know that $a=\left( \frac{|\nabla_g\zeta|_g^2} {1-\zeta|\nabla_g\zeta|_g^2} \right)^{1/4}$ makes $\widetilde{r}_1\equiv0$.
\end{proof}
\begin{remark}
In the disc case, the second equation in \eqref{eq:phase-eq-s} is automatically satisfied,  since $\Phi(\theta,r)=b\theta$ depends only on the angular variable, while $\zeta=\zeta(r)$ depends only on the radial variable. However, in the general case, we cannot expect a simple clean formula like in the disc case. Moreover, in Lemma~\ref{lem:W-form}, we choose $a_b(r):=\left(r|\zeta'(r)|\right)^{-\frac{1}{2}}$ to make $r_1\equiv0$. 
\end{remark}
Since we may not get a simple clean form from the equations in \eqref{eq:phase-eq-s}, we perform an asymptotic analysis of the phases and solutions. As we showed in Section~\ref{sec:glancing-disc}, Taylor expansion can help construct approximate travelling wave solutions. Following this idea, we expand our phases near the chosen boundary arc $y=0$. 

Let us first fix a base point on the boundary, and take the frozen metric in the neighborhood, that is, for a certain $s_0$ and its associated curvature $\kappa_0=\kappa(s_0)>0$, we freeze the metric $g_0=(1-\kappa_0y)^2\dd s^2+\dd y^2$ and its associated Laplacian 
\begin{equation}\label{eq:freezing-lap-op}
\Delta_{g_0}=\partial_y^2-\frac{\kappa_0}{1-\kappa_0y}\partial_y +\frac{1}{(1-\kappa_0y)^2}\partial_s^2.
\end{equation}
Locally, this is just a disc $\D(0,1/\kappa_0)$. Therefore, heuristically, we expect we could benefit from the analysis of the disc case. Let us first do some \textit{formal} computation.

\vspace{2mm} For the frozen Laplacian $\Delta_{g_0}$, as in Section~\ref{sec:glancing-disc}, the phase equations \eqref{eq:phase-eq-s} reduce to
\begin{equation*}
\Phi(s,y)=\sqrt{1-\beta^2}s,\;\zeta(y)(\zeta'(y))^2=1-\frac{1-\beta^2}{(1-\kappa_0 y)^2}.
\end{equation*}
For a unit covector $p_s\dd s+p_y\dd y$ with inward normal component $\beta\in(0,1)$ at the boundary and positive tangential component, the conserved tangential momentum in this frozen model is $p_s=\sqrt{1-\beta^2}$, which gives the phase $\Phi$. Let $R_{\beta}(y):=\frac{2}{3}|\zeta(y)|^{3/2}$. Then, the phases are $p_ss\pm R_{\beta}(y)$. Now we do expansions of the normal momentum $p_y$:
\begin{align*}
1-\frac{1-\beta^2}{(1-\kappa_0 y)^2}=\beta^2-2\kappa_0 y+O(\beta^2y+y^2).
\end{align*}

Thus, the leading part of the Airy coordinate $\zeta_m$ is given by
\begin{equation*}
\zeta_m(y)(\zeta_m'(y))^2=\beta^2-2\kappa_0 y,
\end{equation*}
which implies that $\zeta_m(y):=(2\kappa_0)^{1/3}(\frac{\beta^2}{2\kappa_0}-y)$ with the turning point $y_c=\frac{\beta^2}{2\kappa_0}$ and $\zeta_m(0)=\frac{\beta^2}{(2\kappa_0)^{2/3}}$. For this leading model, set $R_{\beta,m}(y):=\frac{2}{3}(\zeta_m(y))^{3/2}$ for $0\le y\le y_c$. Then, we have
\begin{equation*}
R_{\beta,m}(0)=\frac{\beta^3}{3\kappa_0}.
\end{equation*}
Using the small parameter $\lambda$ from Section~\ref{sec:glancing-disc}, we impose $\lambda^3=3R_{\beta,m}(0)$ in this leading model. We aim to keep this boundary-action normalization along the chosen arc. 

\vspace{2mm} As in Lemma~\ref{lem:local-geometry}, for a certain boundary point $p$ with $\kappa_0>0$, fix its neighborhood $U$, such that along the arclength chart $s\in\mathcal{I}$, $\kappa(s)\geq\kappa_*>0$. Given a small parameter $\lambda$, we can define functions $\beta$, $y_c$ and $p_s$ (see Figure~\ref{fig:curved-collar}) via
\begin{equation}\label{lem:normal-geometry}
\begin{aligned}
\beta(s):=\lambda\kappa(s)^{\frac{1}{3}},\;y_c(s):=\frac{\lambda^2}{2}\kappa(s)^{-\frac{1}{3}},\; p_s(s):=\sqrt{1-\lambda^2\kappa(s)^{\frac{2}{3}}}, \\
\text{and }p_y(s,y)=\sqrt{1-\frac{1-\lambda^2\kappa(s)^{\frac{2}{3}}}{(1-\kappa(s) y)^{2}}}=\sqrt{\lambda^2\kappa(s)^{\frac{2}{3}}-2\kappa(s)y}+O(\lambda^2).
\end{aligned}
\end{equation}
Here the expansion is uniform for $0\le y\le y_c(s)$. We next correct these leading expressions.

\begin{figure}[htbp]
\centering
\includegraphics[width=.9\textwidth]{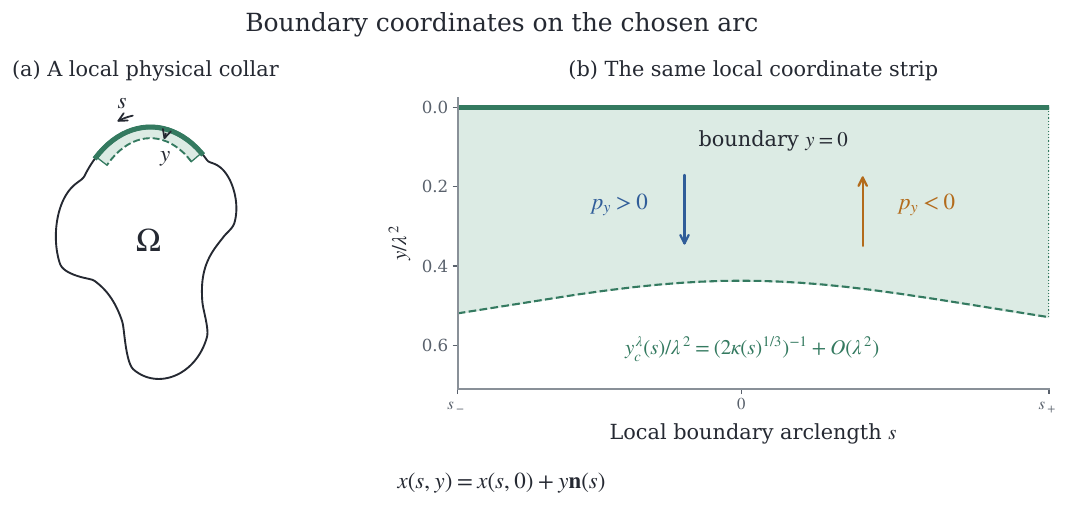}
\caption{Local geometry in boundary normal coordinates}\label{fig:curved-collar}
\end{figure}

\subsubsection{Correcting the phases without changing the boundary action}\label{subsubsec:curved-phase-correction}
We seek $\Phi_\lambda^\pm=\Phi_{0,\lambda}\pm R_\lambda$, while keeping $R_\lambda(s,0)=\frac{1}{3}\lambda^3$ exactly. We call this property a \emph{common boundary action}. It allows the same boundary--caustic compatibility to be used along the chosen arc as we observed in the frozen metric model.

In the disc, $\Phi_{0,\lambda}=b_\lambda \theta$ and $\zeta_\lambda$ depends only on $y$. With varying curvature, the turning coordinate also depends on $s$. This introduces a mixed phase error.

Based on the formal computation in the frozen metric, the appropriate turning coordinate from \eqref{lem:normal-geometry} is $y_c(s):=\frac{\lambda^2}{2}\kappa(s)^{-\frac{1}{3}}$. Then the leading term is 
\begin{equation*}
R_{0}(s,y)=\int_y^{y_c(s)}\sqrt{\lambda^2\kappa(s)^{\frac{2}{3}}-2\kappa(s)z}\dd z=\frac{\lambda^3}{3} \left(1-\frac{2\kappa(s)^{\frac{1}{3}}y}{\lambda^2}\right)^{\frac{3}{2}}.
\end{equation*}
Here $\zeta_0(s,y)=2^{-\frac{2}{3}}\lambda^2-(2\kappa(s))^{\frac{1}{3}}y$ is the leading part of the Airy coordinate. 
\vspace{1.5mm}

We go back to the phase equations \eqref{eq:phase-eq-s}. We fix the leading term $\zeta_0$, and seek a solution $\Phi$ of the second equation in \eqref{eq:phase-eq-s}:
\begin{equation*}
\partial_y\Phi+\frac{\kappa'(s)y}{3(1-\kappa(s)y)^2\kappa(s)}\partial_s\Phi=0,\;\Phi(s,0)=\int_0^s\sqrt{1-\lambda^2\kappa(\tau)^{\frac{2}{3}}}\dd\tau.
\end{equation*}
Thus, we obtain
\begin{equation*}
\Phi(s,y)=\int_0^s\sqrt{1-\lambda^2\kappa(\tau)^{\frac{2}{3}}}\dd\tau-\frac{\kappa'(s)\sqrt{1-\lambda^2\kappa(s)^{\frac{2}{3}}}}{6\kappa(s)}y^2+O(y^3).
\end{equation*}
Now we plug this expression into the first equation in \eqref{eq:phase-eq-s} and obtain
\begin{gather*}
|\nabla_g\Phi|_g^2=\mu^{-2}|\partial_s\Phi|^2+|\partial_y\Phi|^2=1-\lambda^2\kappa(s)^{\frac{2}{3}}+2\kappa(s)y +O(\lambda^4),\\
-\zeta_0|\nabla_g\zeta_0|_g^2=\left((2\kappa(s))^{\frac{1}{3}}y-2^{-\frac{2}{3}}\lambda^2\right)\left((2\kappa(s))^{\frac{2}{3}}+\mu^{-2}\left(\frac{2^{\frac{1}{3}}}{3}\kappa(s)^{-\frac{2}{3}}\kappa'(s)y\right)^2\right)\\
=2\kappa(s)y-\lambda^2\kappa(s)^{2/3}+O(\lambda^6).
\end{gather*}
Therefore, define 
\begin{equation*}
\Phi_{0}:=\int_0^s\sqrt{1-\lambda^2\kappa(\tau)^{\frac{2}{3}}}\dd\tau-\frac{\kappa'(s)\sqrt{1-\lambda^2\kappa(s)^{\frac{2}{3}}}}{6\kappa(s)}y^2,\text{ \; and \; }\zeta_{0}=2^{-\frac{2}{3}}\lambda^2-(2\kappa(s))^{\frac{1}{3}}y.
\end{equation*}
Then $\Phi_0$ and $\zeta_0$ will be the approximate solutions to \eqref{eq:phase-eq-s}:
\begin{equation*}
|\dd\Phi_0|_g^2+\zeta_0|\dd\zeta_0|_g^2-1=O(\lambda^4),\;\langle d\Phi_0,d\zeta_0\rangle_g=O(\lambda^4).
\end{equation*}
We expect that they are sufficient for our application. However, unfortunately, we need a better error estimate like $O(\lambda^6)$ for our later use in the first equation. We therefore seek a smooth correction $\zeta_\lambda$ of $\zeta_{0}$ and also the associated $\Phi_{0,\lambda}$, preserving this boundary value, and set $R_\lambda=\frac{2}{3}(\zeta_\lambda)^{\frac{3}{2}}$ on the oscillatory side. For $\Phi_{0,\lambda}$ and $\zeta_\lambda$, the error terms to be made small are
\begin{equation*}
E_0=|\dd\Phi_{0,\lambda}|^2+\zeta_\lambda|\dd\zeta_\lambda|^2-1 ,\text{ \; and \; }E_1=\langle \dd\Phi_{0,\lambda},\dd\zeta_\lambda\rangle.
\end{equation*}
We cancel the leading part of $E_1$ by writing $\Phi_{0,\lambda}=C_\lambda(s)+U(s,y)$. Using that $C_\lambda'=1+O(\lambda^2)$ and $\mu=1+O(\lambda^2)$ on $y=O(\lambda^2)$, the leading cancellation is
\begin{equation*}
0=\partial_s\zeta_{0}+\partial_yU\,\partial_y\zeta_{0}=-(2\kappa)^{1/3}\left(\frac{\kappa'}{3\kappa}y+\partial_yU\right).
\end{equation*}
Imposing $U(s,0)=0$, we obtain
\begin{equation*}
\partial_yU=-\frac{\kappa'}{3\kappa}y ,\text{ \; and \; }U(s,y)=-\frac{\kappa'(s)}{6\kappa(s)}y^2.
\end{equation*}
This correction recovers the preceding construction. We then add a quartic correction to $C_\lambda'$ and an order-$\lambda^4$ correction to $\zeta_{0}$ (as we presented in Section~\ref{sec:glancing-disc}). These cancel the remaining quartic term in $E_0$, with the latter correction zero at $y=0$. We now specify the family used in Proposition~\ref{prop:curved-phases}. For readers' convenience, the proof of Lemma~\ref{lem:airy-coord} derives the full coefficients.

\begin{proof}[Proof of Proposition~\ref{prop:curved-phases}]
Define
\begin{equation}\label{eq:W0}
\begin{aligned}
\Phi_{0,\lambda}(s,y):=C_\lambda(s)+U(s,y),&\text{ where  }U(s,y):=-\frac{\kappa'(s)}{6\kappa(s)}y^2,\\
C_\lambda'(s):=1-\frac12\lambda^2\kappa(s)^{2/3}+\lambda^4c_4(s),&\text{ \; with \; }c_4(s):=\frac{\kappa^{4/3}}{120}+\frac{\kappa''}{45\kappa^{5/3}}-\frac{4(\kappa')^2}{135\kappa^{8/3}}.
\end{aligned}
\end{equation}
Choose the origin of the local arclength coordinate at $p$ and take the primitive with $C_\lambda(0)=0$ for every $\lambda$. Use the polynomial Airy coordinate $\zeta_\lambda$ in \eqref{eq:airy-coeff}.

Lemma~\ref{lem:airy-coord} constructs $\zeta_\lambda$ as a polynomial in $y/\lambda^2$ and cancels the quadratic and quartic terms in $E_0$. The term $U$ cancels the leading mixed term $E_1$. The boundary value of the normal correction is zero, preserving the displayed boundary action.

The simple turning zero follows from $\partial_y\zeta_\lambda=-(2\kappa)^{1/3}+O(\lambda^2)$. When $\kappa=1$, $U=0$, $c_4=1/120$, and direct substitution into \eqref{eq:airy-coeff} gives \eqref{eq:gl-disc-phase}.
\end{proof}

The common choice of primitive matters when we compare different Fourier modes: a derivative estimate for $C_\lambda$ becomes an estimate for the phase itself on the fixed chart. We use this same choice throughout.

\subsubsection{Choosing the coordinated scales}\label{subsubsec:curved-scales}
We first read the geometric requirement from the fixed profile relation $f_*=\tau_{1/4}g_*$. For branch arguments $\frac{\Phi_{0,\lambda}-t\pm R_\lambda}{2\pi h}+\alpha$, the boundary phase difference is
\begin{equation*}
\psi-\phi=-\frac{R_\lambda(s,0)}{\pi h}.
\end{equation*}
To use the same quarter-period translation at every boundary point, we impose
\begin{equation*}
-\frac{R_\lambda(s,0)}{\pi h}=\frac{1}{4}\pmod1.
\end{equation*}
We therefore keep the disc boundary action $R_\lambda(s,0)=\lambda^3/3$ independent of $s$, and then coordinate $h$ with $\lambda$ as follows.
\vspace{1.5mm}

On the oscillatory collar, $\zeta_\lambda>0$ and $\zeta_\lambda=O(\lambda^2)$. Since $\dd R_\lambda=\sqrt{\zeta_\lambda}\dd\zeta_\lambda$, the estimates in \eqref{eq:eik} give
\begin{align*}
|\dd\Phi_\lambda^\pm|^2-1 =E_0\pm2\sqrt{\zeta_\lambda}E_1 =O(\lambda^6)+O(\lambda)O(\lambda^4)=O(\lambda^5).
\end{align*}
We later (in Sections~\ref{subsec:curved-boundary}--\ref{subsec:curved-correction}) need to require
\begin{equation}\label{eq:window}
\lambda^5\ll h\ll\lambda^4,\text{ \; and \; }\lambda^2\ll w,
\end{equation}
where $w$ is the tangential width (see Figure~\ref{fig:curved-scales}). 
\vspace{1.5mm}

As in the disc, choose $h$ of order $\lambda^{9/2}$ to satisfy both requirements, and adjust it to impose the quarter-period boundary relation exactly:
\begin{equation}\label{eq:scales}
j:=\lceil\lambda^{-3/2}\rceil, \quad h:=\frac{\lambda^3}{3\pi(j-1/4)}.
\end{equation}
This places $h$ between the competing powers:
\begin{equation*}
h\sim\lambda^{9/2},\; \frac{\lambda^5}{h}\sim\lambda^{1/2},\text{ \; and \; } \frac{h}{\lambda^4}\sim\lambda^{1/2}.
\end{equation*}
It also gives
\begin{equation*}
-\frac{R_\lambda(s,0)}{\pi h}=-j+1/4=1/4\pmod1, \; \frac{R_\lambda(s,0)}{2\pi h}=\tfrac12(j-1/4)\rightarrow\infty.
\end{equation*}

\begin{figure}[H]
\centering
\includegraphics[width=.9\textwidth]{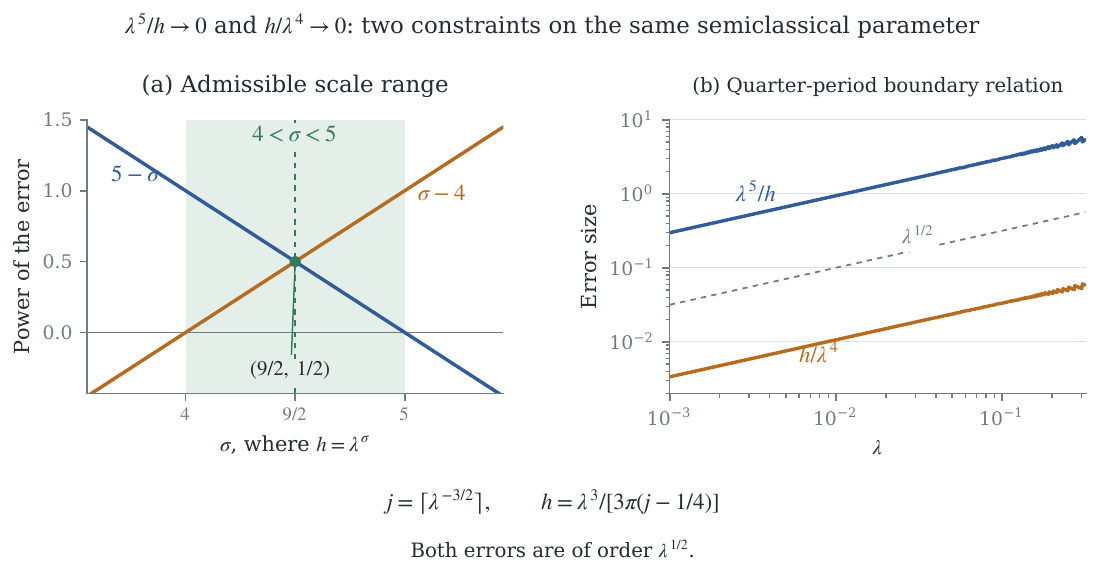}
\caption{The two requirements on $h$}\label{fig:curved-scales}
\end{figure}

\begin{remark}\label{rem:curved-disc-scales}
In the disc, $\Phi_{0,\lambda}=b_\lambda\theta$ and $\zeta_\lambda=\zeta_\lambda(y)$ give $E_1=0$. Thus the phase contribution to the normalized residual is $O(\lambda^6/h)$, as in Lemma~\ref{lem:gl-finite}. With varying curvature, the mixed term $E_1=O(\lambda^4)$ gives the stronger requirement $\lambda^5/h\to0$. The modewise phase comparison still requires $h/\lambda^4\to0$. Hence the same choice $h\sim\lambda^{9/2}$ works in both constructions.
\end{remark}

\subsection{Exact Dirichlet condition and a common amplitude}\label{subsec:curved-boundary}

We seek an approximate wave whose leading time derivative has the same full-profile form as in Section~\ref{subsubsec:gl-packet}. With the curved phases just constructed, the target and its finite Fourier approximation are
\begin{equation*}
S_{\lambda,w,z,\alpha}=A_{\lambda,w}(f_*(\phi)-g_*(\psi)),\; S_{\lambda,w,\ell}=A_{\lambda,w}(f_*^{(\ell)}(\phi)-g_*^{(\ell)}(\psi)).
\end{equation*}
Both expressions are extended by zero outside the moving packet region; their phases and common amplitude are specified in Proposition~\ref{prop:curved-support} below. The full profiles define exact matching. The finite profiles $f_*^{(\ell)},g_*^{(\ell)}$ instead allow us to construct a finite Airy sum $v_\ell$ and control its errors before letting $\ell\to\infty$.

We follow the Airy realization in Lemma~\ref{lem:gl-finite}, now with variable curvature and support in one boundary chart. Choosing Airy zeros gives an exact Dirichlet condition, but the localized Airy sum is not yet an exact solution of the wave equation. Section~\ref{subsec:curved-correction} corrects its residual, and Section~\ref{subsec:curved-full} recovers the full profile.
\vspace{1.5mm}

There are three tasks before returning to the observation geometry.
\begin{itemize}[leftmargin=*,itemsep=3pt]
\item Choose the Airy-zero indices $j_n$ so that each mode vanishes at the boundary and its phase remains close to the common phase.
\item Choose the mode amplitudes so that the leading time derivative of the approximate wave $v_\ell$ has a common real factor $A_{\lambda,w}$.
\item Read the generated approximate velocity from these branches, using the full profiles $f_*,g_*$ to define $S_{\lambda,w,z,\alpha}$ and $\Gamma_{\lambda,w,z,\alpha}$.
\end{itemize}

\subsubsection{Choosing the Airy-zero indices}

Fix a Fourier truncation $H_*^{(\ell)}(z)=\sum_{n\le \ell,\ n\equiv1(\mathrm{mod}4)}d_{*,n}z^n$ of the diamond profile. Its travelling-wave profiles are
\begin{align*}
f_*^{(\ell)}(\xi):=\Real\left(e^{i\pi/4}H_*^{(\ell)}(e^{2\pi i\xi})\right), \text{ \; and \; } g_*^{(\ell)}(\xi):=\Real\left(e^{-i\pi/4}H_*^{(\ell)}(e^{2\pi i\xi})\right).
\end{align*}
Here $g_*^{(\ell)}=\calH f_*^{(\ell)}$. The same restriction $n\equiv1\pmod4$ that gives profile compatibility will make the Airy-zero indices below integers, as in Lemma~\ref{lem:gl-finite}.

\smallskip\noindent\emph{Exact Dirichlet condition.} Each Fourier mode must vanish at an Airy zero. For the positive zeros $z_j$ of $\Ai(-z)$, the classical expansion is \cite{Olver}
\begin{equation*}
z_j^{3/2}=\frac{3\pi}2(j-1/4)+O(j^{-1}).
\end{equation*}
For $h_n=h/n$ and $n\leq \ell$, the leading actions agree if
\begin{equation*}
h_n(j_n-1/4)=h(j-1/4),\text{ \; and \; }j_n-1/4=n(j-1/4).
\end{equation*}
This determines
\begin{equation}\label{eq:airy-lock}
h_n:=h/n, \quad j_n:=nj-(n-1)/4 ,\text{ \; and \; }\lambda_n^3:=2h_nz_{j_n}^{3/2}.
\end{equation}
Since $n\equiv1\pmod4$, the index $j_n$ is an integer. The definition of $\lambda_n$ gives
\begin{align*}
\frac{\zeta_{\lambda_n}(s,0)}{h_n^{2/3}} =\frac{2^{-2/3}\lambda_n^2}{h_n^{2/3}} =\left(\frac{\lambda_n^3}{2h_n}\right)^{2/3}=z_{j_n}.
\end{align*}
For the scale comparison, the Airy-zero expansion yields
\begin{align*}
\lambda_n^3=3\pi h_n(j_n-1/4)+O_\ell(h/j)=3\pi h(j-1/4)+O_\ell(h/j)=\lambda^3\left(1+O_\ell(j^{-2})\right).
\end{align*}
Consequently,
\begin{equation}\label{eq:boundary-zero}
\Ai\left(-\zeta_{\lambda_n}(s,0)/h_n^{2/3}\right)=\Ai(-z_{j_n})=0 ,\text{ \; and \; }\lambda_n/\lambda=1+O_\ell(j^{-2}).
\end{equation}
The boundary condition is therefore exact for each Fourier mode. It remains to show that these different modes can be read using one common pair of propagation phases.

\vspace{2mm} \noindent\emph{Recovery of the common profile.} The normal scales $\lambda_n$ are slightly different. A common profile requires more than $\lambda_n/\lambda\to1$, since phases are divided by $h_n$. We first compare the unscaled tangential phases. Using \eqref{eq:boundary-zero} and the formula for $C_\lambda'$, we obtain
\begin{gather*}
|\lambda_n-\lambda|\le C_\ell\lambda j^{-2},\\
|C_{\lambda_n}'-C_\lambda'| \le C\left(|\lambda_n^2-\lambda^2|+|\lambda_n^4-\lambda^4|\right) \le C_\ell\lambda^2j^{-2}.
\end{gather*}
Since $C_{\lambda_n}(0)=C_\lambda(0)=0$, integration gives, on the fixed arclength chart,
\begin{equation*}
|C_{\lambda_n}(s)-C_\lambda(s)| =\left|\int_0^s(C_{\lambda_n}'-C_\lambda')(\sigma)\dd\sigma\right| \le C_{\ell,T}\lambda^2j^{-2}.
\end{equation*}
This implies that
\begin{equation}\label{eq:phase-coherence}
\frac{|C_{\lambda_n}-C_\lambda|}{h_n} \le C_{\ell,T}\frac{\lambda^2j^{-2}}h \le C_{\ell,T}\frac {h}{\lambda^4}\rightarrow0.
\end{equation}
The last comparison uses $j\sim\lambda^3/h$. Since $h_n=h/n$, this controls the oscillatory factors themselves:
\begin{equation*}
\left|e^{iC_{\lambda_n}/h_n}-e^{inC_\lambda/h}\right| \le\frac{|C_{\lambda_n}-C_\lambda|}{h_n}\rightarrow0.
\end{equation*}
On a fixed regular part of the normal collar, the same scale comparison gives
\begin{equation*}
\frac{|R_{\lambda_n}-R_\lambda|}{h_n} \le C_\ell\frac{\lambda^3j^{-2}}h \le C_\ell j^{-1}.
\end{equation*}
Here and below the constants for this comparison may depend on the chosen regular band. Since $U$ in \eqref{eq:W0} is independent of $\lambda$, the full branch factors therefore satisfy
\begin{align*}
\left|e^{i(\Phi_{0,\lambda_n}\pm R_{\lambda_n}-t)/h_n}-e^{in(\Phi_{0,\lambda}\pm R_\lambda-t)/h}\right|\quad\le\frac{|C_{\lambda_n}-C_\lambda|+|R_{\lambda_n}-R_\lambda|}{h_n}\le C_{\ell,T}\left(\frac{h}{\lambda^4}+j^{-1}\right)\rightarrow0.
\end{align*}
These phase estimates explain the upper restriction on $h$ in \eqref{eq:window}.

The amplitudes also agree asymptotically. On a fixed oscillatory band with $y_c^\lambda(s)-y\ge\delta\lambda^2$, the explicit formula for $\zeta_\lambda$ and \eqref{eq:boundary-zero} give
\begin{equation*}
\frac{\zeta_{\lambda_n}}{\zeta_\lambda}=1+O_{\ell,\delta}(j^{-2}), \text{ hence }(\zeta_{\lambda_n})^{-1/4}=(\zeta_\lambda)^{-1/4}\left(1+O_{\ell,\delta}(j^{-2})\right).
\end{equation*}

\subsubsection{Constructing the approximate wave}
We first choose the curvature-dependent factor $A_0(s)$ in the Airy amplitude. We may not have the exact equation $\widetilde{r}_1=0$ as in Lemma~\ref{lem:exact-Airy-solution}. The transport expression to be made small is
\begin{equation*}
2\langle \dd\Phi_{0,\lambda},\dd A_0\rangle_g+A_0\Delta_g \Phi_{0,\lambda}.
\end{equation*}
In the boundary neighborhood, the corrected phase satisfies
\begin{equation*}
\Delta_g\Phi_{0,\lambda}=-\frac{\kappa'}{3\kappa}+O(\lambda^2),\; \mu^{-2}\partial_s\Phi_{0,\lambda}=1+O(\lambda^2).
\end{equation*}
For an amplitude factor $A_0(s)$, cancellation of the leading transport term therefore gives
\begin{equation*}
2\frac{A_0'}{A_0}-\frac{\kappa'}{3\kappa}=0, \quad\text{so we take }A_0(s)=\kappa(s)^{1/6}.
\end{equation*}
Indeed, $2\mu^{-2}\partial_s\Phi_{0,\lambda}\frac{A_0'}{A_0}+\Delta_g \Phi_{0,\lambda}=O(\lambda^2)$. The remaining transport terms are estimated in Section~\ref{subsec:curved-correction}.

For a common leading amplitude, multiply by $h_n^{5/6}$ to cancel the Airy factor $h_n^{1/6}$ and the time derivative $h_n^{-1}$:
\begin{equation*}
h_n^{5/6}\cdot h_n^{1/6}\cdot h_n^{-1}=1.
\end{equation*}
This is the same Airy superposition as in Lemma~\ref{lem:gl-finite}, with the curvature factor just chosen. See also the general discussion in \cite[Appendix~A.5.2]{AOSI}.

Fix a moving packet region $\mathcal U_{\lambda,w,z}$ as in Definition~\ref{def:curved-packet}, a real nonzero cutoff $\chi\in C_c^\infty((-1/2,1/2))$, and a common phase shift $\alpha\in\R/\Z$. Choose fixed $0<Y_0<Y_1$ above all scaled turning depths on the chart, and a smooth cutoff $\rho\in C^{\infty}$ such that $\rho=1$ on $[0,Y_0]$ and $\rho=0$ on $[Y_1,\infty)$. On this local chart, take the real part of
\begin{equation}\label{eq:airy-wave}
v_\ell^{\mathbb C}:=-2\sqrt{\frac{\pi}{\lambda w}} \chi\left(\frac{\theta-t-z}{w}\right)\rho\left(\frac{y}{\lambda^2}\right)\kappa(s)^{1/6} \sum_{n\le \ell,\ n\equiv1(\mathrm{mod}4)}d_{*,n}h_n^{5/6} \Ai\left(-\frac{\zeta_{\lambda_n}}{h_n^{2/3}}\right)e^{i\frac{\Phi_{0,\lambda_n}-t}{h_n}+2\pi in\alpha}.
\end{equation}
For fixed $\ell$ and sufficiently small $\lambda$, each retained mode satisfies
\begin{equation*}
\zeta_{\lambda_n}(s,y)\le -c\lambda^2, \quad \forall y\in[Y_0\lambda^2,Y_1\lambda^2].
\end{equation*}
Thus the cutoff errors are exponentially small. This cutoff lies beyond the turning curve; the Airy wave remains smooth across that curve. The target $S_{\lambda,w,\ell}$ instead keeps only the oscillatory-side expression and is zero on the other side. Their comparison across the turning curve will use the integral Airy estimates in Section~\ref{subsec:curved-correction}.

The tangential envelope has a different role, and its transport error is controlled by $\lambda^2/w$. Here the tangential envelope is built directly into the approximate wave, whereas the construction in \cite[Section~3.4]{AOSI} achieves angular localization by replacing the angular factors and removing small tails.

\vspace{2mm} We take $\lambda$ smaller so that $Y_1\lambda^2<d_0$. By \eqref{eq:local-buffer}, the resulting approximation has support
\begin{equation*}
\operatorname{supp}v_\ell(t) \subset x\!\left(\left(t+z+[-w/2,w/2]\right)\times[0,Y_1\lambda^2]\right) \subset V, \quad \forall t\in[0,T].
\end{equation*}
The moving envelope stays away from the tangential edges of the chart, and the normal cutoff vanishes before $y=d_0$. We therefore extend by zero to the rest of $\Omega$. The extension is smooth across the artificial edges and creates no additional error there. We note that $v_\ell|_{\partial\Omega}=0$ exactly due to the Airy zeros on the chosen arc, and zero extension elsewhere. 
\subsubsection{The generated approximate velocity and its matching region}
The preceding mode calculation determines the common phases and amplitude. As in Section~\ref{subsubsec:gl-packet}, we separate the factors of the generated velocity amplitude:
\begin{equation*}
A_{\lambda,w} =\underbrace{(\lambda w)^{-1/2}}_{\text{mass scaling}} \quad \underbrace{\chi\!\left(\frac{s-t-z}{w}\right)}_{\text{tangential localization}} \quad \underbrace{\kappa(s)^{1/6}}_{\text{curvature transport}} \quad \underbrace{(\zeta_\lambda)^{-1/4}}_{\text{Airy branch amplitude}} \quad\text{on }\mathcal U_{\lambda,w,z}.
\end{equation*}
We extend this amplitude by zero outside the moving packet region and on its turning curve. The next proposition records the full-profile function, its support and its matching region. Proposition~\ref{prop:finite-airy} proves velocity approximation for its finite-profile version, and Theorem~\ref{thm:realization} recovers the full profile. On this chart define the common phases and amplitude by
\begin{equation}\label{eq:actual-phases}
\begin{split}
\phi&:=(2\pi h)^{-1}(\Phi_{0,\lambda}-t+R_\lambda)+\alpha,\;\psi:=(2\pi h)^{-1}(\Phi_{0,\lambda}-t-R_\lambda)+\alpha\\
\text{and }&  A_{\lambda,w}:=(\lambda w)^{-1/2}\chi\left((s-t-z)/w\right)\kappa(s)^{1/6}(\zeta_\lambda)^{-1/4}.
\end{split}
\end{equation}
The amplitude is extended by zero outside the moving packet region and on its turning curve.

On $\mathcal U_{\lambda,w,z}$, define
\begin{equation}\label{eq:curved-matching}
\begin{aligned}
S_{\lambda,w,z,\alpha}(t,s,y) &:=A_{\lambda,w}(t,s,y)(f_*(\phi)-g_*(\psi)),\\
\Gamma_{\lambda,w,z,\alpha} &:=\mathcal U_{\lambda,w,z}\cap\{(\phi,\psi)\pmod1\in\mathcal R\}.
\end{aligned}
\end{equation}
Extend $S_{\lambda,w,z,\alpha}$ by zero outside the moving packet region and on its turning curve.%

\begin{prop}\label{prop:curved-support}
Let $\Omega\subset\R^2$ be a smooth bounded domain. Fix the local geometry of Lemma~\ref{lem:local-geometry}, with collar coordinates $x(s,y)$, arclength interval $\mathcal I_{\rm in}$, collar depth $d_0$ and time bound $T_{\rm loc}$. Fix $0<T<T_{\rm loc}$, the diamond profiles $f_*,g_*$ and rectangles $\mathcal R$ from Proposition~\ref{prop:diamond}, and a real nonzero $\chi\in C_c^\infty((-1/2,1/2))$. For sufficiently small $\lambda>0$, use the phases of Proposition~\ref{prop:curved-phases} and the scale $h$ in \eqref{eq:scales}. Fix $w,z,\alpha$ with
\begin{equation*}
0<w<1,\quad z+[-w/2,w/2]\subset\mathcal I_{\rm in} ,\text{ \; and \; }\alpha\in\R/\Z.
\end{equation*}

Then $S_{\lambda,w,z,\alpha}\in L^2((0,T)\times\Omega)$, and the following properties hold.
\begin{enumerate}[label=(\roman*),leftmargin=*,itemsep=1mm]
\item \emph{Support in the moving packet region.} We have $S_{\lambda,w,z,\alpha}=0$ outside $\mathcal U_{\lambda,w,z}$.%
\item \emph{Additional cancellation through profile matching.} The full diamond profiles give
\begin{equation*}
\1_{\Gamma_{\lambda,w,z,\alpha}}S_{\lambda,w,z,\alpha}=0.
\end{equation*}
\end{enumerate}
\end{prop}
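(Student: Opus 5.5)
The plan follows the disc argument of Proposition~\ref{prop:gl-support}, replacing the disc quantities by the curved ones from Proposition~\ref{prop:curved-phases} and Definition~\ref{def:curved-packet}. The proof splits into three short steps: square integrability, support, and matching.

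\emph{Square integrability.} Since $|f_*|,|g_*|\le1$ by Proposition~\ref{prop:diamond}, it suffices to show $A_{\lambda,w}\in L^2((0,T)\times\Omega)$. At fixed $t$ I would integrate in the collar coordinates with area element $\mu\dd s\dd y$, where $\mu=1-\kappa y\in[1/2,1]$ on the thin collar. The factor $\kappa^{1/3}$ is bounded on the compact chart. The tangential integral of $\chi^2((s-t-z)/w)$ is at most $w\|\chi\|_{L^2}^2$. For the normal integral I use \eqref{eq:turning}: $\partial_y\zeta_\lambda=-(2\kappa)^{1/3}+O(\lambda^2)$ is bounded above and below, so $\zeta_\lambda(s,y)\ge c\,(y_c^\lambda(s)-y)$ for $0<y<y_c^\lambda(s)$. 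Hence
\[
\int_0^{y_c^\lambda(s)}\zeta_\lambda^{-1/2}\dd y\le C\sqrt{y_c^\lambda(s)}\le C\lambda .
\]
Multiplying by $(\lambda w)^{-1}$ gives a uniform bound $\|A_{\lambda,w}(t)\|_{L^2(\Omega)}^2\le C$, and integrating over $t\in(0,T)$ yields $S_{\lambda,w,z,\alpha}\in L^2$. The only point requiring care is this integrable singularity $\zeta_\lambda^{-1/4}$ at the turning curve; it is handled by the simple-zero property in \eqref{eq:turning}. I also note that $S_{\lambda,w,z,\alpha}$ is Borel measurable, since it is built from measurable functions, and that the turning curve $\{y=y_c^\lambda(s)\}$ is a null set.

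\emph{Support.} Statement (i) holds by construction: $A_{\lambda,w}$, and hence $S_{\lambda,w,z,\alpha}$, was extended by zero outside $\mathcal U_{\lambda,w,z}$ and on the turning curve. Definition~\ref{def:curved-packet} gives $\mathcal U_{\lambda,w,z}\subset(0,T)\times V$, so the zero extension to $\Omega$ is consistent.

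\emph{Matching.} For (ii), fix a point of $\Gamma_{\lambda,w,z,\alpha}$. By \eqref{eq:curved-matching}, $(\phi,\psi)\pmod 1$ lies in $F_+\times J_+$ or in $F_-\times J_-$. Since $f_*$ and $g_*$ are $1$-periodic, Proposition~\ref{prop:diamond}(3) gives $f_*(\phi)=g_*(\psi)=1$ in the first case and $f_*(\phi)=g_*(\psi)=-1$ in the second. In either case $f_*(\phi)-g_*(\psi)=0$, so $S_{\lambda,w,z,\alpha}=0$ on $\Gamma_{\lambda,w,z,\alpha}$. No step here poses a real obstacle: the curved phases enter only through their values mod $1$, which is why the same matching argument as on the disc goes through.
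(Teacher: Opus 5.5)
Your proposal is correct and follows essentially the same route as the paper. The $L^2$ bound comes from the integrability of $\zeta_\lambda^{-1/2}$ across the simple turning zero in \eqref{eq:turning}, giving $\int_0^{y_c^\lambda(s)}\zeta_\lambda^{-1/2}\mu\dd y\le C\lambda$ and hence $\|A_{\lambda,w}(t)\|_{L^2(\Omega)}^2\le C$; support holds by the zero extension; and matching follows from the equal plateau values on $\mathcal R$ pulled back by $(\phi,\psi)\pmod 1$. The paper's proof adds two remarks that serve as motivation rather than as part of the argument: how the finite Airy sum produces $f_*^{(\ell)}(\phi)-g_*^{(\ell)}(\psi)$, and that $\psi=\phi+1/4\pmod 1$ at $y=0$. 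Neither is needed for the two stated properties.
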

\begin{proof}
\noindent\emph{The common amplitude and support in the moving packet region.} %

After the phase replacement justified by \eqref{eq:phase-coherence}, the finite profile sum reads
\begin{align*}
&\Real\sum_{\substack{n\le\ell\\n\equiv1\ (4)}}d_{*,n} \left(e^{i\pi/4}e^{2\pi in\phi}-e^{-i\pi/4}e^{2\pi in\psi}\right)\\
&\quad=\Real\left(e^{i\pi/4}H_*^{(\ell)}(e^{2\pi i\phi})\right) -\Real\left(e^{-i\pi/4}H_*^{(\ell)}(e^{2\pi i\psi})\right)\\
&\quad=f_*^{(\ell)}(\phi)-g_*^{(\ell)}(\psi).
\end{align*}
Replacing the finite profiles by $f_*,g_*$ gives the target \eqref{eq:curved-matching}; Proposition~\ref{prop:finite-airy} and Theorem~\ref{thm:realization} will justify the approximation. Here $\Phi_{0,\lambda}=C_\lambda+U$ retains the normal correction in \eqref{eq:W0}.

\vspace{2mm} Support follows directly from the definition. For the $L^2$ bound, the simple turning zero and the curvature bounds give
\begin{equation*}
\int_0^{y_c^\lambda(s)}(\zeta_\lambda(s,y))^{-1/2}\mu(s,y)\dd y\le C\lambda
\end{equation*}
uniformly on the fixed chart. Hence, for every $t\in[0,T]$, $\|A_{\lambda,w}(t,\cdot)\|_{L^2(\Omega)}^2\le C$. Since $f_*,g_*$ are bounded, $S_{\lambda,w,z,\alpha}\in L^2((0,T)\times\Omega)$.

\smallskip\noindent\emph{Additional cancellation through profile matching.} On each component of $\mathcal R$, the two full profiles take the same plateau value. Pulling back by $(\phi,\psi)$ therefore gives $\1_{\Gamma_{\lambda,w,z,\alpha}}S_{\lambda,w,z,\alpha}=0$.

The same phases also recover the boundary relation
\begin{equation*}
\psi=\phi+1/4\pmod1, \quad\text{hence }f_*(\phi)-g_*(\psi)=0,
\end{equation*}
at $y=0$, because $R_\lambda(s,0)=\lambda^3/3$ and \eqref{eq:scales} hold for every $s$.
\end{proof}

Inside the collar, the same reference rectangles give matching regions whose physical shape depends on curvature; see Figure~\ref{fig:curved-matching}.

\begin{figure}[htbp]
\centering
\includegraphics[width=\textwidth]{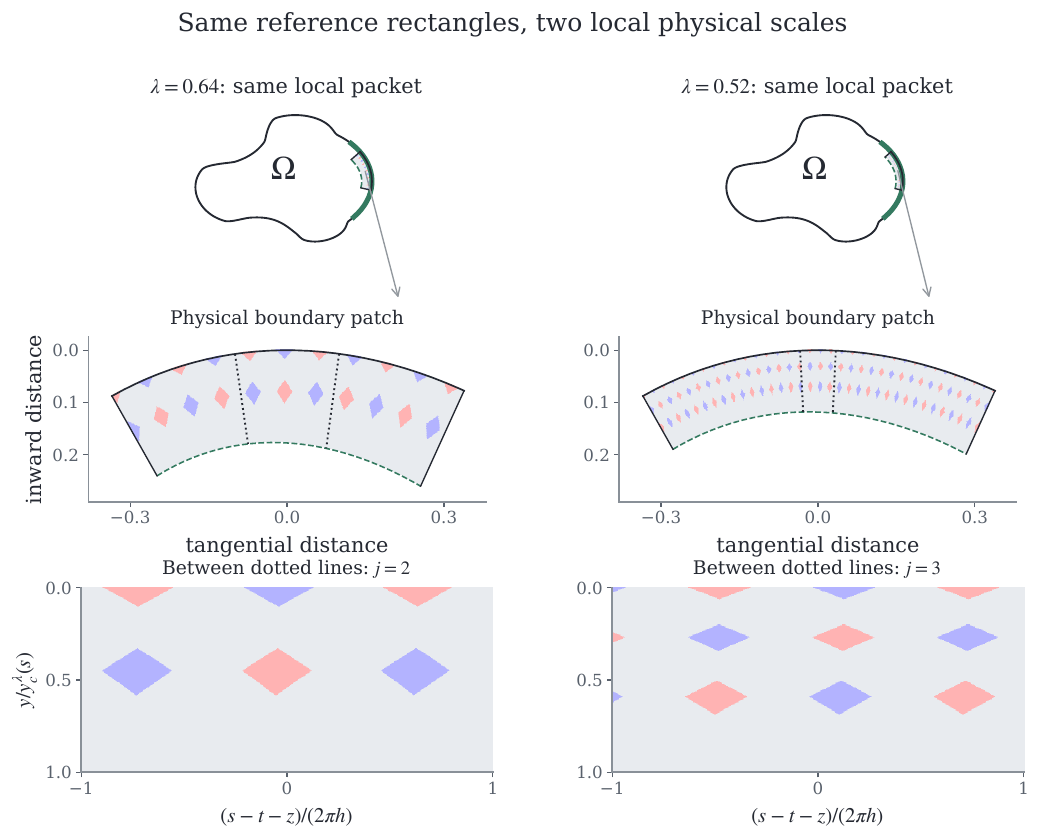}
\caption{The same matching rectangles near one boundary arc at two scales, with the center and tangential width of the moving packet region fixed.}\label{fig:curved-matching}
\end{figure}

\begin{remark}\label{rem:curved-S-params}
For a family $S_k:=c_kS_{\lambda_k,w_k,z_k,\alpha_k}$, we record the dependence as in the disc:
\begin{equation*}
S_k=c_k\,S\!\left(\Omega,q,\chi;z_k,\alpha_k;\lambda_k,w_k\right),\quad\text{where }q=f_*\quad\text{and}\quad g_*=\calH q.
\end{equation*}
Here $S$ is given by \eqref{eq:curved-matching}. We fix $\Omega,q,\chi$, the boundary coordinates and the primitive in \eqref{eq:W0}; $z_k,\alpha_k$ have the roles stated in Remark~\ref{rem:gl-S-params}. Equation \eqref{eq:scales} determines $h_k$ from $\lambda_k$. The normal depth varies with $s$ according to \eqref{eq:turning}, but remains of order $\lambda_k^2$; $w_k$ is the tangential width.
\end{remark}

\Needspace{8\baselineskip}
\subsubsection{The same observation rule on the curved domain}\label{subsubsec:curved-observation}

The matching cells now depend on curvature, but the assembly of the observation region uses exactly the support and matching properties from Proposition~\ref{prop:curved-support}. Before we make the statements. Define $S_k:=c_kS_{\lambda_k,w_k,z_k,\alpha_k}$ and $\Gamma_k:=\Gamma_{\lambda_k,w_k,z_k,\alpha_k}$ by Proposition~\ref{prop:curved-support}. The moving packet regions of Definition~\ref{def:curved-packet} are
\begin{equation}\label{eq:packet}
\mathcal U_k:=\mathcal U_{\lambda_k,w_k,z_k}=\{(t,x)\in(0,T)\times\Omega:s(x)-t\in I_k,\quad 0<y(x)<y_c^{\lambda_k}\left(s(x)\right)\}.
\end{equation}
All points in the moving packet regions lie in the fixed chart by \eqref{eq:local-buffer}.

\smallskip
With the phases $\phi_k,\psi_k$ from \eqref{eq:actual-phases}, the additional observation region \eqref{eq:gl-extra} is
\begin{equation*}
\calE_k:=\{(t,x)\in\mathcal U_k:\dist(\phi_k,\mathcal K)\le4\gamma_k \ \text{or}\ \dist(\psi_k,\mathcal K)\le4\gamma_k\},
\end{equation*}
where $\mathcal K=\{0,1/4,1/2,3/4\}$ and distances are taken on $\R/\Z$. Put
\begin{equation*}
\mathcal H_k:=\mathcal U_k\setminus(\Gamma_k\cup\calE_k).
\end{equation*}
The finite-stage and fixed observation regions are
\begin{equation}\label{eq:fixed-G}
G^{(N)}:=\left((0,T)\times\Omega\right)\setminus\bigcup_{k\le N}\mathcal H_k,\;G:=\bigcap_{N\ge1}G^{(N)}=\left((0,T)\times\Omega\right)\setminus\bigcup_{k\ge1}\mathcal H_k.
\end{equation}

\begin{prop}\label{prop:curved-observation}
Let $\Omega\subset\R^2$ be a smooth bounded domain. Fix the normal coordinates $x(s,y)$ and data $\mathcal I_{\rm in},d_0,T_{\rm loc}$ of Lemma~\ref{lem:local-geometry}. Fix $0<T<T_{\rm loc}$, the diamond profiles $f_*,g_*$ of Proposition~\ref{prop:diamond}, and a real nonzero envelope $\chi\in C_c^\infty((-1/2,1/2))$. Choose widths $0<w_k<1$ with $w_k\to0^+$ and centers $z_k\in\R$ such that the intervals
\begin{equation*}
I_k=z_k+(-w_k/2,w_k/2),\quad\text{with }\overline I_k\subset\mathcal I_{\rm in},
\end{equation*}
are pairwise disjoint. Fix common phase shifts $\alpha_k\in\R/\Z$, sufficiently small scales $\lambda_k\to0^+$ with turning depths below $d_0$, factors $c_k>0$, and reference widths $\gamma_k\to0^+$. Then, these Borel sets satisfy
\begin{equation}\label{eq:curved-stages}
\begin{gathered}
G^{(N)}\downarrow G,\\
G^{(N)}\cap\mathcal U_k=G\cap\mathcal U_k=\Gamma_k\cup\calE_k, \quad \forall k\in\{1,\ldots,N\}.
\end{gathered}
\end{equation}
Moreover,
\begin{equation}\label{eq:curved-observed}
\begin{aligned}
\1_{G^{(N)}}S_k &=\1_{G^{(N)}\cap\mathcal U_k}S_k =\1_{\Gamma_k\cup\calE_k}S_k\\
&=\1_{\calE_k}S_k=\1_GS_k, \quad \forall k\in\{1,\ldots,N\}.
\end{aligned}
\end{equation}
\end{prop}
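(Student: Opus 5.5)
The argument follows the proof of Proposition~\ref{prop:gl-observation} almost verbatim; only the disjointness of the moving packet regions has to be rechecked in the curved chart.

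First, I show that the sets $\mathcal U_k$ are pairwise disjoint in spacetime. By \eqref{eq:packet}, a point of $\mathcal U_k$ lies in the fixed collar chart of Lemma~\ref{lem:local-geometry}, thanks to \eqref{eq:local-buffer} and the assumption that the turning depths are below $d_0$. It therefore has a well-defined arclength coordinate $s(x)$, and it satisfies $s(x)-t\in I_k$. Since the intervals $I_k$ are pairwise disjoint, the quantity $s(x)-t$ cannot lie in two of them. This holds regardless of the $s$-dependent depths $y_c^{\lambda_k}(s)$. Hence $\mathcal U_k\cap\mathcal U_{k'}=\emptyset$ for $k\ne k'$, and in particular $\mathcal H_{k'}\cap\mathcal U_k=\emptyset$.

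Next comes Borel measurability and monotonicity. The phases $\phi_k,\psi_k$ from \eqref{eq:actual-phases} are continuous on $\mathcal U_k$, and $\mathcal R$ and the closed strips around $\mathcal K$ are Borel. So $\Gamma_k$, $\calE_k$, $\mathcal H_k$, and hence $G^{(N)}$ and $G$, are Borel. Monotonicity $G^{(N+1)}=G^{(N)}\setminus\mathcal H_{N+1}\subset G^{(N)}$ is immediate, and $\bigcap_N G^{(N)}=G$ holds by definition, which gives $G^{(N)}\downarrow G$.

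To identify the intersections, fix $k\le N$. Disjointness gives
\begin{equation*}
G^{(N)}\cap\mathcal U_k=\mathcal U_k\setminus\mathcal H_k=\Gamma_k\cup\calE_k,
\end{equation*}
because $\Gamma_k\cup\calE_k\subset\mathcal U_k$. The same computation with all $k'\ge1$ gives $G\cap\mathcal U_k=\Gamma_k\cup\calE_k$. This proves \eqref{eq:curved-stages}.

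For \eqref{eq:curved-observed}, Proposition~\ref{prop:curved-support}(i), together with scaling by $c_k$, gives $S_k=\1_{\mathcal U_k}S_k$. Therefore
\begin{equation*}
\1_{G^{(N)}}S_k=\1_{G^{(N)}\cap\mathcal U_k}S_k=\1_{\Gamma_k\cup\calE_k}S_k.
\end{equation*}
Since $\1_{\Gamma_k}S_k=0$ by Proposition~\ref{prop:curved-support}(ii), this reduces to $\1_{\calE_k\setminus\Gamma_k}S_k=\1_{\calE_k}S_k$. The same chain with $G$ in place of $G^{(N)}$ gives $\1_GS_k=\1_{\calE_k}S_k$.

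The only point needing care is the disjointness step. Unlike in the disc, the depth varies with $s$, and the chart is only local. Both issues are handled by working with the coordinate $s-t$ and by the inclusions in \eqref{eq:local-buffer}.
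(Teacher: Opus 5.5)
Your proposal is correct and matches the paper's proof: disjointness of the $I_k$ in the coordinate $s-t$ gives disjointness of the $\mathcal U_k$ regardless of the $s$-dependent turning depths, and the set-theoretic argument of Proposition~\ref{prop:gl-observation}, together with the support and matching properties of Proposition~\ref{prop:curved-support}, then yields both identities. The only difference is that you write out the intersection computation and the Borel measurability, which the paper leaves implicit.
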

\begin{remark}\label{rem:curved-params}
The set depends on the fixed domain and full profile through
\begin{equation}\label{eq:curved-G-param}
G=G\!\left(\Omega,q,T;\{z_k,\alpha_k\}_{k\ge1};\{\lambda_k,w_k,\gamma_k\}_{k\ge1}\right),\text{where }q=f_*\quad\text{and}\quad g_*=\calH q.
\end{equation}
The positions and propagation scales are as in Remark~\ref{rem:curved-S-params}. The reference width $\gamma_k$ changes $\calE_k$ and $G$ without changing $S_k$.

The widths $w_k$, centers $z_k$ and common phase shifts $\alpha_k$ may be prescribed first. For realization, we then fix a Fourier cutoff and take $\lambda_k$ sufficiently small. Theorem~\ref{thm:realization} gives non-observability for any $\gamma_k\to0$. For GCC, Section~\ref{sec:gcc} imposes stronger interval separation and further upper bounds on $\lambda_k$, allowed by that theorem, and chooses $\gamma_k$ as a function of $\lambda_k$.

A change of Fourier cutoff may require a smaller $\lambda_k$ and hence a different physical set. Once the entire sequence is chosen, $G$ is fixed for all the exact waves.
\end{remark}

\begin{proof}[Proof of Proposition~\ref{prop:curved-observation}]
Disjointness of $I_k$ gives disjointness of $\mathcal U_k$, independently of the varying turning depth. Proposition~\ref{prop:curved-support} supplies support in $\mathcal U_k$ and cancellation on $\Gamma_k$. The set-theoretic argument in the proof of Proposition~\ref{prop:gl-observation} therefore applies unchanged, giving \eqref{eq:curved-stages} and \eqref{eq:curved-observed}.
\end{proof}

\subsection{From approximate waves to exact waves}\label{subsec:curved-correction}

Let $u_\ell$ denote the exact wave with initial state $v_\ell[0]$. For fixed $\ell$, two errors remain: the exact-wave correction $\partial_t(u_\ell-v_\ell)$ and the error for the truncated profiles $\partial_tv_\ell-S_{\lambda,w,\ell}$. We control the first through the residual of $v_\ell$ and the second through the common-phase comparison and the Airy bounds.

Starting from the target, these are the two spacetime $L^2$ comparisons
\begin{equation*}
S_{\lambda,w,\ell} \underset{\substack{\text{Airy expansion}\\\text{and phase comparison}}}{\approx} \partial_tv_\ell \underset{\substack{\text{energy}\\\text{estimate}}}{\approx} \partial_tu_\ell.
\end{equation*}
The first comparison includes the turning layer through an integral estimate.

The common amplitude $A_0(s)=\kappa(s)^{1/6}$ cancels the leading transport term. The envelope was chosen to move at speed one, so
\begin{equation*}
(\partial_t+\partial_s)\chi\left((s-t-z)/w\right)=0.
\end{equation*}
The actual tangential transport coefficient is $\mu^{-2}\partial_s\Phi_{0,\lambda}=1+O(\lambda^2)$ in the collar. Since the envelope has no normal dependence, its remaining transport error is
\begin{equation*}
\left(\partial_t+\mu^{-2}\partial_s\Phi_{0,\lambda}\,\partial_s\right) \chi\left((s-t-z)/w\right) =\frac{\mu^{-2}\partial_s\Phi_{0,\lambda}-1}{w}\chi'\left((s-t-z)/w\right).
\end{equation*}
Thus this error has size $O(\lambda^2/w)$, not $O(1/w)$. Together with the phase-equation estimate, these cancellations give the residual bound below.
\begin{prop}\label{prop:finite-airy}
Let $\Omega\subset\R^2$ be a smooth bounded domain. Fix the collar coordinates $x(s,y)$ and data $\mathcal I_{\rm in},d_0,T_{\rm loc}$ of Lemma~\ref{lem:local-geometry}. Fix $0<T<T_{\rm loc}$, a real nonzero $\chi\in C_c^\infty((-1/2,1/2))$, a nonzero Fourier truncation $H_*^{(\ell)}$ of the diamond profile, and a center parameter $z$ and a common phase shift $\alpha$.

Use the scales \eqref{eq:scales} and choose $w=w(\lambda)$ with
\begin{equation*}
0<w<1,\quad z+[-w/2,w/2]\subset\mathcal I_{\rm in} ,\text{ \; and \; }\lambda^2/w\to0.
\end{equation*}
Let $v_\ell$ be the real approximation \eqref{eq:airy-wave}, with its normal cutoff below $d_0$ and zero extension to $\Omega$, and let $u_\ell$ solve \eqref{eq:wave} on $\Omega$ with $u_\ell[0]=v_\ell[0]$. Then
\begin{equation}\label{eq:forcing}
\|(\partial_t^2-\Delta)v_\ell\|_{L^1(0,T;L^2(\Omega))}\le C_{\ell,T}\left(\frac{\lambda^5}{h}+\lambda+\frac{\lambda^2}{w}+h(1+w^{-1}+\lambda^2w^{-2})\right)+o(1).
\end{equation}

\smallskip
Its leading time derivative, in full spacetime $L^2$, is
\begin{equation}\label{eq:airy-S}
S_{\lambda,w,\ell} =A_{\lambda,w}\left(f_*^{(\ell)}(\phi)-g_*^{(\ell)}(\psi)\right),
\end{equation}
with the phases and amplitude in \eqref{eq:actual-phases}, extended by zero outside \eqref{eq:curved-packet}. More precisely,
\begin{equation}\label{eq:finite-S-error}
\|\partial_tv_\ell-S_{\lambda,w,\ell}\|_{L^2((0,T)\times\Omega)}\rightarrow0
\end{equation}
as $\lambda\to0$ with $\ell$ fixed.

\smallskip
The exact wave also satisfies
\begin{equation}\label{eq:finite-exact}
\|\partial_tu_\ell-S_{\lambda,w,\ell}\|_{L^2((0,T)\times\Omega)}\rightarrow0.
\end{equation}
\end{prop}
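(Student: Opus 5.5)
The proof follows the disc argument of Lemma~\ref{lem:gl-finite}, now with the curved phases of Proposition~\ref{prop:curved-phases}. It has three parts: bound the residual \eqref{eq:forcing}; compare $\partial_tv_\ell$ with $S_{\lambda,w,\ell}$ to get \eqref{eq:finite-S-error}; and deduce \eqref{eq:finite-exact} by the energy estimate. Throughout, $\ell$ is fixed, so the finite mode sum only costs constants $C_\ell$. In the collar coordinates, each mode of \eqref{eq:airy-wave} has the form $F_n=\chi(\eta)\rho(y/\lambda^2)\kappa^{1/6}h_n^{5/6}\Ai(-\zeta_{\lambda_n}/h_n^{2/3})e^{i(\Phi_{0,\lambda_n}-t)/h_n}$ with $\eta=(s-t-z)/w$. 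Here the envelope variable should be $s$; the $\theta$ written in \eqref{eq:airy-wave} is the arclength coordinate.

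\emph{Residual.} We expand $(\partial_t^2-\Delta_g)F_n$ as in the proof of Lemma~\ref{lem:exact-Airy-solution}, writing $h_n^{-2}(-h_n^2\partial_t^2\cdots)$ and grouping terms by the Airy factor and powers of $h_n$.
\begin{enumerate}
\item The eikonal term gives $h_n^{-2}E_0\Ai$.
\item The mixed term gives $h_n^{-4/3}E_1\Ai'$.
\item The transport term with $\Ai$ has coefficient $h_n^{-1}\bigl(2\langle d\Phi,d(\kappa^{1/6}\chi)\rangle-2\partial_t\chi\kappa^{1/6}+\kappa^{1/6}\chi\Delta_g\Phi\bigr)$. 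The choice $A_0=\kappa^{1/6}$ reduces this to $O(\lambda^2)\chi/h$. Because the envelope moves at unit speed, the remainder is $O(\lambda^2/(wh))\chi'$.
\item The $\Ai'$ term carries $h_n^{-1/3}$ times bounded coefficients ($\Delta_g\zeta$, $\partial\zeta\cdot\partial(\chi\kappa^{1/6})$, including $\partial_s\zeta\,\chi'/w$).
\item The second-order terms $\Delta_g(\chi\kappa^{1/6})\Ai$ are $O(1+w^{-2})$.
\item The normal cutoff $\rho$ produces exponentially small errors, since $\zeta_{\lambda_n}\le-c\lambda^2$ on its support.
\end{enumerate}
Each term is multiplied by $h_n^{5/6}(\lambda w)^{-1/2}$ and then estimated in $L^2$ using the two Airy integral bounds from Lemma~\ref{lem:gl-finite}: $\int|\Ai|^2dy\lesssim h^{1/3}\lambda$ and $\int|\Ai'|^2dy\lesssim h^{-1/3}\lambda^3$. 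The tangential integration of the envelope cancels $w^{-1}$. The resulting contributions are:
\begin{itemize}
\item the $E_0$ term gives $\lambda^6/h$;
\item the $E_1$ term gives $\lambda^4\cdot\lambda/h$, which is the dominant $\lambda^5/h$;
\item the transport terms give $\lambda^2+\lambda^2/w$;
\item the $\Ai'$ terms give $h^{2/3}\lambda^{1}\cdot$ bounded, plus $w^{-1}$, all absorbed in $\lambda+h(1+w^{-1})$;
\item the second-order terms give $h(1+\lambda^2w^{-2})$, using $\partial_s$ versus $\mu^{-2}$ corrections of size $\lambda^2$.
\end{itemize}
The $o(1)$ absorbs the mode-discrepancy terms, such as $E_0$ evaluated at $\lambda_n$ rather than $\lambda$, which are controlled by \eqref{eq:boundary-zero}. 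Integrating over $t\in(0,T)$ gives \eqref{eq:forcing}. Every term tends to zero under \eqref{eq:scales} and $\lambda^2/w\to0$, since $h\lambda^2/w^2=(h/\lambda^2)(\lambda^2/w)^2$ and $h/w\le h/\lambda^2$.

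\emph{Leading velocity.} We compute $\partial_tv_\ell=\Real\sum(-i/h_n)(\cdots)+O(1/w)$-envelope terms; the latter are $O(h/w)$ in $L^2$. On the regular band $y_c^\lambda-y\ge\delta\lambda^2$, we insert the Airy asymptotics \eqref{eq:airy-osc}. The error is $O((h/\lambda^3)^{7/6})$ relative, as in Lemma~\ref{lem:gl-airy}. The normalization $h_n^{5/6}h_n^{1/6}h_n^{-1}=1$ together with $\sqrt\pi$ from \eqref{eq:airy-osc} produces the amplitude $A_{\lambda,w}$ with $(\zeta_{\lambda_n})^{-1/4}$. We then replace all modewise quantities by common ones. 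The phase coherence estimate \eqref{eq:phase-coherence}, the estimate on $|R_{\lambda_n}-R_\lambda|/h_n$ and the amplitude ratio $\zeta_{\lambda_n}/\zeta_\lambda$ justify this replacement. Summing over modes gives $f_*^{(\ell)}(\phi)-g_*^{(\ell)}(\psi)$, exactly as in the proof of Proposition~\ref{prop:curved-support}. On the decaying side beyond $y_c^\lambda+\delta\lambda^2$, the target vanishes and Airy decay makes $\partial_tv_\ell$ small. The turning strip $|y-y_c^\lambda(s)|\le\delta\lambda^2$ is the main obstacle. Here the strip varies with $s$ and the modewise turning curves are displaced by $O(\lambda^2j^{-2})$. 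I would bound both $\partial_tv_\ell$ and $S_{\lambda,w,\ell}$ separately on the strip by $C_{\ell,T}\sqrt\delta$ in squared norm plus vanishing terms. For this, change variables $x=-\zeta_{\lambda_n}/h_n^{2/3}$ pointwise in $s$, using $\partial_y\zeta_\lambda\sim-(2\kappa)^{1/3}$ from \eqref{eq:turning}, and integrate $\zeta_\lambda^{-1/2}$ over the strip. Taking $\limsup_{\lambda\to0}$ and then $\delta\to0$ yields \eqref{eq:finite-S-error}.

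\emph{Exact wave.} The difference $u_\ell-v_\ell$ has zero initial data. It also has zero Dirichlet data, because $v_\ell$ vanishes on $\partial\Omega$ by \eqref{eq:boundary-zero} and the zero extension. Its forcing is $-(\partial_t^2-\Delta)v_\ell$. The support of $v_\ell$ stays in $V$ by \eqref{eq:local-buffer}, so the forcing is computed entirely in the chart. The Dirichlet energy estimate then gives $\|\partial_t(u_\ell-v_\ell)\|_{L^2((0,T)\times\Omega)}\le C_T\|(\partial_t^2-\Delta)v_\ell\|_{L^1L^2}$. Combining this with \eqref{eq:forcing} and \eqref{eq:finite-S-error} gives \eqref{eq:finite-exact}.
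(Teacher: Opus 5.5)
Your proposal is correct and follows essentially the same route as the paper. It proves \eqref{eq:forcing} from the residual identity with the two Airy integral bounds, and \eqref{eq:finite-S-error} by splitting into a regular band, a turning strip of width $\delta\lambda^2$ and the decaying side, letting $\delta\to0^+$ after $\lambda\to0^+$. It then obtains \eqref{eq:finite-exact} from the Dirichlet energy estimate.

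Only bookkeeping needs fixing, and your final bounds are the right ones:
\begin{itemize}
\item Before the factor $h_n^{5/6}$, the $E_1$ term carries $h_n^{-5/3}$ and the $\Ai'$ transport term carries $h_n^{-2/3}$, not $h_n^{-4/3}$ and $h_n^{-1/3}$.
\item $\partial_t\chi$ enters with the same sign as $\langle d\Phi,d\chi\rangle$, not the opposite sign; this is what produces the unit-speed cancellation.
\item No mode discrepancy enters the residual, since each mode satisfies Proposition~\ref{prop:curved-phases} with its own $\lambda_n$. The $o(1)$ term is just the normal-cutoff contribution.
\end{itemize}
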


\begin{proof}
\noindent\emph{Step 1. The wave-equation error.} The proof of Lemma~\ref{lem:airy-residual} gives the exact differentiation identity \eqref{eq:residual-id}. The integral Airy bounds are
\begin{equation}\label{eq:airy-norms}
\begin{aligned}
\int|\Ai(-\zeta_\lambda/h^{2/3})|^2\dd y\sim h^{1/3}\lambda, \text{ \; and \; } \int|\Ai'(-\zeta_\lambda/h^{2/3})|^2\dd y\le Ch^{-1/3}\lambda^3.
\end{aligned}
\end{equation}
They include the Airy zeros and the turning layer. In particular,
\begin{equation*}
\frac{\|\Ai'(-\zeta_\lambda/h^{2/3})\|_{L^2_y}} {\|\Ai(-\zeta_\lambda/h^{2/3})\|_{L^2_y}} \le C\lambda h^{-1/3}.
\end{equation*}
Relative to the leading time-derivative norm, the two phase-equation terms in \eqref{eq:residual-id} are therefore bounded by
\begin{align*}
\frac{\|E_0\|_{L^{\infty}}}{h}&\le C\frac{\lambda^6}{h},\\
\frac{\|E_1\|_{L^{\infty}}}{h^{2/3}} \frac{\|\Ai'(-\zeta_\lambda/h^{2/3})\|_{L^2_y}} {\|\Ai(-\zeta_\lambda/h^{2/3})\|_{L^2_y}} &\le C\frac{\lambda^4}{h^{2/3}}\frac{\lambda}{h^{1/3}} =C\frac{\lambda^5}{h}.
\end{align*}
Thus the mixed term $E_1$, rather than $E_0$, gives the stronger lower restriction on $h$. These estimates are uniform on the fixed chart; replacing $(\lambda,h)$ by $(\lambda_n,h_n)$ only changes the constant for fixed $\ell$.

The transport and envelope bounds in the proof of Lemma~\ref{lem:airy-residual} combine as
\begin{align*}
&\lambda^2(1+w^{-1})+\lambda(1+\lambda^2w^{-1}) +h(1+w^{-1}+\lambda^2w^{-2})\\
&\quad\le C\left(\lambda+\frac{\lambda^2}{w} +h(1+w^{-1}+\lambda^2w^{-2})\right).
\end{align*}
In the last term, the leading second derivatives of the moving envelope cancel: the coefficient of $\chi''/w^2$ is $1-\mu^{-2}=O(\lambda^2)$. This leaves $h\lambda^2/w^2$. The normal cutoff contributes only exponentially small errors. Combining these bounds gives \eqref{eq:forcing}.

All terms tend to zero under \eqref{eq:scales} and $\lambda^2/w\to0$. For the remaining envelope terms, notice that
\begin{align*}
\frac hw=\frac h{\lambda^2}\frac{\lambda^2}{w}\rightarrow0, \text{ \; and \; } \frac{h\lambda^2}{w^2}=\frac h{\lambda^2}\left(\frac{\lambda^2}{w}\right)^2\rightarrow0,
\end{align*}
since $h/\lambda^2\sim\lambda^{5/2}\to0$.

\vspace{2mm} \noindent\emph{Step 2. One common propagated profile.}

\noindent\emph{Spacetime approximation.} On a fixed regular band, we use \eqref{eq:phase-coherence} and the amplitude comparison. The normal phase discrepancy is $O_\ell(j^{-1})$.

Differentiating the moving envelope contributes an $L^2$ error $O_{\ell,T}(h/w)$, which tends to zero since $h/w=(h/\lambda^2)(\lambda^2/w)\to0$. Equation \eqref{eq:airy-osc} then yields \eqref{eq:airy-S} on that band.

To pass through the turning region in $L^2$, we use the integral Airy bounds. The relative squared mass in the Airy transition layer is $O\left((h/\lambda^3)^{1/3}\right)$.

More explicitly, for fixed $\delta>0$ let $\mathcal V_{\lambda,\delta}$ be the spacetime band in the collar given by $|y-y_c^\lambda(s)|\le\delta\lambda^2$. The Airy bounds and the integrable squared fold amplitude give
\begin{equation*}
\limsup_{\lambda\to0^+}\left( \|\1_{\mathcal V_{\lambda,\delta}}\partial_tv_\ell\|_{L^2}^2 +\|\1_{\mathcal V_{\lambda,\delta}}S_{\lambda,w,\ell}\|_{L^2}^2\right) \le C_{\ell,T}\sqrt\delta.
\end{equation*}
Here the norms are on $(0,T)\times\Omega$, with $\ell$ fixed and the coordinated scale conditions imposed.

Outside this band, the regular expansion on the oscillatory side and Airy decay on the other side imply
\begin{equation*}
\|\1_{\mathcal V_{\lambda,\delta}^c} (\partial_tv_\ell-S_{\lambda,w,\ell})\|_{L^2((0,T)\times\Omega)}\rightarrow0.
\end{equation*}
Combining the two parts first at fixed $\delta$, we obtain
\begin{equation*}
\limsup_{\lambda\to0^+} \|\partial_tv_\ell-S_{\lambda,w,\ell}\|_{L^2((0,T)\times\Omega)}^2 \le C_{\ell,T}\sqrt\delta.
\end{equation*}
Letting $\delta\to0^+$ proves \eqref{eq:finite-S-error} on the full spacetime region.

\Needspace{8\baselineskip} \vspace{2mm} \noindent\emph{Energy near the turning curve at fixed time.} For the initial-energy calculation below, we also need a fixed-time estimate. Let $B_{\lambda,\delta}\subset\Omega$ be the spatial collar strip $|y-y_c^\lambda(s)|\le\delta\lambda^2$, so that $\mathcal V_{\lambda,\delta}=(0,T)\times B_{\lambda,\delta}$. We claim that
\begin{equation}\label{eq:turning-energy}
\limsup_{\lambda\to0^+}\sup_{0\le t\le T} \int_{B_{\lambda,\delta}}\left(|\partial_tv_\ell(t,x)|^2+|\nabla v_\ell(t,x)|^2\right)\dd x \le C_\ell\sqrt\delta.
\end{equation}
Here $\ell$ and $\delta>0$ are fixed before taking $\lambda\to0^+$. Indeed, the turning depths for $\lambda_n$ differ from $y_c^\lambda(s)$ by $O_\ell(\lambda^2j^{-2})$. For small $\lambda$, a fixed enlargement of the strip therefore contains all these turning curves. The Airy integral bound gives, uniformly in $s$,
\begin{equation*}
\int_{|y-y_c^\lambda(s)|\le\delta\lambda^2} |\Ai(-\zeta_{\lambda_n}/h_n^{2/3})|^2\dd y \le Ch_n^{1/3}\lambda\left(\sqrt\delta+o_\ell(1)\right).
\end{equation*}
The leading time derivative has squared prefactor $h_n^{-1/3}$, and the envelope has squared $L^2$ norm $w\|\chi\|_2^2$ in $s$. The normalization factors therefore combine as
\begin{equation*}
\underbrace{(\lambda w)^{-1}}_{b_{\lambda,w}^2}\,w\,h_n^{-1/3} \cdot h_n^{1/3}\lambda\left(\sqrt\delta+o_\ell(1)\right) =\sqrt\delta+o_\ell(1).
\end{equation*}
Summing over the fixed modes gives \eqref{eq:turning-energy} for $\partial_tv_\ell$. The same envelope identity for $\chi'$ controls its differentiated term by $O_\ell(h/w)$ in $L^2(\Omega)$.

\Needspace{7\baselineskip} To control the spatial derivatives, let $V_n$ be the real $n$-th summand of \eqref{eq:airy-wave}, including its coefficient and cutoffs. Using $\partial_y\Phi_{0,\lambda_n}=O(\lambda^2)$, $\zeta_{\lambda_n,s}=O(\lambda^2)$, and \eqref{eq:airy-norms}, we obtain, uniformly for $t\in[0,T]$,
\begin{align*}
\|\partial_yV_n(t)\|_{L^2(\Omega)} &\le C|d_{*,n}|\lambda+O_\ell(e^{-c\lambda^3/h}),\\*
\|\partial_sV_n(t)+\partial_s\Phi_{0,\lambda_n}\partial_tV_n(t)\|_{L^2(\Omega)} &\le C|d_{*,n}|\left(h_n(1+\lambda^2/w)+\lambda^3\right)+O_\ell(e^{-c\lambda^3/h}).
\end{align*}
In the second line, the envelope derivative is multiplied by $1-\partial_s\Phi_{0,\lambda_n}=O(\lambda^2)$; the $\Ai'$ term contributes $O(|d_{*,n}|\lambda^3)$. The normal-cutoff terms are exponentially small. Since the collar metric is uniformly comparable to the Euclidean coordinate metric, summing over the fixed finite set of modes proves \eqref{eq:turning-energy}.

\vspace{2mm} \noindent\emph{Step 3. The exact-wave correction.} Recall that $u_\ell$ is the exact wave with initial state $v_\ell[0]$. Their difference solves
\begin{equation*}
\begin{cases}
(\partial_t^2-\Delta)(u_\ell-v_\ell)=-(\partial_t^2-\Delta)v_\ell,&(t,x)\in(0,T)\times\Omega,\\
(u_\ell-v_\ell)|_{\partial\Omega}=0,\\
(u_\ell-v_\ell)[0]=0.
\end{cases}
\end{equation*}
The energy estimate therefore gives
\begin{equation}\label{eq:energy-correction}
\sup_{t\in[0,T]}\|(u_\ell-v_\ell)[t]\|_{H_0^1\times L^2} \le C\|(\partial_t^2-\Delta)v_\ell\|_{L^1_tL^2_x}\to0.
\end{equation}

We now compare the exact wave with the generated approximate velocity:
\begin{equation*}
\partial_tu_\ell-S_{\lambda,w,\ell} =\partial_t(u_\ell-v_\ell)+(\partial_tv_\ell-S_{\lambda,w,\ell}).
\end{equation*}
Combining \eqref{eq:energy-correction} and \eqref{eq:finite-S-error}, we obtain
\begin{align*}
&\|\partial_tu_\ell-S_{\lambda,w,\ell}\|_{L^2((0,T)\times\Omega)}\\
&\quad\le \sqrt T\sup_{t\in[0,T]}\|\partial_t(u_\ell-v_\ell)(t)\|_{L^2(\Omega)} +\|\partial_tv_\ell-S_{\lambda,w,\ell}\|_{L^2((0,T)\times\Omega)}\\
&\quad\le C_T\|(\partial_t^2-\Delta)v_\ell\|_{L^1(0,T;L^2(\Omega))} +\|\partial_tv_\ell-S_{\lambda,w,\ell}\|_{L^2((0,T)\times\Omega)} \rightarrow0.
\end{align*}
This proves \eqref{eq:finite-exact} and finishes the proof.
\end{proof}

\Needspace{12\baselineskip}
\subsection{Realizing the full profile family}\label{subsec:curved-full}

The finite profiles $f_*^{(\ell)},g_*^{(\ell)}$ converge to $f_*,g_*$ in $L^2(\R/\Z)$, but do not retain their exact plateau regions. We use them to construct waves while keeping the full-profile function \eqref{eq:curved-matching}. With the common normalization factors $c_k$ chosen below, the realization proceeds through
\begin{equation*}
S_k \underset{\substack{\text{profile}\\\text{tail}}}{\approx} c_kS_{\lambda_k,w_k,\ell_k} \underset{\substack{\text{Airy expansion}\\\text{and phase comparison}}}{\approx} c_k\partial_tv_{\ell_k} \underset{\substack{\text{energy}\\\text{estimate}}}{\approx} \partial_tu_k.
\end{equation*}
All three comparisons are in spacetime $L^2$. Section~\ref{subsec:curved-correction} controls the last two for a fixed Fourier cutoff. The remaining error is the propagated profile tail; the same $c_k$ multiplies the finite and full profiles. Three estimates complete the realization.
\begin{itemize}[leftmargin=*,itemsep=3pt]
\item Energy bounds for $v_\ell$ must remain uniform when the Fourier cutoff increases.
\item The operators $\calT_\varepsilon$ must control $f_*^{(\ell)}-f_*$ and $g_*^{(\ell)}-g_*$ uniformly in the propagation scale.
\item A diagonal choice of the Fourier cutoff and scale must make all errors small before energy normalization.
\end{itemize}
\Needspace{8\baselineskip}
\begin{theorem}\label{thm:realization}
Let $\Omega\subset\R^2$ be a smooth bounded domain. Fix the local geometry of Lemma~\ref{lem:local-geometry}, with collar coordinates $x(s,y)$, interval $\mathcal I_{\rm in}$, depth $d_0$, time bound $T_{\rm loc}$ and fixed neighborhood $V$. Fix $0<T<T_{\rm loc}$, the diamond profiles $f_*,g_*$ from Proposition~\ref{prop:diamond}, and a real nonzero envelope $\chi\in C_c^\infty((-1/2,1/2))$.

Prescribe widths $w_k$ and centers $z_k\in\R$ with
\begin{equation*}
0<w_k<1,\quad w_k\to0 ,\text{ \; and \; }z_k+[-w_k/2,w_k/2]\subset\mathcal I_{\rm in}.
\end{equation*}
Fix common phase shifts $\alpha_k\in\R/\Z$ and any positive upper bounds $\overline\lambda_k$. Realization does not require the moving packet regions to be disjoint.

\smallskip
One can choose Fourier cutoffs $\ell_k\to\infty$, then sufficiently small scales satisfying
\begin{equation*}
0<\lambda_k\le\overline\lambda_k,\quad \lambda_k\to0 ,\text{ \; and \; }\lambda_k^2/w_k\to0.
\end{equation*}
Determine $h_k$ by \eqref{eq:scales}, form the corresponding approximate waves $v_{\ell_k}$, and set $c_k:=E[v_{\ell_k}](0)^{-1/2}$. There are constants $0<c_*\le c_k\le C_*$ such that the real solutions $u_k$ of \eqref{eq:wave} with $u_k[0]=c_kv_{\ell_k}[0]$ have the following properties.

\begin{enumerate}[label=(\roman*),leftmargin=*,itemsep=1.5mm]
\item \emph{(Asymptotic realization).} We have
\begin{equation}\label{eq:full-realization}
\begin{gathered}
\partial_tu_k=S_k+r_k,\\
\text{where }S_k:=c_kS_{\lambda_k,w_k,z_k,\alpha_k}=A_{\lambda_k}\left(f_*(\phi_k)-g_*(\psi_k)\right),\\
E[u_k](0)=1 ,\text{ \; and \; }\|r_k\|_{L^2((0,T)\times\Omega)}\to0.
\end{gathered}
\end{equation}
Here $A_{\lambda_k}=c_kA_{\lambda_k,w_k}$, and $S_k,\Gamma_k$ use the full profiles and local coordinates of Proposition~\ref{prop:curved-support}.

All approximate waves are supported in $V$ and extended by zero to $\Omega$; the exact waves solve the equation on the whole original domain. In particular,
\begin{equation*}
\1_{\Gamma_k}S_k=0,\text{ \; and \; } \liminf_{k\to\infty}\|S_k\|_{L^2((0,T)\times\Omega)}>0.
\end{equation*}
\item \emph{(Observation on the fixed set and non-observability).} If the intervals $I_k=z_k+(-w_k/2,w_k/2)$ are pairwise disjoint, take any $\gamma_k\to0^+$ and use these same scales in Proposition~\ref{prop:curved-observation} to define $G^{(N)}\downarrow G$. Then
\begin{equation}\label{eq:curved-wave-obs}
\1_{G^{(N)}}\partial_tu_k =\1_{G^{(N)}}r_k+\1_{\calE_k}S_k, \quad \forall k\in\{1,\ldots,N\},
\end{equation}
and the same identity holds on $G$. Moreover,
\begin{equation*}
\|\1_{\calE_k}S_k\|_{L^2((0,T)\times\Omega)}^2 \le C_T(\gamma_k+h_k/w_k)\rightarrow0.
\end{equation*}
Consequently,
\begin{equation*}
\|\1_G\partial_tu_k\|_{L^2((0,T)\times\Omega)}^2 \le2\|r_k\|_{L^2((0,T)\times\Omega)}^2+C_T(\gamma_k+h_k/w_k) \rightarrow0,
\end{equation*}
so observability fails on $G$. Here $C_T$ is independent of $k$ and the reference widths $\gamma_k$.
\end{enumerate}
\end{theorem}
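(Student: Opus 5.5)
The proof follows the three-stage chain displayed before the statement: $S_k\approx c_kS_{\lambda_k,w_k,\ell_k}\approx c_k\partial_tv_{\ell_k}\approx\partial_tu_k$. The last two comparisons are given by Proposition~\ref{prop:finite-airy} for each fixed cutoff $\ell$. Three further ingredients are needed: an energy bound for $v_\ell$ that is uniform in $\ell$, a propagated profile-tail estimate that is uniform in the scale, and a diagonal choice of $(\ell_k,\lambda_k)$.

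\emph{Uniform energy.} I would compute $E[v_\ell](0)$ by splitting space into the regular oscillatory band, the turning strip $B_{\lambda,\delta}$ and the decaying side. The strip contributes $O(\sqrt\delta)$ by \eqref{eq:turning-energy}, and the decaying side is negligible. On the regular band, $\partial_tv_\ell$ is close to $S_{\lambda,w,\ell}(0)$, and $\nabla v_\ell$ is close to $-\partial_s\Phi\,\partial_tv_\ell$ in the tangential direction plus an $O(\lambda)$ normal part, so
\[
2E\approx\|S_{\lambda,w,\ell}(0)\|_{L^2}^2 .
\]
Write $\|S\|^2=\int A^2|f^{(\ell)}(\phi)-g^{(\ell)}(\psi)|^2$. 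Because $\phi,\psi$ oscillate at frequency $h^{-1}$ in $s$ and $y$, with $\phi-\psi$ depending on $y$ at the nonstationary rate $\partial_yR_\lambda/h$, a period-counting argument (the mechanism behind \eqref{eq:extra-energy}) should show that $\int A^2 F(\phi)G(\psi)$ is close to $\int A^2\cdot\int F\int G$ for bounded $F,G$, up to $O(h/w+h/\lambda^3)$. This gives
\[
\|S(0)\|^2\approx\|A(0)\|^2\left(\|f^{(\ell)}\|_2^2+\|g^{(\ell)}\|_2^2\right),
\]
which is bounded above and below independently of $\ell$, since $\|f^{(\ell)}\|_2\ge|d_{*,1}|/\sqrt2$ (the first-mode lower bound). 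This yields $c_*\le c_k\le C_*$.

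\emph{Profile tail.} The operator $q\mapsto A_{\lambda,w}\,q(\phi)$ is bounded on $L^2$ uniformly in the scale, by the same averaging (this is the role of $\mathcal T_\varepsilon$). Hence
\[
\|S_{\lambda,w,\ell}-S_{\lambda,w,z,\alpha}\|\le C_T\left(\|f^{(\ell)}-f_*\|_2+\|g^{(\ell)}-g_*\|_2\right)\to0
\]
as $\ell\to\infty$. For $q\in L^2$ I would first prove the bound for smooth or trigonometric $q$ and then pass to the limit.

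\emph{Diagonal choice and part (ii).} Take $\ell_k\to\infty$. Then, using Proposition~\ref{prop:finite-airy}, choose $\lambda_k\le\overline\lambda_k$ small enough that the finite-profile error and the energy errors at cutoff $\ell_k$ are below $1/k$, and that $\lambda_k^2/w_k<1/k$. Part (i) then follows from the triangle inequality with $c_k$ bounded, and $\liminf\|S_k\|>0$ follows from the lower energy bound. For part (ii), \eqref{eq:curved-wave-obs} follows directly from Proposition~\ref{prop:curved-observation}. To bound $\|\1_{\calE_k}S_k\|^2$, fix $(s,y)$ and note that $\partial_t\phi=\partial_t\psi=-(2\pi h)^{-1}$, so the time spent with $\operatorname{dist}(\phi,\mathcal K)\le4\gamma$ is $O(\gamma)$ per period plus $O(h)$ edge effects. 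The time $s$ spends in the packet is about $w$, so the relative measure is $O(\gamma+h/w)$. Multiplying by $\int A^2$, integrated over the normal direction, gives $C_T(\gamma_k+h_k/w_k)$.

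\emph{Main obstacle.} I expect the hardest step to be the scale-uniform averaging estimate behind both the energy lower bound and the tail bound. The amplitude $(\zeta_\lambda)^{-1/4}$ is singular at the turning curve, and the $y$-frequency of $\phi-\psi$ degenerates there. I would handle this by excising a strip of relative mass $\sqrt\delta$ and averaging only on the regular band.
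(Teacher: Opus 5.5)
Your proposal is correct and is essentially the paper's proof. It uses the same chain $S_k\approx c_kS_{\lambda_k,w_k,\ell_k}\approx c_k\partial_tv_{\ell_k}\approx\partial_tu_k$, a uniform-in-$\ell$ energy bound from averaging on the regular band with the turning strip handled by \eqref{eq:turning-energy}, the same diagonal choice ($\ell_k$ first, then $\lambda_k$), and the same time-direction period count for $\|\1_{\calE_k}S_k\|^2$. Your joint equidistribution of $(\phi,\psi)$ is just the paper's mode-by-mode cross-term estimate written at the level of profiles.

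There are three minor points:
\begin{enumerate}
\item Since $|\nabla v_\ell|\approx|\partial_tv_\ell|$, one gets $E[v_\ell](0)\approx\|S_{\lambda,w,\ell}(0)\|_{L^2}^2$, not $2E\approx\|S_{\lambda,w,\ell}(0)\|_{L^2}^2$.
\item The bound $\liminf_k\|S_k\|>0$ is most directly obtained, as in the paper, by running the same averaging at every $t\in[0,T]$ and then subtracting the tail.
\item The scale-uniform tail bound is cleanest as in \eqref{eq:pullback}: count periods in $t$ at a fixed spatial point. This holds for every $F\in L^2(\R/\Z)$ at once, so neither the turning-strip excision nor a density argument is needed there.
\end{enumerate}
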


\subsubsection{Energy bounds uniform in the Fourier cutoff}\label{subsubsec:curved-energy}

We separate the individual energies from their interactions. Let $V_n$ denote the real $n$-th summand of \eqref{eq:airy-wave}, including its coefficient and cutoffs. For a measurable set $D\subset\Omega$, put
\begin{equation*}
I_{nm}(D):=\int_D\left(\partial_tV_n\partial_tV_m+\nabla V_n\cdot\nabla V_m\right)(0,x)\dd x.
\end{equation*}
Then
\begin{equation*}
v_\ell=\sum_nV_n ,\text{ \; and \; } E[v_\ell](0)=\sum_n E[V_n](0)+\sum_{n<m}I_{nm}(\Omega).
\end{equation*}
All sums here run over the positive frequencies $n\le\ell$ with $n\equiv1\pmod4$. 

For one mode, \eqref{eq:airy-norms} gives a normal integral of order $h_n^{1/3}\lambda_n$. The squared leading time-derivative factor is $h_n^{-1/3}$, so the normalization gives
\begin{equation*}
|d_{*,n}|^2b_{\lambda,w}^2\,w\,h_n^{-1/3}\left(h_n^{1/3}\lambda_n\right) =|d_{*,n}|^2\frac{\lambda_n}{\lambda}\rightarrow |d_{*,n}|^2.
\end{equation*}
Together with the spatial-derivative bounds in Step~2 of the proof of Proposition~\ref{prop:finite-airy}, this yields, for each fixed $\ell$ and sufficiently small $\lambda$,
\begin{equation*}
c_0|d_{*,n}|^2\le E[V_n](0)\le C_0|d_{*,n}|^2.
\end{equation*}
The lower bound follows by averaging the oscillations on a regular subcollar; the upper bound includes the turning layer. The constants are independent of $\ell$, although how small $\lambda$ must be may depend on it.

To identify the limiting energy, we first compute the normal mass of the squared common amplitude. On $0\le y\le y_c^\lambda(s)$, the phase formulas give
\begin{gather*}
\zeta_\lambda(s,0)=2^{-2/3}\lambda^2,\quad \partial_y\zeta_\lambda=-\left(2\kappa(s)\right)^{1/3}\left(1+O(\lambda^2)\right), \text{ \; and \; }1-\kappa(s)y=1+O(\lambda^2).
\end{gather*}
Changing variables to $q=\zeta_\lambda(s,y)$, we obtain, uniformly in $s$,
\begin{align*}
\frac1\lambda\int_0^{y_c^\lambda(s)}\kappa(s)^{1/3}(\zeta_\lambda(s,y))^{-1/2}\left(1-\kappa(s)y\right)\dd y&=\frac{2^{-1/3}}\lambda\left(1+O(\lambda^2)\right)\int_0^{2^{-2/3}\lambda^2}q^{-1/2}\dd q\\
&=2^{1/3}+O(\lambda^2).
\end{align*}
The curvature factors cancel in this integral. The same calculation at $\lambda_n$, with normalization $1/\lambda$, has the additional factor $\lambda_n/\lambda\to1$.

Each real branch square has mean $|d_{*,n}|^2/2$, so the two branches contribute $|d_{*,n}|^2$ once their oscillatory terms have been removed. The leading spatial and time-derivative energies agree: the tangential derivative is the negative time derivative up to an $o(1)$ error in $L^2(\Omega)$, and the normal derivative is $o(1)$ there. We now justify the averaging, including the interactions between the branches.

\vspace{2mm} To estimate the interactions, write the two phases at $t=0$ as $\Psi_n^\pm(s,y):=\frac{\Phi_{0,\lambda_n}(s,y)\pm R_{\lambda_n}(s,y)}{h_n}$. Fix $\delta>0$ sufficiently small that $y_c^\lambda(s)/\lambda^2-\delta$ stays uniformly positive. On the regular band $0\le y\le y_c^\lambda(s)-\delta\lambda^2$, for $n\ne m$ and $\sigma,\tau\in\{+,-\}$, we have
\begin{equation*}
\partial_s(\Psi_n^\sigma-\Psi_m^\tau)=\frac{n-m}{h}+O_{\ell,\delta}(\lambda^2/h),\; \partial_s(\Psi_n^\sigma+\Psi_m^\tau)=\frac{n+m}{h}+O_{\ell,\delta}(\lambda^2/h).
\end{equation*}
For the two branches of one mode, with $Y=y/\lambda^2$, the normal separation satisfies
\begin{gather*}
\Psi_n^+-\Psi_n^-=\frac{2R_{\lambda_n}}{h_n},\text{ \; and \; }\left|\partial_Y\left((\Psi_n^+-\Psi_n^-)(s,\lambda^2Y)\right)\right| \sim_{\ell,\delta}\frac{\lambda^3}{h}.
\end{gather*}
Both separations increase on the respective tangential and normal localization scales, since
\begin{equation*}
\frac hw=\frac h{\lambda^2}\frac{\lambda^2}{w}\rightarrow0 ,\text{ \; and \; }\frac h{\lambda^3}\sim\lambda^{3/2}\rightarrow0.
\end{equation*}
The upper boundary of the regular band depends on $s$. To integrate over a fixed domain, set
\begin{equation*}
Y_c^\lambda(s):=y_c^\lambda(s)/\lambda^2 ,\text{ \; and \; }y=\lambda^2\eta\left(Y_c^\lambda(s)-\delta\right),\quad 0\le\eta\le1.
\end{equation*}
At fixed $\eta$, the extra tangential phase derivative is
\begin{equation*}
\lambda^2\eta\,\partial_sY_c^\lambda\,\partial_y(\Psi_n^\sigma\pm\Psi_m^\tau) =O_{\ell,\delta}(\lambda^3/h).
\end{equation*}
It is smaller than the $(n\pm m)/h$ term for distinct modes. After also setting $s=z+w\xi$, integration by parts bounds the leading oscillatory integrals by
\begin{equation*}
C_{\ell,\delta}(h/w+h)\quad\text{for distinct modes}, \quad C_{\ell,\delta}h/\lambda^3\quad\text{for the two branches of one mode}.
\end{equation*}
The doubled phases in each real branch square are controlled by the same tangential averaging. These bounds concern the leading oscillatory integrals. The Airy approximation and differentiated-envelope errors also tend to zero for fixed $\ell,\delta$, by the estimates in the proof of Proposition~\ref{prop:finite-airy}.

Fix $\ell$ and recall the turning strip $B_{\lambda,\delta}$ from \eqref{eq:turning-energy}. Split
\begin{equation*}
I_{nm}(\Omega)=I_{nm}(\Omega\setminus B_{\lambda,\delta})+I_{nm}(B_{\lambda,\delta}).
\end{equation*}
Nonstationary phase on the oscillatory side and Airy decay on the other side give
\begin{equation*}
\sum_{n<m}|I_{nm}(\Omega\setminus B_{\lambda,\delta})|\rightarrow0, \quad \lambda\to0^+\quad\text{ \; with \; }\ell,\delta\text{ fixed}.
\end{equation*}
Inside the strip, the Cauchy--Schwarz inequality gives
\begin{equation*}
|I_{nm}(B_{\lambda,\delta})| \le\left(\int_{B_{\lambda,\delta}}\left(|\partial_tV_n|^2+|\nabla V_n|^2\right)(0,x)\dd x\right)^{1/2}\left(\int_{B_{\lambda,\delta}}\left(|\partial_tV_m|^2+|\nabla V_m|^2\right)(0,x)\dd x\right)^{1/2}.
\end{equation*}
Applying the fixed-time turning-layer energy estimate \eqref{eq:turning-energy} to the individual modes, we obtain
\begin{equation*}
\limsup_{\lambda\to0^+}\sum_{n<m}|I_{nm}(\Omega)| \le C_\ell\sqrt\delta.
\end{equation*}
Letting $\delta\to0^+$ proves that the cross terms vanish.

The same turning-layer estimate completes the averaging within each mode. Combining it with the normal-mass calculation and $w^{-1}\int\chi\left((s-z)/w\right)^2\dd s=\|\chi\|_{L^2(\R)}^2$, we obtain, for every fixed $\ell$,
\begin{equation*}
E[v_\ell](0)\rightarrow 2^{1/3}\|\chi\|_{L^2(\R)}^2\sum_{\substack{n\le\ell\\n\equiv1\ (4)}}|d_{*,n}|^2, \quad \lambda\to0^+.
\end{equation*}
Here we use the scales \eqref{eq:scales} and $\lambda^2/w\to0$; $\ell,\delta$ are fixed before $\lambda\to0$, and then $\delta\to0$.

\vspace{2mm} We may therefore choose $\lambda$ sufficiently small that
\begin{equation*}
\sum_{n<m}|I_{nm}(\Omega)|\le\frac{c_0}{2}|d_{*,1}|^2.
\end{equation*}
Combining this with the individual energy bounds, we obtain
\begin{equation*}
\frac{c_0}{2}|d_{*,1}|^2\le E[v_\ell](0) \le C_0\sum_{n\le\ell,\ n\equiv1(4)}|d_{*,n}|^2+\frac{c_0}{2}|d_{*,1}|^2.
\end{equation*}
Since $d_{*,1}\ne0$ and
\begin{equation*}
0<|d_{*,1}|^2\le\sum_{n\le\ell,\ n\equiv1(4)}|d_{*,n}|^2 \le\sum_{n\ge1}|d_{*,n}|^2<\infty,
\end{equation*}
the bounds are uniform in the Fourier cutoff. Thus, along the diagonal,
\begin{equation}\label{eq:uniform-energy}
0<c_*\le E[v_{\ell_k}](0)\le C_*<\infty
\end{equation}
with constants independent of $k$.

\vspace{2mm} To obtain the spacetime lower bound, choose fixed $Y_-,Y_+$ with
\begin{equation*}
0<Y_-<Y_+<\inf_{s\in\mathcal I}\frac1{2\kappa(s)^{1/3}}
\end{equation*}
and restrict to the regular subcollar
\begin{equation*}
\mathcal C_\lambda:=\{x(s,y):s\in\mathcal I,\quad Y_-<y/\lambda^2<Y_+\}.
\end{equation*}
There $\zeta_\lambda\sim\lambda^2$ and $\mu\sim1$. By \eqref{eq:actual-phases}, the common amplitude satisfies
\begin{equation*}
\begin{aligned}
\int_{\mathcal C_\lambda}|A_{\lambda,w}(t,x)|^2\dd x &\sim\frac1{\lambda w}\int\chi\left((s-t-z)/w\right)^2\dd s \int_{\lambda^2Y_-}^{\lambda^2Y_+}\lambda^{-1}\dd y\\
&\sim\frac1{\lambda w}\,w\lambda=1, \quad\forall t\in[0,T].
\end{aligned}
\end{equation*}
Tangential oscillation averages each real branch square and removes interactions between distinct modes. Normal oscillation removes the interaction between the two branches of one mode. Thus, for each fixed $\ell$,
\begin{equation*}
\int_0^T\!\int_{\mathcal C_\lambda}|S_{\lambda,w,\ell}|^2\dd x\dd t =\left(\sum_n|d_{*,n}|^2\right)\int_0^T\!\int_{\mathcal C_\lambda}|A_{\lambda,w}|^2\dd x\dd t+o_\ell(1).
\end{equation*}
Consequently,
\begin{equation*}
\liminf_{\lambda\to0^+}\|S_{\lambda,w,\ell}\|_{L^2((0,T)\times\Omega)}^2 \ge c_T\sum_{\substack{n\le\ell\\n\equiv1\ (4)}}|d_{*,n}|^2 \ge c_T|d_{*,1}|^2>0.
\end{equation*}
Here $c_T>0$ is independent of $\ell$. Taking $\lambda$ sufficiently small for each fixed cutoff makes the squared norm at least $c_T|d_{*,1}|^2/2$. We include this requirement in the diagonal choice below.

\subsubsection{Controlling the profile tail after propagation}\label{subsubsec:curved-tail}

To compare finite and full profiles, we need an estimate uniform in the scale. At a fixed spatial point in the chart, either phase has time slope $-(2\pi h)^{-1}$. Period counting against the smooth moving envelope gives, for any $F\in L^2(\R/\Z)$,
\begin{align*}
\int\chi\left((s-t-z)/w\right)^2|F(\xi_0-\frac{t}{2\pi h})|^2\dd t &\le2\pi h\|\chi\|_{L^{\infty}}^2 \left(\lfloor\frac{w}{2\pi h}\rfloor+1\right)\|F\|_{L^2(\R/\Z)}^2\\
&\le C(w+h)\|F\|_{L^2(\R/\Z)}^2.
\end{align*}
Here the envelope restricts time to an interval of length $w$, and each phase period has duration $2\pi h$.

The squared normal amplitude has integral $O(\lambda)$, so the normalized squared norm is bounded by
\begin{equation*}
C_T\frac{\lambda(w+h)}{\lambda w}\|F\|_{L^2(\R/\Z)}^2 =C_T(1+h/w)\|F\|_{L^2(\R/\Z)}^2.
\end{equation*}
Hence
\begin{equation}\label{eq:pullback}
\|A_{\lambda,w}F(\phi)\|_{L^2((0,T)\times\Omega)} +\|A_{\lambda,w}F(\psi)\|_{L^2((0,T)\times\Omega)} \le C_T\|F\|_{L^2(\R/\Z)}
\end{equation}
when $h/w\le1$. Outside the fixed chart the amplitude is zero.

\vspace{2mm} For the propagated tail, we subtract the two expressions with the same amplitude and phases:
\begin{equation*}
S_{\lambda,w,\ell}-S_{\lambda,w,z,\alpha} =A_{\lambda,w}\left((f_*^{(\ell)}-f_*)(\phi)-(g_*^{(\ell)}-g_*)(\psi)\right).
\end{equation*}
Equation \eqref{eq:pullback} then gives
\begin{align*}
\|S_{\lambda,w,\ell}-S_{\lambda,w,z,\alpha}\|_{L^2((0,T)\times\Omega)} &\le\|A_{\lambda,w}(f_*^{(\ell)}-f_*)(\phi)\|_{L^2((0,T)\times\Omega)}+\|A_{\lambda,w}(g_*^{(\ell)}-g_*)(\psi)\|_{L^2((0,T)\times\Omega)}\\
&\le C_T\left(\|f_*^{(\ell)}-f_*\|_{L^2(\R/\Z)}+\|g_*^{(\ell)}-g_*\|_{L^2(\R/\Z)}\right).
\end{align*}
This bounds the propagated tail uniformly in the scale, as required by the bound for $\calT_\varepsilon$ in \cite[Section~2]{AOSI}.

\subsubsection{The diagonal choice and normalization}\label{subsubsec:curved-diagonal}

\begin{proof}[Proof of Theorem~\ref{thm:realization}]
Choose $\ell_k\to\infty$ first. For each fixed $\ell_k$, take $\lambda_k$ sufficiently small that
\begin{equation*}
0<\lambda_k\le\min\{\overline\lambda_k,2^{-k}\} ,\text{ \; and \; }\lambda_k^2/w_k\le2^{-k}.
\end{equation*}
Impose the energy bounds \eqref{eq:uniform-energy} and the spacetime lower bound from Section~\ref{subsubsec:curved-energy}:
\begin{gather*}
c_*\le E[v_{\ell_k}](0)\le C_*,\; \|S_{\lambda_k,w_k,\ell_k}\|_{L^2((0,T)\times\Omega)}^2\ge\tfrac12c_T|d_{*,1}|^2.
\end{gather*}
The fixed-cutoff energy calculation in Section~\ref{subsubsec:curved-energy} permits the first requirement after decreasing $\lambda_k$: the constants $c_*,C_*$ are independent of $\ell_k$, while the required scale threshold may depend on it. For the exact wave $u_{\ell_k}$ with initial state $v_{\ell_k}[0]$, require
\begin{align*}
\|\partial_t(u_{\ell_k}-v_{\ell_k})\|_{L^2((0,T)\times\Omega)}\le2^{-k},\;\|\partial_tv_{\ell_k}-S_{\lambda_k,w_k,\ell_k}\|_{L^2((0,T)\times\Omega)}\le2^{-k}.
\end{align*}
Also require that every retained mode's normal cutoff lie below $d_0$. The fixed-cutoff estimates of Sections~\ref{subsec:curved-correction} and \ref{subsubsec:curved-energy} allow all these requirements after decreasing $\lambda_k$ further.

The moving envelopes remain inside the chart by \eqref{eq:local-buffer}, independently of $k$, so the zero extensions introduce no further error.
We normalize the energy by setting
\begin{equation*}
c_k:=E[v_{\ell_k}](0)^{-1/2},\quad u_k[0]:=c_kv_{\ell_k}[0],\text{ \; and \; }A_{\lambda_k}:=c_kA_{\lambda_k,w_k},
\end{equation*}
and let $u_k=c_ku_{\ell_k}$, which solves \eqref{eq:wave}. Then
\begin{equation*}
E[u_k](0)=c_k^2E[v_{\ell_k}](0)=1 ,\text{ \; and \; }C_*^{-1/2}\le c_k\le c_*^{-1/2},
\end{equation*}
where the bounds follow from \eqref{eq:uniform-energy}. The factors $c_k$ are fixed after choosing the diamond and are used unchanged when applying the bound for $\calT_\varepsilon$ to a profile tail.

\vspace{2mm} The three sources of error are now explicit:
\begin{equation*}
r_k=\partial_t(u_k-c_kv_{\ell_k})+c_k(\partial_tv_{\ell_k}-S_{\lambda_k,w_k,\ell_k})+c_k(S_{\lambda_k,w_k,\ell_k}-S_{\lambda_k,w_k,z_k,\alpha_k}).
\end{equation*}
These are, in reverse order to the introductory comparison, the energy correction \eqref{eq:energy-correction}, the Airy and common-phase error \eqref{eq:finite-S-error}, and the propagated profile tail controlled in Section~\ref{subsubsec:curved-tail}. Hence
\begin{equation*}
\|r_k\|_{L^2((0,T)\times\Omega)}\le C_T\left(2^{-k}+\|f_*^{(\ell_k)}-f_*\|_{L^2(\R/\Z)}+\|g_*^{(\ell_k)}-g_*\|_{L^2(\R/\Z)}\right)\rightarrow0.
\end{equation*}
This proves \eqref{eq:full-realization}. Proposition~\ref{prop:curved-support}(ii) gives the matching equality.

For nonvanishing, the spacetime lower bound from Section~\ref{subsubsec:curved-energy} and the tail estimate from Section~\ref{subsubsec:curved-tail} give
\begin{align*}
\|S_k\|_{L^2((0,T)\times\Omega)}&\ge c_k\left(\|S_{\lambda_k,w_k,\ell_k}\|_{L^2((0,T)\times\Omega)}-\|S_{\lambda_k,w_k,\ell_k}-S_{\lambda_k,w_k,z_k,\alpha_k}\|_{L^2((0,T)\times\Omega)}\right)\\
&\ge c_k\left(\sqrt{c_T/2}\,|d_{*,1}|-o(1)\right).
\end{align*}
Since $c_k\ge C_*^{-1/2}$, we obtain $\liminf_{k\to\infty}\|S_k\|_{L^2((0,T)\times\Omega)} \ge C_*^{-1/2}\sqrt{c_T/2}\,|d_{*,1}|>0$. For disjoint intervals, Proposition~\ref{prop:curved-observation} gives $\1_{G^{(N)}}S_k=\1_GS_k=\1_{\calE_k}S_k$ for $k\le N$. Multiplying the realization formula by the corresponding indicators proves \eqref{eq:curved-wave-obs} and its fixed-set version.

\vspace{2mm} \noindent\emph{Additional observed energy.} We now estimate the remaining term for any $\gamma_k\to0^+$. Since $|f_*|,|g_*|\le1$ and the normalization factors $c_k$ are uniformly bounded, it suffices to integrate the squared common amplitude over $\calE_k$.

At a fixed spatial point, each phase in \eqref{eq:actual-phases} is affine in time with slope $-(2\pi h_k)^{-1}$. The moving envelope is supported in a time interval of length $w_k$. Counting complete phase periods and at most two partial periods gives
\begin{equation*}
\int_0^T\chi\left((s-t-z_k)/w_k\right)^2\1_{\{\dist(\phi_k,\mathcal K)\le4\gamma_k\}}\dd t \le C(\gamma_kw_k+h_k).
\end{equation*}
The same bound holds for $\psi_k$, uniformly in the common phase shift and in $\gamma_k$.

Recall that $\dd x=\mu(s,y)\dd s\dd y$. The squared normal amplitude satisfies
\begin{equation*}
\int_0^{y_c^{\lambda_k}(s)} \kappa(s)^{1/3}(\zeta_{\lambda_k}(s,y))^{-1/2}\mu(s,y)\dd y \le C\lambda_k.
\end{equation*}
By \eqref{eq:turning}, $\zeta_{\lambda_k}(s,y)\sim y_c^{\lambda_k}(s)-y$ on the moving packet region, and $y_c^{\lambda_k}(s)=O(\lambda_k^2)$. Integrating over the fixed arclength chart and using the normalization $(\lambda_kw_k)^{-1}$, we obtain
\begin{equation}\label{eq:extra-energy}
\|\1_GS_k\|_{L^2((0,T)\times\Omega)}^2=\|\1_{\calE_k}S_k\|_{L^2((0,T)\times\Omega)}^2\le\frac{C_T}{\lambda_kw_k}\,\lambda_k(\gamma_kw_k+h_k)=C_T(\gamma_k+h_k/w_k)\rightarrow0.
\end{equation}
The limit follows from $\gamma_k\to0$ and $\frac{h_k}{w_k}=\frac{h_k}{\lambda_k^2}\frac{\lambda_k^2}{w_k}\rightarrow0,\text{ since }h_k\sim\lambda_k^{9/2}$. Combining the observation identity \eqref{eq:curved-wave-obs} with the additional observed-energy estimate \eqref{eq:extra-energy}, we conclude that
\begin{align*}
\|\1_G\partial_tu_k\|_{L^2((0,T)\times\Omega)}^2 &\le2\|r_k\|_{L^2((0,T)\times\Omega)}^2 +2\|\1_{\calE_k}S_k\|_{L^2((0,T)\times\Omega)}^2\\
&\le2\|r_k\|_{L^2((0,T)\times\Omega)}^2 +C_T(\gamma_k+h_k/w_k)\rightarrow0.
\end{align*}
Since $E[u_k](0)=1$, observability fails on $G$.

\vspace{2mm} \noindent\emph{Choice of parameters.} For the prescribed widths, centers, common phase shifts and scale bounds, the dependencies are
\begin{equation*}
(w_k,z_k,\alpha_k,\overline\lambda_k) \rightarrow\ell_k\rightarrow\lambda_k \rightarrow(h_k,v_{\ell_k})\rightarrow(c_k,u_k,S_k).
\end{equation*}
The cutoff $\ell_k$ controls the profile tail. Once it is fixed, $\lambda_k$ can be taken sufficiently small for all finite-profile errors and support requirements. Their constants may depend on $\ell_k$, whereas the energy and propagated-tail bounds are uniform. We point out that \eqref{eq:scales} determines $h_k$ from $\lambda_k$, and $c_k$ is fixed after constructing $v_{\ell_k}$.

In particular, $h_k\sim\lambda_k^{9/2}$ and $\lambda_k^2/w_k\to0$ coordinate the oscillation, normal depth and tangential width. Any further upper bounds needed in Section~\ref{sec:gcc} are imposed when selecting $\lambda_k$, before fixing the common observation region $G$. This finishes the proof of the theorem.
\end{proof}

In the notation of \cite[Section~2]{AOSI}, \cite[equation~(2.7)]{AOSI} holds with $w_1=1$, $w_2=-1$, $\Theta_{\lambda_k}=(\phi_k,\psi_k)$, and common real amplitude $A_{\lambda_k}$.

\section{GCC realization on smooth bounded domains}\label{sec:gcc}

Theorem~\ref{thm:realization} gives normalized exact waves with asymptotically vanishing observation on the fixed set $G$. We now make compatible parameter choices in \eqref{eq:curved-G-param} so that this same $G$ also satisfies \hyperref[eq:gcc]{GCC}.

There are two difficulties. At one scale, a ray may be much shallower than the moving packet region or nearly follow one of the two wave branches. Across scales, combining the hidden regions may destroy the time bound. We resolve the first difficulty by profile matching and the quarter-turn relation, and the second by strengthening the separation of moving packet regions. The following theorem records the complete construction and implies the \hyperref[thm:main]{Main Theorem}.

\begin{theorem}\label{thm:curved-gcc}
Let $\Omega$ be a smooth bounded planar domain and fix the local geometry of Lemma~\ref{lem:local-geometry}. Fix $0<T<T_{\rm loc}$, the diamond profiles $f_*,g_*$ and rectangles $\mathcal R$ of Proposition~\ref{prop:diamond}, and a real nonzero envelope $\chi\in C_c^\infty((-1/2,1/2))$. For any prescribed common phase shifts $\alpha_k\in\R/\Z$, there are constants $R,\lambda_*,\eta>0$, depending only on this geometry, $T$ and $\chi$, and coordinated choices with the following properties.

\begin{enumerate}[label=(\roman*),leftmargin=*]
\item \emph{(Geometry and scales).} There are centers $z_k\in\R$ and widths $0<w_k<1$, with $w_k\to0$, such that the intervals $I_k=z_k+(-w_k/2,w_k/2)$ have pairwise disjoint concentric triple enlargements contained in $\mathcal I_{\rm in}$ and
\begin{equation*}
\sum_k w_k\le T/2.
\end{equation*}

One can then choose Fourier cutoffs $\ell_k\to\infty$ and scales $\lambda_k\to0^+$ with
\begin{gather*}
\lambda_k\le\lambda_*,\quad w_k\ge R\lambda_k^2,\text{ \; and \; } \lambda_k^2/w_k\rightarrow0,\\
j_k:=\lceil\lambda_k^{-3/2}\rceil,\quad h_k:=\frac{\lambda_k^3}{3\pi(j_k-1/4)},\quad \text{and }\gamma_k:=\lambda_k^{1/4}.
\end{gather*}
The scales can be taken small enough for the realization in Theorem~\ref{thm:realization}, with normalization factors $0<c_*\le c_k\le C_*$. Take $\mathcal U_k:=\mathcal U_{\lambda_k,w_k,z_k}$ from Definition~\ref{def:curved-packet}. Define $S_k,\Gamma_k$ by Proposition~\ref{prop:curved-support}, with $S_k=c_kS_{\lambda_k,w_k,z_k,\alpha_k}$, and $\calE_k$ by Proposition~\ref{prop:curved-observation}, using these same parameters and local coordinates. All moving packet regions lie in $(0,T)\times V$. Thus one fixed Borel set is given by
\begin{equation*}
\begin{gathered}
\mathcal H_k:=\mathcal U_k\setminus(\Gamma_k\cup\calE_k),\quad G^{(N)}:=\left((0,T)\times\Omega\right)\setminus\bigcup_{k\le N}\mathcal H_k,\\
G:=\bigcap_{N\ge1}G^{(N)},\quad \1_{G^{(N)}}S_k=\1_GS_k=\1_{\calE_k}S_k\quad\forall k\in\{1,\ldots,N\}.
\end{gathered}
\end{equation*}

\item \emph{(Asymptotic realization and non-observability).} By Theorem~\ref{thm:realization}, the same choices give real solutions $u_k$ of \eqref{eq:wave} with
\begin{equation*}
E[u_k](0)=1,\quad \partial_tu_k=S_k+r_k,\text{ \; and \; } \|r_k\|_{L^2((0,T)\times\Omega)}\rightarrow0.
\end{equation*}
Their observation satisfies
\begin{equation}\label{eq:curved-nonobs}
\begin{aligned}
\|\1_G\partial_tu_k\|_{L^2((0,T)\times\Omega)}^2 &\le2\|r_k\|_{L^2((0,T)\times\Omega)}^2 +2\|\1_{\calE_k}S_k\|_{L^2((0,T)\times\Omega)}^2\\
&\le2\|r_k\|_{L^2((0,T)\times\Omega)}^2 +C_T(\gamma_k+h_k/w_k)\rightarrow0.
\end{aligned}
\end{equation}
Consequently, observability fails on $G$.

\item \emph{(Essential GCC and boundary completion).} For this same set,
\begin{equation}\label{eq:curved-gcc}
\essinf_{\rho\in S^*\Omega}\int_0^T\1_G(t,\pi\Phi_t\rho)\dd t\ge\eta>0,
\end{equation}
where the essential infimum is relative to Liouville measure. Moreover, for the specified Borel set $G$, define
\begin{equation*}
G^\sharp:=G\cup\left((0,T)\times\partial\Omega\right).
\end{equation*}
Every unit-speed generalized ray $x_\gamma:[0,T]\to\overline\Omega$ satisfies
\begin{equation*}
\int_0^T\1_{G^\sharp}\left(t,x_\gamma(t)\right)\dd t\ge\eta.
\end{equation*}
Both lower bounds hold with the same constant for every $G^{(N)}\supset G$, using its boundary completion in the second bound. The wave observation remains on the interior set $G$.
\end{enumerate}
\end{theorem}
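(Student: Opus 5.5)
Parts (i) and (ii) are essentially bookkeeping on top of Theorem~\ref{thm:realization} and Proposition~\ref{prop:curved-observation}. First choose the geometry independently of the scales: pick disjoint intervals with $\sum_k w_k\le T/2$ inside $\mathcal I_{\rm in}$, for instance $w_k=c2^{-k}$ with centers arranged so that the concentric triples $3I_k$ are pairwise disjoint. Then feed the upper bounds $\overline\lambda_k:=\min\{\lambda_*,(w_k/R)^{1/2},2^{-k}w_k^{1/2}\}$ into Theorem~\ref{thm:realization}; that theorem allows arbitrary upper bounds, so $w_k\ge R\lambda_k^2$ and $\lambda_k^2/w_k\to0$ both hold. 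Set $\gamma_k=\lambda_k^{1/4}\to0$. Part (ii) is then the conclusion of Theorem~\ref{thm:realization}(ii), and it gives $C_T(\gamma_k+h_k/w_k)\to0$. The remaining constants $R,\lambda_*$ are fixed by the ray analysis of part (iii), which is where the real work lies.

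For (iii), I would first reduce to rays meeting $V$. A ray that never enters $\bigcup_k\mathcal U_k\subset(0,T)\times V$ spends the whole time $T$ in $G$. For a ray meeting $V$, the remark after Lemma~\ref{lem:local-geometry} says it stays in $U$ and coincides with a billiard ray of the strictly convex domain $\Omega_{\rm c}$. In collar coordinates near the arc, parametrize such a ray by its glancing invariant, namely its maximal depth or its angle $\beta$ with the boundary, and by its tangential direction. Split the rays into three classes.
\begin{enumerate}
\item \emph{Non-glancing rays} (angle $\beta\ge\beta_0$, or depth $\gg\lambda_k^2$ for all relevant $k$). Such a ray crosses the collar $\{y<y_c^{\lambda_k}\}$ in time $O(\lambda_k^2/\beta)$ per reflection. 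Since only $O(1/\beta)$ reflections occur, the total hidden time is small, and at least $T/2$ is observed.
\item \emph{Negative glancing rays.} These move with $\dot s\approx-1$, so $s-t$ sweeps each $I_k$ in time about $w_k/2$. The hidden time is therefore at most $\sum_k w_k/2\le T/4$.
\item \emph{Positive glancing rays.} This is the hard case.
\end{enumerate}

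For positive glancing rays with depth parameter $d$, use a ray-dependent threshold. Packets with $\lambda_k^2\ll d$ are at depth below $y_c^{\lambda_k}\ll d$. The ray spends only time $O(\text{(their maximal depth)}/d)^{1/2}\cdot T$ in the collar $\{y<C\lambda_{k_0}^2\}$ containing all of them, which is small; this is the ``thin collar'' estimate. Packets with $\lambda_k^2\gtrsim d$ matter as follows. Since $s-t$ drifts at rate $O(d)$ along the ray and $w_k\ge R\lambda_k^2$, separation of the triples should allow the ray to interact substantially with at most one such packet, say $\mathcal U_k$. The condition $w_k\ge R\lambda_k^2$ with $R$ large is what makes this work. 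Within that single packet, I would compare the ray with the branch phases. Along a positive glancing ray of depth comparable to $\lambda_k^2$, the phases $\phi_k,\psi_k$ vary monotonically between reflections. On the Liouville-generic set, the pair $(\phi_k,\psi_k)$ winds through the torus $(\R/\Z)^2$, spending a fixed proportion of time in $\mathcal R$ (area $1/8$) or in the $4\gamma_k$-strips.

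The hard sub-case is a ray nearly following one branch, where one phase is almost stationary. Then $\phi_k$ (say) stays near a constant while $\psi_k$ runs through all of $\R/\Z$, so the ray visits $J_\pm$ for a quarter of each period. Whenever $\phi_k$ sits in $F_\pm$ this gives observed time. If instead $\phi_k$ sits in the complementary quarter, use the reflection: at $y=0$ we have $\psi=\phi+1/4$, and reflection swaps the branch the ray follows. The quarter-turn relation $f_*=\tau_{1/4}g_*$ then forces the other branch into a plateau matching. Phase errors of size $\lesssim\gamma_k$ coming from ray-versus-phase mismatch (of order $\lambda_k^{1/2}$ relative) are absorbed by $\calE_k$, which is why $\gamma_k=\lambda_k^{1/4}$ is chosen to dominate these errors. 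This yields a lower bound $c\,w_k$ or $c\,T$ on the observed time per unit interaction, giving $\eta>0$ uniformly.

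The boundary-completion statement follows from the same argument. Generalized rays gliding along $\partial\Omega$ lie in $(0,T)\times\partial\Omega\subset G^\sharp$, and all other rays are limits of the above classes. Monotonicity $G^{(N)}\supset G$ gives the bounds for $G^{(N)}$.

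The step I expect to be the main obstacle is the single-packet positive-glancing estimate: making the ray–phase comparison quantitative uniformly in depth ratio $d/\lambda_k^2$ near $1$ (near the turning curve, where branches merge) and showing the quarter-shift rescue uniformly. I would first try to write the ray in Airy-action variables, where $(\phi_k,\psi_k)$ becomes affine up to $O(\lambda_k^{1/2})$ errors.
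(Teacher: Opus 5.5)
Your parts (i)--(ii), the split into non-glancing, negative and positive glancing rays, the $\delta\beta$ threshold with triple separation and $w_k\ge R\lambda_k^2$, and the thin-collar bound all agree with Section~\ref{subsubsec:gcc-regular}. The gaps are in the single-packet estimate (Proposition~\ref{prop:one-scale}), which is what (iii) rests on.

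\textbf{The quarter-turn rescue.} As you describe it, the rescue goes through a reflection, and that step fails. On the outgoing half of a chord, a nearly matched ray has slow $\phi\approx\sigma$, and matching requires $\sigma\in F_+\cup F_-$. After the reflection it has slow $\psi\approx\sigma+1/4$, and matching requires $\sigma+1/4\in J_+\cup J_-$. Since $F_\pm+1/4=J_\pm$, these are the same condition. The Dirichlet relation $f_*(\phi)=g_*(\phi+1/4)$ at $y=0$ carries the plateau status across every reflection unchanged.

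The alternation comes instead from the turning region. The incoming half has slow $\psi\approx b$ and needs $J_+\cup J_-$. The outgoing half has slow $\phi$ with essentially the same value and needs $F_+\cup F_-$, the complement. So the two halves cannot both fail. The paper encodes this as a net quarter shift between the incoming slow phases of successive chords, $\psi_1=\psi_0+1/4+\delta_{\rm ret}$, with $\delta_{\rm ret}$ bounded through the return error $F_\lambda$ of Lemma~\ref{lem:phase-return}.

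Even then, the $4\gamma_k$-strips absorb $\delta_{\rm ret}$ only when the mismatch $e_0=M\rho_0^2$ is $\lesssim\gamma_k$. For larger $e_0$, Lemma~\ref{lem:matched} uses monotonicity of the slow phase to bound corner losses. For $e_0\ge e_c$ it uses a band nearer the caustic, where the slow phase sweeps several periods.

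\textbf{The shallow regime.} You only treat rays of depth comparable to $\lambda_k^2$. But your threshold keeps a packet whenever $\lambda_k\gtrsim\delta\beta$, so the action ratio $r=\beta\kappa^{-1/3}/\lambda_k$ can tend to $0$. Such a ray can stay inside the oscillatory collar of $\mathcal U_k$ for all of $[0,T]$, with about $T/\beta$ reflections. There $(\phi,\psi)$ does not wind through the torus. One has $\phi=\psi-1/4-\nu$ with $0\le\nu\lesssim Mr^2$ and $\nu=0$ at every reflection, and $\psi$ advances only $\sim Mr$ per chord, which may be $\ll1$.

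The missing argument is Lemma~\ref{lem:shallow}, which has three ingredients:
\begin{enumerate}
\item count periods of the continuous phase $\psi$ over all of $[0,T]$, using $-\dot\psi\sim\lambda^2/h$;
\item use that the diagonal $\psi=\phi+1/4$ lies in $\mathcal R$ exactly when $\psi\in J_+\cup J_-$;
\item when $Mr^2$ is not small, average $W(\nu)=2(1/4-\dist(\nu,\Z))_+$ over the sweep of $\nu$ on each chord.
\end{enumerate}

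\textbf{Uniformity and boundary completion.} The equidistribution you invoke for ratios in compact subsets of $(0,1)$ is not a Liouville-generic effect. It holds for every such ray, uniformly, because $M\sim\lambda^{-3/2}\to\infty$ (van der Corput). The regimes are then combined by contradiction over limiting ratios $r_*\in[0,\infty]$. This requires Lemma~\ref{lem:action-stability}, since $\beta$ is not conserved under variable curvature. A merely generic bound would give neither a uniform $\eta$ nor the statement for every generalized ray.

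The boundary completion also cannot come from viewing generalized rays as limits of regular ones. Since $\1_G$ is only measurable, observed-time lower bounds do not pass to limits of trajectories (Appendix~\ref{subsec:rays-limits}). The correct argument is this. A generalized ray meeting $V$ stays in $U$, where the boundary is strictly convex. If it has an interior point, it coincides on $[0,T]$ with a billiard trajectory of $\Omega_{\rm c}$. That trajectory's reflections are transverse and cannot accumulate, because $|\log\beta_{j+1}-\log\beta_j|\le Cl_j$. Hence the ray is regular and the pointwise estimate applies. If it has no interior point, it lies on $\partial\Omega$ and hence in $G^\sharp$.
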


\setcounter{subsection}{-1}
\subsection{Sketch of the proof}\label{subsec:gcc-sketch}

We first estimate the observed time at one scale, then combine the scales, and finally use the exact waves on the same set.

\smallskip\noindent\emph{Step 1. A uniform one-scale estimate.} Section~\ref{subsec:gcc-local} defines $\mathcal O_{\lambda,\alpha}$ by observing outside the oscillatory collar, on the matching regions inside it, and on additional strips near their corners. We prove
\begin{equation*}
\int_0^T\1_{\mathcal O_{\lambda,\alpha}}\left(t,x(t)\right)\dd t\ge\eta_0T
\end{equation*}
for positive regular glancing rays meeting $V$, uniformly in sufficiently small values of the scale parameter $\lambda$ and the reference angle, and in the common phase shift.

Section~\ref{subsec:gcc-action} introduces the normalized action ratio
\begin{equation*}
r_j:=\frac{\beta_j}{\lambda\kappa(s_j)^{1/3}},
\end{equation*}
where $s_j,\beta_j$ are the impact point and inward normal component. Finite-time action stability preserves its limiting regime, independently of the number of reflections. Section~\ref{subsec:gcc-action} treats limits in $(0,1)$ by phase averaging and those in $(1,\infty]$ by observing central chord portions outside the collar.

For the limit $0$, Section~\ref{subsec:gcc-shallow} uses the boundary relation to count a continuous common phase over $[0,T]$. For the limit $1$, Section~\ref{subsec:gcc-matched} combines two successive chords using the quarter-period return and
\begin{equation*}
J_0\cup(J_0-1/4)=\R/\Z,
\end{equation*}
where $J_0$ is the union of the two $g_*$ plateau intervals. The additional strips absorb corner errors. Section~\ref{subsec:gcc-one-scale} combines these ranges by contradiction to obtain uniform constants.

\smallskip\noindent\emph{Step 2. Combining the scales.} In Section~\ref{subsec:gcc-fixed}, choose disjoint concentric triple enlargements of the intervals $I_k$ and require $w_k\ge R\lambda_k^2$. For a positive glancing ray with reference normal component $\beta$, the $O(\beta^2)$ drift of $s-t$ allows at most one encountered moving packet region with $\lambda_k\ge\delta\beta$, for a fixed small $\delta>0$ and sufficiently large $R$. All smaller moving packet regions lie in one collar of depth $D\delta^2\beta^2$.

If $\mathcal U_k$ is the unique moving packet region above the threshold met by the ray, then along the ray, we have
\begin{align*}
\1_G\ge\1_{\mathcal O_{\lambda_k,\alpha_k}}-\1_{\{y\le D\delta^2\beta^2\}},
\end{align*}
which implies that $D_G\ge\eta_0T-C\delta^2T-C\beta\ge\eta_0T/2$ by choosing $\delta$, then the glancing threshold, sufficiently small. If no larger moving packet region is met, the thin-collar estimate alone suffices.

Negative glancing rays have hidden time at most $\sum_kw_k\le T/2$. Non-glancing rays spend uniformly little time in a sufficiently thin boundary collar; rays avoiding $V$ are fully observed. These bounds apply to each regular ray. Section~\ref{subsubsec:gcc-completion} extends the bound to every generalized ray after adjoining $(0,T)\times\partial\Omega$: a ray meeting $V$ stays in the positive-curvature chart, where it is either regular or entirely on the boundary.

\smallskip\noindent\emph{Step 3. The exact waves on the same set.} Section~\ref{subsec:gcc-energy} coordinates the construction with Theorem~\ref{thm:realization}. The geometric requirements impose only upper bounds:
\begin{equation*}
\lambda_k\le\min\{\lambda_*,(w_k/R)^{1/2}\}.
\end{equation*}
After fixing each Fourier cutoff, we may shrink $\lambda_k$ further to ensure $\lambda_k^2/w_k\to0$ and vanishing velocity approximation error.

These same choices give a fixed $G$ satisfying essential \hyperref[eq:gcc]{GCC} and normalized exact waves with $\int_G|\partial_tu_k|^2\dd x\dd t\to0$. This proves Theorem~\ref{thm:curved-gcc}. For arbitrary observation times, we construct on a shorter admissible interval, continue the exact waves, and retain observation only on that interval, proving the \hyperref[thm:main]{Main Theorem}.

\subsection{The observation region at one scale}\label{subsec:gcc-local}
Recall the scales $\lambda,h$ from \eqref{eq:scales}, and we further introduce
\begin{equation}\label{eq:M-eps}
\gamma:=\lambda^{1/4},\; M:=(2\pi h)^{-1}\lambda^3\sim\lambda^{-3/2},\text{ \; and \; } \epsilon:=(2\pi h)^{-1}\lambda^6\sim\lambda^{3/2}=o(\gamma).
\end{equation}
Here $\gamma$ is the width in \eqref{eq:gl-extra}, $M$ measures the normal phase variation, and $\epsilon$ bounds the scaled return error. Since $\epsilon/\gamma\sim\lambda^{5/4}\to0$, the additional reference strips are wider than this return error near a matching corner. The quarter-period identity $-2(2\pi h)^{-1}\lambda^3/3=1/4\pmod1$ still holds exactly.

Recall the plateau intervals and matching rectangles from \eqref{eq:plateaus}--\eqref{eq:rectangles}:
\begin{gather*}
F_+=[3/4,1],\text{ \; and \; }F_-=[1/4,1/2],\; J_+=[0,1/4],\text{ \; and \; }J_-=[1/2,3/4],\\
\mathcal R=(F_+\times J_+)\cup(F_-\times J_-).
\end{gather*}
The intervals are understood modulo $1$. As in \eqref{eq:gl-extra}, write $\mathcal K=\{0,1/4,1/2,3/4\}$, and $J_0:=J_+\cup J_-$. Let $\mathcal C_\lambda:=\{x(s,y):s\in\mathcal I,\ 0<y<y_c^\lambda(s)\}$ be the local collar. %
On $(0,T)\times\mathcal C_\lambda$, the phases \eqref{eq:actual-phases} are
\begin{equation*}
\phi=(2\pi h)^{-1}(\Phi_{0,\lambda}-t+R_\lambda)+\alpha,\text{ \; and \; }\psi=(2\pi h)^{-1}(\Phi_{0,\lambda}-t-R_\lambda)+\alpha,
\end{equation*}
with a common phase shift $\alpha\in\R/\Z$. Define the one-scale observation region by
\begin{equation}\label{eq:local-obs}
\mathcal O_{\lambda,\alpha}:=\left((0,T)\times(\Omega\setminus\mathcal C_\lambda)\right)\cup\calE_\lambda\cup\{(t,x)\in(0,T)\times\mathcal C_\lambda:(\phi,\psi)\pmod1\in\mathcal R\},
\end{equation}
where the set $\calE_\lambda:=\{(t,x)\in(0,T)\times\mathcal C_\lambda:\dist(\phi,\mathcal K)\le4\gamma \ \text{or}\ \dist(\psi,\mathcal K)\le4\gamma\}$ is defined by the same phase-strip rule as in \eqref{eq:gl-extra}. Distances here are taken on $\R/\Z$. Profile matching gives zero observed energy on the rectangles. Moreover, the additional strips $\calE_\lambda$ accommodate the phase-comparison error near their corners. Thus the one-scale rule and observation inside each moving packet region satisfy
\begin{equation*}
\mathcal U_k\cap\mathcal O_{\lambda_k,\alpha_k}=\Gamma_k\cup\calE_k,\; \mathcal H_k=\mathcal U_k\setminus\mathcal O_{\lambda_k,\alpha_k}.
\end{equation*}
Theorem~\ref{thm:realization} gives $\|\1_{\calE_k}S_k\|_2\to0$, as required in \cite[equation~(2.12)]{AOSI}. Both the test rays and the physical packets use the same real arclength coordinate and the same primitive $C_\lambda$ (see Eq. \eqref{eq:W0} for its definition).

For the one-scale argument we consider positively oriented regular glancing rays that meet $V$ during $[0,T]$. Negative glancing rays and rays separated from glancing will be treated in Section~\ref{subsec:gcc-fixed}. Here \emph{regular glancing rays} means near-tangent rays with transverse reflections, not boundary-gliding trajectories; see Appendix~\ref{subsec:meas-boundary}.

Lemma~\ref{lem:local-geometry} confines them to $U$, where they agree with rays in the auxiliary strictly convex domain $\Omega_{\rm c}$. For sufficiently small glancing angles, the complete short ray segments containing time zero or time $T$, and the neighboring ray segments used below, also stay in this chart. All boundary-point uniformity below refers to this fixed arc.

\begin{prop}[Uniform one-scale estimate]\label{prop:one-scale}
Fix the local geometry and $0<T<T_{\rm loc}$. There are $\eta_0,\beta_*,\lambda_*>0$ such that every positively oriented regular glancing ray meeting $V$ during $[0,T]$, with reference inward normal component at most $\beta_*$, satisfies
\begin{equation}\label{eq:one-scale}
\int_0^T\1_{\mathcal O_{\lambda,\alpha}}\left(t,x(t)\right)\dd t\ge\eta_0T
\end{equation}
for all $\lambda\in(0,\lambda_*]$ chosen by \eqref{eq:scales} and all $\alpha\in\R/\Z$. The reference impact may precede time zero on the chord containing the initial point.
\end{prop}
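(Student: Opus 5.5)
The plan is to argue by contradiction and compactness, following the outline of Section~\ref{subsec:gcc-sketch}. Suppose no $\eta_0,\beta_*,\lambda_*$ exist. Then we obtain sequences $\lambda_m\to0$, $\alpha_m\in\R/\Z$, and positive regular glancing rays $x_m$ meeting $V$ with reference normal components $\beta_m\to0$, such that the observed time in $\mathcal O_{\lambda_m,\alpha_m}$ is $o(T)$. We attach to each ray its normalized action ratios $r_j=\beta_j/(\lambda\kappa(s_j)^{1/3})$ at the successive impacts. The first ingredient is a finite-time action stability lemma in the convex comparison domain $\Omega_{\rm c}$ of Lemma~\ref{lem:local-geometry}. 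For near-tangent billiards the quantity $\beta^3/\kappa$ is an adiabatic invariant up to relative errors $O(\beta)$ per unit time, so all $r_j$ along $[0,T]$ (including the chord containing time zero) are comparable, uniformly in the number of reflections. After passing to a subsequence we may therefore assume that the reference ratio converges to some $r_\infty\in[0,\infty]$. We then derive a contradiction in each regime.

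For $r_\infty\in(1,\infty]$, the chords reach depth $\sim\beta^2/\kappa$, which exceeds $y_c^\lambda\sim\lambda^2/(2\kappa^{1/3})$ by a fixed factor. A fixed proportion of each chord's duration is spent in the central part, which lies outside $\mathcal C_\lambda$ and hence in $\mathcal O_{\lambda,\alpha}$. Since chord durations $\sim\beta/\kappa$ are small compared with $T$, summing over chords gives a positive fraction of $T$. For $r_\infty\in(0,1)$ the ray stays in the oscillatory collar at normalized depths bounded away from both $0$ and the turning curve. Along the ray, $\frac{d}{dt}(\phi+\psi)/2=(2\pi h)^{-1}(\dot\Phi_{0,\lambda}-1)$ and $\frac{d}{dt}(\phi-\psi)/2=(2\pi h)^{-1}\dot R_\lambda$. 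The second is of size $\lambda\beta/h\gg$ one period per chord, and its normal oscillation has a direction transverse to the branch covector, because $r_\infty\neq1$ means the ray is not a branch ray. I would show that the pair $(\phi,\psi)$ equidistributes on $(\R/\Z)^2$ in the time average, via a van der Corput/averaging argument on each chord. The observed fraction then tends to $|\mathcal R|=1/8$.

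The limit $r_\infty=0$ is the shallow regime. The ray stays at depth $o(\lambda^2)$, so $R_\lambda=\lambda^3/3+o(\lambda^3)$, and $\psi-\phi\to1/4$ up to an error that I would control by the return error $\epsilon$ of \eqref{eq:M-eps}, using the exact identity $-R_\lambda(s,0)/(\pi h)=1/4\pmod1$. The common phase $(\phi+\psi)/2$ moves continuously and drifts by $\sim(b-1)T/h\to\infty$ periods over $[0,T]$. Once $\psi=\phi+1/4$, the pair lies in $\mathcal R$ exactly when $\phi\in F_+\cup F_-$, which is a set of measure $1/2$. Continuous counting therefore gives observed time at least $T/4-o(T)$. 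The strips $\calE_\lambda$ of width $4\gamma\gg\epsilon$ absorb the discrepancy near the corners $\mathcal K$.

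The critical case $r_\infty=1$ is where the ray nearly follows one of the two branches, and I expect it to be the main obstacle. There, along the rising half-chord, one of $\phi,\psi$ is almost constant while the other is not controlled. My plan is to combine two successive chords, using the reflection and caustic returns. At reflection, the branch on which the ray rides switches from the $+$ to the $-$ phase with a quarter-period shift, so the tracked $\psi$-value $\psi_0$ on one chord becomes $\psi_0-1/4$ (mod $1$) on the next, up to the return error $O(\epsilon)$ plus the drift $O(\lambda^6/h)$. Because $J_0\cup(J_0-1/4)=\R/\Z$, on at least one of the two chords the tracked phase lies in a $g_*$ plateau $J_\pm$. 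On that chord, the other phase sweeps many periods as the ray crosses the normal collar, since $M\sim\lambda^{-3/2}\to\infty$. A fixed fraction of the time then has the other phase in the matching $F_\pm$, or in a strip $\calE_\lambda$ if the tracked value is within $4\gamma$ of a corner. The delicate points are proving that the combined return error is $o(\gamma)$ uniformly in the curvature variation, and handling the turning-layer portion where the ray lingers. Finally, the subsequence limits yield positive lower bounds in every regime, contradicting the assumption. This gives uniform $\eta_0$, and $\lambda_*,\beta_*$ follow.
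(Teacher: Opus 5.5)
Your architecture is the paper's: contradiction, finite-time action stability, a split by the limiting ratio $r_\infty$, central chord portions for $r_\infty>1$, van der Corput averaging for $r_\infty\in(0,1)$, and two successive chords with the quarter-turn covering for $r_\infty=1$. However, classifying by $r_\infty$ alone ignores that the phases are amplified by $M=(2\pi h)^{-1}\lambda^3\sim\lambda^{-3/2}\to\infty$. This leaves a genuine gap in the shallow and critical regimes.

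\emph{Shallow regime.} Your claim that $\psi-\phi\to1/4$ up to an $O(\epsilon)$ error is false. Exactly, $\phi=\psi-1/4-\nu\pmod1$ with $\nu=(\pi h)^{-1}\bigl(\lambda^3/3-R_\lambda(s(t),y(t))\bigr)$, and on the $j$th chord $\nu\approx Mr_j^2(1-z^2)$. Now $r_j\to0$ does not force $Mr_j^2\to0$: for $r\sim\lambda^{1/2}$, the difference $\psi-\phi$ sweeps about $\lambda^{-1/2}$ periods per chord. The two phases are then not locked, and your measure-$1/2$ count fails. At frozen $\nu$ the matched fraction of $\psi$ is $W(\nu)=2\bigl(1/4-\dist(\nu,\Z)\bigr)_+$, which vanishes for $\nu\in[1/4,3/4]$. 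Your argument therefore covers only rays with $Mr_j^2\le\nu_0$ small. In the remaining case, action stability gives $Mr_j^2\ge\nu_*$ on every chord, and you need a scale separation. The rate $|\partial_z\psi|\sim Mr_j=\sqrt M\sqrt{Mr_j^2}\to\infty$ dominates $|\partial_z\nu|\sim Mr_j^2$. Smoothing $\1_{\mathcal R}$, expanding in Fourier modes in $(\psi,\nu)$ and integrating by parts then give $\int_I\1_{\mathcal R}(\phi,\psi)\dd z\ge\int_IW(\nu(z))\dd z-o(1)$ on $I=[1/2,1]$. Since $\nu=0$ at each reflection and $\int_0^AW\ge A/16$ for every $A>0$, each complete chord contributes a fixed fraction of its length.

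\emph{Critical regime $r_\infty=1$.} The same oversight occurs here. Take the mismatch $\rho_j=(\beta_j-a_\lambda(s_j))/\lambda$ with $a_\lambda=\sqrt{1-(C_\lambda')^2}$. Then $\rho_j\to0$, but $e_j=M\rho_j^2$ can take any value.
\begin{itemize}
\item The slow phase on the incoming band is $\psi_j=b_j+\epsilon D_\lambda+e_jH_\lambda$ with $\partial_zH_0=\kappa^{-2/3}/(2z^2)$. It is nearly constant only when $e_0$ is small.
\item The quarter-turn error is $\|\delta_{\rm ret}\|_{L^\infty(I)}\le(C_0+\lambda C_I)e_0+D_I\epsilon$, not $O(\epsilon)$. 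The boundary return contains a term $O\bigl(\lambda(\beta_0-a_\lambda(s_0))^2\bigr)$, which is $O(e_0)$ after division by $2\pi h$.
\end{itemize}
Your covering argument with the $4\gamma$ strips therefore handles only $e_0\lesssim\gamma$.
\begin{itemize}
\item For $\gamma\lesssim e_0\le e_c$, use the monotonicity $\psi_j'\sim e_0/z^2$. Both $\psi_0$ and $\psi_1$ can miss $J_0$ only when $\psi_0$ lies within $(C_0+2)e_0$ of $\mathcal K$, and monotonicity bounds the time spent there.
\item For $e_0\ge e_c$, use a second band closer to the caustic. There $\psi_0$ alone sweeps many periods, so $\1_{J_0}(\psi_0)$ has average about $1/2$ on a single chord.
\end{itemize}
In every subcase, the fast phase ($\phi'\sim M$) must be averaged against the varying weight $\1_{J_\pm}(\psi)$ by a BV integration-by-parts estimate. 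This needs $\Var(\1_{J_\pm}\circ\psi)\lesssim1+e_0$, and it is what converts time with $\psi\in J_0$ into matched time.
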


\subsection{One-scale estimate 1: finite-time action stability and regular ranges}\label{subsec:gcc-action}

We first need to know whether a ray stays in the same scale regime during $[0,T]$. In the disc this follows from conserved angular momentum. For variable curvature, a normalized action is only approximately preserved, and its accumulated error must be controlled independently of the number of reflections.

Let $s_j,\beta_j$ be successive boundary points and inward normal components, and let $l_j$ be the intervening segment lengths. The frozen normal action in Section~\ref{sec:curved} was $\beta_j^3/\left(3\kappa(s_j)\right)$. It suggests the normalized action and the action ratio
\begin{equation*}
\delta_j:=\beta_j\kappa(s_j)^{-1/3} ,\text{ \; and \; }r_j:=\delta_j/\lambda.
\end{equation*}
The second quantity compares the test ray's normal scale with the scale parameter $\lambda$. The reference point with $j=0$ is the initial endpoint of the chord containing time zero.
\begin{lemma}\label{lem:action-stability}
Fix the local geometry and $0<T<T_{\rm loc}$. There are $\delta_*,C_T>0$ such that every positively oriented regular glancing ray meeting $V$ during $[0,T]$, with $0<\delta_0\le\delta_*$, satisfies
\begin{equation}\label{eq:action-stable}
\delta_{j+1}-\delta_j=O(\delta_j^2l_j), \quad |\delta_{j+1}^{-1}-\delta_j^{-1}|\le Cl_j, \quad \sup_j|\delta_j/\delta_0-1|\le C_T\delta_0.
\end{equation}
The indices range over the boundary-touch points bounding the ray segments that meet $[0,T]$. The constants depend only on the fixed geometry and time.%
\end{lemma}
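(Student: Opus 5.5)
The plan is to derive a one-step recursion for $\beta_j$ from the local billiard geometry in the convex completion $\Omega_{\rm c}$ of Lemma~\ref{lem:local-geometry}, and then sum it using $\sum_j l_j\le T+2T_{\rm loc}$.

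\emph{Step 1: the single chord.} Take a chord leaving the boundary point $s_j$ with inward normal component $\beta_j$ (angle $\theta_j\approx\beta_j$ with the tangent). Write the boundary near $s_j$ as a graph over the tangent line, $y=\frac12\kappa(s_j)\xi^2+\frac16\kappa'(s_j)\xi^3+O(\xi^4)$. A line at angle $\theta_j$ meets this graph again at
\begin{equation*}
l_j=\frac{2\theta_j}{\kappa_j}\left(1+O(\theta_j)\right),
\end{equation*}
so $l_j\sim\beta_j$. The exit angle is the angle between the chord and the tangent at the new point. The cubic term gives
\begin{equation*}
\theta_{j+1}=\theta_j+\frac{\kappa'(s_j)}{3\kappa(s_j)}\,l_j\,\theta_j+O(\theta_j^3).
\end{equation*}
Equivalently, $\theta_{j+1}-\theta_j=\frac{\kappa'}{6}l_j^2+O(l_j^3)$, since $\theta_j\approx\kappa l_j/2$. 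After reflection, $\beta_{j+1}=\sin\theta_{j+1}$.

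\emph{Step 2: the normalized action.} Set $\delta_j=\beta_j\kappa(s_j)^{-1/3}$ and use $\kappa(s_{j+1})=\kappa(s_j)+\kappa'(s_j)l_j+O(l_j^2)$. This gives
\begin{equation*}
\delta_{j+1}=\delta_j\left(1+\frac{\kappa'}{3\kappa}l_j-\frac{\kappa'}{3\kappa}l_j+O(l_j^2+\beta_j^2)\right).
\end{equation*}
The linear terms cancel; this is exactly why the exponent $-1/3$ appears, matching the frozen action $\beta^3/(3\kappa)$. Because $l_j\sim\beta_j\sim\delta_j$, the remainder is $O(\delta_j\cdot\delta_j l_j)=O(\delta_j^2 l_j)$, which is the first claim in \eqref{eq:action-stable}. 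The second claim follows from the identity $\delta_{j+1}^{-1}-\delta_j^{-1}=-(\delta_{j+1}-\delta_j)/(\delta_j\delta_{j+1})$, once we know $\delta_{j+1}\sim\delta_j$, which itself follows from the first bound for small $\delta_j$.

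\emph{Main obstacle.} The hardest point is getting the cancellation in Step 2 to second order with constants uniform on the arc. Everything must be expanded consistently in terms of $l_j$, since naive first-order bounds give only $O(\delta_j l_j)$, and that loses the needed power. I would check this carefully with the graph expansion, whose constants are controlled by $\|\kappa\|_{C^2}$ and $\kappa\ge\kappa_*$.

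\emph{Step 3: summation.} The chords meeting $[0,T]$, including the one containing time zero whose reference impact may precede $t=0$, have total length at most $T+2T_{\rm loc}\le C_T$ by the factor-$4$ buffer in \eqref{eq:local-buffer}. Argue by continuous induction (bootstrap). Suppose $\delta_j\le2\delta_0$ for all earlier indices. Summing the second bound gives $|\delta_j^{-1}-\delta_0^{-1}|\le C\sum l_i\le C_T$. Then
\begin{equation*}
\delta_j/\delta_0=\frac{1}{1+O(C_T\delta_0)}=1+O(C_T\delta_0).
\end{equation*}
For $\delta_0\le\delta_*$ small this keeps $\delta_j\le2\delta_0$, which closes the bootstrap. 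It also ensures all points stay in the glancing regime where Step 1 applies and the ray remains in $U$ by Lemma~\ref{lem:local-geometry}. The resulting estimate does not depend on the number of reflections, which can be of order $1/\delta_0$.
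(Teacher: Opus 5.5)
Your proposal is correct and follows essentially the same route as the paper. The paper also uses a one-step chord expansion, computing $s^+-s$ and $B=\beta+\frac{2\kappa'}{3\kappa^2}\beta^2+O(\beta^3)$ from Frenet components, which is equivalent to your exit-angle formula. It then shows the linear term cancels in $\beta\kappa^{-1/3}$, sums $|\delta_{j+1}^{-1}-\delta_j^{-1}|\le Cl_j$ over a bounded total length, and inverts. Your explicit bootstrap keeping $\delta_j\le 2\delta_0$ makes precise a step the paper leaves implicit.
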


The relevant geometries are
\begin{equation*}
r_j\ll1:\ \text{shallower ray},\quad r_j\simeq1:\ \text{nearly matched ray},\quad r_j>1:\ \text{deeper ray}.
\end{equation*}
Figure~\ref{fig:gcc-rays} shows why different estimates are needed. As the reference normal component and the scale parameter $\lambda$ tend to zero, pass to a subsequence with limiting initial ratio $r_*\in[0,\infty]$. Lemma~\ref{lem:action-stability} preserves this limiting regime throughout $[0,T]$.

Table~\ref{tab:gcc-ranges} records the corresponding estimates; Section~\ref{subsec:gcc-one-scale} will combine them into a uniform bound. The values $0$ and $\infty$ describe limiting regimes. The limits $0$ and $1$ require the separate arguments of Sections~\ref{subsec:gcc-shallow} and~\ref{subsec:gcc-matched}.

\begin{table}[H]
\centering
\caption{Limiting action ratios and the one-scale observation estimates.}\label{tab:gcc-ranges}
\begin{tabular}{@{}lll@{}}
\toprule
Limit $r_*$ & Source of a positive observed fraction & Location\\
\midrule
$0$ & Boundary relation & Lemma~\ref{lem:shallow}\\
$(0,1)$ & Regular-range averaging & \eqref{eq:compact-average}\\
$1$ & Two-segment matching & Lemma~\ref{lem:matched}\\
$(1,\infty]$ & Fully observed central chord portions & Section~\ref{subsec:gcc-action}\\
\bottomrule
\end{tabular}
\end{table}

\begin{proof}[Proof of Lemma~\ref{lem:action-stability}]

At a boundary point $s$, let $\beta>0$ be the inward normal component of a positively oriented ray. Denote the next boundary-touch point by $s^+(s,\beta)$, the next normal component by $B(s,\beta)$, and the segment length by $l(s,\beta)$. In boundary normal coordinates, the boundary points are denoted by $x(s,0)$ and $x(s^+,0)$. Frenet expansion gives
\begin{equation*}
x(s+d,0)-x(s,0)=\left(d-\frac{\kappa^2}{6}d^3\right)\boldsymbol\tau(s)+\left(\frac{\kappa}{2}d^2+\frac{\kappa'}{6}d^3\right)\mathbf{n}(s)+O(d^4).
\end{equation*}

On the other hand, we know that
\begin{equation*}
\left(\sqrt{1-\beta^2}\boldsymbol\tau(s)+\beta\mathbf{n}(s)\right)l=x(s+d,0)-x(s,0).
\end{equation*}
Comparing the tangent and normal components, respectively, we obtain
\begin{gather*}
l\sqrt{1-\beta^2}=d-\frac{\kappa^2}{6}d^3+O(d^4),\;l\beta=\frac{\kappa}{2}d^2+\frac{\kappa'}{6}d^3+O(d^4).
\end{gather*}
Then, we deduce that
\begin{equation*}
\frac{\beta}{\sqrt{1-\beta^2}}=\frac{\frac{\kappa}{2}d^2+\frac{\kappa'}{6}d^3+O(d^4)}{d-\frac{\kappa^2}{6}d^3+O(d^4)}=\frac{\kappa}{2}d+\frac{\kappa'}{6}d^2+O(d^3).
\end{equation*}

Set $d=c_1\beta+c_2\beta^2+O(\beta^3)$. Comparing the coefficients of the series, we obtain
\begin{equation*}
s^+-s=\frac{2\beta}{\kappa}-\frac{4\kappa'}{3\kappa^3}\beta^2+O(\beta^3).
\end{equation*}
By reflection, we know that
\begin{equation*}
B=-\langle\sqrt{1-\beta^2}\boldsymbol\tau(s)+\beta\mathbf{n}(s),\mathbf{n}(s+d)\rangle.
\end{equation*}
Again, by Frenet expansion, we have
\begin{equation*}
\mathbf{n}(s+d)=\left(-\kappa d-\frac{\kappa'}{2}d^2\right)\boldsymbol\tau(s) + \left(1-\frac{\kappa^2}{2} d^2\right)\mathbf{n}(s) +O(d^3).
\end{equation*}
Therefore, we conclude that $B=\beta+\frac{2\kappa'}{3\kappa^2}\beta^2+O(\beta^3)$. As for the length $l$, we have
\begin{equation*}
l^2=\left(d-\frac{\kappa^2}{6}d^3+O(d^4)\right)^2+\left(\frac{\kappa}{2}d^2+\frac{\kappa'}{6}d^3+O(d^4)\right)^2.
\end{equation*}
This easily implies that $l=d-\frac{\kappa^2}{24}d^3+O(d^4)=\frac{2}{\kappa}\beta+O(\beta^2)$. In summary, we have
\begin{equation}\label{eq:short-chord}
\begin{aligned}
s^+-s&=\frac{2\beta}{\kappa}-\frac{4\kappa'}{3\kappa^3}\beta^2+O(\beta^3),\\
B&=\beta+\frac{2\kappa'}{3\kappa^2}\beta^2+O(\beta^3) ,\text{ \; and \; }l\sim\beta.
\end{aligned}
\end{equation}
Using Taylor expansion, we have
\begin{equation*}
\kappa(s^+)^{-1/3}=\kappa(s)^{-1/3} -\frac{\kappa'(s)}{3\kappa(s)^{4/3}}(s^+-s) +O((s^+-s)^2)
\end{equation*}
Therefore \eqref{eq:short-chord} gives
\begin{gather*}
\kappa(s^+)^{-1/3}=\kappa^{-1/3}\left(1-\frac{2\kappa'}{3\kappa^2}\beta+O(\beta^2)\right),\\
B\kappa(s^+)^{-1/3} =\left(\beta+\frac{2\kappa'}{3\kappa^2}\beta^2+O(\beta^3)\right) \kappa^{-1/3}\left(1-\frac{2\kappa'}{3\kappa^2}\beta+O(\beta^2)\right)=\beta\kappa^{-1/3}+O(\beta^3).
\end{gather*}
Since $l_j\sim\beta_j\sim\delta_j$, this proves the one-step estimates in \eqref{eq:action-stable}. It remains to control their accumulation over the fixed time interval. Summing the actual ray segment lengths gives
\begin{align*}
|\delta_j^{-1}-\delta_0^{-1}|\le\sum_{i<j}|\delta_{i+1}^{-1}-\delta_i^{-1}|\le C\sum_{i<j}l_i\le C_T.
\end{align*}
For sufficiently small $\delta_0$, we can invert this estimate:
\begin{equation*}
\frac{\delta_j}{\delta_0} =\frac1{1+\delta_0(\delta_j^{-1}-\delta_0^{-1})} =1+O_T(\delta_0).
\end{equation*}
This closes the small-action bound independently of the number of reflections and proves the lemma.
\end{proof}

\smallskip\noindent\emph{Regular-range averaging.} We now treat limiting ratios in $(0,1)$ and aim to prove that 
\begin{equation*}
\int_{\text{segment }j}\1_{\mathcal O_{\lambda,\alpha}}\left(t,x(t)\right)\dd t \geq c_*l_j.
\end{equation*}
We accomplish this by averaging the two phases $\phi$ and $\psi$ in \eqref{eq:actual-phases}. Indeed, we apply the Van der Corput Lemma (see \cite[Chapter~VIII, Proposition~2, pp.~332--334]{Stein1993})
\begin{lemma}\label{lem:van-der-corput}
Let $I=[a,b]$ be a bounded interval, let $k\geq 1$ be an integer, and let $f\in C^k(I;\mathbb R)$. Suppose that $|f^{(k)}(x)|\geq \sigma>0$, for all $x\in I$. If $k=1$, assume additionally that $f'$ is monotone on $I$. Then, for every $\Lambda>0$,
\begin{equation*}
\left|\int_I e^{i\Lambda f(x)}\,dx\right| \leq C_k(\Lambda\sigma)^{-1/k},
\end{equation*}
where $C_k$ depends only on $k$.
\end{lemma}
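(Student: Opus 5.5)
This is the classical van der Corput lemma, which the paper quotes from Stein. The plan is to prove it by induction on $k$, with the case $k=1$ as the base.

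\emph{Base case $k=1$.} Since $f'$ is continuous and $|f'|\ge\sigma>0$, it has a fixed sign on $I$. We write
\[
e^{i\Lambda f}=\frac{1}{i\Lambda f'}\frac{d}{dx}e^{i\Lambda f}
\]
and integrate by parts. This gives boundary terms bounded by $2/(\Lambda\sigma)$, plus the term
\[
\int_I\left|\frac{d}{dx}\frac{1}{\Lambda f'}\right|dx .
\]
Because $f'$ is monotone, $1/f'$ is monotone too. The last integral therefore telescopes to
\[
\frac{1}{\Lambda}\left|\frac{1}{f'(b)}-\frac{1}{f'(a)}\right|\le\frac{1}{\Lambda\sigma}.
\]
Altogether we obtain the bound $3(\Lambda\sigma)^{-1}$, so $C_1=3$.

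\emph{Inductive step $k\ge2$.} The hypothesis $|f^{(k)}|\ge\sigma$ makes $f^{(k-1)}$ strictly monotone. Hence there is at most one point $x_0\in I$ where $|f^{(k-1)}|$ is minimal; we take $x_0$ to be the zero of $f^{(k-1)}$ if it has one, and an endpoint otherwise. For a parameter $\delta>0$, on $I\setminus(x_0-\delta,x_0+\delta)$ we have $|f^{(k-1)}|\ge\sigma\delta$. This set consists of at most two intervals, and on each of them $f^{(k-1)}$ keeps a fixed sign. For $k-1\ge2$ no monotonicity assumption is needed. For $k-1=1$, the derivative $f'$ is automatically monotone there, because $f''$ does not vanish. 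The inductive hypothesis then gives a contribution of at most $2C_{k-1}(\Lambda\sigma\delta)^{-1/(k-1)}$. The excluded piece is bounded trivially by $2\delta$.

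\emph{Optimizing $\delta$.} We choose $\delta=(\Lambda\sigma)^{-1/k}$. Both contributions are then of order $(\Lambda\sigma)^{-1/k}$, since
\[
(\Lambda\sigma\delta)^{-1/(k-1)}=\bigl((\Lambda\sigma)^{1-1/k}\bigr)^{-1/(k-1)}=(\Lambda\sigma)^{-1/k}.
\]
This yields $C_k=2C_{k-1}+2$. If $\delta$ exceeds the length of $I$, the trivial bound $|I|\le\delta$ already suffices. The constants depend only on $k$, not on $I$ or $f$, because every estimate used only $\sigma$, $\Lambda$ and the monotonicity structure.

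\emph{Expected obstacle.} The only delicate point is the base case. There the monotonicity of $f'$ is genuinely needed to bound the total variation of $1/f'$ independently of the length of $I$. In the inductive step this is supplied automatically by the sign condition on $f^{(k)}$, and checking that is the step to do carefully. Everything else is bookkeeping.
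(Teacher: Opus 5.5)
Your proof is correct and is essentially the argument in Stein that the paper cites without reproducing: integration by parts for $k=1$, then excising a $\delta$-neighborhood of the minimizer of $|f^{(k-1)}|$ and choosing $\delta=(\Lambda\sigma)^{-1/k}$. Your recursion $C_1=3$, $C_k=2C_{k-1}+2$ reproduces Stein's constant $5\cdot 2^{k-1}-2$. One small caveat: when $k=1$ and $f$ is only $C^1$, $1/f'$ need not be differentiable, so read the integration by parts in the Riemann--Stieltjes sense and bound the remainder by the total variation of $1/(\Lambda f')$; this is exactly your telescoping bound $\frac{1}{\Lambda}\left|\frac{1}{f'(b)}-\frac{1}{f'(a)}\right|\le\frac{1}{\Lambda\sigma}$.
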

We want to verify the conditions of Lemma~\ref{lem:van-der-corput} on the ray segment away from the turning point. We use the large parameter $M=(2\pi h)^{-1}\lambda^3\sim\lambda^{-3/2}$ in \eqref{eq:M-eps} to replace the $\Lambda$ in Lemma~\ref{lem:van-der-corput}.

On the first half of a ray segment, from the boundary toward the turning region, reparametrize the segment with the parameter $t=t_j+(1-z)\frac{l_j}{2}$, $z\in[0,1]$. Our action ratio is $r=\beta\kappa^{-\frac{1}{3}}\lambda^{-1}$. Thus, we note that $\frac{\dd t}{\dd z}=-r\lambda\kappa^{-\frac{2}{3}}+O(\lambda^2)$. We write the ray in the boundary normal coordinates as $(s(t),y(t))$. Then, using the proof of Lemma~\ref{lem:action-stability},
\begin{equation*}
y(z)=\frac{\beta^2}{2\kappa}(1-z^2)+O(\lambda^3),\text{ \; and \; }\frac{\dd}{\dd t}y=\beta z+O(\lambda^2).
\end{equation*}
For the Airy coordinate $\zeta_{\lambda}$, we have
\begin{equation*}
\zeta_{\lambda}=2^{-\frac{2}{3}}\lambda^2-(2\kappa)^{\frac{1}{3}}y+O(\lambda^4)=2^{-\frac{2}{3}}\lambda^2 \left(1-r^2+r^2z^2\right)+O(\lambda^3).
\end{equation*}
If $r$ stays in a compact subset of $(0,1)$, by definition of $R_{\lambda}$, we obtain
\begin{equation*}
R_{\lambda}=\frac{2}{3}|\zeta_{\lambda}|^{\frac{3}{2}}=\frac{\lambda^3}{3}\left(1-r^2+r^2z^2\right)^{\frac{3}{2}}+O(\lambda^4   ).
\end{equation*}
For $z\in[1/2,1]$, the derivatives of $\phi,\psi$ satisfy
\begin{equation}\label{eq:compact-jets}
\begin{aligned}
\frac{\partial_z\phi}{M}&=\frac{r}{2}(\sqrt{1-r^2+r^2z^2}+rz)^2+O_{C^2}(\lambda),\\
\frac{\partial_z\psi}{M}&=\frac{r}{2}(\sqrt{1-r^2+r^2z^2}-rz)^2+O_{C^2}(\lambda),
\end{aligned}
\end{equation}
Indeed, we take $\phi$ for example. Note that $\frac{\dd}{\dd t}s=\frac{\sqrt{1-(\frac{\dd}{\dd t}y)^2}}{1-\kappa y}=1+\frac{\beta^2}{2}-\beta^2z^2 +O_{C^2}(\lambda^3)$. By the construction of $\Phi_{0,\lambda}$, we have
\begin{equation*}
\partial_s\Phi_{0,\lambda} =1-\frac12\lambda^2\kappa^{\frac{2}{3}}+O(\lambda^3), \;\partial_y\Phi_{0,\lambda}=O(\lambda^2).
\end{equation*}
Thus, we derive that
\begin{align*}
\frac{\dd}{\dd t}(\Phi_{0,\lambda}-t)=\partial_s\Phi_{0,\lambda}\frac{\dd}{\dd t}s+\partial_y\Phi_{0,\lambda}\frac{\dd}{\dd t}y-1=-\frac{\lambda^2\kappa^{\frac{2}{3}}}2 (1-r^2+2r^2z^2)+O(\lambda^3).
\end{align*}
Combining this with $\frac{\dd t}{\dd z}=-r\lambda\kappa^{-\frac{2}{3}}+O(\lambda^2)$, we conclude \eqref{eq:compact-jets}. Taking a Fourier approximation, we find that, for every nonzero integer pair $(p,n)$, the limiting derivative of $(p\phi+n\psi)/M$ is a nonzero smooth factor times
\begin{equation*}
(p+n)(w^2+1)+2(p-n)w,\text{ where  }w:=\frac{\sqrt{1-r^2+r^2z^2}}{rz}.
\end{equation*}
The function $w$ is strictly monotone with derivative bounded away from zero on the compact parameter set. This nonzero polynomial has degree at most two. Hence one of the first three derivatives of the combined phase $(p\phi+n\psi)/M$ is uniformly nonzero locally. For each fixed $(p,n)\ne(0,0)$, compactness and \eqref{eq:compact-jets} give a partition into uniformly finitely many subintervals on each of which one of these derivatives is bounded away from zero. On pieces where the first derivative is bounded away from zero, integration by parts and the uniform bound on the second derivative give $O(M^{-1})$; on the remaining pieces, apply Lemma~\ref{lem:van-der-corput} with $k=2$ or $3$. The constants may depend on $(p,n)$ and the fixed compact range of $r$, but are uniform in the boundary point, the common phase shift and sufficiently small $\lambda$. Thus we obtain
\begin{equation*}
\left|\int_{1/2}^1e^{2\pi i(p\phi(z)+n\psi(z))}\dd z\right|\leq CM^{-\frac{1}{3}}\rightarrow0, \;\forall(p,n)\in\Z^2\setminus\{(0,0)\}.
\end{equation*}
For a smooth periodic function $F$ on $(\R/\Z)^2$, Fourier approximation therefore yields
\begin{equation*}
\int_{1/2}^1F\left(\phi(z),\psi(z)\right)\dd z \rightarrow\frac12\int_{(\R/\Z)^2}F.
\end{equation*}
Since $|\mathcal R|=1/8$, choose a fixed smooth function $0\le F\le\1_{\mathcal R}$ with $\int_{(\R/\Z)^2}F\ge3/32$. For sufficiently small $\lambda$, the convergence above is uniform in the chosen compact range of action ratios, the boundary point and $\alpha$, with error at most $1/64$. Hence
\begin{equation}\label{eq:compact-average}
\int_{1/2}^1\1_{\mathcal R}\left(\phi(z),\psi(z)\right)\dd z\ge\frac{1}{32}.
\end{equation}

To recover the original time $t$, use $|dt/dz|=l_j/2$. For sufficiently small $\lambda$, uniformly in the chosen compact range of action ratios,
\begin{align*}
\int_{\text{segment }j}\1_{\mathcal O_{\lambda,\alpha}}\left(t,x(t)\right)\dd t \ge\frac{l_j}{2}\int_{1/2}^1\1_{\mathcal R}\left(\phi(z),\psi(z)\right)\dd z \ge\frac{l_j}{64}.
\end{align*}
Thus the phase average supplies a fixed observed fraction of each complete ray segment.

\smallskip\noindent\emph{Deeper rays.} If $r_j$ stays in a fixed compact subset $[r_-,r_+]$ of $(1,\infty)$, a central portion of each ray lies beyond the turning curve. To compare the depths, use $t=t_j+(1-z)l_j/2$ on the full ray segment, with $z\in[-1,1]$. Combining \eqref{eq:short-chord} and \eqref{eq:turning}, we have
\begin{gather*}
y(t)=\frac{\beta_j^2}{2\kappa(s_j)}(1-z^2)+O(\beta_j^3),\;y_c^\lambda(s(t))=\frac{\lambda^2}{2\kappa(s_j)^{1/3}}+O(\lambda^3),\\
\frac{y(t)}{y_c^\lambda(s(t))}=r_j^2(1-z^2)+O(\lambda).
\end{gather*}
Choose $z_*\in(0,1)$ with $r_-^2(1-z_*^2)>1$. For sufficiently small $\lambda$, the portion $|z|\le z_*$ is outside the oscillatory part. Hence $\1_{\mathcal O_{\lambda,\alpha}}\left(t,x(t)\right)=1$ for $|z|\le z_*$, and 
\begin{equation*}
\int_{\text{segment }j}\1_{\mathcal O_{\lambda,\alpha}}\left(t,x(t)\right)\dd t \ge\frac{l_j}{2}\int_{-z_*}^{z_*}\dd z=z_*l_j.
\end{equation*}
The choice of $z_*$ and the smallness threshold are uniform in the fixed compact range.

If $r_j\to\infty$ while $\beta_j\to0^+$, then $\lambda/\beta_j\to0$. On the central half of the ray, 
\begin{equation*}
y(t)\ge c\beta_j^2>C\lambda^2\ge y_c^\lambda(s(t)),\;\forall z\in[-1/2,1/2],
\end{equation*}
once the ratio is sufficiently large and $\beta_j$ sufficiently small. Thus the observed time on this ray segment is at least $l_j/2$.

\begin{figure}[tbp]
\centering
\includegraphics[width=.9\textwidth]{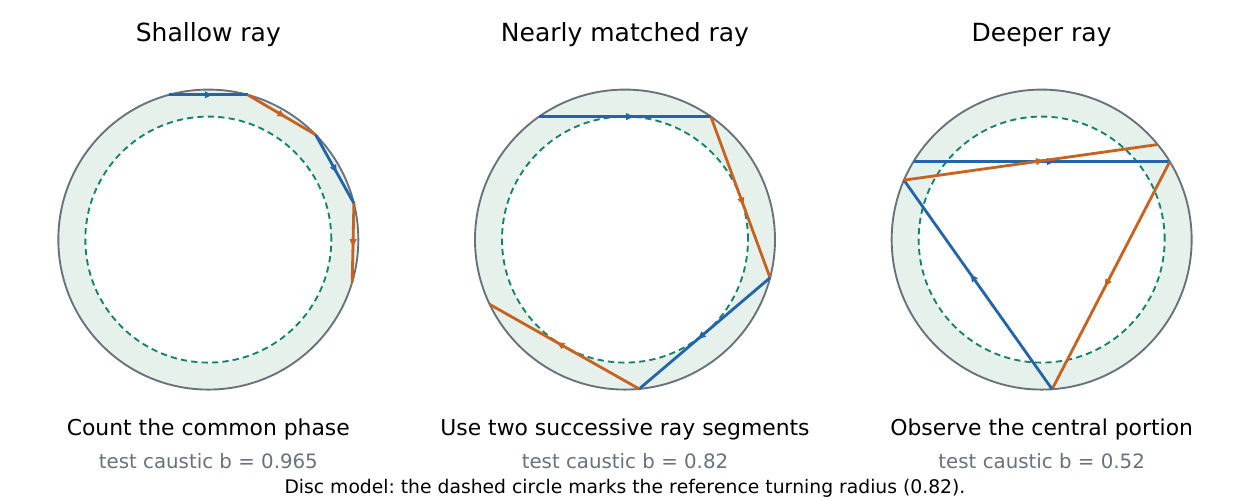}
\caption{Test rays relative to the normal depth of the moving packet region.}\label{fig:gcc-rays}%
\end{figure}

\subsection{One-scale estimate 2: shallow rays and the boundary phase relation}\label{subsec:gcc-shallow}

For $r$ small, the whole ray lies inside the oscillatory collar. Counting periods separately on each ray segment would lose a period at every reflection, and the number of reflections is unbounded. The boundary relation instead lets us count a continuous common phase over $[0,T]$. The matching rectangles alone give the following lower bound.

\begin{lemma}\label{lem:shallow}
Fix the local geometry and $0<T<T_{\rm loc}$. There are $r_{\rm sh},\lambda_{\rm sh},c_{\rm sh}>0$ such that every positively oriented regular glancing ray meeting $V$ during $[0,T]$, with initial action ratio $0<r_0\le r_{\rm sh}$ and scale $0<\lambda\le\lambda_{\rm sh}$ chosen by \eqref{eq:scales}, satisfies
\begin{equation*}
\int_0^T\1_{\mathcal R}\left(\phi\left(t,x(t)\right),\psi\left(t,x(t)\right)\right)\dd t\ge c_{\rm sh}T.
\end{equation*}
The constants depend only on the fixed geometry and time, and the bound is uniform in the common phase shift $\alpha$.
\end{lemma}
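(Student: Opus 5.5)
The plan is to reparametrize the ray by the common phase and treat the phase difference as a slowly varying parameter. Write $\Theta:=\tfrac12(\phi+\psi)=(2\pi h)^{-1}(\Phi_{0,\lambda}-t)+\alpha$ and $c:=\psi-\phi=-R_\lambda/(\pi h)$. Along a positively oriented shallow ray, both are continuous functions of $t$ on the whole interval $[0,T]$, including across reflections: $\Phi_{0,\lambda}$ and $R_\lambda$ are smooth in the collar, and $t\mapsto x(t)$ is continuous.

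\textbf{Step 1: the ray stays in the collar and $\Theta$ is monotone.} By Lemma~\ref{lem:action-stability} we have $r_j=r_0(1+O_T(\lambda r_0))$ for all $j$. By \eqref{eq:short-chord}, the depth satisfies $y\le\beta_j^2/(2\kappa)\le r_{\rm sh}^2\,y_c^\lambda(1+o(1))$. Hence the whole ray stays in the oscillatory collar, away from the turning curve. The computation preceding \eqref{eq:compact-jets} gives
\begin{equation*}
\frac{\dd\Theta}{\dd t}=-\frac{\lambda^2\kappa^{2/3}}{4\pi h}\left(1+O(r^2)+O(\lambda)\right).
\end{equation*}
So $\Theta$ is strictly monotone in $t$, with speed comparable to $\lambda^2/h\to\infty$, and we may use $\Theta$ in place of $t$ at the cost of a bounded Jacobian.

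For the phase difference we use the expansion
\begin{equation*}
R_\lambda=\tfrac13\lambda^3-\kappa^{1/3}\lambda y+O(\lambda^{-1}y^2)+O(\lambda^4),
\end{equation*}
together with $\dd y/\dd t=O(\beta)=O(r\lambda)$. This gives $|\dd c/\dd t|\lesssim r\lambda^2/h$, hence
\begin{equation*}
\left|\frac{\dd c}{\dd\Theta}\right|\le C r\le C r_{\rm sh}.
\end{equation*}
The boundary relation \eqref{eq:scales} gives $c\equiv1/4\pmod 1$ at every reflection. Thus $c$ is a slowly varying, $Cr$-Lipschitz function of $\Theta$.

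\textbf{Step 2: one period of $\Theta$.} Split the $\Theta$-range into unit periods. Since the $\Theta$-range has length $\gtrsim T\lambda^2/h\to\infty$, incomplete periods cost only $o(T)$. For a fixed offset $c$, let
\begin{equation*}
g(c):=\left|\{\phi\in\R/\Z:(\phi,\phi+c)\in\mathcal R\}\right|,
\end{equation*}
a continuous function with $g(1/4)=1/2$ and $\int_0^1g=|\mathcal R|=1/8$. On each unit period of $\Theta$, the offset $c$ moves by at most $Cr$. Therefore the $\Theta$-measure of matched points in that period is at least $g(c_{\rm start})-C' r$, where $C'$ depends on the piecewise-linear modulus of $g$. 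This reduces the lemma to a lower bound on the time average of $g(c(t))$ over $[0,T]$.

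\textbf{Step 3: time average of $g(c)$ (main obstacle).} On one chord, $c-1/4$ equals $(\kappa^{1/3}\lambda/\pi h)\,y(t)$ up to small errors, and $y(t)\approx \tfrac{\beta^2}{2\kappa}(1-z^2)$. So $c$ runs through an interval of length $L\sim Mr^2$ and back, with the arcsine-type density $\propto(L-u)^{-1/2}$ in $u=c-1/4\in[0,L]$. That density is bounded below by $c_0/L$ on all of $[0,L]$.

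If $L\le1/8$, then $g(c)\ge g(1/4)-CL\ge1/4$ throughout the chord. If $L>1/8$, the proportion of chord time with $c \bmod 1\in[1/4-1/16,\,1/4+1/16]$ is at least $c_0\lceil L\rceil^{-1}\cdot\lfloor L\rfloor/8\ge c_1$, and there $g\ge1/4$.

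In either case each complete chord contributes at least $c_1 l_j/4$. The incomplete chords at times $0$ and $T$ have length $O(\lambda)$, which is negligible. Summing over chords gives $\int_0^T g(c(t))\,\dd t\ge c_2T$.

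Combining with Step~2, and choosing first $r_{\rm sh}$ so that $C'r_{\rm sh}\le c_2/2$ and then $\lambda_{\rm sh}$ small for the $o(1)$ errors, yields $c_{\rm sh}=c_2/4$. All constants are uniform in $\alpha$, since $\alpha$ only translates $\Theta$.

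The delicate point is uniformity in $L$, which is unbounded as $r_0\to0$ only through $Mr^2$. I would handle it by this chord-wise density bound, which is uniform in $L$, rather than by equidistribution.
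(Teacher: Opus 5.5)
Your argument is correct, and it takes a different route from the paper's. The paper works with $\psi$ and $\nu=(\pi h)^{-1}(\lambda^3/3-R_\lambda)$, which is your $c-1/4$. Then $\phi=\psi-1/4-\nu$, and your $g$ is the paper's $W$ via $g(1/4+\nu)=W(\nu)$. The paper then splits into two regimes.
\begin{itemize}
\item If $Mr_j^2\le 1/64$ on every segment, then $0\le\nu\le 1/32$. In that case $(\phi,\psi)\in\mathcal R$ whenever $\psi$ lies in a fixed set of measure $7/16$, and counting periods of the continuous phase $\psi$ over $[0,T]$ suffices.
\item Otherwise, action stability gives $Mr_j^2\ge\nu_*$ on every segment. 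The sweep of $\nu$ on each half-chord gives $\int W(\nu(z))\,\dd z\ge 1/256$. The passage from $\1_{\mathcal R}(\phi,\psi)$ to $W(\nu)$ uses a smoothed inner approximation, a finite Fourier expansion in $(\psi,\nu)$, and integration by parts with $|p\psi'+n\nu'|\ge cMr_j\to\infty$.
\end{itemize}
You carry out the same fast/slow separation directly. Since $c$ is $Cr$-Lipschitz in the monotone common phase $\Theta$, averaging over each unit $\Theta$-period replaces $\1_{\mathcal R}$ by $g(c)$ with an $O(r)$ error, whether chords are shorter or longer than a period. The arcsine distribution of $c$ along each chord then bounds the average of $g$ uniformly in $L_j$. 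This removes both the dichotomy and the Fourier/nonstationary-phase step. It also needs action stability only to keep $r_j\le 2r_{\rm sh}$, with no comparison of $Mr_j^2$ across segments. The paper's version has the advantage of reusing the oscillatory-integral toolkit of its regular-range and matched-ray estimates.

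A few details need tightening.
\begin{itemize}
\item Your expansion of $R_\lambda$ cannot contain a free $O(\lambda^4)$ term: after division by $\pi h$ it would be of size $\lambda^{-1/2}$. Because $\zeta_\lambda(s,0)=2^{-2/3}\lambda^2$ exactly, the error is $O(\lambda^3y+\lambda^{-1}y^2)$. The bound $|\dd c/\dd t|\lesssim r\lambda^2/h$ is cleanest from $\dd R_\lambda=\zeta_\lambda^{1/2}\dd\zeta_\lambda$, together with $\partial_s\zeta_\lambda=O(y)$ and $|\dd y/\dd t|=O(\beta)$.
\item In Step~3, the count $\lfloor L\rfloor/8$ vanishes for $1/8<L<1$. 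Use instead that $[0,1/16]$ always lies in the range of $u$. This gives a chord-time fraction of at least $c_0/16$ in that range, and at least $c_0/32$ for $L\ge 1$.
\item The per-period bound $g(c_{\rm start})-C'r$ comes from the product structure of $\mathcal R$, not from the modulus of $g$. Each square meets the diagonal circle $\{(\Theta-c/2,\Theta+c/2)\}$ in a single arc, and shrinking the squares by $Cr$ shortens these arcs by $O(r)$. Combined with the near-constancy of $|\dd t/\dd\Theta|$ on each period (relative variation $O(r^2+\lambda)$), this gives your reduction to $\int_0^T g(c(t))\,\dd t$.
\end{itemize}
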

\begin{proof}
Define 
\begin{equation}\label{eq:shallow-sizes}
\nu(t):=(\pi h)^{-1}\left(\frac{1}{3}\lambda^3-R_\lambda(s(t),y(t))\right).
\end{equation}
$\nu$ measures the difference between normal phases from the initial boundary point to the segment point. The quarter-period boundary relation gives the exact identities
\begin{equation}\label{eq:anchor}
\phi=\psi-1/4-\nu\pmod1, \text{ \; with \; }\nu=0\text{ at each reflection}.
\end{equation}
Thus, we focus on $\psi$ and $\nu$ from now on. For $z\in[-1,1]$, by Lemma~\ref{lem:airy-coord} and the preceding section, we have
\begin{equation*}
\zeta_{\lambda}(s,y)=2^{-\frac{2}{3}}\lambda^2-(2\kappa)^{1/3}y+O(\lambda^2y+y^2),\;y(z)=\frac{\beta^2}{2\kappa}(1-z^2)+O(\beta^3).
\end{equation*}
This implies that
\begin{equation*}
R_{\lambda}(z)=\frac{\lambda^3}{3}\left(1-r^2(1-z^2)+O(\lambda^2r^2+\lambda r^3)\right)^{\frac{3}{2}}.
\end{equation*}
For $\psi$, we have
\begin{equation*}
2\pi h\frac{\dd}{\dd t}\psi=-\frac{\lambda^2\kappa^{\frac{2}{3}}}2 +O(\lambda^4+\beta^2+\lambda\beta).
\end{equation*}
Therefore, we conclude 
\begin{equation}\label{eq:shallow-jets}
\frac{\nu}{Mr_j^2}=1-z^2+O_{C^2}(\lambda^2+r_j^2), \text{ \; and \; }m_\psi(2\pi h)^{-1}\lambda^2\le-\frac{\dd}{\dd t}\psi\left(t,x(t)\right)\le M_\psi(2\pi h)^{-1}\lambda^2.
\end{equation}
We point out that $\psi$ is continuous at reflection in the fixed chart. %
On $I=[1/2,1]$, their derivatives can be bounded by
\begin{equation*}
\frac{Mr_j^2}{2}\le-\partial_z\nu\le4Mr_j^2,\;|\partial_z^2\nu|\le CMr_j^2,\;\partial_z\psi\sim Mr_j,\;|\partial_z^2\psi|\le CMr_j.
\end{equation*}

We can now separate the two motions. For a fixed $\nu$, define $W(\nu):=\int_0^1\1_{\mathcal R}(\theta-1/4-\nu,\theta)\dd\theta =2\left(1/4-\dist(\nu,\Z)\right)_+$. Then
\begin{equation}\label{eq:Wq}
 \int_0^A W(\nu)\dd \nu\ge A/16.
\end{equation}
Indeed, for $0<A\le1$, it follows from
\begin{equation*}
\int_0^A W(\nu)\dd\nu \ge
\begin{cases}
A/2-A^2\ge A/4,&0<A\le1/4,\\
1/16\ge A/16,&1/4\le A\le1.
\end{cases}
\end{equation*}
For $A\ge1$, each complete period contributes $\int_0^1W=1/8$, and $\lfloor A\rfloor/8\ge A/16$. Thus the averaged matching remains positive at every size of $\nu$.
\begin{enumerate}
    \item \emph{Small $\nu$.} Fix $\nu_0=1/64$. First suppose $Mr_j^2\le\nu_0$ on every ray segment meeting the time interval. Then $0\le\nu(t)\le2\nu_0$ by \eqref{eq:shallow-jets}. Let 
\begin{equation*}
J=[2\nu_0,1/4]\cup[1/2+2\nu_0,3/4],\text{ \; and \; }|J|=\frac{7}{16}.
\end{equation*}
The matching region contains every time for which $\psi(t)\in J$. In fact, if $\psi\in [2\nu_0,1/4]$, we know that $0\leq\psi-\nu\leq \frac{1}{4}$. Thus, $\phi=\psi-\frac{1}{4}-\nu\in [-\frac{1}{4},0]=[\frac{3}{4},1]\mod{1}$. Hence, $(\phi,\psi)\in\mathcal{R}$ by its definition and other cases are the same. By \eqref{eq:shallow-jets}, counting complete periods of the continuous phase gives
\begin{equation*}
|\{t\in[0,T]:\psi(t)\in J\}| \ge\frac{7}{16}\frac{m_\psi(2\pi h)^{-1}\lambda^2 T-2}{M_\psi(2\pi h)^{-1}\lambda^2}.
\end{equation*}
Since $(2\pi h)^{-1}\lambda^2\sim\lambda^{-\frac{5}{2}}\to\infty$ as $\lambda\to0^+$, for sufficiently small $\lambda$ we have
\begin{align*}
\frac{7}{16}\frac{m_\psi(2\pi h)^{-1}\lambda^2 T-2}{M_\psi(2\pi h)^{-1}\lambda^2} &=\frac7{16}\frac{m_\psi}{M_\psi} \left(T-\frac{2}{m_\psi(2\pi h)^{-1}\lambda^2}\right)\ge\frac{7T}{32C},
\end{align*}
with $\frac{M_\psi}{m_\psi}\leq C$.
\item \emph{Larger $\nu$.} Otherwise, there exists an index $j_0$ such that $Mr_{j_0}^2>\nu_0$. By Lemma~\ref{lem:action-stability}, for $\delta_*>0$ sufficiently small that $C_T\delta_0<\frac{1}{2}$, we have $\frac{\delta_j}{\delta_k}\geq\frac{1}{3}$, for any $j,k$. Then, for any $j$, we have
\begin{equation*}
Mr_j^2=Mr_{j_0}^2\frac{r_j^2}{r_{j_0}^2}=Mr_{j_0}^2\frac{\delta_j^2}{\delta_{j_0}^2}\geq \frac{1}{9}\nu_0.
\end{equation*}
Then, for $\nu_*=\frac{1}{18}\nu_0$, $Mr_j^2\ge\nu_*>0$ holds for every $j$. For $z\in[1/2,1]$, the boundary value $\nu(1)=0$ and the bounds for $-\partial_z\nu$ give $\nu(1/2)=\int_{1/2}^1\left(-\nu'(z)\right)\dd z\ge \frac{Mr_j^2}{4}$. By change of variables and \eqref{eq:Wq},  
\begin{align*}
\int_I W\left(\nu(z)\right)\dd z \ge\frac1{4Mr_j^2}\int_0^{\nu(1/2)}W(\nu)\dd\nu \ge\frac{\nu(1/2)}{64Mr_j^2}\ge\frac1{256}.
\end{align*}
Fix $0<\varepsilon\le1/3072$. By smoothing the rectangle indicators from inside, with sufficiently narrow boundary strips, we find a smooth periodic lower approximation $W_{\varepsilon}$ such that
\begin{gather*}
0\le W_{\varepsilon}(\psi,\nu)\le\1_{\mathcal R}(\psi-1/4-\nu,\psi),\\
0\leq W(\nu)-\int_0^1W_{\varepsilon}(\psi,\nu)\dd\psi\leq\varepsilon.
\end{gather*}
The second estimate is uniform in $\nu$, since each discarded boundary strip has $\psi$-length bounded by a constant times its width. Then we take a real finite Fourier approximation in $(\psi,\nu)$, such that 
\begin{equation*}
F=\sum_{(n,p)\text{ finite}}c_{p,n}e^{2\pi i(p\psi+n\nu)},\text{ \; and \; }\|F-W_{\varepsilon}\|_{L^{\infty}}\leq \varepsilon.
\end{equation*}
Moreover, for $F_0:=\int_0^1F(\psi,\nu)\dd\psi$, we have $F_0\geq W(\nu)-2\varepsilon$. For its modes with $p\ne0$, if $r_{\rm sh}$ is sufficiently small,
\begin{equation*}
|p\psi'+n\nu'|\ge cMr_j,|p\psi''+n\nu''|\leq CMr_j.
\end{equation*}
Integration by parts therefore gives
\begin{align*}
\left|\int_I e^{2\pi i(p\psi+n\nu)}\dd z\right| \le C\left(\frac1{\inf_I|p\psi'+n\nu'|} +\int_I\frac{|p\psi''+n\nu''|}{|p\psi'+n\nu'|^2}\dd z\right)\le C (Mr_j)^{-1}.
\end{align*}
Combining the smoothing, Fourier approximation and oscillatory errors, we obtain
\begin{align*}
\int_I\1_{\mathcal R}(\phi,\psi)\dd z \ge \int_IF\dd z-\varepsilon|I| \ge\int_I W\left(\nu(z)\right)\dd z-3\varepsilon|I|-C (Mr_j)^{-1}.
\end{align*}
Since $Mr_j=\sqrt{M}\sqrt{Mr_j^2}\ge\sqrt{\nu_*}\sqrt{M}\to\infty$ uniformly in $j$, for sufficiently small $\lambda$ we obtain $\int_I\1_{\mathcal R}(\phi,\psi)\dd z\ge \frac{1}{512}$. Therefore,
\begin{align*}
\int_0^T\1_{\mathcal R}\left(\phi(t),\psi(t)\right)\dd t \ge\sum_{\text{complete segments }j}\frac{l_j}{2}\frac1{512} \ge\frac{T-C\lambda}{1024}.
\end{align*}
Only the first and last partial ray segments have been dropped, and their total length is $O(\lambda)$, which is harmless.
\end{enumerate}

\end{proof}

\subsection{One-scale estimate 3: nearly matched rays and the quarter-turn relation}
\label{subsec:gcc-matched}

Near $r=1$, a ray almost follows one propagation branch. Its slow phase may remain outside the required plateau intervals throughout a ray segment. The next reflection supplies a quarter-period shift, and
\[
J_0\cup(J_0-1/4)=\R/\Z,
\]
where $J_0=J_+\cup J_-$. Thus one of two successive slow phases should lie in a plateau interval. We must quantify the error in this shift and control phases near the corners. Recall that $\mathcal K$ is the fixed finite corner set and that
\[
\partial J_+\cup\partial J_-\subset\mathcal K.
\]
Define the matched boundary momentum and the mismatch by
\begin{equation}\label{eq:mismatch}
a_\lambda(s):=\sqrt{1-C_\lambda'(s)^2}, \quad \rho_j:=\frac{\beta_j-a_\lambda(s_j)}{\lambda}, \quad e_j:=M\rho_j^2.
\end{equation}
\begin{lemma}\label{lem:matched}
There are $\rho_*,\lambda_*,c>0$ such that the following holds. Let two successive complete segments of a positively oriented regular ray lie in the fixed chart, with both segments contained in $[0,T]$. If $|\rho_0|\le\rho_*$ and $0<\lambda\le\lambda_*$, then the observation region $\mathcal O_{\lambda,\alpha}$ in \eqref{eq:local-obs} occupies at least $c(l_0+l_1)$ time on their union. The bound is uniform in the boundary point and common phase origin.
\end{lemma}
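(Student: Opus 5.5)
I reduce the two-segment bound to a statement about one slow phase, which I will call the \emph{branch phase}. Near $r=1$ the ray nearly follows the $\psi$-branch (or the $\phi$-branch, depending on the half-segment). Along a complete segment parametrized by $z\in[-1,1]$ as in Section~\ref{subsec:gcc-action}, the computation behind \eqref{eq:compact-jets} is redone with $r=1+\rho\lambda^{-1}\cdot O(\lambda)$. The eikonal relation $|\dd\Phi_\lambda^\pm|^2=1+O(\lambda^5)$ of Proposition~\ref{prop:curved-phases} then shows that the tangentially matched phase has $z$-derivative $O(M\rho_j^2+M\lambda^{1/2}+\epsilon)$ along the segment. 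This is a second-order vanishing in the mismatch, since the ray covector agrees with $\dd\Phi^\pm_\lambda$ exactly when $\rho=0$. Meanwhile the other phase still has derivative $\sim M$.

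I then split according to the mismatch $e_0=M\rho_0^2$, using Lemma~\ref{lem:action-stability} to show that $\rho_1=\rho_0+O(\lambda)$ so that $e_0\sim e_1$. There are two cases.

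\emph{Large mismatch} ($e_0\ge E$, with $E$ a large fixed constant). The slow phase moves by at least a fixed multiple of $e_0$ over half a segment, with controlled second derivative. The fast phase has derivative $\sim M\gg e_0$. The two-dimensional van der Corput and Fourier averaging argument of \eqref{eq:compact-average}, or the change of variables and lower approximation $W_\varepsilon$ used in Lemma~\ref{lem:shallow}, then gives an observed fraction of $\mathcal R$ of at least a constant. The constant is uniform once $E$ is fixed, since the slow phase sweeps at least $cE$ of $\R/\Z$ or a full period.

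\emph{Small mismatch} ($e_0\le E$). On each segment the slow phase stays within $O(E)$ of its value at the impact. The restriction $|\rho_0|\le\rho_*$ with $\rho_*$ small makes this variation at most $1/16$ only when $E$ is also small, so I first choose $E$ small and treat the intermediate range $E\le e_0\le\rho_*^2M$ by the large-mismatch argument. Since $M\to\infty$, the upper end of this range is harmless.

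At a reflection the identity \eqref{eq:anchor} holds, namely $\phi=\psi-1/4-\nu$ with $\nu=0$. The slow phase on segment $1$ is the slow phase on segment $0$ shifted by exactly $-1/4$, up to the return error $\epsilon=o(\gamma)$ from \eqref{eq:M-eps}, plus the slow drift $O(e_0)$. Let $\theta_0$ be its value on segment $0$. Because $J_0\cup(J_0-1/4)=\R/\Z$, either $\theta_0$ or $\theta_0-1/4$ lies in $J_0$.

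If that value is at distance at least $2\gamma$ from $\mathcal K$, then on the corresponding segment the slow coordinate stays inside a $g_*$-plateau (or the corresponding $f_*$-plateau). The fast phase sweeps $\gtrsim M$ periods, so the set of times where the fast coordinate lies in the matching plateau has fraction $1/4-O(M^{-1})$ by period counting. This gives observed time $\ge c\,l_j$.

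If instead that value is within $2\gamma$ of a corner, then, since the drift is $O(E)+\epsilon$, I would like the slow phase to remain within $4\gamma$ of $\mathcal K$ on the whole segment, so that the segment lies in $\calE_\lambda\subset\mathcal O_{\lambda,\alpha}$. This is the point I must make precise by choosing $E\ll\gamma$, i.e.\ requiring $e_0\le\gamma$ rather than a fixed $E$. In that case the large-mismatch regime must handle $e_0\in[\gamma,\rho_*^2M]$, where sweeping only $\sim\gamma$ of the circle still gives a fraction $\gtrsim\gamma$. That is too small, so I need to refine it.

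The refinement combines both segments. When $\gamma\le e_0\le E$, the slow phase moves monotonically by $\sim e_0$ on each segment, and the two segments are related by the quarter shift. Either the slow phase spends a fixed fraction of one segment outside the $2\gamma$-neighbourhood of $\mathcal K$ inside $J_0$ (giving observation via the fast phase as above), or it stays near a corner (giving observation via $\calE_\lambda$). I would make this dichotomy quantitative by noting that in $z$ the slow phase has derivative bounded by $Ce_0$, so the time spent in any interval of length $4\gamma$ around a corner is at least $\min(1,\gamma/(Ce_0))$.

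Main obstacle: controlling the transition $e_0\sim\gamma$ to $O(1)$ uniformly. There neither the corner strips nor the averaging alone give a fixed fraction, and I expect to need the two-segment quarter-shift bookkeeping together with the monotonicity of the slow phase.
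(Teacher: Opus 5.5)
Your outline has the right ingredients:
\begin{itemize}
\item a fast phase $\phi$ with $\phi'\sim M$;
\item a slow phase $\psi$ drifting at a rate proportional to $e_0=M\rho_0^2$;
\item the quarter-shift comparison of successive segments and the covering $J_0\cup(J_0-1/4)=\R/\Z$;
\item the strips $\calE_\lambda$ for the smallest mismatches.
\end{itemize}
Your treatment of $e_0\lesssim\gamma$ is essentially sound. But, as you say yourself, the range $\gamma\lesssim e_0\lesssim 1$ is left open, and the dichotomy you propose for it does not follow from the available estimates. On a band such as $z\in[1/2,1]$, the slow phase moves only by an amount comparable to $e_0$. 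The return error in $\psi_1=\psi_0+1/4+\delta_{\rm ret}$ is bounded only by $C_0e_0$, and $C_0$ is not small: it comes from $\partial_\beta^2F_\lambda=O(\lambda)$ in the boundary return. The points where neither $\psi_0(z)$ nor $\psi_1(z)$ lies in $J_0$ are points where $\psi_0(z)$ is within $|\delta_{\rm ret}(z)|$ of $\mathcal K$. With slow speed merely comparable to $e_0$, their $z$-measure is controlled only by $O((\#\mathcal K)\,C_0e_0/e_0)=O(1)$, so nothing prevents them from filling the band. For $e_0\gg\gamma$ these points also lie outside the $4\gamma$ strips. Your bound $\min(1,\gamma/(Ce_0))$ on time near a corner gives only an observed fraction $\gtrsim\gamma/e_0$, which degenerates. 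Monotonicity plus two-segment bookkeeping cannot repair this on a band where the slow speed is only comparable to $e_0$.

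The missing idea is to place the bands near the chord midpoint, where the slow phase accelerates but the return error does not.
\begin{itemize}
\item Lemma~\ref{lem:phase-jets} gives $\partial_zH_0(s,0,z)=\kappa^{-2/3}/(2z^2)$. So on $I_1=[d_1,2d_1]$ one has $\psi_j'\ge m_se_0/(4d_1^2)$, while the constant $C_0$ in \eqref{eq:quarter-return} is independent of the band.
\item Both quantities are linear in $e_0$. Hence the bad set has measure at most $8(C_0+2)(\#\mathcal K)d_1^2/m_s$, a fraction $O(d_1)$ of $|I_1|$. This holds uniformly for $c_g\gamma\le e_0\le e_c$, with $e_c$ chosen so that $\psi_0$ ranges over less than $1/64$ on $I_1$. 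Then \eqref{eq:conditional-average} converts time with $\psi_j\in J_0$ into matching time with factor $1/4$.
\item For $e_0\ge e_c$, a second band $I_2=[d_2,2d_2]$ with $d_2\ll d_1$ forces a slow sweep of at least $m_se_c/(4d_2)\ge 8$ periods on a single segment. This replaces your large-$E$ case and leaves no intermediate range.
\item The order of choices ($d_1$, then $e_c$, then $d_2$, then $\rho_*$, then $\lambda_*$) is what makes the constants uniform.
\end{itemize}
Two smaller corrections:
\begin{itemize}
\item The additive error in the slow derivative must be $O(\epsilon)=O(M\lambda^3)$. Lemma~\ref{lem:phase-jets} obtains it by a Gronwall argument from exact tangential matching at the boundary. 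A term $M\lambda^{1/2}\sim\lambda^{-1}$, as you wrote, would make $\psi$ fast.
\item The quarter shift between the incoming halves of successive segments does not follow from \eqref{eq:anchor}, which relates $\phi$ and $\psi$ at a single reflection point. It comes from the boundary return estimate of Lemma~\ref{lem:phase-return}, whose $O(\lambda^6)$ accuracy at matched momentum relies on the specific coefficient $c_4$.
\end{itemize}
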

\begin{proof}
We first record the phase comparison and the averaging estimate used on the two segments. For two segments in the mismatch neighborhood of Lemma~\ref{lem:phase-jets}, that lemma gives, on each fixed compact regular incoming band $I\subset(0,1]$ bounded away from zero,
\begin{gather}
\begin{gathered}
\psi_j(z) = b_j+\epsilon D_\lambda(s_j,\rho_j,z) +e_jH_\lambda(s_j,\rho_j,z), \\
\partial_zH_0(s,0,z) =\frac{\kappa(s)^{-2/3}}{2z^2},
\end{gathered}
\label{eq:slow-jets}
\\
\begin{aligned}
\phi_j'/M &\rightarrow2z^2 \quad\text{as }(\lambda,\rho_j)\to(0,0), \\
\rho_1 &=\left(1+O(\lambda)\right)\rho_0+O(\lambda^4), \quad |e_1-e_0|\le C\lambda e_0+C\epsilon.
\end{aligned}
\label{eq:next-jets}
\end{gather}
Here $b_j=\psi_j(1)$ and $\epsilon=(2\pi h)^{-1}\lambda^6$. The coefficients have bounded fixed derivatives on $I$, and the phase limit holds with every fixed number of $z$-derivatives, uniformly in the boundary point on the fixed arc. We fix the regular bands first, then shrink the mismatch neighborhood, and finally take $\lambda$ small.

Centering each slow phase at its boundary value and comparing successive ray segments yields
\begin{equation}\label{eq:quarter-return}
\begin{gathered}
\psi_1(z) = \psi_0(z)+1/4+\delta_{\rm ret}(z)\pmod1, \\
\|\delta_{\rm ret}\|_{L^\infty(I)} \le(C_0+\lambda C_I)e_0+D_I\epsilon.
\end{gathered}
\end{equation}
The constant $C_0$ comes from the boundary return and is independent of $I$. The derivative convergence in \eqref{eq:next-jets} also gives
\[
\phi_j'\ge c_I M, \quad |\phi_j''|\le C_I M
\]
on each fixed band, after shrinking the mismatch and scale bounds. In particular,
\[
\int_I\frac{|\phi_j''|}{|\phi_j'|^2}\dd z \le\frac{C_I}{M}.
\]
Once a slow phase lies in $J_0$, the fast phase should spend one quarter of its time in the corresponding interval $F_+$ or $F_-$. We quantify this while allowing the slow phase to vary. If
\[
\phi'\ge c_I M, \quad \int_I\frac{|\phi''|}{|\phi'|^2}\dd z \le\frac{C_I}{M},
\]
then for any bounded periodic $p$ and $w\in BV(I)$,
\begin{equation}\label{eq:BV-average}
\left| \int_I w(z)\left(p\left(\phi(z)\right)-\overline p\right)\dd z \right| \le \frac{C_{I,p}}{M} \left(\|w\|_{L^\infty}+\Var_Iw\right).
\end{equation}
Indeed, let $P$ be a bounded periodic primitive of $p-\overline p$. Stieltjes integration by parts gives
\[
\int_I w(z)\left(p\left(\phi(z)\right)-\overline p\right)\dd z = \left[\frac{wP(\phi)}{\phi'}\right]_{\partial I} - \int_I P(\phi)\dd\!\left(\frac{w}{\phi'}\right).
\]
The assumptions on $\phi$ imply
\begin{align*}
\left\|\frac{w}{\phi'}\right\|_{L^\infty} + \Var_I\!\left(\frac{w}{\phi'}\right) &\le \frac{C_I}{M} \left(\|w\|_{L^\infty}+\Var_Iw\right) + \|w\|_{L^\infty} \int_I\frac{|\phi''|}{|\phi'|^2}\dd z \\
&\le \frac{C_I}{M} \left(\|w\|_{L^\infty}+\Var_Iw\right).
\end{align*}
This proves \eqref{eq:BV-average}.

To apply it to the matching rectangles, write
\[
\1_{\mathcal R}(\phi,\psi) = \1_{F_+}(\phi)\1_{J_+}(\psi) + \1_{F_-}(\phi)\1_{J_-}(\psi).
\]
If the slow phase is monotone with variation $O_I(e)$, then
\[
\|\1_{J_\pm}\circ\psi\|_{L^\infty}\le1, \quad \Var_I(\1_{J_\pm}\circ\psi) \le C(1+\Var_I\psi) \le C_I(1+e).
\]
Since $|F_+|=|F_-|=1/4$, applying \eqref{eq:BV-average} to the two terms gives
\begin{equation}\label{eq:conditional-average}
\int_I\1_{\mathcal R}(\phi,\psi)\dd z = \frac14\int_I\1_{J_0}(\psi)\dd z + O_I\left((1+e)/M\right).
\end{equation}
Thus the remaining issue is how much time the slow phase spends in $J_0$.

\smallskip
\noindent\emph{Estimating the observed time.} We use two regular incoming bands: the first for the quarter-turn relation, and the second, closer to the caustic, for a larger slow-phase sweep. The estimates have three roles.
\begin{itemize}[leftmargin=*,itemsep=3pt]
\item For the smallest mismatches, additional observation near $\mathcal K$ absorbs the return error, without requiring monotonicity of $\psi$.
\item For intermediate mismatches, the quarter-turn relation compares two slow phases; monotonicity bounds the time lost near a corner.
\item For larger mismatches, the second band lets the slow phase cross several periods of $J_0$.
\end{itemize}

First fix a preliminary mismatch neighborhood and a boundary-return constant $C_0$ valid throughout it in \eqref{eq:quarter-return}. This constant is independent of the regular band. We retain the same upper bound when shrinking the neighborhood below. Choose $m_s,C_s>0$ with slack relative to the fixed curvature bounds in \eqref{eq:slow-jets}.

Fix $d_1$, then $e_c$, then $d_2$, subject to
\begin{equation*}
\begin{array}{cl}
     0<d_1<1/8,&\frac{8\left(C_0+2\right)(\#\mathcal K)d_1^2}{m_s}\le\frac{d_1}{4}  \\
     e_c>0,& \frac{C_se_c}{2d_1}\le\frac1{64},\;\left(C_0+2\right)e_c\le\frac1{64},\\
     0<d_2<d_1/4,&\frac{m_se_c}{4d_2}\ge8.
\end{array}
\end{equation*}
The first two choices control corner losses on $I_1=[d_1,2d_1]$; the last ensures a slow-phase sweep on $I_2=[d_2,2d_2]$. The choices of $d_1,d_2$ are independent of the common phase shift $\alpha$ in $\mathcal O_{\lambda,\alpha}$.

With $I_1,I_2$ fixed, shrink the preliminary mismatch neighborhood so that both bands are regular, and fix their constants $C_I,D_I$ in \eqref{eq:quarter-return}. We then choose $\rho_*$ strictly inside this neighborhood and finally choose $\lambda_*$.

To ensure that the phase comparison applies to both segments, we first use \eqref{eq:short-chord} and $a_\lambda'=O(\lambda)$ to obtain $\beta_1-\beta_0=O(\lambda^2)$ and $a_\lambda(s_1)-a_\lambda(s_0)=O(\lambda^2)$.  Consequently,
\[
\rho_1=\rho_0+O(\lambda).
\]
Taking $\lambda_*$ sufficiently small therefore keeps both segments in the fixed mismatch neighborhood. The sharper estimates in \eqref{eq:next-jets} then apply. Since $h\sim\lambda^{9/2}$ and $\gamma=\lambda^{1/4}$, we deduce that $M\sim\lambda^{-3/2}$, $\epsilon\sim\lambda^{3/2}$, and $\frac{\epsilon}{\gamma}\sim\lambda^{5/4}\rightarrow0$. In particular, we may take $\lambda_*$ small enough that $c_g\gamma\le e_c$, where $c_g=\frac1{4(C_0+2)}$. Taking $\lambda C_{I_1}\le1$ and $D_{I_1}\epsilon\le c_g\gamma$, we obtain from \eqref{eq:quarter-return}
\[
\|\delta_{\rm ret}\|_{L^\infty(I_1)} \le
\begin{cases}
\gamma/2,& e_0\le c_g\gamma,\\
\left(C_0+2\right)e_0,& e_0\ge c_g\gamma.
\end{cases}
\]
Indeed, the right-hand side of \eqref{eq:quarter-return} is at most $\left(C_0+1\right)e_0+D_{I_1}\epsilon$.

In the second range, the additive error in the slow-phase derivative is small relative to $e_0$, since
\[
e_0\ge c_g\gamma\rightarrow\frac{\epsilon}{e_0}\le\frac{\epsilon}{c_g\gamma}\rightarrow0.
\]
Moreover, \eqref{eq:next-jets} gives $\frac{e_1}{e_0}=1+O(\lambda)+O(\epsilon/e_0)$. Combining this with \eqref{eq:slow-jets}, and decreasing $\rho_*,\lambda_*$ if necessary, we obtain 
\begin{equation}\label{eq:slow-speed}
m_se_0/z^2\le\psi_j'(z)\le C_se_0/z^2,\quad j\in\{0,1\},
\end{equation}
on either band whenever $e_0\ge c_g\gamma$.

\smallskip
\noindent\emph{Smallest mismatches: $e_0\le c_g\gamma$.} Choose a smooth periodic $g_\gamma\in[0,1]$, equal to one within $2\gamma$ of $\mathcal K$, zero at distance at least $3\gamma$, and with $|g_\gamma'|\le C/\gamma$. Put
\[
w_\pm=(1-g_\gamma)\1_{J_\pm}, \quad B_\gamma=g_\gamma+w_++w_-.
\]
These weights are smooth at the interval boundaries, since $\partial J_\pm\subset\mathcal K$.

The support of $g_\gamma$ lies in the additional $4\gamma$ observation strips from \eqref{eq:gl-extra}. Consequently, the observed indicator is at least
\[
g_\gamma(\psi)+w_+(\psi)\1_{F_+}(\phi)+w_-(\psi)\1_{F_-}(\phi).
\]
\eqref{eq:slow-jets} and \eqref{eq:next-jets} give $e_0,e_1=O(\gamma)$, $|\psi_j'|\le C_{I_1}\gamma$. Hence
\[
\Var_{I_1}(w_\pm\circ\psi_j) \le \frac C\gamma \int_{I_1}|\psi_j'(z)|\dd z \le C_{I_1}.
\]
Equation~\eqref{eq:BV-average} therefore applies with error $O_{I_1}(M^{-1})$. The perturbed quarter-turn relation implies $B_\gamma(\psi_0(z))+B_\gamma(\psi_1(z))\ge1$, for $z\in I_1$. Indeed, if the first term is less than one, then $\psi_0(z)\notin J_0$ and its distance from $\mathcal K$ is greater than $2\gamma$. Since $|\delta_{\rm ret}(z)|\le\gamma/2$, we still have
\[
\psi_0(z)+\delta_{\rm ret}(z)\notin J_0.
\]
Applying $J_0\cup(J_0-1/4)=\R/\Z$ to this perturbed phase gives $\psi_1(z)\in J_0$, so the second term equals one.

Let $t_j(z)=t_j+(1-z)l_j/2$ on the incoming band. Summing the two fast averages, we obtain
\begin{align*}
\sum_{j=0}^1\int_{I_1}\1_{\mathcal O_{\lambda,\alpha}}\left(t_j(z),x\left(t_j(z)\right)\right)\dd z&\ge\sum_{j=0}^1\int_{I_1}\left(g_\gamma(\psi_j)+\frac14(w_++w_-)(\psi_j)\right)\dd z-\frac{C_{I_1}}M\\
&\ge\frac14\int_{I_1}\left(B_\gamma(\psi_0)+B_\gamma(\psi_1)\right)\dd z-\frac{C_{I_1}}M\\
&\ge\frac{d_1}{4}-\frac{C_{I_1}}M\ge\frac{d_1}{8}.
\end{align*}
The third line keeps only one quarter of the corner observation; the last uses the perturbed quarter-turn covering.

\smallskip
\noindent\emph{Intermediate mismatches: $c_g\gamma\le e_0\le e_c$.} The slow phases are monotone by \eqref{eq:slow-speed}. If both $\psi_0(z)$ and $\psi_1(z)$ lie outside $J_0$, the covering relation applied to $\psi_0(z)+\delta_{\rm ret}(z)$ gives
\[
\psi_0(z)+\delta_{\rm ret}(z)\in J_0.
\]
Since $\partial J_0\subset\mathcal K$, it follows that $\dist(\psi_0(z),\mathcal K)\le|\delta_{\rm ret}(z)|\le\left(C_0+2\right)e_0$. The range of $\psi_0$ on $I_1$ has length at most $\frac{C_se_0}{2d_1}\le\frac1{64}$, and $\left(C_0+2\right)e_0\le1/64$. Thus at most one periodic copy of the $\left(C_0+2\right)e_0$-neighborhood of each corner can meet this range. By \eqref{eq:slow-speed}, the preimage of each such neighborhood has length at most
\[
\frac{2\left(C_0+2\right)e_0}{m_se_0/(4d_1^2)} = \frac{8\left(C_0+2\right)d_1^2}{m_s}.
\]
Summing over the fixed corner set, the total loss is at most $\frac{8\left(C_0+2\right)(\#\mathcal K)d_1^2}{m_s}\le\frac{d_1}{4}$. Consequently,
\[
\sum_{j=0}^1\int_{I_1}\1_{J_0}\left(\psi_j(z)\right)\dd z\ge |I_1|-d_1/4=\frac{3d_1}{4}.
\]
Applying \eqref{eq:conditional-average} to both ray segments gives
\begin{align*}
\sum_{j=0}^1\int_{I_1}\1_{\mathcal R}\left(\phi_j(z),\psi_j(z)\right)\dd z&\ge\frac14\sum_{j=0}^1\int_{I_1}\1_{J_0}\left(\psi_j(z)\right)\dd z-\frac{C_{I_1}(1+e_c)}M\\
&\ge\frac{3d_1}{16}-\frac{C_{I_1}(1+e_c)}M\ge\frac{d_1}{8}.
\end{align*}

\smallskip
\noindent\emph{Larger mismatches within the fixed neighborhood: $e_0\ge e_c$.} On $I_2$, \eqref{eq:slow-speed} gives a slow-phase sweep of at least $\frac{m_se_0}{4d_2}\ge8$ and a speed ratio at most $4C_s/m_s$. Write
\[
\widetilde{\Delta}\psi_0:=\psi_0(2d_2)-\psi_0(d_2).
\]
Since $J_0$ occupies half of each period, discarding the incomplete periods gives
\begin{align*}
\int_{I_2} \1_{J_0}\left(\psi_0(z)\right)\dd z \ge \frac{\widetilde{\Delta}\psi_0/2-1}{\sup_{I_2}\psi_0'} \ge \frac{\widetilde{\Delta}\psi_0/4}{C_se_0/d_2^2} \ge \frac{m_sd_2}{16C_s} =:\nu_2>0.
\end{align*}
Equation~\eqref{eq:conditional-average} therefore bounds the matching measure below by
\[
\nu_2/4-C_{I_2}(M^{-1}+\rho_0^2).
\]
Choose $\rho_*$ small enough and then $\lambda_*$ small enough to make this at least $\nu_2/8$. These choices occur after $I_2$ is fixed.

\smallskip
Finally, we return to physical time. Since $l_1/l_0=1+O(\lambda)$ and $|dt/dz|=l_j/2$, the positive parameter fractions on the selected band $I_1$ or $I_2$ give
\begin{align*}
\sum_{j=0}^1 \int_{\text{segment }j} \1_{\mathcal O_{\lambda,\alpha}} \left(t,x(t)\right)\dd t &\ge \frac{\min(l_0,l_1)}2 \sum_{j=0}^1 \int_I \1_{\mathcal O_{\lambda,\alpha}} \left(t_j(z),x\left(t_j(z)\right)\right)\dd z \\
&\ge c(l_0+l_1).
\end{align*}
Here $t_j(z)=t_j+(1-z)l_j/2$ on the incoming band. In the larger-mismatch case, the contribution from the first segment alone gives the required lower bound. This finishes the proof.
\end{proof}

\subsection{Completing the proof of the one-scale estimate}\label{subsec:gcc-one-scale}

We combine the estimates for the limiting action ratios by contradiction, obtaining constants uniform as both the ray angle and the scale parameter $\lambda$ approach zero.

\begin{proof}[Proof of Proposition~\ref{prop:one-scale}]
We argue by contradiction. Suppose that no positive constants $\eta_0,\beta_*,\lambda_*$ satisfy the proposition. Then there exist admissible scales $\lambda_n\to0^+$, reference normal components $\beta_n\to0^+$, total phase shifts $\alpha_n$, and rays $x_n$ such that
\begin{equation*}
\frac1T\int_0^T\1_{\mathcal O_{\lambda_n,\alpha_n}}\left(t,x_n(t)\right)\dd t\rightarrow0.
\end{equation*}
Passing to a subsequence, we may assume that their initial action ratios $r_n$ converge to some $r_*\in[0,\infty]$. By the finite-time action stability \eqref{eq:action-stable}, this limiting regime is preserved throughout $[0,T]$.

\vspace{2mm} Table~\ref{tab:gcc-ranges} identifies the estimate for each limiting ratio. For $r_*=1$, the expansion $a_\lambda(s)=\lambda\kappa(s)^{1/3}+O(\lambda^3)$ and \eqref{eq:mismatch} give, along $x_n$,
\begin{equation*}
\rho_j=\frac{\beta_j-a_{\lambda_n}(s_j)}{\lambda_n} =\kappa(s_j)^{1/3}(r_j-1)+O(\lambda_n^2)\rightarrow0.
\end{equation*}
The convergence is uniform over the ray segments meeting $[0,T]$. We may therefore apply Lemma~\ref{lem:matched} to disjoint successive pairs of complete ray segments contained in $[0,T]$. At most two partial ray segments and one unpaired ray segment are discarded, with total length $O(\beta_n)$. In particular,
\begin{align*}
\int_0^T\1_{\mathcal O_{\lambda_n,\alpha_n}}\left(t,x_n(t)\right)\dd t &\ge c\sum_{\text{paired complete segments }j}l_j\\
&\ge c(T-C\beta_n).
\end{align*}

The regular-range estimates likewise lose only partial ray segments. For $r_*=0$, the sequence eventually satisfies both $r_0\le r_{\rm sh}$ and $\lambda_n\le\lambda_{\rm sh}$ in Lemma~\ref{lem:shallow}, which already covers $[0,T]$. Thus in every case, for a fixed positive constant from the corresponding estimate,
\begin{equation*}
\int_0^T\1_{\mathcal O_{\lambda_n,\alpha_n}}\left(t,x_n(t)\right)\dd t \ge cT-C\beta_n,
\end{equation*}
contradicting the assumed vanishing fraction. These cases exhaust the compactified action ratio, uniformly in the phase origin.
\end{proof}

\subsection{GCC for the fixed observation region}\label{subsec:gcc-fixed}

\subsubsection{Regular billiard rays}\label{subsubsec:gcc-regular}

Recall the hidden regions and the fixed observation set from \eqref{eq:fixed-G}:
\begin{equation*}
\mathcal H_k=\mathcal U_k\setminus(\Gamma_k\cup\calE_k) ,\text{ \; and \; } G=\left((0,T)\times\Omega\right)\setminus\bigcup_{k\ge1}\mathcal H_k.
\end{equation*}
We choose the interval and scale parameters to give one lower bound $D_G(\rho)\ge\eta>0$ for every regular ray on $[0,T]$, hence for almost every initial state, where
\begin{equation*}
D_G(\rho)=\int_0^T\1_G\left(t,x_\rho(t)\right)\dd t.
\end{equation*}
For a positive glancing ray with reference normal component $\beta$, we split the moving packet regions at a fixed small multiple $\delta\beta$. Stronger separation in $s-t$ and the ray's small drift allow it to meet at most one moving packet region with $\lambda_k\ge\delta\beta$; all moving packet regions with $\lambda_k<\delta\beta$ lie in one thin collar. Figure~\ref{fig:gcc-packing} separates these two estimates.

Negative glancing rays are controlled by the total interval width. Rays separated from glancing are controlled by their time in the common boundary collar. We prove these three bounds in turn after fixing the family of moving packet regions.
\vspace{1.5mm}

To implement this separation, choose intervals $I_k\subset\mathcal I_{\rm in}$ of lengths $w_k$ with pairwise disjoint concentric triple enlargements contained in $\mathcal I_{\rm in}$. Their total length will be small. Choose the scales afterwards, so that
\begin{equation}\label{eq:packing-scales}
w_k\ge R\lambda_k^2,\; \; \lambda_k^2/w_k\to0 ,\text{ \; and \; }\lambda_k\le\lambda_*.
\end{equation}
The constant $R$ will be fixed below. Use these intervals and scales in Definition~\ref{def:curved-packet} to obtain $\mathcal U_k$, and set $\gamma_k=\lambda_k^{1/4}$. With $\Gamma_k$ from Proposition~\ref{prop:curved-support}, Proposition~\ref{prop:curved-observation} defines $\calE_k$ and assembles $G^{(N)},G$; its finite-stage identity \eqref{eq:curved-stages} remains valid under the stronger separation.

A regular ray that never enters $V$ is fully observed, since every hidden region lies in $V$. For all other rays, Lemma~\ref{lem:local-geometry} identifies the trajectory over $[0,T]$ with a ray in $\Omega_{\rm c}$. We use this identification for the three cases below.%

\smallskip\noindent\emph{Case 1. Positive glancing rays.} The aim is to reduce the fixed-set estimate to one single-scale estimate with an error. If $k$ is the only index such that $\lambda_k\ge\delta\beta$ and the ray meets $\mathcal U_k$, and all moving packet regions below this threshold lie in $y\le D\delta^2\beta^2$, then along that ray
\begin{equation}\label{eq:packing-reduction}
\1_G\ge\1_{\mathcal O_{\lambda_k,\alpha_k}}-\1_{\{y\le D\delta^2\beta^2\}}.
\end{equation}
Here $D$ depends only on the geometry. For a positive glancing test ray with reference normal component $\beta$, Lemma~\ref{lem:action-stability} gives $\beta_j\sim\beta$ uniformly. On the $j$th segment we have the expansion
\begin{equation*}
y(z)=\frac{\beta_j^2}{2\kappa(s_j)}(1-z^2)+O(\beta_j^3),\;\frac{\dd}{\dd t}s=1 +O(\beta^2),\text{ for }z\in[-1,1].
\end{equation*}
Thus, the condition $y\le D\delta^2\beta^2$ yields $1-z^2\leq C\delta^2+C\beta$. Using $|\dd t|=\frac{l_j}{2}|\dd z|$, and returning to the original interval $[t_j,t_{j+1}]$, we obtain
\begin{equation*}
|\{t\in[t_j,t_{j+1}]:y(t)\le D\delta^2\beta^2\}| \le C(\delta^2+\beta)l_j.
\end{equation*}
Summing over complete ray segments and allowing at most two partial ray segments, we obtain
\begin{align*}
|\{t\in[0,T]:y(t)\le D\delta^2\beta^2\}| &\le C(\delta^2+\beta)\sum_{\text{complete segments }j}l_j+C\beta\\
&\le C\delta^2T+C_T\beta.
\end{align*}
Using $\frac{\dd}{\dd t}s=1 +O(\beta^2)$, we know that for any $t_1,t_2\in[0,T]$, we have
\begin{equation*}
|s(t_1)-t_1-(s(t_2)-t_2)|\leq CT\beta^2.
\end{equation*}

In summary, we get
\begin{equation}\label{eq:packing-ray}
\operatorname{osc}_{[0,T]}(s-t)\le A\beta^2 ,\text{ \; and \; }|\{t:y(t)\le D\delta^2\beta^2\}|\le B\delta^2T+B\beta.
\end{equation}
After increasing $B$ if necessary, it depends only on the fixed geometry and time, and is independent of both sufficiently small $\delta$ and the family of moving packet regions. If $\lambda_k,\lambda_l\ge\delta\beta$, disjoint triple enlargements give
\begin{equation*}
\dist(I_k,I_l)\ge w_k+w_l\ge2R\delta^2\beta^2>A\beta^2.
\end{equation*}
The oscillation bound in \eqref{eq:packing-ray} therefore allows a ray to meet at most one such moving packet region. Indeed, suppose that a ray meets two such moving packet regions at $t_k\neq t_l$. Then, $s(t_k)-t_k\in I_k$ and $s(t_l)-t_l\in I_l$. By \eqref{eq:packing-ray}, 
\begin{equation*}
A\beta^2<\dist(I_k,I_l)\leq|s(t_k)-t_k-(s(t_l)-t_l)|\leq A\beta^2,
\end{equation*}
which is a contradiction.

With the one-scale observation constant $\eta_0$ from Proposition~\ref{prop:one-scale}, choose the constants 
\begin{equation*}
B\delta^2\le\eta_0/4,\; \beta_0\le\beta_*,\;B\beta_0\le\eta_0T/4,\text{ \; and \; }R>A/(2\delta^2).
\end{equation*}
The remaining moving packet regions satisfy the common inclusion
\begin{equation*}
\bigcup_{\lambda_l<\delta\beta}\mathcal U_l \subset\{(t,s,y):0<y\le D\delta^2\beta^2\}.
\end{equation*}
Thus all their hidden portions lie in one thin collar.
Therefore, we obtain \eqref{eq:packing-reduction}. Combining it with the one-scale lower bound \eqref{eq:one-scale}, we get
\begin{align*}
D_G &\ge\int_0^T\1_{\mathcal O_{\lambda_k,\alpha_k}}\left(t,x(t)\right)\dd t -|\{t:y(t)\le D\delta^2\beta^2\}|\\
&\ge\eta_0T-B\delta^2T-B\beta\\
&\ge\eta_0T-\eta_0T/4-\eta_0T/4\\
&=\eta_0T/2.
\end{align*}
If the ray meets no moving packet region with $\lambda_k\geq\delta\beta$, then, along the ray, $\1_G\ge1-\1_{\{y\le D\delta^2\beta^2\}}$. 

\begin{figure}[h]
\centering
\resizebox{\textwidth}{!}{%
\definecolor{packingink}{HTML}{252B33}
\definecolor{packingmuted}{HTML}{717B86}
\definecolor{packingblue}{HTML}{305D99}
\definecolor{packinggreen}{HTML}{347A60}
\definecolor{packingcollar}{HTML}{EAF0F4}
\begin{tikzpicture}[x=1cm,y=1cm,>=stealth,font=\small,
  every node/.style={text=packingink}]
  \begin{scope}
    \node at (3.4,3.02) {(a) Separation above the threshold};
    \node at (3.4,2.48) {$\lambda_k\ge\delta\beta$};
    \fill[packingblue!8] (3.13,-.68) rectangle (3.74,1.07);
    \foreach \i/\c/\w in {1/1.05/.48,2/3.4/.48,3/5.8/.36}{
      \draw[packingmuted,dashed,thin]
        ({\c-1.5*\w},.6) rectangle ({\c+1.5*\w},1.03);
      \draw[packingink,line width=2.7pt]
        ({\c-.5*\w},.815) -- ({\c+.5*\w},.815);
      \node at (\c,1.35) {$3I_{k_\i}$};
      \node at (\c,.24) {$I_{k_\i}$};
    }
    \draw[<->,packingmuted] (1.29,1.85) -- (3.16,1.85)
      node[midway,above] {$>A\beta^2$};
    \draw[packingblue,line width=1.2pt] (3.13,-.53) -- (3.74,-.53);
    \draw[packingblue] (3.13,-.64) -- (3.13,-.42);
    \draw[packingblue] (3.74,-.64) -- (3.74,-.42);
    \node[text=packingblue] at (3.43,-.93) {Ray drift $\le A\beta^2$};
    \draw[->,packingmuted] (0,-1.26) -- (6.85,-1.26)
      node[below left] {$s-t$};
    \node at (3.4,-1.94) {Use the one-scale bound for that region.};
  \end{scope}
  \begin{scope}[xshift=8.4cm]
    \node at (3.4,3.02) {(b) One collar for all smaller regions};
    \node at (3.4,2.48) {$\lambda_k<\delta\beta$};
    \fill[packingcollar] (0,1.5) rectangle (6.8,1.0);
    \foreach \x/\w/\d in {.24/.68/.26,1.49/.40/.16,2.59/.54/.39,4.08/.35/.12,5.1/.69/.33,6.22/.34/.20}{
      \fill[packingmuted!16] (\x,1.5) rectangle ({\x+\w},{1.5-\d});
      \begin{scope}
        \clip (\x,1.5) rectangle ({\x+\w},{1.5-\d});
        \foreach \h in {0,.12,...,1.2}{
          \draw[packingmuted!70,line width=.25pt]
            ({\x+\h-.5},1.0) -- ({\x+\h},1.5);
        }
      \end{scope}
      \draw[packingmuted,thin] (\x,1.5) rectangle ({\x+\w},{1.5-\d});
    }
    \draw[packingink] (0,1.5) -- (6.8,1.5);
    \node[anchor=south west] at (0,1.62) {boundary};
    \draw[packinggreen,dashed] (0,1.0) -- (6.8,1.0);
    \draw[packinggreen,thin] (4.9,1.98) -- (4.64,1.0);
    \node[text=packinggreen,anchor=south] at (4.9,1.98) {$y=D\delta^2\beta^2$};
    \foreach \left in {.10,3.4}{
      \draw[packingblue,line width=.9pt,domain=0:1,samples=75,variable=\z]
        plot ({\left+3.3*\z},{1.5-1.8*4*\z*(1-\z)});
      \begin{scope}
        \clip (0,1) rectangle (6.8,1.5);
        \draw[packingblue,line width=2pt,domain=0:1,samples=75,variable=\z]
          plot ({\left+3.3*\z},{1.5-1.8*4*\z*(1-\z)});
      \end{scope}
    }
    \draw[->,packingmuted] (-.14,1.5) -- (-.14,-.6)
      node[below] {$y$};
    \draw[->,packingmuted] (6.8,1.5) -- (7.12,1.5)
      node[below] {$s-t$};
    \node[text=packinggreen] at (3.4,-.93) {One common collar loss};
    \node at (3.4,-1.94)
      {$|\{t:y(t)\le D\delta^2\beta^2\}|\le B\delta^2T+B\beta$};
  \end{scope}
  \node[font=\normalsize] at (7.6,-2.74)
    {$D_G\ge\eta_0T-B\delta^2T-B\beta$};
\end{tikzpicture}
}
\caption{Combining scales for a positive glancing ray.}\label{fig:gcc-packing}
\end{figure}

\vspace{2mm} \noindent\emph{Case 2. Negative glancing rays.} These rays satisfy $(s-t)'=-2+O(\beta^2)$. Decrease the glancing threshold so that $|(s-t)'|\ge1$. The real coordinate $s-t$ is strictly decreasing and crosses each $I_k$ at most once. Choose the total interval length so that
\begin{equation*}
\left|\left\{t:s(t)-t\in\bigcup_k I_k\right\}\right| \le\sum_k w_k\le T/2.
\end{equation*}
They therefore have $D_G\ge T/2$.

\vspace{2mm} \noindent\emph{Case 3. Rays separated from glancing.} By definition, $\beta=\delta\kappa^{\frac{1}{3}}$. In this case, we require that $\beta_0>\beta_*>0$. By Lemma~\ref{lem:local-geometry}, we only focus on the fixed chart and the neighborhood $V$. Then, by the proof of Lemma~\ref{lem:action-stability}, we know that $\beta\sim l\sim\delta$, where $l$ is the length of the segment. By Lemma~\ref{lem:action-stability}, we have
\begin{equation*}
|\beta_{j+1}-\beta_j|\leq C\beta_j^2\leq Cl_j^2.
\end{equation*}
This implies that there exists a constant $C>0$, independent of $j$, such that
\begin{equation}\label{eq:log-beta-est}
|\log\beta_{j+1}-\log\beta_j|\le Cl_j.
\end{equation}
For $l_j$ large, we can use compactness and the continuity of $\beta$ to ensure the same estimate \eqref{eq:log-beta-est}. Moreover, \eqref{eq:log-beta-est} even holds in $\Omega_{\rm c}$ (see Lemma~\ref{lem:local-geometry}). Without loss of generality, we assume that the ray starts at a boundary point at $t=0$. Then, due to \eqref{eq:log-beta-est}, 
\begin{equation*}
|\log\beta_{j}-\log\beta_0|\le C\sum_ml_m\leq C(T+2\diam(\Omega_{\rm c})).
\end{equation*}
Therefore, we have $\beta_j\geq \beta_*e^{-C(T+2\diam(\Omega_{\rm c}))}>0$. Thus a fixed non-glancing angle cannot reach arbitrarily small angles in bounded time. Since $\beta_j\sim l_j$, we know that the segment length is uniformly bounded from below. Thus, there are uniformly finitely many reflections. 

Suppose that $l_j\geq l_*>0$ and $N_T$ is the number of finite segments of the ray. For the rays in this case that meet $V$, the trajectory over $[0,T]$ agrees with a ray in $\Omega_{\rm c}$ and stays in the fixed chart. Uniformly over these rays, we claim
\begin{equation*}
\sup_\rho|\{t\in[0,T]:0<y\left(x_\rho(t)\right)\le d\}|\rightarrow0, \text{ as }d\to0.
\end{equation*}
This shows that the complete unit-speed ray $ \rho$ spends little time in the very thin collar. In fact, we denote the ray segment by its endpoints at the boundary. Let
\begin{equation*}
P_{\partial}:=\{(a,b)\in\partial\Omega_{\rm c}\times\partial\Omega_{\rm c}:\text{ the length }|a-b|\geq l_*\}.
\end{equation*}
$P_{\partial}$ is a compact set. It is easy to see that
\begin{equation*}
T_d(a,b):=|a-b|\int_0^11_{0<\dist((1-u)a+ub,\partial\Omega_{\rm c})\leq d}\dd u
\end{equation*}
is precisely the time spent in the $d$-collar along the complete segment from $a$ to $b$. It suffices to prove that 
\begin{equation*}
\sup_{(a,b)\in P_{\partial}}T_d(a,b)\to0, \text{ as }d\to0.
\end{equation*}
We argue by contradiction. Suppose that there exists $\epsilon_0>0$ and a sequence $d_m\to0$, and pairs $(a_m,b_m)\in P_{\partial}$, such that $T_{d_m}(a_m,b_m)\geq\epsilon_0>0$. By compactness, up to a subsequence we may assume that $(a_m,b_m)$ has a limit $(a,b)\in P_{\partial}$. Using the preceding estimate, we know that $|a-b|\geq l_*>0$. For every fixed $u\in(0,1)$, we have $\dist((1-u)a_m+ub_m,\partial\Omega_{\rm c})\to\dist((1-u)a+ub,\partial\Omega_{\rm c})>0$. Then, as $m\to\infty$, we have $1_{0<\dist((1-u)a_m+ub_m,\partial\Omega_{\rm c})\le d_m}\to 0$. Moreover, the length of the segment has the trivial upper bound $\diam\Omega_{\rm c}$. The dominated convergence theorem therefore gives $T_{d_m}(a_m,b_m)\to0$, which is a contradiction.

All hidden regions lie in the collar of depth $d=C(\sup_k\lambda_k)^2$. Taking $\sup_k\lambda_k$ sufficiently small, we therefore obtain
\begin{equation*}
D_G(\rho)\ge T-|\{t\in[0,T]:0<y\left(x_\rho(t)\right)\le d\}|\ge T/2.
\end{equation*}

\vspace{2mm} The three geometric ranges now give
\begin{equation*}
D_G(\rho)\ge
\begin{cases}
\eta_0T/2,&\text{positive glancing rays},\\
T/2,&\text{negative glancing rays},\\
T/2,&\text{rays separated from glancing}.
\end{cases}
\end{equation*}
Consequently, with one constant independent of the initial ray and of the finite stage,
\begin{equation*}
\eta:=\frac T2\min\{\eta_0,1\}>0,\quad \essinf_{\rho\in S^*\Omega}D_G(\rho)\ge\eta,\text{ \; and \; } D_{G^{(N)}}\ge D_G\quad\text{for every }N.
\end{equation*}
Together with $D_G=T$ for rays avoiding $V$, this proves \eqref{eq:curved-gcc} for the fixed $G$ and for every $G^{(N)}\supset G$, whenever the parameters satisfy the choices above. The parameters and $\eta$ may depend on the fixed $0<T<T_{\rm loc}$.

Reflection instants and the null exceptional set of initial states do not affect the essential assertion; see \cite[Lemma~B.2 and Proposition~B.3]{AOSI} for the regular flow and invariance under null modifications.

\begin{remark}\label{rem:disc-time}
For the disc construction of Section~\ref{sec:glancing-disc}, the same argument works for every fixed $T>0$ on the arclength cover. Impose the additional choices
\begin{equation*}
A\beta_0^2<\pi,\text{ \; and \; } \left(\frac{3T}{2\pi}+2\right)\sum_k w_k\le T/2.
\end{equation*}
The first ensures that a positive glancing ray can meet only one lifted copy of a comparable moving packet region. A negative glancing ray meets at most $3T/(2\pi)+2$ copies of each interval, so the second bounds its total hidden time by $T/2$. This gives the same GCC conclusion. The realization is the periodized construction described in Proposition~\ref{prop:gl-disc-real}.
\end{remark}

\subsubsection{Boundary completion and generalized rays}\label{subsubsec:gcc-completion}

We now prove the stronger assertion in Theorem~\ref{thm:curved-gcc}(iii) for the specified Borel set $G$. Set $G^\sharp:=G\cup\left((0,T)\times\partial\Omega\right)$ and retain the constant $\eta$ from Section~\ref{subsubsec:gcc-regular}. We use the individual-ray estimates above and the location of the hidden regions.

\smallskip\noindent\emph{Localization of the relevant generalized rays.} Let $x_\gamma:[0,T]\to\overline\Omega$ be any unit-speed generalized ray, in the sense recalled in \cite[Appendix~B.1]{AOSI}. If it never meets $V$, then it meets no hidden region. At every $t\in(0,T)$, an interior point $x_\gamma(t)$ is observed in $G$, and a boundary point is observed in $G^\sharp$. Thus
\begin{equation*}
\int_0^T\1_{G^\sharp}\left(t,x_\gamma(t)\right)\dd t=T.
\end{equation*}
This case includes any diffractive, higher-order or infinite-order contacts outside the fixed neighborhood $V$.

Otherwise, choose $t_*\in[0,T]$ with $x_\gamma(t_*)\in V$. A generalized ray parametrized by physical time has a $1$-Lipschitz spatial projection. The distance bound in \eqref{eq:local-buffer} therefore gives
\begin{equation*}
|x_\gamma(t)-x_\gamma(t_*)|\le|t-t_*|\le T<\dist(\overline V,U^c), \quad\forall t\in[0,T].
\end{equation*}
Hence the entire trajectory stays in $U$, where $\Omega$ agrees with the strictly convex domain $\Omega_{\rm c}$. At a tangency in this chart, the free unit-speed line has
\begin{equation*}
\frac{\dd^2y}{\dd t^2}=-\kappa(s)\le-\kappa_*<0.
\end{equation*}
Thus all glancing contacts in $U$ are of the second-order gliding type described in \cite[Appendix~B.1]{AOSI}. Diffractive and higher-order contacts cannot occur there.

\smallskip\noindent\emph{Regularity from an interior point.} We first verify that every billiard trajectory from an interior state of $\Omega_{\rm c}$ remains regular on finite time intervals. Strict convexity makes every reflection of an interior chord transverse. To rule out an accumulation of reflections, suppose their successive segment lengths satisfy $\sum_jl_j<\infty$. Then the reflection points converge and $l_j\to0$. After discarding finitely many segments and relabeling, Lemma~\ref{lem:action-stability}, in either tangential orientation, gives
\begin{equation*}
l_j\sim\beta_j, \quad |\log\beta_{j+1}-\log\beta_j|\le Cl_j,
\end{equation*}
where $\beta_j>0$ is the inward normal component at the $j$th reflection. Consequently,
\begin{equation*}
\beta_j\rightarrow0, \quad \log\beta_N\ge\log\beta_0-C\sum_{j=0}^{N-1}l_j \ge\log\beta_0-C\sum_{j=0}^{\infty}l_j> -\infty.
\end{equation*}
These conclusions contradict each other. The same argument backwards in time excludes accumulation in either direction. Thus such a trajectory has only finitely many transverse reflections on every compact time interval.

Suppose now that $x_\gamma(t_0)\in\Omega$ for some $t_0\in(0,T)$. Start a billiard trajectory in $\Omega_{\rm c}$ from the same position and direction at $t_0$. The two trajectories coincide until their first reflection, and then across that reflection by the specular reflection law. We continue this comparison forwards and backwards. The auxiliary trajectory has no finite-time accumulation of impacts. Nor can it leave $U$ while the trajectories agree, since $x_\gamma\left([0,T]\right)$ stays a positive distance from $U^c$. Thus the comparison covers $[0,T]$. In particular, $x_\gamma$ is regular on this interval. This argument also excludes a transition between interior propagation and boundary gliding inside $U$.

The estimates in Section~\ref{subsubsec:gcc-regular} therefore apply to this ray. They are pointwise in the initial state: Proposition~\ref{prop:one-scale} covers every positive regular glancing ray, and the negative and non-glancing estimates are direct time bounds. The partial endpoint segments are already included there; the distance bound in Lemma~\ref{lem:local-geometry} accommodates the short extensions used in those estimates. Since reflection instants have zero time measure, we obtain
\begin{equation*}
\int_0^T\1_{G^\sharp}\left(t,x_\gamma(t)\right)\dd t =\int_0^T\1_G\left(t,x_\gamma(t)\right)\dd t\ge\eta.
\end{equation*}

\smallskip\noindent\emph{Trajectories on the boundary.} If the localized trajectory has no interior point, it lies on $\partial\Omega$ throughout $[0,T]$. Its observed time in $G^\sharp$ is then $T\ge\eta$. Together with the preceding cases, this proves the bound for every generalized ray. The same bound holds for the boundary completion of every $G^{(N)}\supset G$ by inclusion.%

\subsection{The exact waves on the same set}\label{subsec:gcc-energy}

It remains to coordinate the geometric choices with Theorem~\ref{thm:realization}. That theorem gives non-observability for any $\gamma_k\to0$, including the choice $\gamma_k=\lambda_k^{1/4}$ used above. The GCC conditions impose only additional upper bounds on the later scales.

\begin{proof}[Proof of Theorem~\ref{thm:curved-gcc}]
\noindent\emph{Step 1. Fixing the geometric data.} Keep the local geometry of Lemma~\ref{lem:local-geometry}, the time $0<T<T_{\rm loc}$, the diamond profiles $f_*,g_*$, the cutoff $\chi$ and the prescribed total phase shifts $\alpha_k$ fixed. As in Section~\ref{subsec:gcc-fixed}, choose intervals $I_k=z_k+(-w_k/2,w_k/2)$ with pairwise disjoint concentric triple enlargements contained in $\mathcal I_{\rm in}$ and
\begin{equation*}
0<w_k<1,\quad w_k\to0,\text{ \; and \; }\sum_k w_k\le T/2.
\end{equation*}
In particular, $\overline I_k\subset\mathcal I_{\rm in}$, and the intervals themselves are pairwise disjoint. By \eqref{eq:local-buffer}, their translates $t+\overline I_k$ remain inside the boundary chart for every $t\in[0,T]$.

\vspace{2mm} \noindent\emph{Step 2. Choosing scales compatible with realization and GCC.} Section~\ref{subsec:gcc-fixed} fixes $R$ and a common upper bound $\lambda_*$, including the restriction for rays separated from glancing. Decrease $\lambda_*$ if necessary so that Proposition~\ref{prop:curved-phases} applies and \eqref{eq:turning} gives $y_c^\lambda(s)<d_0$ throughout the chart whenever $0<\lambda\le\lambda_*$. Prescribe
\begin{equation*}
\overline\lambda_k:=\min\{\lambda_*,(w_k/R)^{1/2}\}.
\end{equation*}
The widths and centers from Step 1 satisfy the hypotheses of Theorem~\ref{thm:realization}; its common phase shifts and positive upper bounds may be prescribed arbitrarily. Apply that theorem to obtain Fourier cutoffs $\ell_k\to\infty$ and scales
\begin{equation*}
0<\lambda_k\le\overline\lambda_k,\quad \lambda_k\to0 ,\text{ \; and \; }\lambda_k^2/w_k\to0.
\end{equation*}
The upper bounds give the geometric inequalities
\begin{equation*}
\lambda_k\le\lambda_*,\text{ \; and \; }R\lambda_k^2\le w_k.
\end{equation*}
The stronger limit $\lambda_k^2/w_k\to0$ comes from the theorem's diagonal choice, not from $R\lambda_k^2\le w_k$ alone: after fixing each $\ell_k$, we take $\lambda_k$ still smaller to meet the realization estimates. Thus all three conditions in \eqref{eq:packing-scales} hold.

Determine $h_k$ by \eqref{eq:scales}. Theorem~\ref{thm:realization} also supplies the bounded positive normalization factors $c_k$, the normalized exact waves $u_k$ and the errors $r_k$ in \eqref{eq:full-realization}. Its choice keeps every retained mode's normal cutoff below $d_0$.

\vspace{2mm} \noindent\emph{Step 3. Defining the generated velocities and the common set.} For each $k$, Steps 1--2 give the moving packet region $\mathcal U_k:=\mathcal U_{\lambda_k,w_k,z_k}$ of Definition~\ref{def:curved-packet} and verify the assumptions of Proposition~\ref{prop:curved-support}. Use that proposition to define
\begin{equation*}
S_k:=c_kS_{\lambda_k,w_k,z_k,\alpha_k} ,\text{ \; and \; } \Gamma_k:=\Gamma_{\lambda_k,w_k,z_k,\alpha_k}.
\end{equation*}
The function $S_k$ is the full-profile velocity used in \eqref{eq:full-realization}, on this same moving packet region. Multiplication by $c_k$ preserves both conclusions of Proposition~\ref{prop:curved-support}:
\begin{equation*}
\1_{\mathcal U_k^c}S_k=0,\text{ \; and \; }\1_{\Gamma_k}S_k=0.
\end{equation*}

Set $\gamma_k:=\lambda_k^{1/4}\to0$. The intervals are disjoint with closures in $\mathcal I_{\rm in}$, the scales and reference widths tend to zero, and $c_k>0$. Thus Proposition~\ref{prop:curved-observation} applies to these same choices and defines $\calE_k,G^{(N)}$ and $G$. It gives
\begin{equation*}
G^{(N)}\downarrow G ,\text{ \; and \; } \1_{G^{(N)}}S_k=\1_GS_k=\1_{\calE_k}S_k, \quad\forall k\in\{1,\ldots,N\}.
\end{equation*}
This is one fixed Borel observation region for the entire wave sequence.

\vspace{2mm} \noindent\emph{Step 4. Non-observability and GCC on the same set.} Theorem~\ref{thm:realization}(ii) applies to this $G$, since the intervals are disjoint and $\gamma_k\to0$. Moreover,
\begin{equation*}
\frac{h_k}{w_k}=\frac{h_k}{\lambda_k^2}\frac{\lambda_k^2}{w_k}\rightarrow0, \quad\text{since }h_k/\lambda_k^2\sim\lambda_k^{5/2}.
\end{equation*}
Consequently,
\begin{equation*}
\|\1_G\partial_tu_k\|_{L^2((0,T)\times\Omega)}^2 \le2\|r_k\|_{L^2((0,T)\times\Omega)}^2 +C_T(\lambda_k^{1/4}+h_k/w_k)\rightarrow0.
\end{equation*}
Together with $E[u_k](0)=1$, this proves \eqref{eq:curved-nonobs} and the failure of observability.

Finally, the triple separation and total-width bound from Step 1, the scale bounds from Step 2, and the choice $\gamma_k=\lambda_k^{1/4}$ are precisely the conditions used in Section~\ref{subsec:gcc-fixed}. Section~\ref{subsubsec:gcc-regular} gives \eqref{eq:curved-gcc}, and Section~\ref{subsubsec:gcc-completion} gives the same lower bound for every generalized ray in $G^\sharp$. Both apply to this same $G$, with no further change of parameters. This finishes the proof of Theorem~\ref{thm:curved-gcc}.
\end{proof}

\begin{proof}[Proof of the {\hyperref[thm:main]{Main Theorem}}]
For the prescribed $\Omega$ and $T>0$, choose $0<\tau<\min\{T,T_{\rm loc}\}$ and apply Theorem~\ref{thm:curved-gcc} on $[0,\tau]$. Let $G_\tau$ and $u_k$ be its observation region and normalized exact waves.

Continue each exact wave to $[0,T]$ and set $G:=G_\tau$, viewed as a subset of $(0,T)\times\Omega$ with no observation after $\tau$. Then
\begin{equation*}
\int_0^T\1_G(t,\pi\Phi_t\rho)\dd t =\int_0^\tau\1_{G_\tau}(t,\pi\Phi_t\rho)\dd t\ge\eta \quad\text{for almost every }\rho,
\end{equation*}
whereas
\begin{equation*}
E[u_k](0)=1,\text{ \; and \; } \int_G|\partial_tu_k|^2\dd x\dd t =\int_{G_\tau}|\partial_tu_k|^2\dd x\dd t\rightarrow0.
\end{equation*}
These are the two conclusions in \eqref{eq:main} and \eqref{eq:main-nonobs}. Only the observation set is extended by zero in time; the local realization formula is used on $[0,\tau]$.

For the additional assertion in the footnote, set $G^\sharp:=G\cup\left((0,T)\times\partial\Omega\right)$ and $G_\tau^\sharp:=G_\tau\cup\left((0,\tau)\times\partial\Omega\right)$. The restriction of any generalized ray to $[0,\tau]$ is covered by Theorem~\ref{thm:curved-gcc}(iii). Hence
\begin{equation*}
\int_0^T\1_{G^\sharp}\left(t,x_\gamma(t)\right)\dd t \ge\int_0^\tau\1_{G_\tau^\sharp}\left(t,x_\gamma(t)\right)\dd t\ge\eta.
\end{equation*}
The interior wave observation is still the one on $G_\tau$.
\end{proof}

On the short interval of Theorem~\ref{thm:curved-gcc}, the relation to \cite[Section~2.7]{AOSI} is explicit: set $Z_k=\1_{\calE_k^c}S_k$ and $e_k=r_k+\1_{\calE_k}S_k$. Then
\begin{equation*}
\partial_tu_k=Z_k+e_k, \quad \1_{G^{(N)}}Z_k=0\quad\forall k\in\{1,\ldots,N\}, \quad \|e_k\|_{L^2}\to0 ,\text{ \; and \; }G^{(N)}\downarrow G.
\end{equation*}
The generated approximate velocity $S_k$ uses the full profiles $f_*,g_*$; $Z_k$ removes only the additional observed part whose energy tends to zero. This is the same finite-stage interpretation as in the radial disc construction of \cite[Section~3.3]{AOSI}, with that one additional error. Together with the fixed diamond symmetry, it verifies \cite[Definition~2.5]{AOSI} for glancing-reduced AOS.

The compatibility classification is complete within the specified scalar boundary--caustic loop. \hyperref[eq:gcc]{GCC} is realized for the diamond profile here and for the finite-step profiles in the radial construction of \cite[Section~4]{AOSI}.

A classification of all AOS, \hyperref[eq:gcc]{GCC} realization for all loop-compatible profiles, and observability after quantitative exclusion of all exact or approximate observable symmetries remain open. Smooth domains with flat or negatively curved boundary pieces are covered by the \hyperref[thm:main]{Main Theorem}.

\appendix
\numberwithin{figure}{section}
\section{Generalized rays in the local construction}\label{subsec:meas-boundary}

We use the generalized rays of Melrose--Sj\"ostrand \cite{MS1978}; see also \cite[Section~1C1]{LRLTT2017}. Their boundary types, the regular billiard flow, and the essential spacetime measurable GCC are recalled in \cite[Appendix~B]{AOSI}. Here we explain their role in the local construction of Section~\ref{sec:gcc} and distinguish the boundary-completed estimate from a limit of interior observed times.

\subsection{Local boundary completion}\label{subsec:rays-completion}

For the constructed Borel representative, Theorem~\ref{thm:curved-gcc}(iii) gives a uniform observed-time lower bound for every generalized ray after boundary completion,
\begin{equation*}
G^\sharp:=G\cup\left((0,T)\times\partial\Omega\right).
\end{equation*}
Section~\ref{subsubsec:gcc-completion} proves this assertion directly.

During the short interval fixed in Lemma~\ref{lem:local-geometry}, every hidden region lies in $V\Subset U$. A generalized ray avoiding $V$ is fully observed in $G^\sharp$. This includes rays with diffractive, higher-order or infinite-order contacts during that interval: such contacts can occur outside the positive-curvature chart $U$, but the distance bound prevents these rays from also meeting $V$.

A ray meeting $V$ remains in $U$ throughout the interval. If it has an interior point, comparison with $\Omega_{\rm c}$ and exclusion of finite-time accumulation make it a regular ray, to which the interior estimates apply. Otherwise it is entirely on the observed boundary. Thus no global finite-order-contact or convexity assumption is needed.

The interior glancing estimates concern near-tangent rays with transverse reflections and positive normal component, not boundary-gliding trajectories. Their bound is uniform as the normal component tends to zero. Section~\ref{subsec:gcc-energy} passes the all-ray assertion to arbitrary observation times by restricting to the short construction interval.

\subsection{Gliding limits and measurable observation}\label{subsec:rays-limits}

Along a positively curved arc, near-tangent billiards can approximate a boundary-gliding trajectory for as long as it remains in a compact subarc. This applies on every fixed finite interval when the whole boundary is strictly positively curved.

To see the geometry, choose initial points and directions approaching a given boundary point and tangent direction. The short-chord estimates of Section~\ref{subsec:gcc-action} keep the incidence angles small on the interval under consideration. The chord depths tend to zero, and the accumulated difference between boundary arc length and chord length also tends to zero. Thus suitably chosen trajectories satisfy
\begin{equation*}
\sup_{0\le t\le T}|x_j(t)-x_{\rm gl}(t)|\rightarrow0.
\end{equation*}

For a continuous weight $b$ on $[0,T]\times\overline\Omega$, uniform continuity then gives
\begin{equation*}
\int_0^T b\left(t,x_j(t)\right)\dd t\rightarrow \int_0^T b\left(t,x_{\rm gl}(t)\right)\dd t.
\end{equation*}
This implication need not hold for a measurable indicator $\1_G$. Hence approximation of trajectories alone does not transfer a lower bound for their observed times to the gliding limit.

For a regular billiard ray, reflection instants have zero one-dimensional measure. Adding any Borel subset of $(0,T)\times\partial\Omega$ to $G$ therefore changes neither its observed time nor the interior wave observation. It can, however, change the observed time of a boundary-gliding ray. Unlike relative openness, such a boundary assignment provides no observed interior neighborhood.

\section{Phase, Airy, and return calculations}\label{sec:phase}

We provide the phase and Airy calculations used in the following two parts of the proof.
\begin{itemize}[leftmargin=*,itemsep=1mm]
\item \emph{Section~\ref{sec:curved}: phase construction and residual.} Section~\ref{subsec:phase-coord} constructs the corrected Airy coordinate in Proposition~\ref{prop:curved-phases}. Section~\ref{subsec:residual} proves the Airy bounds \eqref{eq:airy-norms} and the normalized residual estimate \eqref{eq:forcing} used in Proposition~\ref{prop:finite-airy}.
\item \emph{Section~\ref{sec:gcc}: nearly matched rays.} Sections~\ref{subsec:phase-return}--\ref{subsec:jets} give the momentum return, differentiated phases, and quarter-turn comparison \eqref{eq:slow-jets}--\eqref{eq:quarter-return} used in Lemma~\ref{lem:matched} of Section~\ref{subsec:gcc-matched}.
\end{itemize}
The same finite-order phases are used for the approximate waves and the ray comparison. Throughout, we work on the fixed positive-curvature chart of Lemma~\ref{lem:local-geometry}. Estimates are uniform in the boundary point on its fixed compact arc; dependence on a retained incoming band is recorded below. 

\subsection{Phase construction for Proposition~\ref{prop:curved-phases}}\label{subsec:phase-coord}

We construct the Airy coordinate and quartic correction used in Proposition~\ref{prop:curved-phases}, preserving the exact boundary action in \eqref{eq:phase-data}. The scaled normal variable $Y=y/\lambda^2$ keeps the turning region at finite distance. 
We seek a correction of the form
\begin{equation*}
\zeta_\lambda=2^{-\frac{2}{3}}\lambda^2-(2\kappa(s))^{\frac{1}{3}}y-\lambda^2u(s)y-v(s)y^2.
\end{equation*}
Recall the form of $\Phi_{0,\lambda}$ in \eqref{eq:W0}, leaving $c_4$ to be determined.

\begin{lemma}\label{lem:airy-coord}
There are smooth functions $c_4(s)$, $u(s)$, and $v(s)$ such that, for all sufficiently small $\lambda>0$,
\begin{equation*}
\zeta_\lambda(s,0)=2^{-2/3}\lambda^2, \;E_0=O(\lambda^6) ,\text{ \; and \; }E_1=O(\lambda^4),
\end{equation*}
where $E_0,E_1$ are defined in \eqref{eq:eik}. On each fixed scaled collar $0\le y\le c\lambda^2$, these estimates hold uniformly on the fixed compact arc, with every fixed number of derivatives in $(s,y/\lambda^2)$.
\end{lemma}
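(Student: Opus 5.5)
The plan is a direct expansion in the scaled normal variable $Y=y/\lambda^2$, with $\Phi_{0,\lambda}=C_\lambda(s)+U(s,y)$ as in \eqref{eq:W0} and the ansatz $\zeta_\lambda=2^{-2/3}\lambda^2-(2\kappa)^{1/3}y-\lambda^2u(s)y-v(s)y^2$. The boundary value $\zeta_\lambda(s,0)=2^{-2/3}\lambda^2$ holds for every choice of $u,v$, because all corrections vanish at $y=0$. The work is therefore to choose $c_4,u,v$ so that $E_0$ and $E_1$ have the stated orders. On the collar $y=\lambda^2Y$ with $0\le Y\le c$, I will expand every quantity as a polynomial in $\lambda^2$ with coefficients that are polynomials in $Y$ and smooth in $s$. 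The remainders are then $O(\lambda^6)$ uniformly, together with any fixed number of derivatives in $(s,Y)$.

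\emph{Step 1: the mixed term.} Write $\mu^{-2}=1+2\kappa y+3\kappa^2y^2+O(y^3)$. Then
\begin{equation*}
E_1=\mu^{-2}C_\lambda'\,\partial_s\zeta_\lambda+\mu^{-2}\partial_sU\,\partial_s\zeta_\lambda+\partial_yU\,\partial_y\zeta_\lambda .
\end{equation*}
With $\partial_s\zeta_\lambda=-\tfrac{1}{3}(2\kappa)^{1/3}\kappa^{-1}\kappa'y+O(\lambda^4)$ and $U=-\kappa'y^2/(6\kappa)$, the leading $O(\lambda^2)$ terms cancel exactly; this is the computation already carried out before \eqref{eq:W0}. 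Every remaining term contains at least two factors of size $\lambda^2$, coming from $y$, from $\lambda^2u$, from $vy$, or from $C_\lambda'-1$. Hence $E_1=O(\lambda^4)$ for any smooth $u,v,c_4$, and no further constraint arises here.

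\emph{Step 2: the scalar term.} Expand
\begin{equation*}
E_0=\mu^{-2}(C_\lambda'+\partial_sU)^2+(\partial_yU)^2+\zeta_\lambda\bigl((\partial_y\zeta_\lambda)^2+\mu^{-2}(\partial_s\zeta_\lambda)^2\bigr)-1 .
\end{equation*}
At order $\lambda^2$ the terms $-\lambda^2\kappa^{2/3}+2\kappa y$ from $\Phi$ cancel against $2^{-2/3}\lambda^2(2\kappa)^{2/3}-(2\kappa)y$ from the Airy part, as in the formal computation. At order $\lambda^4$ the coefficient is a polynomial in $Y$ of degree at most two, say $a_0(s)+a_1(s)Y+a_2(s)Y^2$, and each coefficient is affine in one unknown. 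The $Y^0$ coefficient comes from $C_\lambda'^2$ and the constant Airy part, and it fixes $c_4$. The $Y^1$ coefficient involves $u$ linearly through $-2^{-2/3}(2\kappa)^{1/3}\cdot2\cdot\lambda^2uy$ plus related terms. The $Y^2$ coefficient involves $v$ through the $\zeta\,\partial_y\zeta$ cross terms together with the $3\kappa^2y^2$, $(\partial_yU)^2$ and $\kappa'U$ contributions. Because $(2\kappa)^{1/3}\ne0$, each of these is uniquely solvable for a smooth coefficient. As a check, in the case $\kappa\equiv1$ the solution must reproduce \eqref{eq:gl-disc-phase}: $c_4=1/120$, with $2^{1/3}(\tfrac{2}{15}\lambda^2y-\tfrac{3}{10}y^2)$ matching $-\lambda^2uy-vy^2$. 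Once the order-$\lambda^4$ coefficient vanishes, the next terms are $O(\lambda^6)$, giving $E_0=O(\lambda^6)$.

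\emph{Step 3: uniformity and the main obstacle.} The expansion is finite polynomial algebra in $(\lambda^2,Y)$ with smooth $s$-coefficients, and $\kappa\ge\kappa_*$ on the compact arc. The remainders are therefore uniform in $s$, and by Taylor's theorem with remainder they stay uniform after any fixed number of derivatives in $(s,Y)$. I expect the main obstacle to be the bookkeeping at order $\lambda^4$. In particular, one must verify that the $s$-derivative contributions from $U$ and from $\partial_s\zeta_\lambda$ produce only $\kappa''$ and $(\kappa')^2$ terms that can be absorbed into $c_4$ and $v$, and that they do not generate a $Y^3$ term at order $\lambda^4$. Such a term would need a cubic correction. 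I will first check that the degree count is at most two: each of $\zeta_\lambda$, $y$ and $\partial_s\zeta_\lambda$ contributes degree at most one per factor of $\lambda^2$. If a stray $Y^3$ term does appear, I would add a $w(s)y^3/\lambda^2$-type correction; this is still zero at $y=0$, so the boundary action is preserved.
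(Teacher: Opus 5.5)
Your proposal is correct and follows essentially the paper's route. With $\Phi_{0,\lambda}$ from \eqref{eq:W0} and your ansatz for $\zeta_\lambda$, $U$ cancels the leading $E_1$ term, the order-$\lambda^4$ coefficient of $E_0$ is a quadratic in $Y=y/\lambda^2$ (your degree count is right, so no cubic correction is needed), and smoothness in $(s,Y,\lambda^2)$ gives the derivative bounds.

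One small correction: that quadratic system is triangular rather than diagonal. The $Y^0$ coefficient also contains $u$, through $2\cdot2^{-2/3}(2\kappa)^{1/3}u$, and the $Y^1$ coefficient contains $v$. So you solve for $v$ from $Y^2$ (pivot $-5(2\kappa)^{2/3}$), then for $u$ from $Y^1$ (pivot $-3(2\kappa)^{2/3}$), then for $c_4$ from $Y^0$, and this reproduces \eqref{eq:airy-coeff}. The paper does the same computation by recentering at $Y=1/(2\kappa^{1/3})$, where $1+2(Y-\tfrac{1}{2\kappa^{1/3}})\partial_Y$ is diagonal; it then fixes $c_4$ by requiring the correction to vanish at $Y=0$.
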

\begin{proof}
\noindent\emph{Step 1. The quartic correction and boundary value.} We choose $u,v$, and $c_4$ to cancel the quartic terms in $E_0$. Set
\[
D:=3\kappa^2-\frac{\kappa''}{3\kappa} +\frac{4(\kappa')^2}{9\kappa^2}.
\]
Writing $Y=y/\lambda^2$, we seek the corrected Airy coordinate in the form
\[
\zeta_\lambda(s,y) =\lambda^2\left(2^{-2/3}-(2\kappa(s))^{1/3}Y\right) -\lambda^4\left(u(s)Y+v(s)Y^2\right).
\]
Expansion in the collar metric gives
\begin{align*}
|\dd\Phi_{0,\lambda}|_g^2 ={}&1+\lambda^2\left(2\kappa Y-\kappa^{2/3}\right)\\
&+\lambda^4\left( 2c_4+\frac14\kappa^{4/3} -2\kappa^{5/3}Y+DY^2 \right)+O(\lambda^6),\\
-\zeta_\lambda|\dd\zeta_\lambda|_g^2 ={}&\lambda^2\left(2\kappa Y-\kappa^{2/3}\right)\\
&+\lambda^4(2\kappa)^{2/3} \left( uY+vY^2 +2\left(Y-\frac{1}{2\kappa^{1/3}}\right) \partial_Y(uY+vY^2) \right) +O(\lambda^6).
\end{align*}
Taking the difference, we obtain
\begin{align*}
2c_4+\frac14\kappa^{4/3}-2\kappa^{5/3}Y+DY^2 ={}&n_0+n_1\left(Y-\frac{1}{2\kappa^{1/3}}\right)+D\left(Y-\frac{1}{2\kappa^{1/3}}\right)^2,
\end{align*}
where $n_0:=2c_4-\frac34\kappa^{4/3} +\frac{D}{4\kappa^{2/3}}$, and $n_1:=-2\kappa^{5/3}+\frac{D}{\kappa^{1/3}}$. Thus the correction must solve
\begin{align*}
&(2\kappa)^{2/3} \left( 1+2\left(Y-\frac{1}{2\kappa^{1/3}}\right)\partial_Y \right)(uY+vY^2)\\
&\quad= n_0+n_1\left(Y-\frac{1}{2\kappa^{1/3}}\right) +D\left(Y-\frac{1}{2\kappa^{1/3}}\right)^2.
\end{align*}
Since
\[
\left( 1+2\left(Y-\frac{1}{2\kappa^{1/3}}\right)\partial_Y \right) \left(Y-\frac{1}{2\kappa^{1/3}}\right)^j = (2j+1)\left(Y-\frac{1}{2\kappa^{1/3}}\right)^j,
\]
we obtain a polynomial solution:
\begin{align*}
uY+vY^2 =(2\kappa)^{-2/3}\left( n_0+\frac{n_1}{3} \left(Y-\frac{1}{2\kappa^{1/3}}\right) +\frac{D}{5} \left(Y-\frac{1}{2\kappa^{1/3}}\right)^2 \right).
\end{align*}
This solution is smooth at the leading turning point $Y=1/(2\kappa^{1/3})$.

At the boundary $Y=0$, the requirement that the correction vanish becomes
\[
n_0-\frac{n_1}{6\kappa^{1/3}} +\frac{D}{20\kappa^{2/3}}=0,
\]
which determines $c_4$. The remaining coefficients of $Y$ and $Y^2$ determine $u$ and $v$. We therefore obtain
\begin{equation}\label{eq:airy-coeff}
\begin{aligned}
c_4&=\frac{\kappa^{4/3}}{120} +\frac{\kappa''}{45\kappa^{5/3}} -\frac{4(\kappa')^2}{135\kappa^{8/3}},\\
u&=2^{1/3}\left( -\frac{2\kappa}{15} -\frac{\kappa''}{45\kappa^2} +\frac{4(\kappa')^2}{135\kappa^3} \right),\\
v&=2^{-2/3}\left( \frac{3\kappa^{4/3}}5 -\frac{\kappa''}{15\kappa^{5/3}} +\frac{4(\kappa')^2}{45\kappa^{8/3}} \right),\\
\zeta_\lambda(s,y) &:=2^{-2/3}\lambda^2-(2\kappa(s))^{1/3}y -\lambda^2u(s)y-v(s)y^2.
\end{aligned}
\end{equation}
The construction gives $E_0=O(\lambda^6)$ and 
\[
\zeta_\lambda(s,0)=2^{-2/3}\lambda^2,\;\partial_y\zeta_\lambda =-(2\kappa(s))^{1/3}+O(\lambda^2)<0.
\]
The implicit function theorem therefore gives a simple turning zero near the leading turning curve, with $y_c^\lambda(s) =\frac{\lambda^2}{2\kappa(s)^{1/3}}+O(\lambda^4)$.

\smallskip
\noindent\emph{Step 2. The mixed phase error.} Write
\[
U_4(s,Y)=-\frac{\kappa'(s)}{6\kappa(s)}Y^2,
\]
so that $U(s,\lambda^2Y)=\lambda^4U_4(s,Y)$. The quadratic coefficient of $E_1$ vanishes because
\begin{align*}
\partial_s\left(2^{-2/3}-(2\kappa)^{1/3}Y\right) -(2\kappa)^{1/3}\partial_YU_4= -\frac{2^{1/3}}3\kappa^{-2/3}\kappa'Y +\frac{(2\kappa)^{1/3}\kappa'}{3\kappa}Y =0.
\end{align*}
Thus $E_1=O(\lambda^4)$.

After substituting $y=\lambda^2Y$, the exact expressions for both defects are smooth in $(s,Y,\lambda^2)$ on every fixed compact scaled collar, since $\mu$ stays bounded away from zero. The cancellations above imply that
\[
\lambda^{-6}E_0(s,\lambda^2Y) \quad\text{and}\quad \lambda^{-4}E_1(s,\lambda^2Y)
\]
extend smoothly to $\lambda=0$. Consequently, the same estimates hold with every fixed number of derivatives in $(s,Y)$.

When $\kappa$ is constant, $c_4$ reduces to $\kappa^{4/3}/120$, consistent with the expansion of the exact circular action \eqref{eq:circular-action} under the same boundary normalization.
\end{proof}

\subsection{Normalized Airy residual for Proposition~\ref{prop:finite-airy}}\label{subsec:residual}

We prove the Airy integral bounds and residual estimate used in Step~1 of the proof of Proposition~\ref{prop:finite-airy}. Separating the phase-equation, transport, and envelope terms identifies the scale restrictions in \eqref{eq:forcing}.

\begin{lemma}\label{lem:airy-residual}
Fix the local geometry, envelope $\chi$, Fourier truncation, and time $T$ of Proposition~\ref{prop:finite-airy}. Use the scales \eqref{eq:scales} and an admissible width $w=w(\lambda)$ with $\lambda^2/w\to0$. Let $v_\ell$ be the real finite sum \eqref{eq:airy-wave}, with its amplitude factor $b_{\lambda,w}=(\lambda w)^{-1/2}$, normal cutoff, and zero extension. Fix the cutoff depths $Y_0<Y_1$ as in Section~\ref{subsec:curved-boundary}. Then the following estimates hold.
\begin{enumerate}[label=(\roman*),leftmargin=*,itemsep=1mm]
\item Uniformly in $s$ on the fixed compact arc,
\begin{align*}
\int_0^{Y_1\lambda^2}|\Ai(-\zeta_\lambda/h^{2/3})|^2\dd y\sim h^{1/3}\lambda,\; \int_0^{Y_1\lambda^2}|\Ai'(-\zeta_\lambda/h^{2/3})|^2\dd y\le Ch^{-1/3}\lambda^3.
\end{align*}
The same bounds hold for each retained mode $(\lambda_n,h_n)$, with a constant depending on $\ell$.
\item The residual satisfies
\begin{equation*}
\|(\partial_t^2-\Delta)v_\ell\|_{L^1(0,T;L^2(\Omega))}\le C_{\ell,T}\left(\frac{\lambda^5}{h}+\lambda+\frac{\lambda^2}{w} +h(1+w^{-1}+\lambda^2w^{-2})\right)+o(1).
\end{equation*}
Here $o(1)$ is the normal-cutoff contribution. The constants may depend on the fixed chart, $\chi$, and normal cutoff.
\end{enumerate}
\end{lemma}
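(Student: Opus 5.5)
For (i), I will change variables $x=-\zeta_\lambda(s,y)/h^{2/3}$ at fixed $s$. By Lemma~\ref{lem:airy-coord}, $\partial_y\zeta_\lambda=-(2\kappa)^{1/3}+O(\lambda^2)$ is bounded above and below uniformly on the arc, so $\dd y\sim h^{2/3}\dd x$. The range $y\in[0,Y_1\lambda^2]$ maps to
\[
x\in[-z_j,\,c Y_1\lambda^2h^{-2/3}],\qquad z_j=\zeta_\lambda(s,0)/h^{2/3}\sim\lambda^2h^{-2/3}.
\]
On the decaying side $x>0$, $\Ai$ is integrable, contributing $O(h^{2/3})$. On the oscillatory side I will use $|\Ai(-x)|^2\lesssim(1+x)^{-1/2}$, together with the averaged lower bound from \eqref{eq:airy-osc}: $\int_0^X|\Ai(-x)|^2\dd x\sim X^{1/2}$ for large $X$. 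This gives $h^{2/3}(\lambda^2h^{-2/3})^{1/2}=h^{1/3}\lambda$, which dominates $h^{2/3}$ because $h\ll\lambda^3$. Similarly $|\Ai'(-x)|^2\lesssim(1+x)^{1/2}$ gives $h^{2/3}(\lambda^2h^{-2/3})^{3/2}=h^{-1/3}\lambda^3$. For the retained modes I replace $(\lambda,h)$ by $(\lambda_n,h_n)$; by \eqref{eq:boundary-zero}, $\lambda_n\sim\lambda$ and $h_n=h/n$, so only $\ell$-dependent constants change.

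For (ii), I will treat one complex summand $F_n=\chi\,\rho\,\kappa^{1/6}h_n^{5/6}\Ai(-\zeta_{\lambda_n}/h_n^{2/3})e^{i(\Phi_{0,\lambda_n}-t)/h_n}$ and apply $\partial_t^2-\Delta_g$, using the collar form \eqref{eq:collar-wave}. As in Lemma~\ref{lem:exact-Airy-solution}, I will expand with the Airy equation and group the terms:
\begin{itemize}
\item the eikonal defect $h_n^{-2}E_0\Ai$;
\item the mixed defect $h_n^{-4/3}E_1\Ai'$;
\item the transport term $h_n^{-1}\bigl(2\langle d\Phi,d(\chi\kappa^{1/6})\rangle+\chi\kappa^{1/6}\Delta_g\Phi\bigr)\Ai$ combined with $2h_n^{-1}\partial_t\chi$;
\item the terms $h_n^{-2/3}(2\langle d\zeta,da\rangle+a\Delta_g\zeta)\Ai'$ and $\Delta_g a\,\Ai$, second derivatives of the envelope, and derivatives of $\rho$.
\end{itemize}
Each term gets an $L^\infty$ coefficient bound times the $L^2_y$ norms from (i), multiplied by the normalization $(\lambda w)^{-1/2}h_n^{5/6}$ and the tangential factor $w^{1/2}$. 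I measure everything relative to the leading velocity size $h_n^{-1}h_n^{5/6}(h_n^{1/3}\lambda)^{1/2}(\lambda w)^{-1/2}w^{1/2}\sim1$.
\begin{itemize}
\item The eikonal defect contributes $\lambda^6/h$.
\item The mixed term contributes $h^{-1/3}\lambda^4\cdot\lambda h^{-1/3}\cdot h^{2/3}/h$, that is $\lambda^5/h$; this is the computation already displayed in Proposition~\ref{prop:finite-airy}.
\item The transport term: $A_0=\kappa^{1/6}$ cancels the leading part, leaving $O(\lambda^2)$. The envelope gives $(\mu^{-2}\partial_s\Phi-1)\chi'/w=O(\lambda^2/w)$, since $(\partial_t+\partial_s)\chi=0$.
\item The $\Ai'$ term with bounded coefficient: the ratio bound $\lambda h^{-1/3}$ times $h^{2/3}/h\cdot h$ gives $O(\lambda)$, and the same with an envelope derivative gives $\lambda\cdot\lambda^2/w$.
\item The remaining $h^2$-order terms give $h(1+w^{-1})$. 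The $\chi''/w^2$ term carries the coefficient $1-\mu^{-2}(\partial_s\Phi)^2$, whose leading part cancels against $\partial_t^2$, leaving $h\lambda^2/w^2$.
\item Derivatives of $\rho$ are supported where $\zeta_{\lambda_n}\le-c\lambda^2$, so Airy decay makes them $O(e^{-c\lambda^3/h})$; these form the $o(1)$ term.
\end{itemize}
Summing the finitely many modes and integrating over $t\in[0,T]$ gives the stated bound.

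The main obstacle is the uniform treatment across the turning layer. There pointwise Airy asymptotics fail, and the $\Ai'$ mass must be bounded using only the integral in (i), not pointwise ratios. I will handle this by always pairing $L^\infty$ coefficient bounds with the integrated Airy norms, so no pointwise expansion near $y_c^\lambda$ is needed. A second point requiring care is that the $\zeta$-derivatives in $s$ are $O(\lambda^2)$ with the scaled-collar derivative control of Lemma~\ref{lem:airy-coord}.
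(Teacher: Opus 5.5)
Your proposal is correct and follows the paper's proof essentially step by step: in (i) you use the same change of variables $x=-\zeta_\lambda/h^{2/3}$ (the paper gets the two-sided bound from the identity $(x\Ai(x)^2-\Ai'(x)^2)'=\Ai(x)^2$ plus endpoint asymptotics rather than by averaging the oscillatory asymptotics), and in (ii) the same five-group residual identity normalized against the leading velocity. Two bookkeeping slips do not change the final orders, which still give $\lambda^5/h$ and $h\lambda^2/w^2$: the mixed defect enters as $h_n^{-5/3}E_1\Ai'$ (not $h_n^{-4/3}$) alongside $h_n^{-2}E_0\Ai$, and the coefficient of $\chi''/w^2$ is $\kappa^{1/6}(1-\mu^{-2})$, since no phase derivative enters $Q_{tt}-\Delta Q$.
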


\begin{proof}
\noindent\emph{Step 1. The residual identity.} Write $\mu=1-\kappa y$, $Q(t,s)=\kappa(s)^{1/6}\chi\left((s-t-z)/w\right)$. Before inserting the amplitude normalization and normal cutoff, consider the single complex mode $v^{\mathbb C} =h e^{i(\Phi_{0,\lambda}-t)/h} Q\Ai\left(-\frac{\zeta_\lambda}{h^{2/3}}\right)$. Direct differentiation, using the Airy equation $\Ai''\left(-\frac{\zeta_\lambda}{h^{2/3}}\right) = -\frac{\zeta_\lambda}{h^{2/3}} \Ai\left(-\frac{\zeta_\lambda}{h^{2/3}}\right)$, gives
\begin{equation}\label{eq:residual-id}
\begin{aligned}
(\partial_t^2-\Delta)v^{\mathbb C} =e^{i(\Phi_{0,\lambda}-t)/h}\left(
h^{-1}E_0Q \Ai\left(-\frac{\zeta_\lambda}{h^{2/3}}\right) +2ih^{-2/3}E_1Q \Ai'\left(-\frac{\zeta_\lambda}{h^{2/3}}\right)\right.\\
-i(2Q_t+2\langle d\Phi_{0,\lambda},dQ\rangle +Q\Delta\Phi_{0,\lambda}) \Ai\left(-\frac{\zeta_\lambda}{h^{2/3}}\right)\\
+h^{1/3}(2\langle dQ,d\zeta_\lambda\rangle +Q\Delta\zeta_\lambda) \Ai'\left(-\frac{\zeta_\lambda}{h^{2/3}}\right)\\
\left.+h(Q_{tt}-\Delta Q) \Ai\left(-\frac{\zeta_\lambda}{h^{2/3}}\right)
\right).    
\end{aligned}
\end{equation}

\smallskip
\noindent\emph{Step 2. Airy norms and normalization.} Set $L:=\lambda^2h^{-2/3}\to\infty$. The change of variable to $x=-\zeta_\lambda/h^{2/3}$ has inverse Jacobian comparable to $h^{2/3}$, and the negative integration range has length comparable to $L$. The identity
\[
\left( -\frac{\zeta_\lambda}{h^{2/3}} \Ai\left(-\frac{\zeta_\lambda}{h^{2/3}}\right)^2 - \Ai'\left(-\frac{\zeta_\lambda}{h^{2/3}}\right)^2 \right)' = \Ai\left(-\frac{\zeta_\lambda}{h^{2/3}}\right)^2,
\]
where the derivative is taken with respect to the Airy argument, and the Airy asymptotics give
\begin{equation*}
\int_{-L}^0 \left| \Ai\left(-\frac{\zeta_\lambda}{h^{2/3}}\right) \right|^2 \dd\left(-\frac{\zeta_\lambda}{h^{2/3}}\right) \sim \frac1\pi L^{1/2}, \quad \int_{-L}^0 \left| \Ai'\left(-\frac{\zeta_\lambda}{h^{2/3}}\right) \right|^2 \dd\left(-\frac{\zeta_\lambda}{h^{2/3}}\right) \le CL^{3/2}.
\end{equation*}
The lower bound averages the oscillations; on the positive half-line both integrals are bounded by Airy decay. Returning to $y$, we obtain
\begin{gather*}
\int|\Ai(-\zeta_\lambda/h^{2/3})|^2\dd y \sim h^{2/3}L^{1/2} =h^{1/3}\lambda,\\
\int|\Ai'(-\zeta_\lambda/h^{2/3})|^2\dd y \le Ch^{2/3}L^{3/2} =Ch^{-1/3}\lambda^3.
\end{gather*}
In particular,
\begin{equation*}
\frac{ \|\Ai'(-\zeta_\lambda/h^{2/3})\|_{L^2_y} }{ \|\Ai(-\zeta_\lambda/h^{2/3})\|_{L^2_y} } \le C\lambda h^{-1/3}.
\end{equation*}
These are the bounds in \eqref{eq:airy-norms}. The leading time derivative of $v^{\mathbb C}$ is $-ie^{i(\Phi_{0,\lambda}-t)/h} Q\Ai\left(-\frac{\zeta_\lambda}{h^{2/3}}\right)$, and we use its $L^2$ norm on the scaled collar as the normalization for the residual terms below.

\smallskip
\noindent\emph{Step 3. Transport and cutoff terms.} The factor $\kappa^{1/6}$ cancels the leading transport term. Indeed, $\Delta\Phi_{0,\lambda}=-\kappa'/(3\kappa)+O(\lambda^2)$ and $\partial_s\Phi_{0,\lambda}\mu^{-2}=1+O(\lambda^2)$ give
\begin{align*}
2\mu^{-2}\partial_s\Phi_{0,\lambda} \frac{(\kappa^{1/6})'}{\kappa^{1/6}} +\Delta\Phi_{0,\lambda} &= \frac{\kappa'}{3\kappa} -\frac{\kappa'}{3\kappa} +O(\lambda^2)=O(\lambda^2).
\end{align*}
The derivative of the moving cutoff has the same small factor $\partial_s\Phi_{0,\lambda}\mu^{-2}-1$. Using also $\partial_s\zeta_\lambda=O(\lambda^2)$ and $\Delta\zeta_\lambda=O(1)$, we obtain the following bounds relative to the leading time-derivative norm, in the order of \eqref{eq:residual-id}:
\begin{equation*}
\begin{array}{c|c}
\text{term in \eqref{eq:residual-id}} & \text{normalized bound} \\ \hline
h^{-1}E_0Q\Ai & C\lambda^6/h \\
+2ih^{-2/3}E_1Q\Ai' & C\lambda^5/h \\
-i(2Q_t+2\langle d\Phi_{0,\lambda},dQ\rangle+Q\Delta\Phi_{0,\lambda})\Ai & C\lambda^2(1+w^{-1}) \\
+h^{1/3}(2\langle dQ,d\zeta_\lambda\rangle+Q\Delta\zeta_\lambda)\Ai' & C\lambda(1+\lambda^2w^{-1}) \\
h(Q_{tt}-\Delta Q)\Ai & Ch(1+w^{-1}+\lambda^2w^{-2}).
\end{array}
\end{equation*}
For the last bound, we use
\[
Q_{tt}-\Delta Q = \kappa^{1/6}(1-\mu^{-2})\chi''/w^2 +O(1+w^{-1})
\]
in the corresponding cutoff norms.

Summing the five bounds gives the bracket in \eqref{eq:forcing}. In particular, the second term requires $\lambda^5/h\to0$, while the terms involving the moving cutoff require $\lambda^2/w\to0$. The normal cutoff varies only where $\zeta_\lambda\le-c\lambda^2$. There the Airy decay is $e^{-c\lambda^3/h}$, which absorbs every cutoff derivative under the chosen scale $h\sim\lambda^{9/2}$.

For fixed $\ell$, replacing $(\lambda,h)$ by $(\lambda_n,h_n)$ changes only a finite constant. Each mode in \eqref{eq:airy-wave} is the preceding expression multiplied by $-2\sqrt\pi\,d_{*,n}e^{2\pi in\alpha} b_{\lambda,w}h_n^{-1/6}$. The residual and leading time derivative acquire the same factor. Moreover, the Airy bounds and tangential width give, uniformly in $t$,
\begin{align*}
b_{\lambda,w}h_n^{-1/6} \|Q\Ai(-\zeta_{\lambda_n}/h_n^{2/3})\| _{L^2(\{0\le y\le Y_1\lambda^2\})}\le C_\ell(\lambda w)^{-1/2}h_n^{-1/6} w^{1/2}h_n^{1/6}\lambda_n^{1/2}\le C_\ell.
\end{align*}
Thus the normalized single-mode bounds give the stated $L^1_tL^2_x$ estimate for the finite sum. Taking the real part preserves the upper bound; the normal cutoff contributes the stated $o(1)$.

\end{proof}

\subsection{Boundary return and momentum matching}\label{subsec:phase-return}

Lemma~\ref{lem:matched} in Section~\ref{subsec:gcc-matched} uses the change of boundary phase and normal momentum between two successive impacts. We prove these return estimates here; Section~\ref{subsec:jets} combines them with the phase variation within each ray segment. Let $p_\lambda=C_\lambda'$ and $a_\lambda=\sqrt{1-p_\lambda^2}$. Recall the next impact $s^+(s,\beta)$, reflected normal component $B(s,\beta)$, and travel time $l(s,\beta)$ for a positively oriented ray launched from $s$ with normal component $\beta$. Define the boundary return error by
\begin{equation}\label{eq:return-error}
F_\lambda(s,\beta):=C_\lambda\left(s^+(s,\beta)\right)-C_\lambda(s)-l(s,\beta)+2\lambda^3/3.
\end{equation}

\begin{lemma}\label{lem:phase-return}
On the fixed arc, the matched initial momentum satisfies
\begin{equation}\label{eq:matched-return}
F_\lambda\left(s,a_\lambda(s)\right)=O_{C_s^1}(\lambda^6).
\end{equation}
Fix a sufficiently small preliminary mismatch bound $\rho_*>0$. For all sufficiently small $\lambda>0$ and $\delta=\beta-a_\lambda(s)$ with $|\delta|\le\rho_*\lambda$, we have
\begin{equation}\label{eq:graph-return}
\begin{split}
B(s,\beta)-a_\lambda\left(s^+(s,\beta)\right)&=\left(1+O(\lambda)\right)\delta+O(\lambda^5),\\
F_\lambda(s,\beta)&=O(\lambda^6)+O(\lambda^6|\delta|)+O(\lambda\delta^2).
\end{split}
\end{equation}
The constants depend only on the fixed arc and the preliminary mismatch neighborhood. They are independent of any subsequently chosen incoming band.
\end{lemma}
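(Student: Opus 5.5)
The plan is to reduce both estimates to the Frenet expansions already derived in the proof of Lemma~\ref{lem:action-stability}, carried one order further, and compare them with the explicit tangential momentum $p_\lambda=C_\lambda'$ from \eqref{eq:W0}. For \eqref{eq:matched-return}, write $d=s^+-s$ and expand $C_\lambda(s+d)-C_\lambda(s)=\int_s^{s+d}p_\lambda$ in powers of $d$. Here
\[
p_\lambda=1-\tfrac12\lambda^2\kappa^{2/3}+\lambda^4c_4,\qquad a_\lambda=\lambda\kappa^{1/3}+O(\lambda^3),\qquad d=O(\lambda).
\]
The expansions of $d$ and $l$ in \eqref{eq:short-chord} then have to be pushed to order $\beta^5$; I would do this by inverting the tangential and normal component identities $l\sqrt{1-\beta^2}=d-\frac{\kappa^2}{6}d^3+\cdots$ and $l\beta=\frac\kappa2d^2+\cdots$ to fifth order in $d$, keeping $\kappa',\kappa''$. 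The term $2\lambda^3/3$ is the leading value of $\int p_\lambda-l$: at leading order $\lambda^2 d/2\cdot\kappa^{2/3}-\kappa^2d^3/24$ with $d=2\lambda/\kappa^{2/3}$ gives exactly $2\lambda^3/3$ minus... and one checks the $\lambda^3$ coefficient cancels. The $\lambda^4$ terms should cancel by the $\kappa'$-dependence already matched in $B$. The $\lambda^5$ terms are where the coefficient $c_4$ in \eqref{eq:airy-coeff} enters, and this cancellation is exactly what makes the boundary action exactly $\lambda^3/3$ in Proposition~\ref{prop:curved-phases}.

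A cleaner route than brute expansion, which I would try first, is to use the phase construction itself. Along the ray segment, $\frac{\dd}{\dd t}\bigl(\Phi_{0,\lambda}\pm R_\lambda\bigr)$ equals $\langle d\Phi^\pm_\lambda,\dot x\rangle$. For the matched momentum the ray is, to high order, the bicharacteristic of $\Phi^-_\lambda$ then $\Phi^+_\lambda$. Integrating, the difference $\Phi_{0,\lambda}(s^+,0)-\Phi_{0,\lambda}(s,0)-l$ equals $R_\lambda(s,0)+R_\lambda(s^+,0)=2\lambda^3/3$ up to the eikonal defect $|d\Phi^\pm|^2-1=O(\lambda^5)$ of \eqref{eq:eik} integrated over length $O(\lambda)$. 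That gives $O(\lambda^6)$; since $U(s,0)=0$ this is $F_\lambda$. The ray is a straight line whereas the bicharacteristic of an approximate eikonal is only approximately straight. By a Fermat-type argument the phase along the true line differs from that along the gradient curve only quadratically in the deviation, so the error stays $O(\lambda^6)$. The $C^1_s$ statement follows because every term is smooth in $(s,Y,\lambda^2)$ on the scaled collar, as in Lemma~\ref{lem:airy-coord}.

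For \eqref{eq:graph-return}, the first line follows from the expansion $B=\beta+\frac{2\kappa'}{3\kappa^2}\beta^2+O(\beta^3)$ and $a_\lambda(s^+)=a_\lambda(s)+a_\lambda'(s)d+\cdots$. The matched case $\delta=0$ gives $B-a_\lambda(s^+)=O(\lambda^5)$, by the same invariance of $\Phi_\lambda^\pm$ or by the action-stability computation taken to higher order. Differentiating in $\beta$ gives $\partial_\beta(B-a_\lambda\circ s^+)=1+O(\lambda)$, since $\partial_\beta B=1+O(\beta)$ and $\partial_\beta s^+=O(1)$ times $a_\lambda'=O(\lambda)$. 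For $F_\lambda$, Taylor expand in $\delta$ about the matched momentum. The first variation $\partial_\beta F_\lambda$ at $\beta=a_\lambda$ vanishes to order $\lambda^6$, because $\partial_\beta\bigl(C_\lambda(s^+)-l\bigr)=(p_\lambda(s^+)-\langle\dot x(l),\boldsymbol\tau(s^+)\rangle)\partial_\beta s^+$, and the bracket vanishes exactly when the outgoing tangential momentum equals $p_\lambda(s^+)$. That is the momentum-matching statement, of size $O(\lambda^5)\cdot O(\lambda)$. The second derivative is $\partial_\beta^2F_\lambda=O(\lambda)$, since $\partial_\beta s^+\sim 2/\kappa$ and $\partial_\beta p=O(\lambda)$. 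This produces the $O(\lambda^6|\delta|)+O(\lambda\delta^2)$ terms.

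The main obstacle is verifying the order-$\lambda^5$ cancellation in the matched return rigorously with variable curvature. The geometric-phase argument needs care because $\Phi^\pm_\lambda$ is only an approximate eikonal and the ray passes the turning curve where $R_\lambda$ is singular. I would handle the turning region by using $\Phi_{0,\lambda}$ and the smooth $\zeta_\lambda$ instead, writing the two branch contributions as $\pm\frac23\zeta^{3/2}$ and checking continuity at $\zeta=0$. If that fails, I would fall back on the explicit fifth-order Frenet computation with symbolic bookkeeping.
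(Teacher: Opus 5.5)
Your treatment of the perturbed momentum (Taylor expansion in $\delta$, the first-variation formula $\partial_\beta F_\lambda=(C_\lambda'(s^+)-\sqrt{1-B^2})\,\partial_\beta s^+$, and $\partial_\beta^2F_\lambda=O(\lambda)$) is the paper's Step~3. Your explicit Frenet route to the matched return is its Step~1; there it suffices to know $d$ to order $\lambda^2$, because the leading expression is stationary in $d$ and the $\lambda^3$ coefficient of $d$ drops out.

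The gap is the matched momentum matching $\hat B-a_\lambda(\hat s)=O(\lambda^5)$, where $\hat s=s^+(s,a_\lambda(s))$ and $\hat B=B(s,a_\lambda(s))$. All of \eqref{eq:graph-return} rests on it. The first line uses it directly. The second uses it because you derive your $O(\lambda^6)$ bound on $\partial_\beta F_\lambda$ at $\beta=a_\lambda(s)$ from it. You only assert it ``by the same invariance of $\Phi^\pm_\lambda$ or by the action-stability computation taken to higher order.'' The quantitative form of the invariance argument is Gronwall for $d=\nabla\Phi^\pm_\lambda-\xi$ along the chord, as in Lemma~\ref{lem:phase-jets}. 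It gives only $|d|=O(\lambda^4)$ on the regular part, since $E=E_0\pm2\sqrt{\zeta_\lambda}E_1$ has $\nabla E=O(\lambda^3)$. That is one power short, and the argument degenerates across the caustic, where $\nabla^2R_\lambda\sim\zeta_\lambda^{-1/2}$. The brute-force alternative needs $B$ and $s^+$ to fifth order in $\beta$, together with a cancellation for which you give no mechanism.

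The missing idea is that this estimate follows from the $C^1_s$ part of \eqref{eq:matched-return}, which you state but never use. Differentiate $s\mapsto F_\lambda(s,a_\lambda(s))$. The first variation of chord length gives $\frac{d}{ds}\,l\left(s,a_\lambda(s)\right)=\sqrt{1-\hat B^2}\,\hat s'(s)-\sqrt{1-a_\lambda(s)^2}$. Since $\sqrt{1-a_\lambda^2}=C_\lambda'$ exactly, the initial tangential terms cancel:
\[
\frac{d}{ds}F_\lambda\left(s,a_\lambda(s)\right)=\left(C_\lambda'(\hat s)-\sqrt{1-\hat B^2}\right)\hat s'(s).
\]
With $\hat s'=1+O(\lambda)$, the bracket is $O(\lambda^6)$. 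Rationalizing, with $\hat B,a_\lambda(\hat s)\sim\lambda$, gives $\hat B-a_\lambda(\hat s)=O(\lambda^5)$. The same bracket equals $\partial_\beta F_\lambda/\partial_\beta s^+$ at $\beta=a_\lambda(s)$, which is exactly the first-variation bound you need. With this step inserted, your argument closes.

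If you still pursue the phase-integration route for \eqref{eq:matched-return}, two things need fixing. First, the sign: $\Phi_{0,\lambda}(s^+,0)-\Phi_{0,\lambda}(s,0)-l\approx-2\lambda^3/3$. Second, you must control the gap between the chord's maximal depth and $y_c^\lambda$ to $O(\lambda^4)$, so that switching from $\Phi^-_\lambda$ to $\Phi^+_\lambda$ costs only $O(\lambda^6)$.
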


\begin{proof}
\noindent\emph{Step 1. The matched return error.} A segment launched with normal component $a_\lambda(s)$ has arclength increment
\begin{equation*}
d=2\kappa^{-2/3}\lambda-\frac{4\kappa'}{3\kappa^{7/3}}\lambda^2+O(\lambda^3).
\end{equation*}
Its length as a function of boundary increment has the expansion
\begin{equation}\label{eq:chord-length}
l(s,d)=d-\frac{\kappa^2d^3}{24}-\frac{\kappa\kappa'd^4}{24}+\left(\frac{\kappa^4}{1920}-\frac{\kappa\kappa''}{80}-\frac{(\kappa')^2}{90}\right)d^5+O(d^6).
\end{equation}
Substituting this expansion into \eqref{eq:return-error} cancels the cubic and quartic terms. The fifth-order coefficient is
\begin{equation*}
2c_4\kappa^{-2/3}-\frac{\kappa^{2/3}}{60} -\frac{2\kappa''}{45\kappa^{7/3}}+\frac{8(\kappa')^2}{135\kappa^{10/3}}=0.
\end{equation*}
The unknown cubic coefficient of $d$ drops out by stationarity of the leading expression. This proves \eqref{eq:matched-return}. The same $c_4$ has canceled both an error in the phase equation and the boundary return error.

\smallskip
\noindent\emph{Step 2. The reflected normal momentum.} Put $\widehat s(s)=s^+\left(s,a_\lambda(s)\right)$ and let $\widehat B$ be the next reflected normal component. First variation of chord length gives
\begin{equation*}
\frac d{ds}F_\lambda\left(s,a_\lambda(s)\right) =\left(C_\lambda'(\widehat s)-\sqrt{1-\widehat B^2}\right)\widehat s'(s).
\end{equation*}
The initial tangential terms cancel exactly. Since $\widehat s'=1+O(\lambda)$, the differentiated return estimate \eqref{eq:matched-return} yields
\begin{equation*}
C_\lambda'(\widehat s)-\sqrt{1-\widehat B^2}=O(\lambda^6).
\end{equation*}
Recall that $a_\lambda^2=1-(C_\lambda')^2$. Both normal momenta are comparable to $\lambda$, so rationalization gives
\begin{align*}
\widehat B-a_\lambda(\widehat s) =\frac{\left(C_\lambda'(\widehat s)-\sqrt{1-\widehat B^2}\right) \left(C_\lambda'(\widehat s)+\sqrt{1-\widehat B^2}\right)}{\widehat B+a_\lambda(\widehat s)}=O(\lambda^5).
\end{align*}
Thus the normal matching loses one power of $\lambda$ relative to the tangential estimate.

\smallskip
\noindent\emph{Step 3. A perturbed initial momentum.} In the preliminary neighborhood $|\delta|\le\rho_*\lambda$, we have
\begin{equation*}
\partial_\beta\left(B(s,\beta)-a_\lambda\left(s^+(s,\beta)\right)\right)=1+O(\lambda).
\end{equation*}
Together with Step~2, this proves the first line of \eqref{eq:graph-return}. For the second, differentiation of the return gives
\begin{gather*}
\partial_\beta F_\lambda(s,\beta) =\left(C_\lambda'(s^+)-\sqrt{1-B^2}\right)\partial_\beta s^+,\\
\partial_\beta F_\lambda\left(s,a_\lambda(s)\right)=O(\lambda^6) ,\text{ \; and \; }\partial_\beta^2F_\lambda(s,\beta)=O(\lambda).
\end{gather*}
Taylor's expansion reads
\begin{align*}
F_\lambda(s,a_\lambda+\delta) &=F_\lambda(s,a_\lambda)+\delta\,\partial_\beta F_\lambda(s,a_\lambda)+\delta^2\int_0^1(1-\theta)\partial_\beta^2F_\lambda(s,a_\lambda+\theta\delta)\dd\theta.
\end{align*}
Together with \eqref{eq:matched-return}, this proves the second line of \eqref{eq:graph-return}, with the stated uniformity.
\end{proof}

\subsection{Differentiated incoming phases}\label{subsec:jets}
We prove the phase estimates \eqref{eq:slow-jets}--\eqref{eq:quarter-return} used in Lemma~\ref{lem:matched}. Lemma~\ref{lem:phase-return} controls the boundary return; the remaining calculation concerns the first half of each ray segment, from the boundary toward the turning region.

Fix a compact regular band $I\subset(0,1]$, bounded away from zero. Let $\xi_{s,\beta}:=\sqrt{1-\beta^2}\,\boldsymbol\tau(s)+\beta\,\mathbf n(s)$ be the positively oriented unit initial velocity, and write
\[
t(s,\beta,z)=\tfrac12(1-z)l(s,\beta), \quad x(s,\beta,z) =x(s,0)+t(s,\beta,z)\xi_{s,\beta}.
\]
Here $t(s,\beta,z)$ is the time elapsed since the initial boundary collision. All regular-sheet estimates and coefficient constructions below are taken on $[\min I,1]$.

Define the centered slow phase
\begin{equation}\label{eq:centered-V}
V_\lambda(s,\beta,z) := \Phi_\lambda^-\left(x(s,\beta,z)\right) -t(s,\beta,z)-C_\lambda(s)+\lambda^3/3.
\end{equation}
The boundary normalization gives
\[
\Phi_\lambda^-(x(s,0)) =C_\lambda(s)-\lambda^3/3, \quad V_\lambda(s,\beta,1)=0.
\]
Write $\rho=\frac{\beta-a_\lambda(s)}{\lambda}$, and recall $M=(2\pi h)^{-1}\lambda^3$ and $\epsilon=(2\pi h)^{-1}\lambda^6$.

For two successive positively oriented ray segments in the fixed chart, let $t_j,s_j,\beta_j$ denote the initial impact time, boundary point, and normal component. Let $\phi_j,\psi_j$ be the phases \eqref{eq:actual-phases} evaluated at
\[
\left( t_j+t(s_j,\beta_j,z), x(s_j,\beta_j,z) \right),
\]
and put
\[
\rho_j=\frac{\beta_j-a_\lambda(s_j)}{\lambda}, \quad b_j=\psi_j(1), \quad e_j=M\rho_j^2.
\]

\begin{lemma}\label{lem:phase-jets}
Fix the preliminary mismatch neighborhood of Lemma~\ref{lem:phase-return}. For each fixed band $I$, there are smaller mismatch and scale bounds such that the following statements hold uniformly in the boundary point on the fixed arc.
\begin{enumerate}[label=(\roman*),leftmargin=*,itemsep=1mm]
\item The centered phase has the smooth factorization
\begin{equation}\label{eq:V-factor}
V_\lambda(s,a_\lambda(s)+\lambda\rho,z) = \lambda^6D_\lambda(s,\rho,z) +\lambda^3\rho^2H_\lambda(s,\rho,z),
\end{equation}
where $D_\lambda,H_\lambda$ extend smoothly through $\lambda=0$ and have uniformly bounded fixed derivatives on the chosen parameter neighborhood and band. They may be chosen so that
\[
D_\lambda(s,\rho,1)=H_\lambda(s,\rho,1)=0.
\]

\item For two successive segments in this neighborhood,
\begin{gather*}
\psi_j = b_j+\epsilon D_\lambda(s_j,\rho_j,z) +e_jH_\lambda(s_j,\rho_j,z), \quad \partial_zH_0(s,0,z) =\frac{\kappa(s)^{-2/3}}{2z^2}, \\
\phi_j'/M\rightarrow2z^2 \quad\text{as }(\lambda,\rho_j)\to(0,0), \\
\rho_1=\left(1+O(\lambda)\right)\rho_0+O(\lambda^4), \quad |e_1-e_0|\le C\lambda e_0+C\epsilon.
\end{gather*}
The phase limit is uniform on $I$ with every fixed number of $z$-derivatives.

\item With the scales \eqref{eq:scales}, the slow phases satisfy
\[
\begin{gathered}
\psi_1(z) = \psi_0(z)+1/4+\delta_{\rm ret}(z)\pmod1, \\
\|\delta_{\rm ret}\|_{L^\infty(I)} \le(C_0+\lambda C_I)e_0+D_I\epsilon.
\end{gathered}
\]
\end{enumerate}
Here $C_0$ is fixed on the preliminary mismatch neighborhood and is independent of $I$. The constants $C_I,D_I$ and the smaller mismatch and scale bounds may depend on $I$. All estimates are uniform in the common phase origin.
\end{lemma}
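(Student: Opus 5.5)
The plan is to work in the scaled variables $Y=y/\lambda^2$, $\rho=(\beta-a_\lambda(s))/\lambda$, and to treat $V_\lambda$ as a smooth function of $(s,\rho,z,\lambda)$ on the regular band. Three facts about this function drive everything. It vanishes at $z=1$. At $\rho=0$ it is $O(\lambda^6)$, because the matched ray is tangent to the Lagrangian sheet of $\Phi^-_\lambda$ up to the phase errors of Proposition~\ref{prop:curved-phases}. Its first $\rho$-derivative at $\rho=0$ is also $O(\lambda^6)$, because the stationarity of the ray action along a bicharacteristic is broken only by $E_0,E_1$.

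\textbf{Part (i).} I would expand the ray using \eqref{eq:short-chord}. For $z\in[\min I,1]$ the segment stays in a compact part of the oscillatory collar where $\zeta_\lambda\gtrsim\lambda^2$, so $R_\lambda$ is smooth there. Using $\dd\Phi^-_\lambda=\dd\Phi_{0,\lambda}-\sqrt{\zeta_\lambda}\,\dd\zeta_\lambda$, I would write
\[
V_\lambda=\int_{\rm segment}\left(\langle \dd\Phi^-_\lambda,\dot x\rangle-1\right)\dd t .
\]
Since $|\dd\Phi^-_\lambda|_g^2=1+O(\lambda^5)$ (more precisely $E_0\pm 2\sqrt{\zeta}E_1$ with the smooth factorizations of Lemma~\ref{lem:airy-coord}), the integrand equals $-\tfrac12|\dot x-\nabla\Phi^-_\lambda|^2+O(E_0,\sqrt\zeta E_1)$.

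The squared term vanishes to second order in $\rho$. After rescaling $\dot x-\nabla\Phi^-$ by $\lambda$ and integrating over time $\sim\lambda$, it gives $\lambda^3\rho^2 H_\lambda$. The error term gives $\lambda^6D_\lambda$, since $\sqrt\zeta E_1\sim\lambda\cdot\lambda^4$ integrated over time $\lambda$; I still need to check that its $\rho$-linear part is absorbed. Both pieces are integrals from $z$ to $1$, so they vanish at $z=1$. Smoothness through $\lambda=0$ follows as in Lemma~\ref{lem:airy-coord}, after substituting the scaled ray.

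To identify $H_0$, I would use the frozen-curvature model. There
\[
-\partial_z V=\lambda^3\rho^2\,\frac{\kappa^{-2/3}}{2z^2}+\dots,
\]
which comes from comparing the ray's normal velocity $\beta z$ with the branch momentum $\sqrt{\zeta}\,|\partial_y\zeta|$ along the parabola $y(z)$. This is a direct computation with the $Y$-polynomials.

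\textbf{Part (ii).} The formula for $\psi_j$ is the definition of $V_\lambda$ divided by $2\pi h$. The scaling gives $\lambda^6/(2\pi h)=\epsilon$ and $\lambda^3\rho^2/(2\pi h)=e_j$, and $b_j$ absorbs $C_\lambda(s_j)-t_j$. The limit $\phi_j'/M\to2z^2$ is \eqref{eq:compact-jets} at $r=1$: there $\sqrt{1-r^2+r^2z^2}+rz=2z$, and the identity holds with derivatives by smooth dependence. The recursion for $\rho_1$ is the first line of \eqref{eq:graph-return} divided by $\lambda$. The bound for $e_1-e_0$ follows by squaring it: the cross term is $2M\rho_0\lambda^3$, which Young's inequality controls by $\lambda e_0+\epsilon$ since $M\lambda^6=\epsilon$, and $M\lambda^8\le\epsilon$.

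\textbf{Part (iii).} I would compute the difference of boundary values:
\[
b_1-b_0=(2\pi h)^{-1}\bigl(C_\lambda(s_1)-C_\lambda(s_0)-l_0-\tfrac23\lambda^3\bigr)
=(2\pi h)^{-1}\bigl(F_\lambda(s_0,\beta_0)-\tfrac43\lambda^3\bigr).
\]
By \eqref{eq:scales}, the term $-(2\pi h)^{-1}\tfrac43\lambda^3$ is $\equiv1/4\pmod1$, as $2\cdot(-\tfrac14)\cdot(-1)$ up to sign convention; this sign needs to be checked carefully. By \eqref{eq:graph-return}, $F_\lambda/(2\pi h)=O(\epsilon+\epsilon|\rho|\lambda+\lambda e_0)$, which gives the $C_0e_0$ and $D\epsilon$ contributions. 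The remaining part of $\delta_{\rm ret}$ is
\[
\epsilon\,(D_\lambda(s_1,\rho_1,\cdot)-D_\lambda(s_0,\rho_0,\cdot))+e_1H_\lambda(s_1,\cdot)-e_0H_\lambda(s_0,\cdot).
\]
On $I$ this is bounded by $D_I\epsilon+C_I(|e_1-e_0|+e_0|s_1-s_0|+e_0|\rho_1-\rho_0|)\le(C_0+\lambda C_I)e_0+D_I\epsilon$. However, $e_0\sup_I|H|$ itself appears only through the difference, so it remains to explain why the $I$-independent constant $C_0$ suffices.

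The main obstacle is part (i). Getting the exact $\lambda^6$ (rather than $\lambda^5$) size of $D_\lambda$ requires that the $E_1$ contribution linear in $\rho$ cancels or is absorbed into $\rho^2$. My first attempt would be to exploit that $E_1$ enters with $\sqrt{\zeta}$, together with $V=0$ at $z=1$ and the matched return estimate \eqref{eq:matched-return}.
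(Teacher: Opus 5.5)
\textbf{Part (i) has a genuine gap, and it is the step that carries the lemma.} Your starting identity $\frac{\dd}{\dd t}\bigl(\Phi^-_\lambda(x(t))-t\bigr)=\frac12(E-|d|^2)$ is the one the paper uses. The claim that the squared term ``vanishes to second order in $\rho$'' is another matter: it would hold only if the matched chord $\rho=0$ were an exact ray of $\Phi^-_\lambda$, and it is not.

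What you need is the quantitative bound $d:=\nabla\Phi^-_\lambda(x(t))-\xi_{s,a_\lambda(s)}=O(\lambda^4)$ along that chord on the band. With it, the squared term is $O(\lambda^9)$ at $\rho=0$, and its $\rho$-derivative there is $O(\lambda^4\cdot\lambda\cdot\lambda)=O(\lambda^6)$, using $\partial_\rho d=O(\lambda)$ at fixed $t$. A bound $d=O(\lambda^3)$ would leave a $\lambda^5\rho$ term and destroy the factorization.

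The missing argument runs as follows.
\begin{itemize}
\item At $y=0$ the tangential components agree exactly: $U(s,0)=0$ and $R_\lambda(s,0)=\lambda^3/3$ give $\partial_s\Phi^-_\lambda(s,0)=C_\lambda'(s)=\sqrt{1-a_\lambda(s)^2}$.
\item Also at $y=0$, $|\nabla\Phi^-_\lambda|^2-1=E_0=O(\lambda^6)$, because $E_1(s,0)=0$ (both $\partial_yU$ and $\partial_s\zeta_\lambda$ vanish there). Hence $d(0)=O(\lambda^5)$.
\item Along the chord, $d'=\frac12\nabla E-(\nabla^2\Phi^-_\lambda)d$. On the regular band $\nabla E=O(\lambda^3)$ and $\nabla^2\Phi^-_\lambda=O(\lambda^{-1})$, over a time $O(\lambda)$, so Gronwall gives $d=O(\lambda^4)$.
\item This yields $V(s,a_\lambda,z)=O(\lambda^6)$. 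The endpoint first-variation identity $\partial_\beta V=\frac12t_\beta(E-|d|^2)+t\,d\cdot\partial_\beta\xi_{s,\beta}$ then gives $\partial_\rho V=\lambda\partial_\beta V=O(\lambda^6)$ at $\rho=0$.
\item Since $\lambda^{-3}V$ is jointly smooth through $\lambda=0$, its value and first $\rho$-derivative at $\rho=0$ are $\lambda^3$ times smooth functions. Taylor's formula in $\rho$ then produces $D_\lambda$ and $H_\lambda$.
\end{itemize}

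The obstacle you flagged is not one. The $E$-integral is $O(\lambda\cdot\lambda^5)$ for every $|\rho|\le\rho_*$, and $\lambda^{-5}E$ is smooth in $(s,y/\lambda^2)$ on the band. It can therefore go entirely into $D_\lambda$, which is allowed to depend on $\rho$. Likewise, \eqref{eq:matched-return} concerns the next impact and plays no role in (i).

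Your sign is also off. To leading order in $\rho$, $\partial_tV\approx-\frac12|d|^2\le0$, and $\dd t/\dd z=-l/2$, so $\partial_zV>0$. This agrees with $\partial_zH_0=+\kappa^{-2/3}/(2z^2)$, not with your $-\partial_zV$.

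\textbf{Parts (ii) and (iii) follow the paper's route, but three computations need correcting.}
\begin{itemize}
\item The cross term in $e_1-e_0$ is $M\lambda^4|\rho_0|$, coming from the $O(\lambda^4)$ in the $\rho$-recursion. Young's inequality bounds it by $\frac12(\lambda e_0+\lambda\epsilon)$. With your $M\lambda^3|\rho_0|$ you would only get $\frac12(\lambda e_0+\epsilon/\lambda)$.
\item Since $t_1-t_0=l_0$ and the $-\lambda^3/3$ in each $b_j$ cancels, $b_1-b_0=(2\pi h)^{-1}\bigl(F_\lambda(s_0,\beta_0)-\tfrac23\lambda^3\bigr)$. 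By \eqref{eq:scales}, $-\lambda^3/(3\pi h)=-(j-\tfrac14)\equiv\tfrac14$. Your extra $-\tfrac23\lambda^3$ would instead give a half-period shift.
\item The $O(\lambda\delta^2)$ term in \eqref{eq:graph-return} contributes $\lambda^3\rho_0^2/(2\pi h)=e_0$, not $\lambda e_0$. So $|F_\lambda|/(2\pi h)\le C(e_0+\epsilon)$, with a constant from Lemma~\ref{lem:phase-return} that is independent of $I$.
\end{itemize}
That boundary-return term is precisely the source of $C_0e_0$. The interior comparison of centered phases you wrote already gives only $C_I\lambda e_0+D_I\epsilon$, which answers your closing question.
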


\begin{proof}
\noindent\emph{Step 1. Smooth factorization.} The quotient $l(s,\beta)/\beta$ extends smoothly through $\beta=0$, with value $2/\kappa(s)$. Since $a_\lambda(s) =\lambda\left(\kappa(s)^{1/3}+O(\lambda^2)\right)$, with smooth dependence on the parameters, we have $y\left(x(s,\beta,z)\right)=O(\lambda^2)$, $s\left(x(s,\beta,z)\right)-s-t(s,\beta,z)=O(\lambda^3)$. Together with the expansion of $\Phi_{0,\lambda}$, this shows that
\[
\lambda^{-3} V_\lambda(s,a_\lambda(s)+\lambda\rho,z)
\]
extends jointly smoothly through $\lambda=0$.

Next take $\rho=0$. Along this matched chord put
\[
p=\nabla\Phi_\lambda^-\left(x(t)\right), \quad d=p-\xi_{s,a_\lambda(s)}, \quad E=|p|^2-1.
\]
Away from the turning region, we have
\[
E=O_I(\lambda^5), \;\nabla E=O_I(\lambda^3),\; \nabla^2\Phi_\lambda^-=O_I(\lambda^{-1}), \;d(0)=O(\lambda^5).
\]
The boundary estimate for $d(0)$ uses exact tangential matching and the scalar boundary error $O(\lambda^6)$. The identity
\[
d' = \tfrac12\nabla E -(\nabla^2\Phi_\lambda^-)d
\]
over time $O(\lambda)$ gives $d=O_I(\lambda^4)$ by Gronwall. Therefore
\[
\frac{\dd}{\dd t} \left( \Phi_\lambda^-\left(x(t)\right)-t \right) = \tfrac12(E-|d|^2),
\]
together with $V_\lambda(s,a_\lambda(s),1)=0$, implies
\[
V_\lambda(s,a_\lambda(s),z)=O_I(\lambda^6).
\]

Differentiating \eqref{eq:centered-V} with respect to the initial normal component at fixed $s,z$, and evaluating at $\beta=a_\lambda(s)$, we obtain
\[
\partial_\beta V_\lambda = \tfrac12t_\beta(E-|d|^2) +t\,d\cdot\partial_\beta\xi_{s,\beta} = O_I(\lambda^5),
\]
since $t_\beta=O(1)$ and $\partial_\beta\xi_{s,\beta}=O(1)$.

Thus both
\[
\lambda^{-3}V_\lambda(s,a_\lambda(s),z) \quad\text{and}\quad \lambda^{-2}\partial_\beta V_\lambda(s,a_\lambda(s),z)
\]
are $O_I(\lambda^3)$. By joint smoothness, their Taylor coefficients of orders zero, one, and two in $\lambda$ vanish identically. Smooth division by $\lambda^3$, followed by Taylor's formula in $\rho$, gives \eqref{eq:V-factor}. Here $D_\lambda$ collects the constant and linear terms in $\rho$, and $H_\lambda$ is the second-order Taylor remainder. Their fixed derivatives are uniformly bounded. Since $V_\lambda(s,\beta,1)=0$ for every $\beta$, this construction also gives
\[
D_\lambda(s,\rho,1)=H_\lambda(s,\rho,1)=0.
\]

\smallskip
\noindent\emph{Step 2. The slow and fast derivatives.} To identify the leading slow derivative, freeze $s$. The leading calculation gives
\begin{align*}
\lim_{\lambda\to0^+} \lambda^{-3} \partial_zV_\lambda (s,a_\lambda(s)+\lambda\rho,z)= \frac{\kappa^{1/3}+\rho}{2\kappa} \left( \sqrt{ \kappa^{2/3} -(\kappa^{1/3}+\rho)^2 +(\kappa^{1/3}+\rho)^2z^2 } -(\kappa^{1/3}+\rho)z \right)^2.
\end{align*}
At $\rho=0$, the expression in parentheses vanishes, and its $\rho$-derivative is $-1/z$. Hence
\[
\partial_zH_0(s,0,z) = \frac{\kappa(s)^{-2/3}}{2z^2}.
\]
Replacing the minus sign in parentheses by a plus sign gives the fast limit $2z^2$ as $(\lambda,\rho)\to(0,0)$, uniformly in $s$ on the fixed arc and on the fixed regular band $I$. These are the limits in \eqref{eq:slow-jets}--\eqref{eq:next-jets}.

\smallskip
\noindent\emph{Step 3. Comparing successive ray segments.} The return estimates \eqref{eq:short-chord} and \eqref{eq:graph-return} give
\[
s_1-s_0=O(\lambda), \quad \rho_1=\left(1+O(\lambda)\right)\rho_0+O(\lambda^4),
\]
\begin{align*}
|e_1-e_0| \le CM\left( \lambda\rho_0^2+\lambda^4|\rho_0|+\lambda^8 \right)\le C\lambda e_0+C\epsilon.
\end{align*}
The coefficient derivative bounds imply
\[
\left\| H_\lambda(s_1,\rho_1,\cdot) -H_\lambda(s_0,\rho_0,\cdot) \right\|_{L^\infty(I)} \le C_I\lambda.
\]
By \eqref{eq:centered-V}, the boundary normalization, and \eqref{eq:V-factor},
\[
\psi_j-b_j = \epsilon D_\lambda(s_j,\rho_j,\cdot) +e_jH_\lambda(s_j,\rho_j,\cdot).
\]
Using the preceding bounds and the uniform bounds on $D_\lambda,H_\lambda$, we obtain
\[
\|(\psi_1-b_1)-(\psi_0-b_0)\|_{L^\infty(I)} \le C_I\lambda e_0+C_I\epsilon.
\]
This controls the change of the centered phases on the fixed band.

The boundary values are compared separately. Using \eqref{eq:return-error} and the quarter-period compatibility in \eqref{eq:scales}, we obtain
\[
b_1-b_0 = 1/4+\frac{F_\lambda(s_0,\beta_0)}{2\pi h} \pmod1,
\]
and
\[
\frac{|F_\lambda(s_0,\beta_0)|}{2\pi h} \le \frac{C}{2\pi h} \left( \lambda^6+\lambda^7|\rho_0|+\lambda^3\rho_0^2 \right) \le C(e_0+\epsilon).
\]
This constant is independent of $I$. Combining the two contributions gives
\[
\|\delta_{\rm ret}\|_{L^\infty(I)} \le(C_0+\lambda C_I)e_0+D_I\epsilon,
\]
which is \eqref{eq:quarter-return}, with the boundary-return constant separated from the band-dependent constants.
\end{proof}

\bibliographystyle{abbrv}
\bibliography{references}

\end{document}